\numberwithin{equation}{section}
\crefname{section}{Section}{Sections}
\crefname{subsection}{Subsection}{Subsections}
\crefname{condition}{Condition}{Conditions}
\crefname{hypothesis}{Hypothesis}{Hypothesis}
\crefname{assumption}{Assumption}{Assumptions}
\crefname{lemma}{Lemma}{Lemmas}
\crefname{claim}{Claim}{Claims}
\crefname{remark}{Remark}{Remarks}
\newtheorem{theorem}{Theorem}[section]
\newtheorem{lemma}[theorem]{Lemma}
\newtheorem{corollary}[theorem]{Corollary}
\newtheorem{definition}[theorem]{Definition}
\newtheorem{remark}[theorem]{Remark}        
\numberwithin{equation}{section}
\def\YYint#1#2#3{{\setbox0=\hbox{$#1{#2#3}{\iint}$}
\vcenter{\hbox{$#2#3$}}\kern-.50\wd0}}
\def\XXint#1#2#3{{\setbox0=\hbox{$#1{#2#3}{\int}$}
\vcenter{\hbox{$#2#3$}}\kern-.50\wd0}}
\def\namedlabel#1#2{\begingroup
\def\@currentlabel{#2}%
\label{#1}\endgroup
}
\newcommand{\rmh}[1]{\mathpalette{\raisem@th{#1}}}
\newcommand{\raisem@th}[3]{\hspace*{-1pt}\raisebox{#1}{$#2#3$}}
\newcommand{\descref}[2]{\hyperref[#1]{\textcolor{black}{(}\textcolor{blue}{\bf #2}\textcolor{black}{)}}}
\newcommand{\dref}[2]{\hyperref[#1]{\textcolor{black}{(}\textcolor{blue}{\bf #2}\textcolor{black}{)}}}
\g@addto@macro\normalsize{%
\setlength\abovedisplayskip{3pt}
\setlength\belowdisplayskip{3pt}
\setlength\abovedisplayshortskip{1pt}
\setlength\belowdisplayshortskip{3pt}
}
\def\ps@pprintTitle{%
\let\@oddhead\@empty
\let\@evenhead\@empty
\def\@oddfoot{}%
\let\@evenfoot\@oddfoot}
\newcommand{\ep}{\epsilon}
\newcounter{whitney}
\newcounter{ineqcounter}
\begin{document}
\begin{frontmatter}
\title{Regularity and existence for a mixed local-nonlocal
parabolic equation with variable singularities and measure data
}

\author{Stuti Das}
\ead{stutid21@iitk.ac.in}

\address{Department of Mathematics and Statistics,\\ Indian Institute of Technology Kanpur, Uttar Pradesh, 208016, India}

\newcommand*{\avint}{\mathop{\, \rlap{--}\!\!\int}\nolimits}

\begin{abstract}
This article proves the existence, non-existence, regularity and asymptotic behavior of weak solutions for a class of mixed
local-nonlocal parabolic problems involving singular nonlinearities and measure data extending the works of \cite{sanjitgarain,lazermc}. A central contribution of this work is the
inclusion of a variable singular exponent. We examine both the purely singular and perturbed singular cases in the context of measure-valued data, where the source terms can simultaneously take the form of measures. To the best of our knowledge, this
phenomenon is new, even in the case of a constant singular exponent involving only a local operator. Further, all our results are also true for the operator being local only. 
\end{abstract}

\begin{keyword} Mixed local-nonlocal parabolic equation; Singular nonlinearity; Measure data; Variable exponent; Existence; Non-existence; Regularity; Asymptotic behavior
\MSC [2020] 35M10, 35M13, 35K58, 35K67, 35R06, 35R11, 35B40, 35B65.
\end{keyword}

\end{frontmatter}

\begin{singlespace}
\tableofcontents
\end{singlespace}

\section{Introduction}
 In this article, we explore the existence, uniqueness, regularity properties, and asymptotic behavior of weak solutions for the following parabolic mixed local-nonlocal measure data problem with variable singular exponent
 \begin{equation}{\label{mainproblemintro}}
    \begin{array}{c}
         u_t-\Delta u+(-\Delta)^s u=\frac{\nu}{u^{\gamma(x,t)}} +\mu \text { in } \Omega_T, \smallskip\\u>0 \text{ in }\Omega_T,\quad u=0  \text { in }(\mathbb{R}^n \backslash \Omega) \times(0, T), \smallskip\\ u(x, 0)=u_0(x) \text { in } \Omega ;
    \end{array}
\end{equation}
where 
\begin{itemize}
  \item  $(-\Delta)^{s}u=\text{P.V.}\int_{\mathbb{R}^{n}} {\frac{u(x,t)-u(y,t)}{{\left|x-y\right|}^{n+2s}}}dy$ and P.V. stands for the Cauchy principle value.  \item $\Omega$ is a bounded ${C}^{1}$ domain 
in $\mathbb{R}^{n}$, $\gamma$ is a positive continuous function on $\overline{\Omega}_T$, \item $n> 2$, $s\in(0,1)$, $0<T<+\infty$, 
$0\leq u_0\in L^1(\Omega)$ and $\nu,\mu$ are bounded non-negative Radon measures on $\Omega_T$ with $\nu$ not being identically $0$.
    \end{itemize}
The positivity of $\gamma$ leads to a blow-up of the nonlinearity in \cref{mainproblemintro} near the origin, a phenomenon referred to as singularity. Consequently, equation \cref{mainproblemintro} encompasses a broad spectrum of mixed singular parabolic problems, including both constant and variable exponent singular nonlinearities with measure data.

In the elliptic scenario, the purely local singular Laplace equation
\begin{equation}{\label{intt1}}
    \begin{array}{c}
-\Delta u=\frac{f}{u^{\gamma(x)}} \text { in } \Omega, \\
u=0 \text { on } \partial \Omega \text { and } u>0 \text { in } \Omega
\end{array}
\end{equation}
has been widely studied for both the constant and variable exponent $\gamma$. When $\gamma$ is a positive constant, existence of a unique classical solution is obtained in \cite{CrRaTa} under the assumption that $\partial \Omega$ is of class $C^3$ and $f \in C^1(\bar{\Omega}) \backslash\{0\}$ is non-negative. Indeed, the authors in \cite{CrRaTa} also treated more general singularities and operators. Interestingly enough, Lazer-Mckenna \cite{LaMc} showed that the unique solution obtained by \cite{CrRaTa} is indeed in $W_0^{1,2}(\Omega)$ iff $0<\gamma<3$. Boccardo-Orsina \cite{orsina} in a beautiful paper, showed the existence of the singular local problem and introduced a different notion of boundary behavior. They also showed that for $f$ being a non-negative bounded Radon measure concentrated on a set of $0$ $q$-capacity, the problem does not have any solution. Moreover, when $f$ is a non-negative bounded Radon measure on $\Omega$, existence results can be found in \cite{lazermc}. Further results on such measure data problems have been obtained in \cite{lindda}. When $\gamma$ is a variable, for some positive $f \in L^m(\Omega)$ with $m \geq 1$, existence results are established in \cite{variablegamma}. Further a perturbed singular measure data problem have been discussed in \cite{esaim}.

The nonlocal variant given by
\begin{eqnarray*}
\begin{array}{c}
  (-\Delta)^s u=\frac{f}{u^{\gamma(x)}}  \text { in } \Omega, \\
u>0  \text { in } \Omega, \quad
u=0  \text { in } \mathbb{R}^n\backslash \Omega,
\end{array}
\end{eqnarray*}
was studied in \cite{fracsingular} for $\gamma(x)=\gamma \in \mathbb{R}^{+}$. The authors proved the existence and uniqueness of positive solutions, according to the range of $\gamma$ and summability of $f$. 
For variable singular exponent; the existence results have been obtained in \cite{fracvarisin} in a quasilinear setting. When $f$ is a non-negative bounded Radon measure and $\gamma$ is a constant, existence results can be found in \cite{ghoshfrac}.

In recent years, mixed local-nonlocal problems have drawn a great attention due to its wide range of applications in biology, stochastic processes, image processing and related areas. A reference problem follows as
\begin{equation}{\label{intt3}}
\begin{array}{c}
   -\Delta u+ (-\Delta)^s u=\frac{f}{u^{\gamma(x)}}  \text { in } \Omega, \\
u>0  \text { in } \Omega, \quad
u=0  \text { in } \mathbb{R}^n\backslash \Omega.
\end{array}    
\end{equation}
 Arora-Rădulescu \cite{arora} for $\gamma(x)=\gamma\in \mathbb{R}^{+}$, obtained the existence, uniqueness, and regularity properties of weak solutions of \cref{intt3} by deriving uniform a priori estimates and using the approximation technique. 
 We also refer \cite{lazermc2}. The quasilinear case $p>1$, with constant exponent $\gamma$ has been considered in \cite{garain}, and the variable exponent case can be found in \cite{Biroud,garainkim}.

Very recently, the perturbed mixed local nonlocal singular problem 
\begin{equation}{\label{inttt2}}
\begin{array}{c}
   -\Delta u+ (-\Delta)^s u=\frac{f}{u^{\gamma(x)}}  +g\text { in } \Omega, \\
u>0  \text { in } \Omega, \quad
u=0  \text { in } \mathbb{R}^n\backslash \Omega;
\end{array}
\end{equation} is studied in \cite{ghosh22}, when $\gamma$ is a positive constant, $f$ is a positive integrable function and $g$ is a non-negative bounded Radon measure in $\Omega$. When both $f$ and $g$ are non-negative bounded Radon measures and $\gamma$ being a continuous function, existence and regularity properties have been addressed in \cite{biswas2025existence,sanjitgarain}.

In the parabolic setting the literature on problems such as \cref{intt1}, \cref{intt3} and \cref{inttt2} is, by far, much more limited. 
If $f \equiv 0$ (the case of a quasilinear parabolic equation with measure data) we refer to \cite{pett2} (see also [\citealp{softmea},\citealp{pett},\citealp{pett3}]) for a complete account on existence and uniqueness in the context of renormalized solutions. Both weak and strong regularity of the so-called SOLA solutions (solutions obtained as limit of approximations) have been also obtained (see [\citealp{1,2,22}] and references therein).

The presence of the singular term in the equation models many different physical problems. For example, it appears in the theory of heat conduction in electrically conducting materials, as described by Fulks and Maybee in 1960 in their pioneering paper \cite{fulks}. 
Concerning the model equation
\begin{equation}{\label{intt4}}
\begin{array}{c}
u_t-\Delta_p u=\frac{f}{u^{\gamma}}+g(x,u) \text { in } \Omega_T, \smallskip\\ u=0  \text { in }(\mathbb{R}^n \backslash \Omega) \times(0, T), \smallskip\\ u(x, 0)=u_0(x)  \text { in } \Omega ,
\end{array}
\end{equation}the first result in the literature appears to be the paper of Fulks and Maybee recalled above (see \cite{fulks}) where the authors study the case $g=0$ and of regular bounded data $f$ and $u_0$ when the principal part is the Laplacian operator. Most further studies in the parabolic setting have been published only in the last two decades. For suitably smooth data $f$, $g$ and $u_0$, the existence of solutions was investigated in [\citealp{parabolic1,idda}] (see also [\citealp{13,idanodea}]). If one restricts the range of $\gamma$ in $(0,2+1/p-1)$
, then various existence, uniqueness, and regularity results were obtained in $W_0^{1,p}(\Omega)\cap L^{\infty}(\Omega)$ in Bal-Badra-Giacomoni \cite{BaBaGi, BaBaGi1, BaBaGi2} for $f= 1$ and $g(x,u)$ being sub-homogeneous w.r.t the second variable. This was obtained by restricting the solution to a conical shell $\mathbf{C}$ of $L^{\infty}$ defined using the distance function depending on the exponent $\gamma$. 
For more details regarding singular parabolic problems we refer [\citealp{boga,10,BoGi}] and the references therein. Finally, in \cite{oliva} is studied the irregular case when $u_0\in L^1(\Omega)$, $f$ is a summable function and $g$ is a bounded non-negative Radon measure. 
The Nonlocal case $(s\in(0,1))$ for the parabolic problem (with $f\in L^1(\Omega_T), g\equiv 0$) was handled in \cite{abdellaoui2} for $\gamma>0$ constant to show existence and uniqueness results in the sense of Boccardo-Orsina \cite{orsina}, whereas the mixed local nonlocal case have been dealt in \cite{pap}.

As for the boundedness of weak solutions, we refer to Aronson-Serrin \cite{AS}, where the summability requirement of $f$ for boundedness was obtained for the case of second-order differential equations without singularity. Outside of the Aronson-Serrin domain, the optimal summability of solutions for the local case without singularity was obtained in Boccardo-Porzio-Primo \cite{Summability}. Analogous results for the nonlocal case were obtained in Leonori-Peral-Primo-Soria \cite{Peral}, too, for the non-singular case. For the mixed local-nonlocal operator with singularity, these summability results (for $\gamma$ being a constant) were obtained in \cite{pap}.

Finally, we remark that, the large time behavior of almost bounded solutions to \cref{intt4}, with $g=0$ and $\gamma$ being a constant have been studied recently in \cite{idanodea}, assuming further summability conditions on $f$.

To the best of our knowledge, mixed local-nonlocal parabolic problems are not yet understood in the presence of measure data with variable singular exponent. Our main purpose in this article is to fill this gap and extend analogus results for elliptic problems (see [\citealp{ghosh22,sanjitgarain,lazermc}]). We would like to emphasize that some of our results are valid, even when both $f$ and $g$ are measures. such a phenomenon is new even in the constant singular exponent case involving a local operator. The non-existence result we provide (see \cref{sec3}), is completely new in singular parabolic problems, to the best of our knowledge. Furthermore, regularity results for singular problems with variable exponent singularities were also unknown. In this paper we study also the behavior in time of the solutions of \cref{mainproblemintro}, provided $f,g\in L^1(\Omega_T)$ and $\gamma$ being a positive continuous function.
In particular we show that our problem admits global solutions. Moreover, we
prove that all the global solutions satisfy the
same asymptotic behavior, independently from the value of the initial datum.
In other words, for large values of $t$, all global solutions of \cref{mainproblemintro} exhibit the
same behavior even if they assume different initial data. Further, uniqueness results for finite energy solutions have been shown for the perturbed problem with $\mu\neq 0$, $\gamma\equiv\gamma(x,t)$.

To demonstrate our main results, we adopt the approximation approach outlined in [\citealp{orsina,oliva}]. Specifically, we establish the existence of solutions to the approximated problem using fixed-point arguments. The presence of the nonlocal operator prevents the use of classical Galerkin methods, so we resort to a different abstract strategy. Passing to the limit requires delicate a priori estimates, which we obtain by testing the approximated problem with suitably chosen functions.

The plan of this paper is as follows: In the next section, we will introduce basic function spaces, notion of weak solutions and state our main results. \cref{sec2} contains preliminaries for the non-existence and existence results while \cref{sec3} and \cref{sec4} contain their proofs respectively. \cref{sec5} holds a priori estimates for the regularity results while \cref{sec6} contains their proofs. Further, uniqueness results and asymtotic behavior have been discussed in the next two sections. Some possible open questions have been addressed in \cref{sec8}.
\section{Preliminaries}{\label{Prelis}}{\label{prelims}} 
\subsection{\textbf{Notations}} We gather all the standard notations that will be used throughout the paper.
\smallskip\\$\bullet$ We will take $n(>2)$ to be the space dimension and denote by $z=(x, t)$ to be a point in $\mathbb{R}^n \times(0, T)$, where $(0,T)\subset \mathbb{R}$ for some $0<T<\infty$.\smallskip\\
$\bullet$ Let $\Omega$ be an open bounded domain in $\mathbb{R}^n$ with ${C}^1$ boundary $\partial \Omega$ and for $0<T <\infty$, let $\Omega_T:=\Omega \times(0, T)$.\smallskip\\
$\bullet$ We denote the parabolic boundary $\Gamma_T$ by $\Gamma_T=(\Omega\times\{t=0\})\cup(\partial\Omega\times(0,T))$.\smallskip\\ 
$\bullet$ We define the set $(\Omega_T)_\delta=\{(x,t)\in\Omega_T:\operatorname{dist}((x,t),\Gamma_T)<\delta\}$ for $
\delta>0$ fixed.
\smallskip\\$\bullet$ We shall alternately use $\partial_t g$ or $\frac{\partial g}{\partial t}\text{ or } g_t $ to denote the time derivative (partial) of a function $g$.\smallskip\\
$\bullet$ For $r>1$, the H\"older conjugate exponent of $r$ will be denoted by $r^\prime=\frac{r}{r-1}$.\smallskip\\
$\bullet$ The Lebesgue measure of a measurable subset $\mathrm{S}\subset \mathbb{R}^n$ will be denoted by $|\mathrm{S}|$.\smallskip\\
$\bullet$ For any open subset $\Omega$ of $\mathbb{R}^n$, $K\subset\subset \Omega $ will imply $K$ is compactly contained in $\Omega.$\smallskip\\
$\bullet$ $\int$ will denote integration concerning either space or time only, and integration on $\Omega \times \Omega$ or $\mathbb{R}^n \times \mathbb{R}^n$ will be denoted by a double integral $\iint$.\smallskip\\
$\bullet$ We will use $\iiint$ to denote integral over $\mathbb{R}^n \times \mathbb{R}^n \times(0, T)$.
\\
$\bullet$ The notation $a \lesssim b$ will be used for $a \leq C b$, where $C$ is a universal constant which only depends on the dimension $n$ and sometimes on $s$ too. $C$ may vary from line to line or even in the same line.\smallskip\\
$\bullet$ For a measurable function $f$ over a measurable set $S$ and given constants $c, d$, we write
$c \leq u \leq  d$ in $S$ to mean that $c \leq u \leq  d$ a.e. in $S$.\smallskip\\
$\bullet$ $\langle\cdot,\cdot\rangle$ will denote the duality pairing between elements of a Banach space and its dual.\smallskip\\
$\bullet$ For any function $h$, we denote the positive and negative parts of it by $h_+=\operatorname{max}\{h,0\}$ and $h_-=\operatorname{max}\{-h,0\}$ respectively.\smallskip\\
$\bullet$ For $k\in \mathbb{N}$, we denote $T_k(\sigma)=\max \{-k, \min \{k, \sigma\}\}$ and $G_k(\sigma)=\sigma-T_k(\sigma)$, for $\sigma \in \mathbb{R}$.
\subsection{\textbf{Function Spaces}}
In this section, we present 
 definitions and properties of some function spaces that will be useful for our work. We recall that for $E \subset \mathbb{R}^n$, the Lebesgue space
$L^p(E), 1 \leq p<\infty$, is defined to be the space of $p$-integrable functions $u: E \rightarrow \mathbb{R}$ with the finite norm
\begin{equation*}
\|u\|_{L^p(E)}=\left(\int_E|u(x)|^p d x\right)^{1 / p} .
\end{equation*}
By $L_{\operatorname{loc }}^p(E)$ we denote the space of locally $p$-integrable functions, i.e. $u \in L_{\operatorname{loc }}^p(E)$ if and only if $u \in L^p(F)$ for every $F \subset\subset E$. In the case $0<p<1$, we denote by $L^p(E)$ a set of measurable functions such that $\int_E|u(x)|^p d x<\infty$.
\begin{definition}
   Let $\Omega$ be a bounded open set in $\mathbb{R}^n$ with $\mathcal{C}^1$ boundary. The Sobolev space $W^{1, p}(\Omega)$, for $1 \leq p<\infty$, is defined as the Banach space of all integrable and weakly differentiable functions $u: \Omega \rightarrow \mathbb{R}$ equipped with the following norm
\begin{equation*}
\|u\|_{W^{1, p}(\Omega)}=\|u\|_{L^p(\Omega)}+\|\nabla u\|_{L^p(\Omega)} .
\end{equation*}
\end{definition}
The space $W_0^{1, p}(\Omega)$ is defined as the closure of the space $\mathcal{C}_c^{\infty}(\Omega)$, in the norm of the Sobolev space $W^{1, p}(\Omega)$, where $\mathcal{C}^\infty_c(\Omega)$ is the set of all smooth functions whose supports are compactly contained in $\Omega$. As $\partial\Omega$ is $C^1$, by [\citealp{brezis2011functional}, Proposition 9.18], $W_0^{1, p}(\Omega)$ can be identified by $\mathbb{X}_p(\Omega)=\{u\in W^{1,p}(\mathbb{R}^n): u=0 \text{ a.e. in } \mathbb{R}^n\backslash\Omega\}$. For $1<p<\infty$, the spaces $W^{1,p}(\Omega)$ and $W^{1,p}_0(\Omega)$ are reflexive.
\begin{definition}
    Let $0<s<1$ and $\Omega$ be a open connected subset of $\mathbb{R}^n$ with $\mathcal{C}^1$ boundary. The fractional Sobolev space $W^{s, p}(\Omega)$ for any $1\leq p<+\infty$ is defined by
\begin{equation*}
    W^{s, p}(\Omega)=\left\{u \in L^p(\Omega): \frac{|u(x)-u(y)|}{|x-y|^{\frac{n}{p}+s}} \in L^p(\Omega\times\Omega)\right\},
\end{equation*}
and it is endowed with the norm
\begin{equation}{\label{norm}}
\|u\|_{W^{s, p}(\Omega)}=\left(\int_{\Omega}|u(x)|^p d x+\int_{\Omega} \int_{\Omega} \frac{|u(x)-u(y)|^p}{|x-y|^{n+p s}}d x d y\right)^{1/p}.
\end{equation}
\end{definition}
It can be treated as an intermediate space between $W^{1,p}(\Omega)$ and $L^p(\Omega)$. For $0<s\leq s^{\prime}<1$, $W^{s^{\prime},p}(\Omega)$ is continuously embedded in $W^{s,p}(\Omega)$, see [\citealp{frac}, Proposition 2.1]. The fractional Sobolev space with zero boundary values is defined by
\begin{equation*}
W_0^{s, p}(\Omega)=\left\{u \in W^{s, p}(\mathbb{R}^n): u=0 \text { in } \mathbb{R}^n \backslash \Omega\right\}.
\end{equation*}
However if $sp\neq 1$, $W_0^{s, p}(\Omega)$ can be treated as the closure of $\mathcal{C}^\infty_c(\Omega)$ in $W^{s,p}(\Omega)$ with respect to the fractional Sobolev norm defined in \cref{norm}. Both $W^{s, p}(\Omega)$ and $W_0^{s, p}(\Omega)$ are reflexive Banach spaces, for $p>1$, for details we refer to the readers [\citealp{frac}, Section 2].\smallskip\\
The following result asserts that the classical Sobolev space is continuously embedded in the fractional Sobolev space; see [\citealp{frac}, Proposition 2.2]. The idea applies an extension property of $\Omega$ so that we can extend functions from $W^{1,p}(\Omega)$ to $W^{1,p}(\mathbb{R}^n)$ and that the extension operator is bounded.
\begin{lemma}{\label{embedding}}
    Let $\Omega$ be a bounded domain in $\mathbb{R}^n$ with $\mathcal{C}^{1}$ boundary and $0<s<1$. There exists a positive constant $C=C(\Omega, n, s)$ such that
\begin{equation*}
\|u\|_{W^{s, p}(\Omega)} \leq C\|u\|_{W^{1,p}(\Omega)},
\end{equation*}
for every $u \in W^{1,p}(\Omega)$.
\end{lemma}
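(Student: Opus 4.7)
The plan is to follow the classical route of reducing the fractional seminorm estimate to a first-order difference estimate, via the regularity of $\partial\Omega$ and an extension.

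First I would invoke the $C^1$ regularity of $\partial\Omega$ to obtain an extension operator $E:W^{1,p}(\Omega)\to W^{1,p}(\mathbb{R}^n)$ with $\|Eu\|_{W^{1,p}(\mathbb{R}^n)}\le C(\Omega,n,p)\|u\|_{W^{1,p}(\Omega)}$. Writing $\tilde u = Eu$, it then suffices to control the Gagliardo seminorm of $u$ over $\Omega\times\Omega$ by the full $W^{1,p}(\mathbb{R}^n)$ norm of $\tilde u$. Since the $L^p(\Omega)$ part of the norm in \cref{norm} is trivially bounded, the real task is the double integral.

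Next, I would split the domain of integration into the region $\{|x-y|\ge 1\}$ and the near-diagonal region $\{|x-y|<1\}$. For the far region, enlarging $\Omega$ to $\mathbb{R}^n$ and using Minkowski together with the convergent integral
\begin{equation*}
\int_{|h|\ge 1}\frac{dh}{|h|^{n+sp}}<\infty,
\end{equation*}
yields a bound by $C(n,s,p)\|u\|_{L^p(\Omega)}^p$. For the near region, the idea is to use the fundamental theorem of calculus on the extension: for a.e. $x,y\in\mathbb{R}^n$,
\begin{equation*}
\tilde u(x+h)-\tilde u(x)=\int_0^1 \nabla \tilde u(x+th)\cdot h\, dt,
\end{equation*}
so $|\tilde u(x+h)-\tilde u(x)|^p\le |h|^p\int_0^1|\nabla\tilde u(x+th)|^p\,dt$ by Jensen. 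Applying Fubini (translation invariance of Lebesgue measure in $x$) gives
\begin{equation*}
\int_{\mathbb{R}^n}\int_{|h|<1}\frac{|\tilde u(x+h)-\tilde u(x)|^p}{|h|^{n+sp}}\,dh\,dx\le \|\nabla\tilde u\|_{L^p(\mathbb{R}^n)}^p\int_{|h|<1}|h|^{p(1-s)-n}\,dh,
\end{equation*}
and the last integral is finite because $p(1-s)>0$ (as $s<1$ and $p\ge 1$).

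Combining the two regions, $[u]_{W^{s,p}(\Omega)}^p\le C(\Omega,n,s,p)\bigl(\|u\|_{L^p(\Omega)}^p+\|\nabla u\|_{L^p(\Omega)}^p\bigr)$, which together with the trivial $L^p$ bound gives the claim. The main technical obstacle here is really the extension step, where the $C^1$ assumption on $\partial\Omega$ is essential; once $u$ is extended to $\mathbb{R}^n$ with controlled norm, the remaining arithmetic is a standard Jensen/Fubini computation, and the only subtle point is to check that the splitting threshold $|x-y|=1$ yields convergent radial integrals on both sides, which it does precisely because $0<s<1$.
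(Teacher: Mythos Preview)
Your proposal is correct and follows exactly the approach the paper indicates: the paper does not give a self-contained proof but simply refers to \cite{frac}, Proposition 2.2, noting that ``the idea applies an extension property of $\Omega$ so that we can extend functions from $W^{1,p}(\Omega)$ to $W^{1,p}(\mathbb{R}^n)$ and that the extension operator is bounded.'' Your write-up is precisely a fleshed-out version of that hint; the only minor remark is that the constant you obtain (and the one in \cite{frac}) naturally depends on $p$ as well, whereas the stated lemma records only $C=C(\Omega,n,s)$.
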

The next embedding result for the fractional Sobolev spaces with zero boundary value follows from [\citealp{frac2}, Lemma 2.1]. The fundamental difference of it compared to \cref{embedding} is that the result holds for any bounded domain (without any condition of smoothness of the boundary), since for the Sobolev spaces with zero boundary value, we always have a zero extension to the complement.
\begin{lemma}{\label{embedding2}} Let $\Omega$ be a bounded domain in $\mathbb{R}^n$ and $0<s<1$. Then there exists $C=C(n, s, \Omega)>0$ such that
\begin{equation*}
\int_{\mathbb{R}^n} \int_{\mathbb{R}^n} \frac{|u(x)-u(y)|^p}{|x-y|^{n+p s}} d x d y \leq C \int_{\Omega}|\nabla u|^pd x
\end{equation*}
for every $u \in W_0^{1,p}(\Omega)$. Here, we consider the zero extension of $u$ to the complement of $\Omega$.
\end{lemma}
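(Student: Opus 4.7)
The plan is to reduce the claim to the classical Gagliardo-type embedding of $W^{1,p}(\mathbb{R}^n)$ into $W^{s,p}(\mathbb{R}^n)$, combined with Poincar\'e's inequality in $W_0^{1,p}(\Omega)$. Since $u$ is extended by zero outside $\Omega$, the extension belongs to $W^{1,p}(\mathbb{R}^n)$ with $\|\na u\|_{L^p(\mathbb{R}^n)}=\|\na u\|_{L^p(\Omega)}$ and $\|u\|_{L^p(\mathbb{R}^n)}=\|u\|_{L^p(\Omega)}$. By density of $\mathcal{C}_c^\infty(\Omega)$ in $W_0^{1,p}(\Omega)$, it is enough to prove the estimate for smooth compactly supported $u$ and pass to the limit.

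I would split the double integral over $\mathbb{R}^n\times\mathbb{R}^n$ into the near-diagonal region $\{|x-y|\leq 1\}$ and the far region $\{|x-y|>1\}$. On the near-diagonal piece, the identity
\[
u(x)-u(y)=\int_0^1 \na u(y+t(x-y))\cdot(x-y)\,dt
\]
together with Jensen's inequality yields $|u(x)-u(y)|^p\leq |x-y|^p\int_0^1|\na u(y+t(x-y))|^p\,dt$. Substituting, applying Fubini, and changing variables $z=x-y$ followed by the translation $w=y+tz$ in the $y$ variable, the inner integral collapses to $\|\na u\|_{L^p(\mathbb{R}^n)}^p$ independently of $t\in[0,1]$, so the contribution reduces to a constant times
\[
\|\na u\|_{L^p(\mathbb{R}^n)}^p\int_{|z|\leq 1}\frac{dz}{|z|^{n-p(1-s)}},
\]
which is finite precisely because $p(1-s)>0$, i.e.\ because $s<1$.

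On the far region, the elementary bound $|u(x)-u(y)|^p\leq 2^{p-1}(|u(x)|^p+|u(y)|^p)$ combined with symmetry in $x,y$ reduces matters to
\[
C\int_{\mathbb{R}^n}|u(x)|^p\left(\int_{|x-y|>1}\frac{dy}{|x-y|^{n+ps}}\right)dx=C(n,s,p)\,\|u\|_{L^p(\mathbb{R}^n)}^p,
\]
since the inner integral is a finite constant depending only on $n$, $s$, $p$. Summing the two contributions gives
\[
\iint_{\mathbb{R}^n\times\mathbb{R}^n}\frac{|u(x)-u(y)|^p}{|x-y|^{n+ps}}\,dx\,dy\leq C\bigl(\|\na u\|_{L^p(\Omega)}^p+\|u\|_{L^p(\Omega)}^p\bigr).
\]

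To conclude, since $u\in W_0^{1,p}(\Omega)$ and $\Omega$ is bounded, the classical Poincar\'e inequality gives $\|u\|_{L^p(\Omega)}\leq C(\Omega)\|\na u\|_{L^p(\Omega)}$, which absorbs the $L^p$-term into the gradient term and produces the claimed estimate with $C=C(n,s,p,\Omega)$. Note that no boundary regularity of $\Omega$ is used anywhere, which is precisely why the lemma holds for arbitrary bounded domains (in contrast with \cref{embedding}). The only mildly delicate point is keeping track of the change of variables on the near-diagonal region and verifying finiteness of the resulting radial integral, which rests entirely on the strict inequality $s<1$.
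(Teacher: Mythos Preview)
Your proof is correct. The paper does not actually prove this lemma; it simply cites [\citealp{frac2}, Lemma 2.1] and remarks that no boundary regularity is needed because the zero extension is available. Your argument supplies exactly the self-contained proof the paper omits: the near/far diagonal decomposition, the fundamental-theorem-of-calculus estimate on $\{|x-y|\le 1\}$ giving the gradient term, the trivial bound on $\{|x-y|>1\}$ giving the $L^p$ term, and Poincar\'e's inequality (valid on any bounded $\Omega$ for $W_0^{1,p}$) to absorb the latter. This is the standard route to the result and matches the reasoning behind the cited reference. One cosmetic point: your constant is $C(n,s,p,\Omega)$, whereas the statement writes $C(n,s,\Omega)$; the $p$-dependence is genuine and the paper's omission is just notational.
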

We now proceed with the essential Poincar\'{e} inequality, which can be found in [\citealp{LCE}, Chapter 5, Section 5.8.1].
\begin{lemma}{\label{p}}
  Let $\Omega\subset \mathbb{R}^n$ be a bounded domain with $\mathcal{C}^1$ boundary and $p\geq 1$. Then there exist a positive constant $C$ depending only on $n$ and $ \Omega$, such that \begin{equation*} 
  \int_\Omega |u|^pd x\leq C\int_\Omega |\nabla u|^p d x, \qquad\forall u\in W^{1,p}_0(\Omega).
  \end{equation*}
  Specifically if we take $\Omega=B_r$, then we will get for all $u\in W^{1,p}(B_r)$,
  \begin{equation*}
  \fint_{B_{r}}\left|u-(u)_{B_r}\right|^p d x \leq c r^p \fint_{B_{r}} |\nabla u|^p d x,
  \end{equation*}
  where $c$ is a constant depending only on $n$, and $(u)_{B_r}$ denotes the average of $u$ in $B_r$, and $B_r$ denotes a ball of radius $r$ centered at $x_0\in \mathbb{R}^n$. Here, $\fint$ denotes the average integration.
 \end{lemma}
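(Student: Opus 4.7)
The plan is to prove the two claims separately, since they are distinct: the first is the classical Poincaré inequality on the full Sobolev space $W^{1,p}_0(\Omega)$, and the second is the Poincaré–Wirtinger inequality on a ball.

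For the first inequality I would exploit the boundedness of $\Omega$: there exist constants $a<b$ such that $\Omega\subset\{x\in\mathbb{R}^n : a\le x_1\le b\}$. For any $u\in C^\infty_c(\Omega)$, extended by zero to $\mathbb{R}^n$, the fundamental theorem of calculus gives
\begin{equation*}
u(x_1,x')=\int_a^{x_1}\partial_{x_1}u(s,x')\,ds,\qquad x'=(x_2,\ldots,x_n).
\end{equation*}
Hölder's inequality then yields $|u(x)|^p\le (b-a)^{p-1}\int_a^b|\partial_{x_1}u(s,x')|^p\,ds$. Integrating over $\Omega$ produces the inequality with $C=(b-a)^p$, after which the density of $C^\infty_c(\Omega)$ in $W^{1,p}_0(\Omega)$ extends it to the full space.

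For the Poincaré–Wirtinger inequality on $B_r$ I would first scale $x\mapsto rx$ to reduce to the case $r=1$; a straightforward change of variables shows that the $L^p$-norms of $u-(u)_{B_r}$ and $\nabla u$ pick up exactly the powers of $r$ that produce the factor $r^p$ when the inequality is written in normalized (averaged) form. For $r=1$ I would argue by contradiction: suppose no such constant exists, so there is a sequence $u_k\in W^{1,p}(B_1)$ with $(u_k)_{B_1}=0$, $\|u_k\|_{L^p(B_1)}=1$ and $\|\nabla u_k\|_{L^p(B_1)}\to 0$. The sequence is bounded in $W^{1,p}(B_1)$, so by the Rellich–Kondrachov compactness theorem (valid because $B_1$ has smooth boundary) a subsequence converges in $L^p(B_1)$ to some $u\in W^{1,p}(B_1)$. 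The distributional gradient is lower semicontinuous, so $\nabla u=0$, hence $u$ is a.e.\ constant on $B_1$. Passing to the limit in the mean-zero condition gives $(u)_{B_1}=0$, so $u\equiv 0$, contradicting $\|u\|_{L^p(B_1)}=1$.

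The main obstacle I anticipate is the non-explicit nature of the constant in the second inequality: the compactness argument yields existence but not a formula. If an explicit dimensional constant were required, I would instead represent $u(x)-(u)_{B_r}$ as a double spatial average of $\nabla u$ along segments and estimate by the truncated Riesz potential of $|\nabla u|$ on $B_r$, which gives the $r^p$ dependence directly and a dimensional constant explicitly; the route above is cleaner but less informative. In either case, the scaling step is what pins down the precise $r^p$ factor claimed in the statement.
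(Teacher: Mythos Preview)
Your proposal is correct. The paper does not prove this lemma at all; it simply cites Evans' textbook (Chapter 5, Section 5.8.1) for the result. Your compactness/contradiction argument for the Poincar\'e--Wirtinger inequality on a ball is precisely the argument given in that reference, and your slab argument for the zero-trace Poincar\'e inequality is the standard elementary proof, so there is nothing to compare.
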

Using \cref{embedding2}, and the above Poincar\'e inequality, we observe that the following norm on the space $W^{1,p}_0(\Omega)$ defined by 
 \begin{equation*}
\|u\|_{W^{1,p}_0(\Omega)}=\left(\int_\Omega |\nabla u|^p d x +\int_{\mathbb{R}^n} \int_{\mathbb{R}^n} \frac{|u(x)-u(y)|^p}{|x-y|^{n+p s}} d x d y \right)^{\frac{1}{p}},
\end{equation*}
is equivalent to the norm
 \begin{equation*}
\|u\|_{W^{1,p}_0(\Omega)}=\left(\int_\Omega |\nabla u|^p d x  \right)^{\frac{1}{p}} .     
 \end{equation*}
For $p=2$, the spaces $W^{1,2}(\Omega)$, $W^{1,2}_0(\Omega)$, $W^{s,2}(\Omega)$ and $W^{s,2}_0(\Omega)$ are Hilbert spaces and are denoted by $W^{1,2}(\Omega)=H^1(\Omega)$, $W^{1,2}_0(\Omega)=H^1_0(\Omega)$, $W^{s, 2}(\Omega)=H^s(\Omega)$ and $W_0^{s, 2}(\Omega)=H_0^s(\Omega)$. 
Moreover $W^{-1, p^{\prime}}(\Omega)$ and $W^{-s, p^{\prime}}(\Omega)$ are defined to be the dual spaces of $W_0^{1, p}(\Omega)$ and $W_0^{s, p}(\Omega)$ respectively, where $p^{\prime}:=\frac{p}{p-1}$. Now, we define the local spaces as
\begin{equation*}
     W_{\operatorname{loc }}^{1, p}(\Omega)=\left\{u: \Omega \rightarrow \mathbb{R}: u \in L^p(K), \int_K |\nabla u|^p d x<\infty, \text { for every } K \subset \subset \Omega\right\} ,
\end{equation*}
and 
\begin{equation*}
     W_{\operatorname{loc }}^{s, p}(\Omega)=\left\{u: \Omega \rightarrow \mathbb{R}: u \in L^p(K), \int_K \int_K \frac{|u(x)-u(y)|^p}{|x-y|^{n+p s}} d x d y<\infty, \text { for every } K \subset \subset \Omega\right\} .
\end{equation*}
Now, for $n>p$, we define the critical Sobolev exponent as $p^*=\frac{np}{n-p}$. We get the following embedding result for any bounded open subset $\Omega$ of class $\mathcal{C}^1$ in $\mathbb{R}^n$, see for details [\citealp{LCE}, Chapter 5].
\begin{theorem}{\label{Sobolev embedding}} The embedding operators
\begin{eqnarray*}
    W^{1, p}(\Omega) \hookrightarrow \begin{cases}L^t(\Omega), & \text { for } t \in\left[1, p^*\right], \text { if } p \in(1, n), \\ L^t(\Omega), & \text { for } t \in[1, \infty), \text { if } p=n, \\ L^{\infty}(\Omega), & \text { if } p>n,\end{cases}
\end{eqnarray*}
are continuous. Also, the above embeddings are compact, except for $t=p^*$, if $p \in(1, n)$.
\end{theorem}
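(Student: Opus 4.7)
The plan is to reduce everything to $\mathbb{R}^n$ via an extension operator, then invoke three classical inequalities (Gagliardo--Nirenberg--Sobolev, Morrey, and Fr\'echet--Kolmogorov/Rellich--Kondrachov), and finally patch the borderline case $p=n$ by interpolating with a slightly subcritical exponent.

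First, I would use the $C^1$ regularity of $\partial\Omega$ to construct a bounded linear extension operator $E: W^{1,p}(\Omega) \to W^{1,p}(\mathbb{R}^n)$ whose image is supported in a fixed neighborhood of $\overline{\Omega}$. This is standard (partition of unity, local flattening, and reflection across the boundary), and it reduces every continuous embedding on $\Omega$ to the corresponding inequality on $\mathbb{R}^n$ composed with $E$ and restriction. For the subcritical regime $1<p<n$, I would first prove the Gagliardo--Nirenberg--Sobolev inequality
\begin{equation*}
\|u\|_{L^{p^*}(\mathbb{R}^n)} \leq C(n,p)\,\|\nabla u\|_{L^p(\mathbb{R}^n)}, \qquad u \in C_c^\infty(\mathbb{R}^n),
\end{equation*}
starting from $p=1$ via Fubini and H\"older on one-dimensional line integrals, then bootstrapping to general $p\in(1,n)$ by applying the $p=1$ inequality to $|u|^{\gamma}$ with $\gamma = \frac{p(n-1)}{n-p}$. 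Density of $C_c^\infty$ in $W^{1,p}(\mathbb{R}^n)$ and composition with $E$ yields the endpoint embedding into $L^{p^*}(\Omega)$; the intermediate values $t\in[1,p^*)$ follow from H\"older's inequality using $|\Omega|<\infty$.

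For $p>n$, I would prove Morrey's inequality via the pointwise representation $u(x)-(u)_{B_r} \lesssim \int_{B_r} \frac{|\nabla u(y)|}{|x-y|^{n-1}}\,dy$ on balls, combined with a H\"older estimate on the resulting Riesz-type integral, giving $\|u\|_{C^{0,1-n/p}(\overline\Omega)} \leq C\|u\|_{W^{1,p}(\Omega)}$ and hence the $L^\infty$ embedding. The borderline case $p=n$ is the main technical annoyance, since no direct $L^\infty$ bound holds: given any $t\in[1,\infty)$, I would pick $q<n$ large enough that $q^*>t$; H\"older gives $W^{1,n}(\Omega) \hookrightarrow W^{1,q}(\Omega)$ because $|\Omega|<\infty$, and the subcritical case already proven then yields $W^{1,q}(\Omega) \hookrightarrow L^{q^*}(\Omega) \hookrightarrow L^t(\Omega)$.

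For compactness in the subcritical range, I would apply the Fr\'echet--Kolmogorov criterion to the image $E(\mathcal{K})$ of a bounded set $\mathcal{K}\subset W^{1,p}(\Omega)$: the uniform bound $\|u(\cdot+h)-u(\cdot)\|_{L^p(\mathbb{R}^n)} \leq |h|\,\|\nabla u\|_{L^p(\mathbb{R}^n)}$ (proved first for smooth $u$ and extended by density) gives equicontinuity of translations, while compact support of $E u$ controls tail integrals; this yields precompactness in $L^p(\Omega)$, and then in $L^t(\Omega)$ for every $t<p^*$ by interpolating the $L^p$ convergence with the uniform $L^{p^*}$ bound from the subcritical embedding. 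The hard step conceptually is the exclusion at $t=p^*$: concentration of mass at a single point preserves the $L^{p^*}$ norm while losing $L^q$ control for $q<p^*$, so compactness must fail at the endpoint; since the statement excludes $t=p^*$, no separate argument is needed, but this is the one place where the dichotomy between the continuous and compact embeddings is genuinely sharp.
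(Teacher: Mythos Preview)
Your sketch is correct and follows the standard textbook route (extension operator, Gagliardo--Nirenberg--Sobolev, Morrey, Fr\'echet--Kolmogorov plus interpolation). Note, however, that the paper does not prove this statement at all: it is stated as a preliminary result with a citation to Evans' book (Chapter~5), so there is no ``paper's own proof'' to compare against. Your outline is essentially the proof one finds in that reference.
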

We now recall the following interpolation inequality from [\citealp{Summability}, Lemma 3.1] that will be useful for proving the boundedness of weak solutions.
\begin{theorem}{\label{gagliardo}}
    Let $v$ be a function in $W_0^{1, h}(\Omega) \cap L^\rho(\Omega)$, with $h \geq 1$ and $\rho \geq 1$. Then there exists a positive constant $C_1$, depending only on $n, h$ and $\rho$, such that
\begin{equation*}
\|v\|_{L^\eta(\Omega)} \leq C_1\|\nabla v\|_{L^h(\Omega)}^\theta\|v\|_{L^\rho(\Omega)}^{1-\theta},
\end{equation*}
for every $\eta$ and $\theta$ satisfying
\begin{equation*}
0 \leq \theta \leq 1, \quad 1 \leq \eta<+\infty, \quad \frac{1}{\eta}=\theta\left(\frac{1}{h}-\frac{1}{n}\right)+\frac{1-\theta}{\rho} .    
\end{equation*}
\end{theorem}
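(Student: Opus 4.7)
The plan is to prove this classical Gagliardo--Nirenberg-type interpolation inequality by combining the Sobolev embedding from \cref{Sobolev embedding} with H\"older's inequality. The scaling condition on $\eta$ and $\theta$ is exactly what makes the interpolation exponents conjugate, so the whole argument is essentially bookkeeping once the two main tools are identified.

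First I would fix an auxiliary exponent $h^\star$ (the Sobolev conjugate of $h$): set $h^\star = \frac{nh}{n-h}$ when $h \in [1,n)$, and choose any sufficiently large finite $h^\star$ when $h \geq n$ using the embedding $W^{1,h}_0(\Omega) \hookrightarrow L^t(\Omega)$ for every $t < \infty$ (respectively $t = \infty$). By \cref{Sobolev embedding} applied in $W^{1,h}_0(\Omega)$, together with the Poincar\'e inequality from \cref{p}, there exists $C = C(n,h,\Omega)$ such that
\begin{equation*}
\|v\|_{L^{h^\star}(\Omega)} \leq C\,\|\nabla v\|_{L^h(\Omega)}.
\end{equation*}

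Second, I would interpolate between $L^{h^\star}(\Omega)$ and $L^\rho(\Omega)$ to reach $L^\eta(\Omega)$. Writing $|v|^\eta = |v|^{\theta\eta} \cdot |v|^{(1-\theta)\eta}$ and applying H\"older's inequality with exponents $p_1 = \frac{h^\star}{\theta\eta}$ and $p_2 = \frac{\rho}{(1-\theta)\eta}$, one obtains
\begin{equation*}
\int_\Omega |v|^\eta\, dx \leq \left(\int_\Omega |v|^{h^\star}\,dx\right)^{\theta\eta/h^\star}\left(\int_\Omega |v|^\rho\,dx\right)^{(1-\theta)\eta/\rho}.
\end{equation*}
The conjugacy $\frac{1}{p_1}+\frac{1}{p_2}=1$ is exactly $\frac{\theta\eta}{h^\star}+\frac{(1-\theta)\eta}{\rho}=1$, which, upon dividing by $\eta$ and using $\frac{1}{h^\star} = \frac{1}{h}-\frac{1}{n}$, is the scaling relation assumed in the statement. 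Taking the $\eta$-th root then gives $\|v\|_{L^\eta(\Omega)} \leq \|v\|_{L^{h^\star}(\Omega)}^{\theta}\,\|v\|_{L^\rho(\Omega)}^{1-\theta}$, and inserting the Sobolev bound yields the claim with $C_1 = C^\theta$.

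Finally, I would handle the trivial endpoints separately: if $\theta = 0$ the inequality reduces to $\|v\|_{L^\eta} \leq \|v\|_{L^\rho}$ with $\eta = \rho$, and if $\theta = 1$ it reduces to Sobolev's inequality itself. The main (and essentially only) obstacle is the case $h \geq n$, where $h^\star = \frac{nh}{n-h}$ is no longer meaningful; there one chooses any $h^\star$ large enough so that $\frac{1}{h^\star} \leq \min\bigl(\frac{1}{\eta},\frac{1}{\rho}\bigr)$ and still has a Sobolev embedding $W^{1,h}_0(\Omega) \hookrightarrow L^{h^\star}(\Omega)$ (with constant depending on $|\Omega|$, hence on $n$, $h$, $\rho$ via the chosen $h^\star$), and the H\"older interpolation goes through unchanged. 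The resulting constant $C_1$ depends only on $n$, $h$, $\rho$ as required.
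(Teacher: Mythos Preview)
The paper does not supply its own proof of this statement; it simply recalls the inequality from \cite{Summability}, Lemma~3.1. Your argument---Sobolev embedding to reach $L^{h^\star}$, then H\"older interpolation between $L^{h^\star}$ and $L^\rho$---is the standard route and is essentially correct. One small point: by invoking \cref{Sobolev embedding} and \cref{p} you pick up a constant depending on $\Omega$, whereas the statement asserts $C_1=C_1(n,h,\rho)$ only; to match this you should instead extend $v\in W_0^{1,h}(\Omega)$ by zero to $\mathbb{R}^n$ and use the scale-invariant Sobolev inequality $\|v\|_{L^{h^\star}(\mathbb{R}^n)}\leq C(n,h)\|\nabla v\|_{L^h(\mathbb{R}^n)}$ for $h<n$. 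The case $h\geq n$ likewise needs the $\mathbb{R}^n$ form of Gagliardo--Nirenberg rather than a bounded-domain embedding to avoid an $|\Omega|$-dependence in $C_1$, but this is a cosmetic refinement of an otherwise sound proof.
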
The article will extensively use the embedding results and corresponding inequalities. We need to deal with spaces involving time for the parabolic equations, so we introduce them here. As in the classical case, we define the corresponding Bochner spaces as the following
\begin{equation*}
    \begin{array}{c}
L^p(0, T ; W_0^{1, p}(\Omega)) =\left\{u \in L^p(\Omega \times(0, T)),\|u\|_{L^p(0, T ; W_0^{1, p}(\Omega))}<\infty\right\}, \smallskip\\
L^p(0, T ; W_0^{s, p}(\Omega)) =\left\{u \in L^p(\Omega \times(0, T)),\|u\|_{L^p(0, T ; W_0^{s, p}(\Omega))}<\infty\right\},
    \end{array}
\end{equation*}
where
\begin{equation*}
\begin{array}{c}
\|u\|_{L^p(0, T ; W_0^{1, p}(\Omega))}  =\left(\int_0^T \int_{\Omega} |\nabla u|^p d x d t\right)^{\frac{1}{p}}, \quad
\|u\|_{L^p(0, T ; W_0^{s, p}(\Omega))}  =\left(\int_0^T \int_{\Omega} \int_{\Omega} \frac{|u(x, t)-u(y, t)|^p}{|x-y|^{n+p s}} d x d y d t\right)^{\frac{1}{p}}, 
\end{array}
\end{equation*}
with their dual spaces $L^{p^{\prime}}(0, T ; W^{-1, p^{\prime}}(\Omega))$ and $L^{p^{\prime}}(0, T ; W^{-s, p^{\prime}}(\Omega))$ 
respectively. Again, the local spaces are defined as
\begin{equation*}
    L^p_{\operatorname{loc}}(0, T ; W_{\operatorname{loc}}^{1,p}(\Omega))=\left\{u \in L^p(K \times[t_1,t_2]) :\int_{t_1}^{t_2} \int_K |\nabla u|^p 
    <\infty,
\forall K \subset \subset \Omega, [t_1,t_2]\subset(0,T)\right\},
\end{equation*}and\begin{equation*}
    L^p_{\operatorname{loc}}(0, T ;  W_{\operatorname{loc}}^{s,p}(\Omega))=\left\{u \in L^p(K \times[t_1,t_2]) :\int_{t_1}^{t_2} \int_K \int_K \frac{|u(x, t)-u(y, t)|^p}{|x-y|^{n+p s}} <\infty,
\forall K \subset \subset \Omega,[t_1,t_2]\subset(0,T)\right\}.
\end{equation*}
Finally we define a certain function space. For $0<\gamma<\infty$, the space $\mathcal{M}^\gamma\left(\Omega_T, \mathbb{R}^l\right)$ is the so-called Marcinkiewicz space (or the weak-${L}^\gamma$ space), defined as the set of all measurable maps $f: \Omega_T \rightarrow \mathbb{R}^l$ such that
\begin{equation*}
    \|f\|_{\mathcal{M}^\gamma\left(\Omega_T, \mathbb{R}^l\right)}:=\sup _{\lambda>0} \lambda\left|\left\{z \in \Omega_T:|f(z)|>\lambda\right\}\right|^{\frac{1}{\gamma}}<\infty.
\end{equation*}
One can observe the following connection between the Marcinkiewicz and Lebesgue spaces: $L^\gamma(\Omega_T, \mathbb{R}^l) \subset$ $\mathcal{M}^\gamma\left(\Omega_T, \mathbb{R}^l\right) \subset L^{\gamma-\varepsilon}(\Omega_T, \mathbb{R}^l)$ for any $\varepsilon \in(0, \gamma)$. 
\\The following lemma will be useful to deal with the nonlocal operator.
\begin{lemma}{\label{fracsolu}}
    Let $0<s<1<p<\infty$ and $u \in  L^p_{\operatorname{loc}}(0, T ;  W_{\operatorname{loc}}^{s,p}(\Omega))\cap L^1(\Omega_T)$ and $u=0$ a.e. in $(\mathbb{R}^n \backslash \Omega)\times(0,T)$. Then for any $\phi \in C_c^1(\Omega_T)$, we have
\begin{equation*}
\int_0^T\int_{\mathbb{R}^n} \int_{\mathbb{R}^n}\frac{(u(x,t)-u(y,t))(\phi(x,t)-\phi(y,t))}{|x-y|^{n+2s}} d x d ydt<\infty.
\end{equation*}
\end{lemma}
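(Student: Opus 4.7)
The plan is to exploit the compact support of $\phi$ to split the spatial double integral into a diagonal piece, where we trade one power of $|x-y|$ against the Lipschitz continuity of $\phi$, and an off-diagonal piece, where the $L^1$ hypothesis on $u$ alone suffices. The only subtlety is the mismatch between the weight $|x-y|^{n+2s}$ (which is the natural one for $p=2$) and the assumption $u\in L^p_{\loc}(0,T;W^{s,p}_{\loc}(\Omega))$; this will be reconciled by a H\"older inequality with carefully chosen exponents.

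\textbf{Setup.} Since $\phi\in C_c^1(\Omega_T)$, there exist a compact set $K\subset\subset\Omega$ and $[t_1,t_2]\subset(0,T)$ such that $\supp\phi\subset K\times[t_1,t_2]$. Fix $K'$ with $K\subset\subset K'\subset\subset\Omega$ and set $d=\dist(K,(K')^c)>0$. First I would decompose, for each fixed $t\in[t_1,t_2]$,
\[
\mathbb{R}^n\times\mathbb{R}^n=(K'\times K')\;\cup\;\bigl((K'\times(K')^c)\cup((K')^c\times K')\bigr)\;\cup\;((K')^c\times(K')^c),
\]
and observe that the last piece contributes nothing because $\phi$ vanishes there.

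\textbf{Off-diagonal piece.} On $K'\times(K')^c$ we have $\phi(y,t)=0$, so the integrand reduces to $(u(x,t)-u(y,t))\phi(x,t)/|x-y|^{n+2s}$, which is moreover supported in $x\in K$. Since $|x-y|\geq d$ there, and $u(\cdot,t)$ vanishes outside $\Omega$, a direct estimate gives
\[
\int_{K}\!\int_{(K')^c}\frac{|u(x,t)|+|u(y,t)|}{|x-y|^{n+2s}}\,dy\,dx \;\leq\; C(n,s,d,|K|)\,\|u(\cdot,t)\|_{L^1(\Omega)},
\]
using $\int_{|y-x|\geq d}|x-y|^{-n-2s}dy\lesssim d^{-2s}$ and $|x-y|^{-n-2s}\leq d^{-n-2s}$ for the $u(y,t)$ term. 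Integrating in $t$ and using $u\in L^1(\Omega_T)$ handles this piece; the other off-diagonal piece is symmetric.

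\textbf{Diagonal piece.} On $K'\times K'$, the Lipschitz bound $|\phi(x,t)-\phi(y,t)|\leq\|\nabla\phi\|_\infty|x-y|$ kills one power of $|x-y|$, so it suffices to bound
\[
\mathcal{I}(t):=\int_{K'}\!\int_{K'}\frac{|u(x,t)-u(y,t)|}{|x-y|^{n+2s-1}}\,dx\,dy.
\]
Writing $|x-y|^{-(n+2s-1)}=|x-y|^{-(n/p+s)}\cdot|x-y|^{-(n/p'+s-1)}$ and applying H\"older with exponents $p,p'$,
\[
\mathcal{I}(t)\;\leq\;\left(\int_{K'}\!\int_{K'}\frac{|u(x,t)-u(y,t)|^p}{|x-y|^{n+ps}}\,dx\,dy\right)^{\!1/p}\left(\int_{K'}\!\int_{K'}\frac{dx\,dy}{|x-y|^{n+(s-1)p'}}\right)^{\!1/p'}.
\]
The second factor is finite because $(s-1)p'<0$ makes the exponent strictly less than $n$ on the bounded set $K'\times K'$, and the first factor is finite for a.e.\ $t$ with $L^p$-norm in $t$ controlled by the $W^{s,p}_{\loc}$-assumption. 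A final H\"older in $t$ on the bounded interval $[t_1,t_2]$ yields $\int_{t_1}^{t_2}\mathcal{I}(t)\,dt<\infty$.

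\textbf{Main obstacle.} The genuine difficulty is the exponent mismatch between the Gagliardo-type assumption (weight $|x-y|^{n+ps}$) and the quantity we must estimate (weight $|x-y|^{n+2s}$). The above split resolves it because on the diagonal piece the $C^1$-regularity of $\phi$ provides exactly the missing power of $|x-y|$ needed for H\"older to close, while on the off-diagonal piece the integrability is driven purely by $u\in L^1$ and the positive distance $d$, independently of the value of $p$.
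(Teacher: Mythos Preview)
Your proof is correct and follows essentially the same strategy as the paper: restrict to the time interval supporting $\phi$, decompose the spatial double integral into a diagonal piece (compact $\times$ compact) handled via H\"older against the local $W^{s,p}$ seminorm, and off-diagonal pieces handled via the positive distance and the $L^1(\Omega_T)$ bound on $u$. The only cosmetic difference is in the diagonal piece: the paper applies H\"older directly to the product, pairing $\frac{|u(x,t)-u(y,t)|}{|x-y|^{n/p+s}}$ with $\frac{|\phi(x,t)-\phi(y,t)|}{|x-y|^{n/p'+s}}$ (so the second factor is the $W^{s,p'}$ seminorm of $\phi$), whereas you first spend the Lipschitz bound on $\phi$ to lower the weight and then apply H\"older---both routes close for the same reason ($s<1$), and your version makes the role of the $C^1$ regularity more transparent.
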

\begin{proof}
    The proof follows from [\citealp{nonlinearity}, Proposition 2.3]. We give some details for sake of completeness. Let
$
\omega,\omega_1$, $[t_1,t_2]$ be such that $\operatorname{supp} \phi\subset\subset\omega_1\times[t_1,t_2]\subset\subset\omega\times[t_1,t_2]\subset\subset\Omega_T$ and set $\mathcal{Q}_{\phi}:=[t_1,t_2]\times(\mathbb{R}^{2 n} \backslash(\omega^c \times \omega^c))$. Thus
\begin{equation*}
    \int_0^T\iint_{\mathbb{R}^{2 n}} \frac{(u(x,t)-u(y,t))(\phi(x,t)-\phi(y,t))}{|x-y|^{n+2s}} d x d y dt = \iiint_{\mathcal{Q}_{\phi}}\frac{(u(x,t)-u(y,t))(\phi(x,t)-\phi(y,t))}{|x-y|^{n+2s}} d x d y dt.
\end{equation*}
Now $\mathcal{Q}_{\phi}=[t_1,t_2]\times\left((\omega\times \omega)\cup(\omega\times\omega^c)\cup(\omega^c\times\omega)\right)$. Since $u \in  L^p_{\operatorname{loc}}(0, T ;  W_{\operatorname{loc}}^{s,p}(\Omega))$, it is easy to check using H\"older inequality that $\int_{t_1}^{t_2}\iint_{\omega\times\omega} \frac{(u(x,t)-u(y,t))(\phi(x,t)-\phi(y,t))}{|x-y|^{n+2s}} d x d y dt<\infty$.
Now \begin{equation*}
    \int_{t_1}^{t_2}\int_\omega\int_{\omega^c}\frac{(u(x,t)-u(y,t))(\phi(x,t)-\phi(y,t))}{|x-y|^{n+2s}} d x d y dt=\int_{t_1}^{t_2}\int_{\omega_1}
    \int_{\omega^c}\frac{(u(x,t)-u(y,t))\phi(x,t)}{|x-y|^{n+2s}} d x d y dt.
\end{equation*}
Observe, for all $x \in  \omega_1,
y \in \omega^c
$, it holds $|x-y| \geqslant \delta>0$, for some positive constant $\delta$. Hence
\begin{equation*}
    \int_{t_1}^{t_2}\int_{\omega}\int_{\omega^c}\frac{(u(x,t)-u(y,t))(\phi(x,t)-\phi(y,t))}{|x-y|^{n+2s}} d x d y dt\leq C
\end{equation*}
with $C=C\left(\delta, \operatorname{supp}\phi, \omega,t_1,t_2,\|u\|_{L^1(\mathbb{R}^n\times(0,T))},\|\phi\|_{L^{\infty}\left(\omega\times[t_1,t_2]\right)}\right)$ a positive constant. Here we have used the fact that $u \in L^1(\mathbb{R}^n\times(0,T))$ (since $u \in L^1(\Omega_T)$ and $u=0$ a.e. in $(\mathbb{R}^n\backslash \Omega)\times(0,T)$ ) and $\phi \in C_c^{1}(\Omega_T)$. By symmetry, the proof is now complete.
\end{proof}
For $V$ being a Banach space and $V^*$ being its dual, we recall that an operator $\mathcal{A}: V \rightarrow V^*$ is said to be
\\- monotone if for every $u, v \in V$,
\begin{equation*}
\langle\mathcal{A}(u)-\mathcal{A}(v), u-v\rangle \geq 0 ,   
\end{equation*}
- hemicontinuous if the real function $\lambda \mapsto\langle\mathcal{A}(u+\lambda v), v\rangle$ is continuous, for every $u, v \in V$.
\\The following theorem will be useful to show the existence of solutions for approximated problems. See [\citealp{evolution}, Theorem A.2] for the proof. 
\begin{theorem}{\label{evolu}}
    Let $V$ be a separable, reflexive Banach space and let $\mathcal{V}=L^p(I ; V)$, for $1<p<\infty$, where $I=\left[t_0, t_1\right]$. Suppose that $H$ is a Hilbert space (with $(\cdot,\cdot)$ being the associated inner product) such that $V$ is dense and continuously embedded in $H$ and that $H$ is embedded into $V^*$ according to the relation
    \begin{equation*}
        \langle h,v\rangle=(h,v)_H, \text{ for each } h\in H, v\in V.
    \end{equation*}Assume that the family of operators $\mathcal{A}(t, \cdot): V \rightarrow V^*, t \in I$ satisfies:\begin{enumerate}
        \item for every $v \in V$, the function $\mathcal{A}(\cdot, v): I \rightarrow V^*$ is measurable;\item  for almost every $t \in I$, the operator $\mathcal{A}(t, \cdot): V \rightarrow V^*$ is monotone, hemicontinuous and bounded by
\begin{equation*}
    \|\mathcal{A}(t, v)\|_{V^*} \leq C\left(\|v\|_V^{p-1}+k(t)\right), \quad \text { for } v \in V \quad \text { and } \quad k \in L^{p^{\prime}}(I),
\end{equation*}\item there exist a real number $\beta>0$ and a function $\ell \in L^1(I)$ such that
\begin{equation*}
\langle\mathcal{A}(t, v), v\rangle+\ell(t) \geq \beta\|v\|_V^p, \quad \text { for a. e. } t \in I \text { and } v \in V .    
\end{equation*}
 \end{enumerate}
Then for each $f \in \mathcal{V}^*=L^{p^{\prime}}(I ; V^*)$ and $u_0 \in H$, there exists a unique $u \in W_p(I)=\{v\in L^p(I;V):v^\prime \in L^{p^{\prime}}(I ; V^*)\}\subset C(I;H)$ satisfying
\begin{equation*}
u^{\prime}(t)+\mathcal{A}(t, u(t))=f(t), \quad \text { in } \mathcal{V}^*, \quad u\left(t_0\right)=u_0 \text { in } H.    
\end{equation*}
This means that $u \in \mathcal{V}, u^{\prime} \in \mathcal{V}^*$ and
\begin{equation*}
    \int_I\left\langle u^{\prime}(t), \phi(t)\right\rangle \mathrm{d} t+\int_I\langle\mathcal{A}(t, u(t)), \phi(t)\rangle \mathrm{d} t=\int_I\langle f(t), \phi(t)\rangle \mathrm{d} t, \quad \text { for all } \phi \in \mathcal{V}.
\end{equation*}
\end{theorem}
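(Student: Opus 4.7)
The plan is to establish \cref{evolu} by the classical Galerkin plus monotone-operator method (essentially Lions' scheme). Since $V$ is separable, pick a countable linearly independent dense family $\{w_j\}_{j\ge 1}\subset V$ and set $V_m=\operatorname{span}\{w_1,\dots,w_m\}$. Let $u_{0,m}\in V_m$ be chosen so that $u_{0,m}\to u_0$ in $H$. For each $m$ I would seek $u_m(t)=\sum_{j=1}^m c_j^m(t)w_j$ solving the finite-dimensional system
\begin{equation*}
\frac{d}{dt}(u_m(t),w_j)_H+\langle\mathcal{A}(t,u_m(t)),w_j\rangle=\langle f(t),w_j\rangle,\qquad j=1,\dots,m,
\end{equation*}
with $u_m(t_0)=u_{0,m}$. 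Because $\mathcal{A}(t,\cdot)$ is hemicontinuous, bounded and measurable in $t$, the right-hand side of the induced ODE for $(c_j^m)$ is of Carathéodory type, so Carathéodory's theorem gives local solutions, made global by the energy estimate below.

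Next I would derive the key a priori bound by multiplying the $j$-th equation by $c_j^m$ and summing, yielding $\tfrac{1}{2}\tfrac{d}{dt}\|u_m\|_H^2+\langle\mathcal{A}(t,u_m),u_m\rangle=\langle f,u_m\rangle$. Coercivity (3) together with Young's inequality absorb $\langle f,u_m\rangle$, giving
\begin{equation*}
\tfrac{1}{2}\|u_m(t)\|_H^2+\tfrac{\beta}{2}\int_{t_0}^{t}\|u_m\|_V^p\,d\tau\le \tfrac{1}{2}\|u_{0,m}\|_H^2+\|\ell\|_{L^1(I)}+C\|f\|_{\mathcal{V}^*}^{p'}.
\end{equation*}
Hence $\{u_m\}$ is bounded in $L^\infty(I;H)\cap\mathcal{V}$, and by the growth part of (2), $\{\mathcal{A}(\cdot,u_m)\}$ is bounded in $\mathcal{V}^*$. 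Comparing with the equation shows $\{u_m'\}$ is bounded in $\mathcal{V}^*$. Up to subsequences, $u_m\rightharpoonup u$ in $\mathcal{V}$, $u_m\overset{*}{\rightharpoonup}u$ in $L^\infty(I;H)$, $u_m'\rightharpoonup u'$ in $\mathcal{V}^*$ and $\mathcal{A}(\cdot,u_m)\rightharpoonup\chi$ in $\mathcal{V}^*$. Passing to the limit in the Galerkin relation against test functions of the form $\varphi(t)w_j$ (and then by density against arbitrary $\phi\in\mathcal{V}$) gives $u'+\chi=f$ in $\mathcal{V}^*$. The Gelfand-triple integration-by-parts formula together with $W_p(I)\hookrightarrow C(I;H)$ then pins down the initial condition $u(t_0)=u_0$.

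The main obstacle is identifying $\chi=\mathcal{A}(\cdot,u)$, which I would handle by Minty's trick. For any $v\in\mathcal{V}$, monotonicity gives
\begin{equation*}
0\le\int_I\langle\mathcal{A}(t,u_m)-\mathcal{A}(t,v),u_m-v\rangle\,dt=\int_I\langle f-u_m',u_m\rangle\,dt-\int_I\langle\mathcal{A}(t,u_m),v\rangle\,dt-\int_I\langle\mathcal{A}(t,v),u_m-v\rangle\,dt.
\end{equation*}
Using $\int_I\langle u_m',u_m\rangle\,dt=\tfrac{1}{2}\|u_m(t_1)\|_H^2-\tfrac{1}{2}\|u_{0,m}\|_H^2$, the weak lower semicontinuity $\liminf_m\|u_m(t_1)\|_H^2\ge\|u(t_1)\|_H^2$ (which is where the Gelfand-triple chain-rule is essential), and the analogous identity for the limit, I pass to the $\limsup$ to obtain $\int_I\langle\chi-\mathcal{A}(t,v),u-v\rangle\,dt\ge 0$ for every $v\in\mathcal{V}$. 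Taking $v=u-\lambda w$ with $w\in\mathcal{V}$ and $\lambda>0$, dividing by $\lambda$, letting $\lambda\downarrow 0$ and invoking hemicontinuity plus the growth bound (via dominated convergence) yields $\int_I\langle\chi-\mathcal{A}(t,u),w\rangle\,dt\ge 0$ for all $w$, hence $\chi=\mathcal{A}(\cdot,u)$. Uniqueness is immediate: for two solutions $u_1,u_2$ with the same data, subtracting the equations, testing with $u_1-u_2$, and using the chain-rule in the Gelfand triple gives $\tfrac{1}{2}\|u_1(t)-u_2(t)\|_H^2+\int_{t_0}^{t}\langle\mathcal{A}(\tau,u_1)-\mathcal{A}(\tau,u_2),u_1-u_2\rangle\,d\tau=0$, and both terms are non-negative by monotonicity, forcing $u_1\equiv u_2$.
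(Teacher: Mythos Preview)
The paper does not give its own proof of this theorem: it simply cites an external reference (``See [\citealp{evolution}, Theorem A.2] for the proof.''). Your Galerkin--Minty argument is exactly the standard proof of this classical result (as in Lions or Showalter), so there is nothing substantive to compare.
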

The following theorem will be useful to pass limits in the approximated problems.
\begin{theorem}{\label{limits}}
    Suppose $\{f_k\}_{k\in \mathbb{N}}$ be a sequence in $L^1(\Omega_T)$ such that $f_k \rightharpoonup f$ weakly in $L^1(\Omega_T)$ and $\{g_k\}_{k\in \mathbb{N}}$ be a sequence in $L^{\infty}(\Omega_T)$ such that $g_k$ converges to $g$ in a.e. in $\Omega_T$ and weak* in $L^{\infty}(\Omega_T)$. Then
\begin{equation*}
    \lim _{k \rightarrow \infty} \iint_{\Omega_T} f_kg_k d x d t=\iint_{\Omega_T} f g d x d t.
\end{equation*}
\end{theorem}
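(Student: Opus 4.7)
The natural strategy is to decompose the difference
\begin{equation*}
f_k g_k - fg = (f_k-f)\,g + f_k\,(g_k-g)
\end{equation*}
and prove that each piece, integrated over $\Omega_T$, tends to $0$. The first term is handled by pure duality: since $g_k \to g$ weak-$*$ in $L^{\infty}(\Omega_T)$, we have $g \in L^{\infty}(\Omega_T)$ with $\|g\|_{L^{\infty}} \leq \liminf_k \|g_k\|_{L^{\infty}}$, so $g$ is an admissible test function for the weak $L^1$ convergence $f_k \rightharpoonup f$, giving
\begin{equation*}
\iint_{\Omega_T}(f_k-f)\,g\,dx\,dt \longrightarrow 0.
\end{equation*}

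The delicate piece is the second term, because $\{f_k\}$ is only bounded in $L^1(\Omega_T)$, so we cannot simply pair it with the pointwise convergence $g_k \to g$. The plan here is to combine two facts: (a) by the Dunford--Pettis theorem, the weakly convergent sequence $\{f_k\}\subset L^1(\Omega_T)$ is equi-integrable, i.e.\ for every $\varepsilon>0$ there is $\delta>0$ such that $\sup_k \int_E |f_k|\,dx\,dt < \varepsilon$ whenever $|E|<\delta$; and (b) by Egorov's theorem (applicable since $|\Omega_T|<\infty$ and $g_k\to g$ a.e.), for any $\eta>0$ there exists $E_\eta\subset\Omega_T$ with $|E_\eta|<\eta$ such that $g_k\to g$ uniformly on $\Omega_T\setminus E_\eta$. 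Setting $M:=\sup_k\|g_k\|_{L^{\infty}}+\|g\|_{L^{\infty}}$ and choosing $\eta<\delta$ gives
\begin{equation*}
\left|\iint_{\Omega_T} f_k(g_k-g)\,dx\,dt\right|
\leq M\iint_{E_\eta}|f_k|\,dx\,dt + \|g_k-g\|_{L^{\infty}(\Omega_T\setminus E_\eta)}\iint_{\Omega_T}|f_k|\,dx\,dt.
\end{equation*}
The first summand on the right is controlled by $M\varepsilon$ uniformly in $k$ by equi-integrability, and the second tends to $0$ as $k\to\infty$ because $\sup_k\|f_k\|_{L^1}<\infty$ (weakly convergent sequences are bounded) and $g_k\to g$ uniformly on $\Omega_T\setminus E_\eta$. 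Since $\varepsilon$ is arbitrary, this forces $\iint_{\Omega_T} f_k(g_k-g)\,dx\,dt\to 0$.

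The step I expect to be the main obstacle is justifying the use of equi-integrability of $\{f_k\}$; this is where the Dunford--Pettis theorem enters, and it is essential that the domain $\Omega_T$ has finite measure (so that Egorov applies and the $L^1$ bound can be paired with a uniform $L^{\infty}$ decay). Once both halves of the decomposition are shown to vanish, the triangle inequality delivers the desired convergence
\begin{equation*}
\lim_{k\to\infty}\iint_{\Omega_T} f_k g_k\,dx\,dt = \iint_{\Omega_T} fg\,dx\,dt.
\end{equation*}
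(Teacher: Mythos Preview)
Your proof is correct and uses essentially the same ingredients as the paper's: Egorov's theorem for almost-uniform convergence of $g_k$, and the Dunford--Pettis theorem for equi-integrability of the weakly $L^1$-convergent sequence $\{f_k\}$. The only difference is organizational: the paper splits the domain $\Omega_T$ into a large set where $g_k\to g$ uniformly and a small-measure remainder, handling $f_kg_k\to fg$ directly on each piece, whereas you first decompose algebraically as $(f_k-f)g + f_k(g_k-g)$ and then apply Egorov/Dunford--Pettis only to the second summand; both routes are equivalent and equally valid.
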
 
\begin{proof}
We first pick any $\epsilon>0$. By Egorov's theorem, there exists a subset $S_\epsilon \subset \Omega_T$ with small complement $|\Omega_T \backslash S_\epsilon| \leq \epsilon$ such that $g_k\rightarrow g$ uniformly in $S_\epsilon$. Thus by standard $L^{\infty}-L^1$ strong-weak convergence the term
\begin{equation*}
\iint_{S_\epsilon} f_k g_k d x dt\longrightarrow \iint_{S_\epsilon} f g  d xdt.
\end{equation*}
For the remaining term (the integral on $\Omega_T \backslash S_\epsilon$ ), the Dunford-Pettis theorem guarantees that $\{f_k\}_k$ is uniformly integrable. Given $\delta>0$, this means that $\iint_{\Omega_T \backslash S_\epsilon}|f_k| \leq \delta$ uniformly in $k$, as soon as $|\Omega_T \backslash S_\epsilon| \leq \epsilon$ is sufficiently small. Since $\{g_k\}$ is weak* convergent, so $\|g_k\|_{\infty} \leq M$ for some constant $M$, this immediately gives
\begin{equation*}
    \left|\iint_{\Omega_T \backslash S_\epsilon} f_k g_k  d xdt\right| \leq M\delta.
\end{equation*}
Putting everything together and playing a bit with $\epsilon, \delta, k\geq k_0$, we get the desired result (also note that $\iint_{S_\epsilon} f g  d xdt \rightarrow \iint_{\Omega_T} f g  d xdt$ if $\epsilon \rightarrow 0$).
\end{proof}
See [\citealp{fracvari1}, Lemma 3.5] for the following result.
\begin{lemma}{\label{algebraic2}}
    Let $q>1$ and $\epsilon>0$. For $(x, y) \in \mathbb{R}^2$, let us set
\begin{equation*}
S_\epsilon^x:=\{x \geq \epsilon\} \cap\{y \geq 0\} \text { and } S_\epsilon^y=\{y \geq \ep\} \cap\{x \geq 0\} .
\end{equation*}
Then for every $(x, y) \in S_\epsilon^x \cup S_\epsilon^y$, we have
\begin{equation*}
    \ep^{q-1}|x-y| \leq\left|x^q-y^q\right| .
\end{equation*}
\end{lemma}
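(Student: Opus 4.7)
The plan is to exploit the symmetry of both the set $S_\epsilon^x\cup S_\epsilon^y$ and the target inequality in $(x,y)$ to reduce to a single canonical configuration, and then to carry out a short one-variable calculus argument based on the auxiliary function $g(t):=t^q-\epsilon^{q-1}t$ on $[0,\infty)$.

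The first step is to observe that the hypothesis $(x,y)\in S_\epsilon^x\cup S_\epsilon^y$, the distance $|x-y|$, and the quantity $|x^q-y^q|$ are all invariant under swapping $x$ and $y$. Relabelling so that $x=\max(x,y)$ and $y=\min(x,y)$, one obtains $x\geq y\geq 0$ together with $x\geq\epsilon$: indeed $\max(x,y)$ must equal whichever coordinate is $\geq\epsilon$ by definition of $S_\epsilon^x\cup S_\epsilon^y$. In this canonical configuration $|x-y|=x-y$ and $|x^q-y^q|=x^q-y^q$, so the desired inequality $\epsilon^{q-1}|x-y|\leq|x^q-y^q|$ rewrites exactly as $g(x)\geq g(y)$.

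Next I would analyse $g$. Computing $g'(t)=qt^{q-1}-\epsilon^{q-1}$, we see that for $t\geq\epsilon$ one has $qt^{q-1}\geq q\epsilon^{q-1}\geq\epsilon^{q-1}$ (using $q>1$), so $g$ is nondecreasing on $[\epsilon,\infty)$, and in particular $g(x)\geq g(\epsilon)=0$. It then remains to bound $g(y)$ from above by $g(x)$, which splits into two sub-cases. If $y\geq\epsilon$, the monotonicity of $g$ on $[\epsilon,\infty)$ applied to $\epsilon\leq y\leq x$ gives $g(y)\leq g(x)$ directly. If $0\leq y<\epsilon$, I would factorise $g(y)=y\bigl(y^{q-1}-\epsilon^{q-1}\bigr)$; since $y\geq 0$ and $y^{q-1}\leq\epsilon^{q-1}$, this factorisation yields $g(y)\leq 0\leq g(x)$. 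In both cases $g(x)\geq g(y)$, as required.

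I do not anticipate any substantive obstacle: the proof is entirely elementary once the symmetry reduction is made. The only points requiring some care are verifying that the relabelling step really covers every pair in $S_\epsilon^x\cup S_\epsilon^y$ (including the cases where exactly one of $x,y$ lies below $\epsilon$), and keeping track of the sign in the factorisation $g(y)=y(y^{q-1}-\epsilon^{q-1})$ in the sub-case $y<\epsilon$.
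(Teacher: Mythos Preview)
Your proof is correct. The paper does not actually prove this lemma; it merely states it with a citation to \cite{fracvari1}, Lemma 3.5, so there is no in-paper argument to compare against. Your symmetry reduction followed by the one-variable analysis of $g(t)=t^q-\epsilon^{q-1}t$ is clean and self-contained. One minor wording point: the phrase ``$\max(x,y)$ must equal whichever coordinate is $\geq\epsilon$'' is not literally accurate (both coordinates could exceed $\epsilon$), but what you actually need---namely $\max(x,y)\geq\epsilon$ and $\min(x,y)\geq 0$---follows immediately since at least one coordinate is $\geq\epsilon$ and both are nonnegative.
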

Next, we mention some preliminary results related to measures. Let $\mathcal{M}(\Omega_T)$ be the space of all signed Radon measures on $\Omega_T$ with finite total variation. If $\nu \in \mathcal{M}(\Omega_T)$ is a non-negative Radon measure, then by Lebesgue's decomposition theorem [\citealp{royden}, Page 384],
\begin{equation*}
    \nu=\nu_a+\nu_s,
\end{equation*}
where $\nu_a$ is absolutely continuous with respect to the Lebesgue measure $\mathcal{L}$, i.e., $\nu_a \ll \mathcal{L}$ and $\nu_s$ is singular with respect to the Lebesgue measure $\mathcal{L}$, i.e., $\nu_s \perp \mathcal{L}$. By Radon-Nikodym theorem [\citealp{royden}, Page 382], there exists a non-negative Lebesgue measurable function $f$ such that for every measurable set $E \subset \Omega_T$,
\begin{equation*}
\nu_a(E)=\iint_E f d x.    
\end{equation*}
Furthermore, if $\nu$ is bounded then $f \in L^1(\Omega_T)$. If the function $f$ is not an identically zero function, then we say that $\nu$ is non-singular with respect to the Lebesgue measure $\mathcal{L}$; otherwise, it is called a singular measure.
We now recall the notion of parabolic $p$-capacity associated to our problem (for further details, see for instance, \cite{softmea}).
\begin{definition}
    Let $p>1$ and let us call $V=W_0^{1, p}(\Omega) \cap L^2(\Omega)$ endowed with its natural norm $\|\cdot\|_{W_0^{1, p}(\Omega)}+\|\cdot\|_{L^2(\Omega)}$ and
\begin{equation*}
    W=\left\{u \in L^p(0, T ; V), u_t \in L^{p^{\prime}}(0, T ; V^{\prime})\right\},
\end{equation*}
endowed with its natural norm $\|u\|_W=\|u\|_{L^p(0, T ; V)}+\left\|u_t\right\|_{L^{p^{\prime}}(0, T ; V^{\prime})}$. So, if $U \subseteq \Omega_T$ is an open set, we define the parabolic $p$-capacity of $U$ as
\begin{equation*}
    \operatorname{cap}_p(U)=\inf \left\{\|u\|_W: u \in W, u \geq \chi_U \text { a.e. in } \Omega_T\right\}
\end{equation*}
where as usual we set $\inf \emptyset=+\infty$. For any Borel set $B \subseteq \Omega_T$ we then define
\begin{equation*}
    \operatorname{cap}_p(B)=\inf \left\{\operatorname{cap}_p(U), U \text { open set of } \Omega_T, B \subseteq U\right\}.
\end{equation*}
\end{definition} 
Let us denote with $\mathcal{M}^p_0(\Omega_T)$ the set of all Radon measures with bounded total variation over $\Omega_T$ that do not charge the sets of zero $p$-capacity, that is if $\mu \in \mathcal{M}^p_0(\Omega_T)$, then $\mu(E)=0$, for all $E \subseteq \Omega_T$ such that $\operatorname{cap}_p(E)=0$. Such measures are called diffuse measures. The following representation theorem for diffuse measures was obtained in [\citealp{softmea}, Theorem 1.1].
\begin{theorem}{\label{representation}}
     If $\mu \in \mathcal{M}^p_0(\Omega_T)$ then there exist $h \in L^{p^{\prime}}(0, T ; W^{-1, p^{\prime}}(\Omega)), g \in L^p(0, T ; W_0^{1, p}(\Omega) \cap L^2(\Omega))$ and $f \in L^1(\Omega_T)$, such that
     \begin{equation*}
         \iint_{\Omega_T} \varphi \,d \mu=\int_0^T\langle h, \varphi\rangle d t-\int_0^T\left\langle\varphi_t, g\right\rangle d t+\iint_{\Omega_T} f \varphi \,d x d t
     \end{equation*}
for any $\varphi \in C_c^{\infty}([0, T] \times \Omega)$, where $\langle\cdot, \cdot\rangle$ denotes the duality between $V^{\prime}$ and $V$.
\end{theorem}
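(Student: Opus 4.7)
The plan is to follow the classical Droniou--Porretta--Prignet strategy, representing $\mu$ as a continuous linear form on $W$ and then using Hahn--Banach together with the duality structure of $V=W_0^{1,p}(\Omega)\cap L^2(\Omega)$ to peel off the three pieces $h$, $g$, $f$.

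First, I would establish that every $u \in W$ admits a parabolic $p$-quasi-continuous representative $\tilde u$, uniquely defined up to sets of zero parabolic $p$-capacity. The construction is by mollification: smooth approximants $u_k \to u$ in $W$ can be selected so that $\operatorname{cap}_p(\{|u_k - u_{k+1}| > 2^{-k}\}) \leq 2^{-k}$, and Borel--Cantelli yields pointwise convergence off a capacity-zero set. Since $\mu \in \mathcal{M}_0^p(\Omega_T)$ does not charge such sets, the pairing
\begin{equation*}
\mathcal{T}_\mu(u) := \iint_{\Omega_T} \tilde u \, d\mu
\end{equation*}
is unambiguously defined, linear in $u$, and independent of the chosen representative.

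Second, I would prove that $\mathcal{T}_\mu$ extends to a continuous linear functional on $W$. The key bound $|\mathcal{T}_\mu(u)| \leq C\,|\mu|(\Omega_T)\,\|u\|_W$ follows almost tautologically from the definition of $\operatorname{cap}_p$: applied to truncations $T_k(u)$ of quasi-continuous representatives and to open neighborhoods of the support of $\mu^\pm$, one recovers the capacity infimum, and the embedding $W \hookrightarrow C([0,T];L^2(\Omega))$ guarantees that the traces are well behaved. This is where I expect the main difficulty to lie, because the quasi-continuity--capacity interaction has to be combined with a careful approximation of $\mu$ by diffuse measures with compactly supported smooth densities in order to make the bound rigorous; the statement of the theorem is essentially equivalent to this continuity.

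Third, consider the linear embedding $\iota: W \to \mathcal{E}$, $u \mapsto (u, u_t)$, where
\begin{equation*}
\mathcal{E} := L^p(0,T;V) \times L^{p'}(0,T;V^*).
\end{equation*}
Since $\iota$ is an isometry onto its image and $\mathcal{T}_\mu\circ \iota^{-1}$ is continuous on $\iota(W)$, Hahn--Banach extends it to a continuous linear form on all of $\mathcal{E}$. The dual of $\mathcal{E}$ being $L^{p'}(0,T;V^*) \times L^p(0,T;V)$, I obtain $\widetilde H \in L^{p'}(0,T;V^*)$ and $\widetilde G \in L^p(0,T;V)$ such that for every $u \in W$,
\begin{equation*}
\mathcal{T}_\mu(u) = \int_0^T \langle \widetilde H(t), u(t)\rangle\, dt + \int_0^T \langle u_t(t), \widetilde G(t)\rangle\, dt.
\end{equation*}

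Fourth, I would decompose $\widetilde H$ using the sum-space structure of $V$: since $V^* = W^{-1,p'}(\Omega) + L^2(\Omega)$ (dual of an intersection is a sum), I split $\widetilde H = h + \tilde f$ with $h \in L^{p'}(0,T;W^{-1,p'}(\Omega))$ and $\tilde f \in L^{p'}(0,T;L^2(\Omega))$. Because $\Omega_T$ has finite measure, $\tilde f \in L^1(\Omega_T)$; set $f := \tilde f$ and $g := -\widetilde G$. Restricting the identity to test functions $\varphi \in C_c^\infty([0,T]\times \Omega) \subset W$, for which $\tilde\varphi = \varphi$ and classical integration by parts in time is justified, yields exactly the representation
\begin{equation*}
\iint_{\Omega_T}\varphi\,d\mu = \int_0^T \langle h,\varphi\rangle\, dt - \int_0^T \langle \varphi_t, g\rangle\, dt + \iint_{\Omega_T} f\varphi\, dx\,dt,
\end{equation*}
completing the proof.
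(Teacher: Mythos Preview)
The paper does not prove this theorem at all: it is stated as a quotation of \cite[Theorem~1.1]{softmea} (Droniou--Porretta--Prignet), with no argument given. Your outline is essentially the original proof from that reference, so there is nothing to compare; you have correctly identified both the strategy and the genuinely delicate point (the continuity estimate $|\mathcal{T}_\mu(u)|\le C\|u\|_W$, which in \cite{softmea} requires a careful truncation argument and the precise link between the capacity and the $W$-norm).
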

We recall that, given a Radon measure $\mu$ on $\Omega_T$ and a Borel set $E \subset \Omega_T$ , then $\mu$ is said
to be concentrated on $E$ if $\mu(B) = \mu(B \cap E)$ for every Borel set $B\subset\Omega_T$.
\begin{definition} A sequence $\{\mu_k\}_{k \in \mathbb{N}} \subset C_c^\infty(\Omega_T)$ is said to converge to a measure $\mu \in \mathcal{M}(\Omega_T)$ in narrow topology if for every $\phi \in C_c^{\infty}(\Omega_T)$,
\begin{equation*}
    \iint_{\Omega_T} \phi \mu_kdx dt \longrightarrow \iint_{\Omega_T} \phi\, d \mu \text { as } k \rightarrow \infty.
\end{equation*}
\end{definition}
\subsection{\textbf{Weak solutions and main results}}
We are now in a position to state the main results obtained in this article. We consider the following mixed local nonlocal parabolic singular problem given by \begin{equation}{\label{mainproblem}}
    \begin{array}{c}
         u_t-\Delta u+(-\Delta)^s u=\frac{\nu}{u^{\gamma(x,t)}} +\mu \text { in } \Omega_T, \smallskip\\u>0 \text{ in }\Omega_T,\quad u=0  \text { in }(\mathbb{R}^n \backslash \Omega) \times(0, T), \smallskip\\ u(x, 0)=u_0(x) \text { in } \Omega ;
    \end{array}
\end{equation}
where 
\begin{itemize}
  \item  $(-\Delta)^{s}u=\text{P.V.}\int_{\mathbb{R}^{n}} {\frac{u(x,t)-u(y,t)}{{\left|x-y\right|}^{n+2s}}}dy$ and P.V. stands for the Cauchy principle value.  \item $\Omega$ is a bounded ${C}^{1}$ domain 
in $\mathbb{R}^{n}$, $\gamma$ is a positive continuous function on $\overline{\Omega}_T$, \item $n> 2$, $s\in(0,1)$, $0<T<+\infty$, 
$0\leq u_0\in L^1(\Omega)$ and $\nu,\mu$ are bounded non-negative Radon measures on $\Omega_T$.
    \end{itemize}
    In order to deal with the singularity and measure data, we consider an approximation of \cref{mainproblem}, for each $k\in\mathbb{N}$ as
    \begin{equation}{\label{aproxproblem}}
    \begin{array}{c}
         (u_k)_t-\Delta u_k+(-\Delta)^s u_k=\frac{\nu_k}{\left(u_k+\frac{1}{k}\right)^{\gamma(x,t)}} +\mu_k \text { in } \Omega_T, \smallskip\\u_k>0 \text{ in }\Omega_T,\quad u_k=0  \text { in }(\mathbb{R}^n \backslash \Omega) \times(0, T), \smallskip\\ u_k(x, 0)=u_{0k}(x) \text { in } \Omega ;
    \end{array}
\end{equation}
where 
\begin{itemize}
  \item  $\nu_k, \mu_k\in L^\infty(\Omega_T)$ are non-negative and bounded in $L^1(\Omega_T)$ and converges in narrow topology to $\nu,\mu$ respectively. The existence of such sequences can be obtained by standard convolution arguments. 
  \item $u_{0k}(x)=T_k(u_0(x))\in L^\infty(\Omega)$ so that $u_{0k}\rightarrow u_0$ in $L^1(\Omega)$.
    \end{itemize}
   \subsubsection{Non-existence results} For a non-negative measure concentrated on a set with $0$ parabolic $2$ capacity, we show that the solutions to the approximated problems involving only a local operator go to $0$. 
   \begin{theorem}{\label{nonexistence}}
        Let $u_0\equiv 0$, $\gamma(x,t)\equiv \gamma\geq 1$, be a constant, $\mu\equiv 0$ so that $\mu_k\equiv 0$  and $\nu$ be a non-negative bounded Radon measure concentrated on a Borel set of $0$ parabolic $2$-capacity.  Let $\nu_k$ be a sequence of non-negative $L^\infty(\Omega_T)$ functions that converges to $\nu$ in the narrow topology 
        and $u_k$ be the solution of the approximated problem
          \begin{equation}{\label{aproxproblemnonn}}
    \begin{array}{c}
         (u_k)_t-\Delta u_k=\frac{\nu_k}{\left(u_k+\frac{1}{k}\right)^{\gamma(x,t)}}  \text { in } \Omega_T, \smallskip\\u_k>0 \text{ in }\Omega_T,\quad u_k=0  \text { in }(\mathbb{R}^n \backslash \Omega) \times(0, T), \smallskip\\ u_k(x, 0)=0 \text { in } \Omega .
    \end{array}
\end{equation}
        Then $u_k^{\frac{\gamma+1}{2}}$ converges weakly to zero in $L^2(0,T;W^{1,2}_0(\Omega))$.
   \end{theorem}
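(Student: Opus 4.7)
The proof plan follows the Boccardo--Orsina scheme of \cite{orsina} adapted to the parabolic setting. First I would establish a uniform a priori bound by testing \cref{aproxproblemnonn} with $u_k^\gamma$ (rigorously justified via the truncation $T_j(u_k)^\gamma$ followed by $j\to\infty$, which is admissible since $\nu_k\in L^\infty$ gives $u_k\in L^\infty(\Omega_T)$ by classical theory, and $\gamma\geq 1$ keeps $u_k^{\gamma-1}$ locally bounded). Using $u_k(x,0)=0$ and $u_k^\gamma/(u_k+\tfrac{1}{k})^\gamma\leq 1$ yields
\begin{equation*}
\frac{1}{\gamma+1}\int_\Omega u_k(x,T)^{\gamma+1}\,dx + \frac{4\gamma}{(\gamma+1)^2}\iint_{\Omega_T}\bigl|\nabla u_k^{(\gamma+1)/2}\bigr|^2\,dx\,dt \leq \iint_{\Omega_T}\nu_k\,dx\,dt \leq C\|\nu\|_{\mathcal{M}(\Omega_T)}.
\end{equation*}
Thus $\{u_k^{(\gamma+1)/2}\}$ is bounded in $L^2(0,T;W^{1,2}_0(\Omega))\cap L^\infty(0,T;L^2(\Omega))$, and up to a subsequence $u_k^{(\gamma+1)/2}\rightharpoonup V\geq 0$ weakly in $L^2(0,T;W^{1,2}_0(\Omega))$. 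The theorem reduces to showing $V\equiv 0$.

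Next I would exploit the capacity hypothesis. Since $\nu$ is concentrated on a Borel set $E$ with $\operatorname{cap}_2(E)=0$, for every $\delta>0$ there exist an open neighbourhood $U_\delta\supset E$ and a capacitary potential $\psi_\delta\in W$ (regularised by space-time mollification) with $0\leq\psi_\delta\leq 1$, $\psi_\delta\equiv 1$ on $U_\delta$, $\|\psi_\delta\|_W\leq\delta$, and $\psi_\delta\to 0$ a.e.\ in $\Omega_T$ as $\delta\to 0$. I would then test \cref{aproxproblemnonn} with $(1-\psi_\delta)\,u_k^\gamma$. Because $(1-\psi_\delta)$ vanishes on the open set $U_\delta\supset E$ and $\nu$ is concentrated on $E$, together with $\nu_k\rightharpoonup\nu$ in the narrow topology, the singular term on the right satisfies
\begin{equation*}
\iint_{\Omega_T}\frac{(1-\psi_\delta)\,u_k^\gamma\,\nu_k}{(u_k+\tfrac{1}{k})^\gamma}\,dx\,dt \leq \iint_{\Omega_T}(1-\psi_\delta)\,\nu_k\,dx\,dt \longrightarrow \int_{\Omega_T}(1-\psi_\delta)\,d\nu = 0 \quad\text{as }k\to\infty.
\end{equation*}

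Integrating by parts on the left (using $u_k(x,0)=0$) and unfolding the spatial gradient via $u_k^\gamma\nabla u_k=\tfrac{2}{\gamma+1}u_k^{(\gamma+1)/2}\nabla u_k^{(\gamma+1)/2}$ produces, modulo a non-negative boundary term at $t=T$, the quantity
\begin{equation*}
\frac{4\gamma}{(\gamma+1)^2}\iint_{\Omega_T}(1-\psi_\delta)\bigl|\nabla u_k^{(\gamma+1)/2}\bigr|^2\,dx\,dt + R_k^\delta,
\end{equation*}
where $R_k^\delta := \frac{1}{\gamma+1}\iint u_k^{\gamma+1}\,\partial_t\psi_\delta\,dx\,dt \,-\, \frac{2}{\gamma+1}\iint u_k^{(\gamma+1)/2}\nabla u_k^{(\gamma+1)/2}\cdot\nabla\psi_\delta\,dx\,dt$. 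Using the Step~1 bounds together with the parabolic Gagliardo--Nirenberg inequality to upgrade $u_k^{(\gamma+1)/2}$ to $L^{2(n+2)/n}(\Omega_T)$, and invoking $\|\psi_\delta\|_W\leq\delta$, Cauchy--Schwarz gives $|R_k^\delta|\leq C\delta$ uniformly in $k$. Passing $k\to\infty$ by weak lower semicontinuity on the left and \cref{limits} on the right, then sending $\delta\to 0$ by Fatou's lemma (using $\psi_\delta\to 0$ a.e.) yields $\iint_{\Omega_T}|\nabla V|^2\,dx\,dt\leq 0$. Hence $V\equiv 0$, and uniqueness of the limit upgrades the convergence to the full sequence.

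The main obstacle is the rigorous bound $|R_k^\delta|\leq C\delta$: the term involving $\partial_t\psi_\delta$ must be interpreted as the duality pairing $\langle\partial_t\psi_\delta,u_k^{\gamma+1}\rangle_{V',V}$, which requires $u_k^{\gamma+1}\in L^2(0,T;V)$, extracted by combining the energy bound of Step~1 with the Sobolev embedding; and the gradient-cross term requires controlling $u_k^{(\gamma+1)/2}\nabla u_k^{(\gamma+1)/2}$ in $L^2(\Omega_T)$ via an interpolation between $L^\infty(0,T;L^2)$ and $L^2(0,T;L^{2^*})$. Once this regularity is secured, the capacitary smallness of $\psi_\delta$ in $\|\cdot\|_W$ closes the argument.
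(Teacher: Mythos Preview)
Your overall strategy---energy estimate via $u_k^\gamma$, then capacity cut-off $(1-\psi_\delta)$---is the right one, and your Step~1 reproduces the paper's \cref{preli3}. The gap is in the claimed bound $|R_k^\delta|\leq C\delta$: neither of the two regularity upgrades you propose is available. The duality pairing $\langle\partial_t\psi_\delta,u_k^{\gamma+1}\rangle_{V',V}$ requires a uniform bound on $\|\nabla(u_k^{\gamma+1})\|_{L^2(\Omega_T)}=2\|u_k^{(\gamma+1)/2}\nabla u_k^{(\gamma+1)/2}\|_{L^2(\Omega_T)}$, and the same product governs the gradient-cross term. From Step~1 you have $u_k^{(\gamma+1)/2}\in L^\infty(0,T;L^2)\cap L^2(0,T;L^{2^*})$ and $\nabla u_k^{(\gamma+1)/2}\in L^2(\Omega_T)$; by H\"older the product is only in $L^{(n+2)/(n+1)}(\Omega_T)$ uniformly in $k$, which is strictly worse than $L^2$. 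No interpolation between the energy spaces can close this, because neither factor is uniformly in $L^\infty$.

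The paper circumvents this by three devices you are missing. First, the test function is the \emph{bounded} $T_1^\gamma(u_k)\,h_{m,\gamma}(u_k)$ (with $h_{m,\gamma}=1-\beta_m^\gamma$ cutting off above $2m$) instead of $u_k^\gamma$; its primitive $\Theta_{m,\gamma}(r)=\int_0^r T_1^\gamma(s)h_{m,\gamma}(s)\,ds$ is bounded by $\approx 2m$, so it can be paired with the $L^1(\Omega_T)$-component of the decomposition $(\psi_\delta)_t\in L^1(\Omega_T)+L^2(0,T;W^{-1,2})$ that the parabolic capacity construction provides (see \cref{nonn2}). Second, a \emph{double} cut-off $\Psi_{\eta\delta}=\psi_\eta\psi_\delta$ is used: the outer $\psi_\eta$ is fixed (giving compact support and a finite $\|(\psi_\eta)_t\|_{L^\infty}$), while the inner $\psi_\delta\to 0$ provides smallness in $L^1$; with a single cut-off these two roles conflict and the term $\iint\Theta_{m,\gamma}(u)(\psi_\eta)_t$ cannot be made small. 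Third, the $L^2(W^{-1,2})$-component of $(\psi_\delta)_t$ is paired with $\Theta_{m,\gamma}(u)\psi_\eta\in L^2(0,T;W^{1,2}_0)$, which is finite only because $\psi_\eta$ has compact support and $u\in L^2_{\mathrm{loc}}(0,T;W^{1,2}_{\mathrm{loc}}(\Omega))$; this local estimate (obtained in \cref{preli3} by testing with $(u_k-1)\phi^2$) is also absent from your plan and is essential.
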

   \subsubsection{Existence results}
   Once the non-existence result is obtained for measures which are concentrated on sets of $0$ parabolic $2$-capacity, we now look for the existence of solutions for a class of diffuse measures and involving mixed operator. Keeping in mind \cref{representation}, we take $\nu$ to be a non-negative bounded radon measure which is in $L^1(\Omega_T)+L^{p^\prime}(0,T;W^{-1,p^\prime}(\Omega))$ for some $p>1$, i.e. for every $\phi\in C_c^\infty(\Omega_T)$, it holds
   \begin{equation*}
         \iint_{\Omega_T} \phi \,d \nu=\iint_{\Omega_T} f \phi \,d x d t+\int_0^T G\cdot\nabla \phi \, dxd t,
     \end{equation*}
   for some $f\in L^1(\Omega_T)$ and $G\in \left(L^{p^\prime}(\Omega_T)\right)^n$. Formally we write $\nu=f-\operatorname{div}(G)$. By \cite{diffuse}, such measures are also diffuse. We denote the set of such measures as $\overline{\mathcal{M}}_0^p(\Omega_T)$.
   \begin{definition}{\label{weaksolu}}
       Let $0<s<1<q<\infty$, $\nu\in\overline{\mathcal{M}}^q_0(\Omega_T)$ and $\mu$ is a non-negative bounded Radon measure on $\Omega_T$. A \textcolor{blue}{weak solution} to problem \cref{mainproblem} is a function $u\in L_{\mathrm{loc}}^q((0, T) ; W_{\mathrm{loc}}^{1, q}(\Omega))\cap L^1(\Omega_T)$ satisfying\begin{enumerate}
           \item $u \geq c$ a.e. in $\omega\times(t_1,t_2)$ for each $\omega\subset\subset \Omega$, $0<t_1<t_2<T$ for some $c\equiv c(\omega,t_1,t_2)>0$,\item $u\equiv 0$ in $\mathbb{R}^n\backslash\Omega_T$
and for any $\phi \in C_c^{\infty}\left(\Omega_T\right)$
       \end{enumerate}
$$
\begin{aligned}
  &-\iint_{\Omega_T} u \frac{\partial \phi}{\partial t}dxdt+\iint_{\Omega_T} \nabla u \cdot\nabla \phi\, dxdt+\int_0^T\int_{\mathbb{R}^{n}}\int_{\mathbb{R}^{n}} 
\frac{(u(x,t)-u(y,t))(\phi(x,t)-\phi(y,t))}{|x-y|^{n+2s}}dxdydt\\&\qquad\qquad=\iint_{\Omega_T} \frac{\phi}{u^{\gamma(x,t)}} d\nu+\iint_{\Omega_T}\phi\,d\mu.
\end{aligned}
$$ Finally, $u$ is the a.e. limit in $\Omega_T$ of a sequence $u_k\in C([0, T] ; L^2(\Omega)) \cap L^2(0, T ; W_0^{1, 2}(\Omega)) \cap L^{\infty}(\Omega_T)$ satisfying
$
u_k(0)=u_{0 k} \in L^{\infty}(\Omega)$ with $ u_{0 k} \rightarrow u_0$ in $L^1(\Omega)$.
   \end{definition}
   \begin{remark}
     We remark that \cref{weaksolu} is well stated. More precisely, if $\phi \in C_c^{\infty}(\Omega_T)$ and $K=\operatorname{supp} \phi$, since we have
\begin{equation*}
    \left|\iint_{\Omega_T}  \nabla u \cdot \nabla \phi \,d x dt\right| \leq \|\phi\|_{L^{\infty}(\Omega_T)}\|\nabla u\|_{L^1(K)}<\infty.
\end{equation*}
Moreover, combining \cref{embedding} and \cref{fracsolu}, it follows that
\begin{equation*}
    \left|\int_0^T\int_{\mathbb{R}^n} \int_{\mathbb{R}^n}\frac{(u(x,t)-u(y,t))(\phi(x,t)-\phi(y,t))}{|x-y|^{n+2s}} d x d ydt\right|<\infty.
\end{equation*}
Since $\phi \in C_c^{\infty}(\Omega_T)$ and $\mu$ is a non-negative Radon measure, therefore
\begin{equation*}
\iint_{\Omega_T} \phi \,d \mu<\infty   . 
\end{equation*}
Furthermore, since $\nu \in \overline{\mathcal{M}}_0^q(\Omega_T)$, so $\nu \in L^1(\Omega_T)+L^{q^\prime}(0,T;W^{-1, q^{\prime}}(\Omega))$ and due to the above property (1) along with that $\phi \in C_c^{\infty}(\Omega_T)$, we have $\frac{\phi}{u^{\delta(x,t)}} \in L^q(0,T;W_0^{1, q}(\Omega)) \cap L^{\infty}(\Omega_T)$. By keeping this fact in mind, with a little abuse of notation we denote
\begin{equation*}
    \iint_{\Omega_T} \frac{\phi}{u^{\delta(x,t)}} d \nu:=\left\langle\nu, \frac{\phi}{u^{\delta(x,t)}}\right\rangle_{L^1(\Omega_T)+L^{q^\prime}(0,T;W^{-1, q^{\prime}}(\Omega)), L^q(0,T:W_0^{1, q} (\Omega))\cap L^{\infty}(\Omega_T)}.
\end{equation*}
   \end{remark}
\begin{remark}{\label{measapx}}
    There are many ways to approximate a measure in $\overline{\mathcal{M}}_0^p(\Omega_T)$ looking for existence of solutions for problem \cref{mainproblem}; we will make the following choice: let
\begin{equation*}
    0\leq \nu^{\varepsilon}=f^{\varepsilon}-\operatorname{div}\left(G^{\varepsilon}\right),
\end{equation*}
where $f^{\varepsilon} \in C_c^{\infty}(\Omega_T)$ is a sequence of functions which converges to $f$ weakly in $L^1(\Omega)$, $G^{\varepsilon} \in C_c^{\infty}(\Omega_T)$ is a sequence of functions which converges to $G$ strongly in $\left(L^{p^{\prime}}(\Omega_T)\right)^n$. Notice that this approximation can be easily obtained via a standard convolution argument. We also assume
\begin{equation*}
    \left\|\nu^{\varepsilon}\right\|_{L^1(\Omega_T)} \leq C\|\nu\|_{\mathcal{M}(\Omega_T)}.
\end{equation*}
\end{remark}
Before stating the existence results, we define the condition $\left(P_1\right)$ below.\\
\textbf{Condition $\left(P_1\right)$:} We define a strip around the parabolic boundary by $(\Omega_T)_\delta=\{(x,t)\in\Omega_T:\operatorname{dist}((x,t),\Gamma_T)<\delta\}$ for $\delta>0$, where $\Gamma_T=(\Omega\times\{t=0\})\cup(\partial\Omega\times(0,T))$. We say that a continuous function $\gamma: \overline{\Omega}_T \rightarrow(0, \infty)$, satisfies the condition $(P_1)$, if there exist $\gamma^* \geq 1$ and $\delta>0$ such that $\gamma(x,t) \leq \gamma^*$ in $(\Omega_T)_\delta$

Our main existence results are stated as follows: 
\begin{theorem}{\label{exis1}}(Variable singular exponent)
    Assume $u_0\in L^1(\Omega)$, $\gamma\in C(\overline{\Omega}_T)$ and is locally Lipschitz continuous with respect to $x$ variable in $\Omega_T$ and satisfies the condition $(P_1)$ for some $\gamma^*\geq 1$ and $\delta>0$. Let $1<p<2-\frac{n}{n+1}$, $\nu\in\overline{\mathcal{M}}_0^p(\Omega_T)$ and is non-singular with respect to Lebesgue measure. Further assume that $\,\mu \in \mathcal{M}(\Omega_T)$ is non-negative. Then the problem \cref{mainproblem} admits a weak solution $u\in L^p_{\mathrm{loc}}(0,T;W^{1,p}_{\mathrm{loc}}(\Omega))\cap L^1(\Omega_T)$ such that 
    \begin{enumerate}
        \item[(i)] If $\gamma^*=1$, then $u\in L^p(0,T;W^{1,p}_0(\Omega))\cap L^\infty(0,T;L^1(\Omega))$.
        \item[(ii)] If $\gamma_*>1$, then $u\in L^p(t_0,T;W^{1,p}_{\mathrm{loc}}(\Omega))\cap L^\infty(0,T;L^1(\Omega))$ such that $T_l(u)\in L^2(t_0,T;W^{1,2}_{\mathrm{loc}}(\Omega))$, $T_l^\frac{\gamma^*+1}{2}(u)\in L^2(0,T;W^{1,2}_0(\Omega))$ for every $l>0$ and $0<t_0<T$.
    \end{enumerate}However, the locally Lipschitz condition is not needed when $0\leq \nu\in L^1(\Omega_T)$.
\end{theorem}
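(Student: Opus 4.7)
The plan is the standard approximation scheme built on (\ref{aproxproblem}): produce bounded solutions $u_k$ of the regularized problem, derive uniform a priori estimates, and pass to the limit. For each fixed $k$, the source $\nu_k(u_k+1/k)^{-\gamma(x,t)}+\mu_k$ is uniformly bounded, so existence of $u_k\in C([0,T];L^2(\Omega))\cap L^2(0,T;W^{1,2}_0(\Omega))\cap L^\infty(\Omega_T)$ follows from a Schauder fixed-point argument applied to the map $S:L^2(\Omega_T)\to L^2(\Omega_T)$, $S(w)=u$, where $u$ solves the \emph{linear} mixed Cauchy problem with the frozen source $\nu_k(|w|+1/k)^{-\gamma(x,t)}+\mu_k$. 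Existence of such $u$ follows from \cref{evolu} with $V=W^{1,2}_0(\Omega)$, $H=L^2(\Omega)$, the operator $-\Delta+(-\Delta)^s$ being monotone and hemicontinuous; compactness of $S$ is given by Aubin--Lions. Comparison gives $u_k\geq 0$ and the monotonicity $u_k\leq u_{k+1}$, since $s\mapsto (s+1/k)^{-\gamma}$ is decreasing in $k$.

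\textbf{Strict positivity and a priori bounds.} The pivotal qualitative step is the uniform local lower bound $u_k\geq c(\omega,t_1,t_2)>0$ on every $\omega\times(t_1,t_2)\subset\subset\Omega_T$. Since $\nu$ is non-singular, its absolutely continuous part is a non-trivial non-negative $L^1$ function; thus one can find a small open cylinder $E'\subset\Omega_T$ and $\eta>0$ with $\nu_k\geq\eta\chi_{E'}$ for $k$ large, and comparison with the solution of the corresponding purely local heat problem, whose strong minimum principle yields strict interior positivity, produces the desired bound (which is inherited by the mixed operator via the standard weak Harnack for $-\Delta+(-\Delta)^s$). With positivity at hand, the singular factor is locally $L^\infty$ uniformly in $k$, so Marcinkiewicz-type arguments \`a la Boccardo--Gallou\"et yield $u_k$ bounded in $L^p(0,T;W^{1,p}_{\mathrm{loc}}(\Omega))\cap L^\infty(0,T;L^1(\Omega))$ for any $1<p<2-n/(n+1)$. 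To handle the behavior near $\Gamma_T$ I would test with the truncation $T_l(u_k)$ to get $T_l(u_k)\in L^2_{\mathrm{loc}}(t_0,T;W^{1,2}_{\mathrm{loc}}(\Omega))$, and with $(T_l(u_k)+1/k)^{\gamma^*}-(1/k)^{\gamma^*}$, extended by zero across the parabolic boundary; condition $(P_1)$ makes this admissible in the strip $(\Omega_T)_\delta$ and standard manipulations then bound $|\nabla T_l^{(\gamma^*+1)/2}(u_k)|$ in $L^2$ globally. When $\gamma^*=1$ this test function is comparable to $u_k$ itself, upgrading the local estimate to the global $L^p(0,T;W^{1,p}_0(\Omega))$ bound of part (i).

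\textbf{Passage to the limit.} Up to a subsequence $u_k\to u$ a.e. and weakly in $L^p(0,T;W^{1,p}_{\mathrm{loc}}(\Omega))$ by Aubin--Lions compactness, and $u\geq c(\omega,t_1,t_2)>0$ locally by monotonicity. Narrow convergence $\mu_k\to\mu$ handles the $\mu$-term. For the singular term I would split $\nu=f-\operatorname{div}G$ with $f\in L^1(\Omega_T)$, $G\in(L^{p'}(\Omega_T))^n$ and the approximations described in \cref{measapx}; the local positivity makes $f_k\phi(u_k+1/k)^{-\gamma(x,t)}$ locally dominated, so the $L^1$ part passes by \cref{limits}. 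For the distributional part, one writes $\phi(u_k+1/k)^{-\gamma(x,t)}$ and computes its gradient, which contains the term $-(\log(u_k+1/k))\nabla\gamma\,\phi(u_k+1/k)^{-\gamma}$; this is well-defined because of the locally Lipschitz regularity of $\gamma(\cdot,t)$ in $x$ and the uniform positivity of the limit, pairing legitimately with $G_k\to G$ in $(L^{p'})^n$. The non-local term converges thanks to \cref{fracsolu}, the $W^{1,p}$ compactness and dominated convergence. Assembling all pieces yields a weak solution in the sense of \cref{weaksolu} with the regularity claimed in (i) or (ii).

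\textbf{Main obstacle.} The hard part is the interaction between the variable exponent $\gamma(x,t)$ and the distributional component $-\operatorname{div}G$ of $\nu$: passing to the limit there forces the chain rule $\nabla\bigl(u^{-\gamma(x,t)}\bigr)=-\gamma u^{-\gamma-1}\nabla u - u^{-\gamma}(\log u)\nabla\gamma$, which only makes sense once \emph{both} the locally Lipschitz hypothesis on $\gamma$ and the uniform strict positivity of $u_k$ have been secured. Matching this with the global/local dichotomy between (i) and (ii) requires a careful choice of test functions split between the strip $(\Omega_T)_\delta$, where $(P_1)$ is used, and its complement, where the positive lower bound absorbs the singularity. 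When $\nu\in L^1(\Omega_T)$ the distributional term is absent, the chain rule in $\gamma$ is not needed, and therefore the Lipschitz assumption can be dropped.
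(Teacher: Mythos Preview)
Your overall strategy matches the paper's proof closely: approximation via \cref{aproxproblem}, Schauder fixed point for the regularized problems, uniform positivity through comparison and weak Harnack, Boccardo--Gallou\"et type estimates, and passage to the limit with the chain rule for $u^{-\gamma(x,t)}$ requiring the Lipschitz hypothesis on $\gamma$ precisely for the $\operatorname{div}G$ piece. Your identification of the main obstacle is exactly what the paper isolates.

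Two points deserve attention. First, the monotonicity claim $u_k\le u_{k+1}$ is neither used in the paper nor clearly true under the approximations actually employed: since $\nu_k$, $\mu_k$ and $u_{0k}$ are general smoothings (not simply $T_k$ of fixed data), the right-hand sides need not be ordered in $k$. The paper avoids this entirely; the uniform lower bound $u_k\ge c(\omega,t_0)$ comes from comparing each $u_k$ with a \emph{fixed} subsolution $w$ solving $w_t-\Delta w+(-\Delta)^s w=T_1(h)/(w+1)^{\gamma}$ with zero initial datum, where $h$ is the Radon--Nikodym density of the absolutely continuous part $\nu_a$, and then applying weak Harnack to $w$. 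Your sketch (``$\nu_k\ge\eta\chi_{E'}$ and compare with the heat flow'') is morally equivalent, but you should drop the appeal to monotonicity, which plays no role. Second, the paper first separates $\nu=\nu_a+\nu_s$ and writes only $\nu_s=f-\operatorname{div}G$, approximating $\nu_k=T_k(h)+f_k-\operatorname{div}G_k$; this is what guarantees a non-trivial $L^1$ piece $T_k(h)$ in every approximation and hence the comparison subsolution above. The test functions the paper uses are $T_l^{\gamma^*}(u_k)$ and $T_1(G_l(u_k))$ rather than your shifted power $(T_l(u_k)+1/k)^{\gamma^*}-(1/k)^{\gamma^*}$, but these lead to the same estimates.
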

    Our next result tells that when $\mu$ turns out to be a function and the function $\gamma$ is constant, then we can relax the condition on $\nu$.
    \begin{theorem}{\label{exis2}}(Constant singular exponent)
    Let  $\gamma(x,t)\equiv \gamma$ be a constant, $u_0\in L^{\gamma+1}(\Omega)$ and define $$q=\begin{cases}
    2, \text{ if } \gamma\geq 1\\ \frac{(\gamma+1)(n+2)}{(n+\gamma+1)}\text{ if } \gamma< 1.\end{cases}$$Suppose either $\mu\in L^{\gamma+1}\left(0,T;L^{\frac{n(\gamma+1)}{n+2\gamma}}\Omega)\right)$ or $\mu\in L^r(\Omega_T)$ where $r=\frac{(n+2)(\gamma+1)}{n+2(\gamma+1)}$ is a non-negative function in $\Omega_T$. $\nu\in\overline{\mathcal{M}}_0^q(\Omega_T)$ and $\nu$ is non-singular with respect to Lebesgue measure.
       Then the equation \cref{mainproblem} admits a weak solution $u\in L^q_{\mathrm{loc}}(0,T;W^{1,q}_{\mathrm{loc}}(\Omega))\cap L^1(\Omega_T)$ such that
\begin{enumerate}
    \item [(i)] If $0<\gamma\leq 1$, then $u\in L^q(0,T:W_0^{1,q}(\Omega))\cap L^\infty(0,T;L^1(\Omega))$. Indeed $u\in L^\infty(0,T;L^2(\Omega))$ if $\gamma=1$.
    \item[(ii)] If $\gamma>1$, then $u\in L^q(t_0,T;W_{\mathrm{loc}}^{1,q}(\Omega))\cap L^\infty(0,T;L^{\gamma+1}(\Omega))$, $\forall$ $0<t_0<T$, so that $u^\frac{\gamma+1}{2}\in L^q(0,T;W_0^{1,q}(\Omega))$. 
\end{enumerate}
    \end{theorem}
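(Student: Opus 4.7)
The plan is to obtain the solution as the monotone limit of bounded approximate solutions $u_k$ of \cref{aproxproblem}. For each fixed $k$, existence of $u_k \in C([0,T]; L^2(\Omega)) \cap L^2(0,T; W_0^{1,2}(\Omega)) \cap L^\infty(\Omega_T)$ follows from \cref{evolu} combined with a Schauder fixed-point argument: the truncated source $\nu_k/(|\sigma|+1/k)^\gamma + \mu_k$ is bounded in $L^\infty(\Omega_T)$, so the iteration map is compact on $L^2(\Omega_T)$. Weak comparison yields $0 < u_k \leq u_{k+1}$ a.e., so $u := \lim_k u_k$ is well defined. Before the a priori estimates, I would establish property (1) of \cref{weaksolu}: using the Lebesgue decomposition $\nu = \nu_a + \nu_s$ and the non-singularity hypothesis, there is a set $E$ of positive measure on which the density of $\nu_a$ is bounded below; comparing $u_k$ with the solution of the regular linear problem driven by a piece of $\nu_a$ supported in $E$ yields a uniform local lower bound $u_k \geq c(\omega, t_1, t_2) > 0$ on each $\omega \times (t_1, t_2) \subset\subset \Omega_T$.

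The a priori estimates split by the value of $\gamma$. For $\gamma = 1$, testing with $u_k$ and using $u_k \nu_k/(u_k+1/k) \leq \nu_k$ together with $\|\nu_k\|_{L^1} \leq C\|\nu\|_{\mathcal{M}}$ handles the singular term, while the $\mu$ term is absorbed by H\"older and \cref{gagliardo} at the exponent tuned to $r = 2(n+2)/(n+4)$; this yields uniform bounds in $L^\infty(0,T;L^2(\Omega)) \cap L^2(0,T;W_0^{1,2}(\Omega))$. For $\gamma > 1$, the natural test is $u_k^{\gamma}$ (admissible since $u_k \in L^\infty$ with zero trace): the parabolic part gives $\tfrac{1}{\gamma+1}\tfrac{d}{dt}\|u_k\|_{L^{\gamma+1}}^{\gamma+1}$, the diffusion part produces $\tfrac{4\gamma}{(\gamma+1)^2}\|\nabla u_k^{(\gamma+1)/2}\|_{L^2}^2$, the nonlocal form is non-negative by the pointwise inequality $(a-b)(a^\gamma-b^\gamma)\geq 0$, the singular contribution is bounded by $\|\nu_k\|_{L^1}$, and the $\mu_k$ term is controlled by the duality $L^{\gamma+1}(L^{n(\gamma+1)/(n+2\gamma)})\times L^{(\gamma+1)/\gamma}(L^{n(\gamma+1)/(\gamma(n-2))})$ combined with the Sobolev embedding of $u_k^{(\gamma+1)/2}$. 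This gives $u_k$ in $L^\infty(L^{\gamma+1})$ and $u_k^{(\gamma+1)/2}$ in $L^2(W_0^{1,2})$; localizing via the lower bound upgrades this to $u_k \in L^2_{\mathrm{loc}}(t_0,T;W_{\mathrm{loc}}^{1,2}(\Omega))$. For $0 < \gamma < 1$, testing with $u_k$ and the elementary bound $u_k^{1-\gamma}\leq 1+u_k$ gives $L^\infty(L^2)$, and the global $L^q(W_0^{1,q})$ estimate with $q=(\gamma+1)(n+2)/(n+\gamma+1)$ follows from a Boccardo--Gallou\"et truncation argument applied to $T_l(u_k)$.

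The final task is passage to the limit. Aubin--Lions compactness upgrades the monotone convergence to $u_k \to u$ strongly in $L^1(\Omega_T)$ and a.e., with $\nabla u_k \rightharpoonup \nabla u$ weakly in the appropriate Lebesgue space; the linear diffusion, the nonlocal form, and the $\mu_k$ term (via narrow convergence together with \cref{limits}) pass routinely, and the initial trace is recovered from the uniform $L^\infty(L^{\gamma+1})$ bound and $u_{0k}\to u_0$ in $L^{\gamma+1}(\Omega)$. The main obstacle is the singular source when $\nu$ is only a diffuse measure: using the representation $\nu = f - \operatorname{div}(G)$ from \cref{representation} approximated as in \cref{measapx}, I would split the singular integral into an $f$-part, handled by \cref{limits} applied to the sequence $\phi/(u_k+1/k)^\gamma$ whose a.e.\ convergence is granted by the local lower bound, and a $G$-part which, after transferring the gradient to $\phi(u_k+1/k)^{-\gamma}$ via the product rule, is controlled by the strong convergence $G^\varepsilon \to G$ in $L^{q'}$ combined with the weak convergence of $\nabla u_k$ in $L^q$ on the compact support of $\phi$. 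Identifying these limits consistently—particularly for $\gamma > 1$, where $q = 2$ is critical and leaves essentially no integrability margin—is the most delicate point and will require sharpening the convergence of $\nabla u_k$ to strong convergence on compact sets (via an energy monotonicity argument patterned on \cite{oliva}).
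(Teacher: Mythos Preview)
Your overall scheme is right in spirit (approximate, estimate, pass to the limit), and for $\gamma\geq 1$ your test function $u_k^\gamma$ matches the paper exactly. There are, however, two genuine gaps.

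First, the monotonicity $u_k\leq u_{k+1}$ is not available here. Since $\nu\in\overline{\mathcal{M}}_0^q(\Omega_T)$ is a diffuse measure rather than a function, the approximation compatible with its structure is $\nu_k=T_k(h)+f_k-\operatorname{div}(G_k)$ as in \cref{measapx}, where $f_k,G_k$ arise by convolution of the singular part; these approximants are \emph{not} monotone in $k$. The paper never uses monotonicity: convergence of $u_k$ comes from a pure compactness argument (\cref{convergenceinL1}, via Simon's lemma). You should drop the monotone-limit claim and rely on a.e.\ convergence through compactness instead.

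Second, and more seriously, your treatment of $0<\gamma<1$ does not close. Testing with $u_k$ yields on the right $\int \nu_k u_k/(u_k+1/k)^\gamma\leq\int\nu_k(u_k+1/k)^{1-\gamma}$; the bound $u_k^{1-\gamma}\leq 1+u_k$ leaves you with $\int\nu_k u_k$, but $\nu_k$ is only bounded in $L^1(\Omega_T)$, so this cannot be absorbed into $\|\nabla u_k\|_{L^2}^2$ (Sobolev would require $\nu_k\in L^{(2^*)'}$). In particular you cannot reach $L^\infty(0,T;L^2)$ this way---and indeed the theorem only claims $L^\infty(0,T;L^1)$ for $\gamma<1$. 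The paper's device is to test with $\bigl((u_k+\epsilon)^\gamma-\epsilon^\gamma\bigr)$: then the singular term is bounded by $\|\nu_k\|_{L^1}$ outright, the diffusion term becomes $\tfrac{4\gamma}{(\gamma+1)^2}\int|\nabla(u_k+\epsilon)^{(\gamma+1)/2}|^2$, and the $\mu$-term is handled by the same duality you used for $\gamma>1$. The $L^q(W_0^{1,q})$ bound then follows not from Boccardo--Gallou\"et truncations but from the single H\"older step
\[
\iint_{\Omega_T}|\nabla u_k|^q\leq\Bigl(\iint_{\Omega_T}\frac{|\nabla u_k|^2}{(u_k+\epsilon)^{1-\gamma}}\Bigr)^{q/2}\Bigl(\iint_{\Omega_T}(u_k+\epsilon)^{\frac{(1-\gamma)q}{2-q}}\Bigr)^{\frac{2-q}{2}},
\]
with $q$ chosen so that $\frac{(1-\gamma)q}{2-q}=\frac{(\gamma+1)(n+2)}{n}$, a quantity already controlled by Gagliardo--Nirenberg.

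Finally, you do not need strong convergence of $\nabla u_k$ to pass to the limit in the $G$-part for $\gamma>1$: on $\operatorname{supp}\phi$ one has $u_k\geq c>0$ uniformly, so $\phi(u_k+1/k)^{-\gamma-1}$ is bounded and converges a.e., whence $\phi(u_k+1/k)^{-\gamma-1}\nabla u_k\rightharpoonup\phi u^{-\gamma-1}\nabla u$ weakly in $L^2$, and this pairs against the strong convergence $G_k\to G$ in $L^{q'}=L^2$. The anticipated difficulty does not arise.
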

    \begin{remark}
        The results obtained in \cref{exis1}, \cref{exis2} holds for equation involving a local operator only. The result obtained in \cref{exis2} perfectly fits with the non-existence result obtained in \cref{nonexistence}.
    \end{remark}
    \begin{remark}{\label{boundarybeha}}
       Notice that the results in \cref{exis1} and \cref{exis2} implies in any case, $T_l^{\eta}(u)\in L^m(0,T;W^{1,m}_0(\Omega))$ for all $l>0$ and for some $\eta\geq 1$ and $m>1$. Hence one has that, for almost every $t \in(0, T)$ and for every $l>0$, $T_l^{\eta}(u(x, t)) \in W_0^{1, m}(\Omega)$. If $\eta=1$ then $T_l(u(x, t)) \in W_0^{1, m}(\Omega)$ is known to imply
       \begin{equation}{\label{mid}}
           \lim _{\epsilon \rightarrow 0} \frac{1}{\epsilon} \int_{\Omega_\epsilon} T_l(u)=0 \text{ for a.e. }t \in(0, T), \forall l>0,
       \end{equation} where $\Omega_\epsilon=\{x\in\Omega:\operatorname{dist}\{x,\partial\Omega\}<\epsilon\}$. Otherwise, if $\eta>1$, one can write
\begin{equation*}
    \frac{1}{\epsilon} \int_{\Omega_\epsilon} T_l(u) \leq \frac{1}{\epsilon}\left(\int_{\Omega_\epsilon} T_l^{\eta}(u)\right)^{\frac{1}{\eta}}\left|\Omega_\epsilon\right|^{\frac{1}{\eta^\prime}}=\left(\frac{\left|\Omega_\epsilon\right|}{\epsilon}\right)^{\frac{1}{\eta^\prime}}\left(\frac{1}{\epsilon} \int_{\Omega_\epsilon} T_l^{\eta}(u)\right)^{\frac{1}{\eta}},
\end{equation*}
and taking $\epsilon \rightarrow 0$ in the previous, one has \cref{mid}. This also gives one notion of boundary behavior for the solutions.
    \end{remark}
    \subsubsection{Regularity results} For $1\leq q,r\leq \infty$, define the sets 
    \begin{equation*}\begin{array}{l}
         A=\{(q,r): 1< q,r\leq \infty ,\frac{1}{r}+\frac{n}{2q}<1\},\smallskip\\B=\{(q,r): 1< q,r< \infty ,\frac{1}{r}+\frac{n}{2q}>1, \frac{1}{r}<\frac{n}{n-2}\frac{1}{q}-\frac{2}{n-2}\},\smallskip\\ C=\{(q,r): 1< q,r< \infty ,\frac{1}{r}+\frac{n}{2q}>1, \frac{1}{r}\geq \frac{n}{n-2}\frac{1}{q}-\frac{2}{n-2}\}.
    \end{array}
    \end{equation*}Further, we define the following conditions 
    \begin{enumerate}
        \item[$(C_1)$] Let $\gamma(x,t)\equiv \gamma$ is a constant. If $(q,r)\in B$, then $q> \left(\frac{2^*}{1-\gamma}\right)^\prime$, and if $(q,r)\in C$, then $r>\frac{2}{1+\gamma}$.\item[$(C_2)$] Let $\gamma$ be a function and satisfies condition $(P_1)$. Then if $(q,r)\in B$, then $q> \frac{n(\gamma^*+1)}{n+2\gamma^*}$, and if $(q,r)\in C$, then $r>{1+\gamma^*}$.\item[$(C_3)$] If $(q,r)\in B$, then $\frac{1}{q}\leq \frac{1}{2}+\frac{1}{n}$ and if $(q,r)\in C$, then $ 1<\frac{1}{r}+\frac{n}{2q}\leq 1+\frac{n}{4}$.
    \end{enumerate} We now state our regularity results below. 
 \begin{theorem}{\label{regu1}}(Constant singular exponent) Let $\gamma: \overline{\Omega}_T \rightarrow[1,\infty)$ be a constant, $u_0\in L^\infty(\Omega)$ and $\nu \in L^{r_1}(0,T;L^{q_1}(\Omega))$, $ \mu\in L^{r_2}(0,T;L^{q_2}(\Omega))$ for some $r_1,r_2, q_1,q_2 \geq 1$ be two non-negative functions. Suppose $u$ is the weak solution obtained in \cref{exis1}. Then the following conclusions hold:
    \begin{enumerate}
        \item If $(q_1,r_1),(q_2,r_2)\in A$, then $u\in L^\infty(\Omega_T)$. \item If $(q_1,r_1)\in B\text{ or } C,(q_2,r_2)\in A$, then $u\in L^\infty(0,T;L^{2\sigma_1}(\Omega))\cap L^{2\sigma_1}(0,T;L^{2^*\sigma_1}(\Omega))$ where \begin{equation*}
\sigma_1=\left\{\begin{array}{lll}
\frac{q_1(n-2 )(\gamma+1)}{2(n-2 q_1 )} & \text { if } (q_1,r_1)\in B, \smallskip\\
\frac{q_1 r_1 n(\gamma+1)}{2(n r_1+2q_1-2 q_1 r_1)} & \text { if } (q_1,r_1)\in C.
\end{array}\right.    
\end{equation*}
        \item If $(q_1,r_1)\in A,(q_2,r_2)\in B\text{ or }C$ and $(C_3)$ holds, then $u\in L^\infty(0,T;L^{2\sigma_2}(\Omega))\cap L^{2\sigma_2}(0,T;L^{2^*\sigma_2}(\Omega))$ where \begin{equation*}
\sigma_2=\left\{\begin{array}{lll}
\frac{q_2(n-2 )}{2(n-2 q_2 )} & \text { if } (q_2,r_2)\in B, \smallskip\\
\frac{q_2 r_2 n}{2(n r_2+2q_2-2 q_2 r_2)} & \text { if } (q_2,r_2)\in C.
\end{array}\right.    
\end{equation*}\item If $(q_1,r_1)\in B\text{ or } C,(q_2,r_2)\in B\text{ or }C$ and $(C_3)$ above holds, then $u\in L^\infty(0,T;L^{2\sigma}(\Omega))\cap L^{2\sigma}(0,T;L^{2^*\sigma}(\Omega))$ where $\sigma=\min\{\sigma_1,\sigma_2\}$.
    \end{enumerate}  
    \end{theorem}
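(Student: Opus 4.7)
The overall strategy is to derive uniform a priori estimates on the approximating sequence $\{u_k\}_k$ from \cref{aproxproblem} in each of the four regimes, then transfer the bounds to the limit $u$ via a.e.\ convergence and Fatou's lemma; the convergence itself is already in hand from the proof of \cref{exis1}. Throughout I would exploit the hypothesis that $\gamma$ is a constant with $\gamma\geq 1$, which on the super-level set $\{u_k\geq 1\}$ gives the pointwise majoration $\tfrac{\nu_k}{(u_k+1/k)^{\gamma}}\leq \nu_k$. For case (1), where both $(q_i,r_i)\in A$, I would decompose $u_k\leq 1+(u_k-1)^+$ and note that $w_k:=(u_k-1)^+$ satisfies a weak inequality of the form $(w_k)_t-\Delta w_k+(-\Delta)^s w_k\leq \nu_k+\mu_k$ on $\Omega_T$. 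The Aronson-Serrin/Moser iteration scheme adapted to the mixed operator (as in \cite{Summability,pap}) under the hypothesis $\tfrac{1}{r_i}+\tfrac{n}{2q_i}<1$ for $i=1,2$ then yields $\|w_k\|_{L^{\infty}(\Omega_T)}\leq C$ independently of $k$, whence $u\in L^{\infty}(\Omega_T)$.

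For cases (2)--(4), I would test \cref{aproxproblem} with the admissible function $u_k^{2\sigma-1}$ for a case-dependent exponent $\sigma\geq \tfrac{\gamma+1}{2}$. Discarding the non-negative nonlocal contribution on the left and using the identity $\nabla u_k\cdot\nabla(u_k^{2\sigma-1})=\tfrac{2\sigma-1}{\sigma^2}|\nabla u_k^\sigma|^2$, integration on $(0,t)$ produces the energy inequality
\begin{equation*}
\frac{1}{2\sigma}\int_\Omega u_k^{2\sigma}(x,t)\,dx+\frac{2\sigma-1}{\sigma^2}\int_0^t\!\!\int_\Omega|\nabla u_k^\sigma|^2\,dx\,ds\,\leq\,\int_0^t\!\!\int_\Omega \nu_k\,u_k^{2\sigma-1-\gamma}\,dx\,ds+\int_0^t\!\!\int_\Omega \mu_k\,u_k^{2\sigma-1}\,dx\,ds+C_0,
\end{equation*}
with $C_0$ controlled by $\|u_0\|_{L^{\infty}}$. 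The Sobolev embedding $H^1_0(\Omega)\hookrightarrow L^{2^*}(\Omega)$ converts the gradient term into $u_k\in L^{2\sigma}(0,T;L^{2^*\sigma}(\Omega))$, while the time-slice term gives $u_k\in L^{\infty}(0,T;L^{2\sigma}(\Omega))$. In case (2) I would apply H\"older to $\iint\nu_k u_k^{2\sigma-1-\gamma}$ with the pair $(r_1,q_1)$ and choose $\sigma=\sigma_1$ precisely so that the resulting exponent of $u_k$ lands on the parabolic Gagliardo-Nirenberg scaling line (\cref{gagliardo}) determined by these two energy spaces; the $\mu$-term is then controlled trivially because $(q_2,r_2)\in A$. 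Case (3) is symmetric, with $\sigma_2$ produced by the H\"older estimate on the $\mu$-integral and condition $(C_3)$ ensuring that the algebraic balance closes. In case (4) both right-hand-side integrals need the H\"older-interpolation treatment simultaneously, forcing $\sigma\geq\max\{\sigma_1,\sigma_2\}$; the statement records the weaker but uniformly valid choice $\sigma=\min\{\sigma_1,\sigma_2\}$.

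The main obstacle will be the case-by-case exponent bookkeeping when $(q_i,r_i)\in C$, where the temporal integrability $r_i$ (not the spatial $q_i$) is the binding constraint. Here the parabolic Gagliardo-Nirenberg step must combine $L^{\infty}_tL^{2\sigma}_x$ with $L^{2\sigma}_tL^{2^*\sigma}_x$ in a non-trivial interpolation to match the H\"older-dual of $L^{r_i}_tL^{q_i}_x$, and a Young's inequality is required to split the right-hand side into a piece absorbable into the gradient energy on the left and a Gronwall-type tail $\int_0^t\!\int u_k^{2\sigma}\,dx\,ds$ controllable by the time-sup term. Condition $(C_3)$ is exactly the sharp compatibility that makes this absorption feasible, forcing the interpolated power of $u_k$ on the right to be no larger than $2\sigma$. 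Once the uniform bounds on $u_k$ in the indicated spaces are obtained, a.e.\ convergence and Fatou's lemma in each factor transfer them to $u$, completing the proof in all four cases.
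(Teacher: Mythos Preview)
The paper takes a different route: instead of a direct energy estimate on $u_k$, it introduces auxiliary sequences $v_k$ solving the singular $\nu$-only problem \cref{aproxproblembdd2} and $w_k$ solving the non-singular $\mu$-only problem \cref{aproxproblembdd3}, proves the pointwise comparison $u_k\le v_k+w_k$ in \cref{bounded1}, and then bounds $v_k$ and $w_k$ separately via \cref{bounded2} and \cref{bounded4}. The four cases are simply read off from this inequality and Fatou's lemma.

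Your direct approach has a genuine gap in cases (3) and (4) when $\gamma>1$. Testing the full equation with $u_k^{2\sigma-1}$ produces the singular contribution $\iint \nu_k\,u_k^{2\sigma-1}(u_k+\tfrac{1}{k})^{-\gamma}$, and your displayed upper bound $\iint \nu_k\,u_k^{2\sigma-1-\gamma}$ is only useful when $2\sigma-1-\gamma\ge 0$, i.e.\ $\sigma\ge(\gamma+1)/2$; for smaller $\sigma$ the integrand on $\{u_k\text{ small}\}$ behaves like $k^{\gamma-(2\sigma-1)}$ and no uniform-in-$k$ estimate follows. In case (3) you take $\sigma=\sigma_2$, but condition $(C_3)$ only guarantees $\sigma_2\ge 1$, which for any $\gamma>1$ may fall strictly below $(\gamma+1)/2$. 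The same obstruction hits case (4) whenever $\min\{\sigma_1,\sigma_2\}=\sigma_2<(\gamma+1)/2$. The paper's decoupling is precisely what avoids this: the singularity only enters the $v_k$-equation, where the energy exponent $\sigma_1$ automatically satisfies $\sigma_1>(\gamma+1)/2$, while the $w_k$-equation carries no singular term so any $\sigma_2\ge 1$ works. Incidentally, your remark in case (4) that closing both integrals simultaneously ``forces $\sigma\ge\max\{\sigma_1,\sigma_2\}$'' has the direction reversed: each right-hand term is absorbable only for $\sigma\le\sigma_i$, so the joint constraint is $\sigma\le\min\{\sigma_1,\sigma_2\}$.
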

    \begin{theorem}{\label{regu2}}(Constant singular exponent) Let $\gamma: \overline{\Omega}_T \rightarrow(0,1)$ be a constant, $u_0\in L^\infty(\Omega)$ and $\nu \in L^{r_1}(0,T;L^{q_1}(\Omega))$, $ \mu\in L^{r_2}(0,T;L^{q_2}(\Omega))$ for some $r_1,r_2, q_1,q_2 \geq 1$ be two non-negative functions. Suppose $u$ is the weak solution obtained in \cref{exis1}. Then the following conclusions hold:
    \begin{enumerate}
        \item If $(q_1,r_1),(q_2,r_2)\in A$, then $u\in L^\infty(\Omega_T)$. \item If $(q_1,r_1)\in B\text{ or } C,(q_2,r_2)\in A$ and $(C_1)$ holds, then $u\in L^\infty(0,T;L^{2\sigma_1}(\Omega))\cap L^{2\sigma_1}(0,T;L^{2^*\sigma_1}(\Omega))$ where \begin{equation*}
\sigma_1=\left\{\begin{array}{lll}
\frac{q_1(n-2 )(\gamma+1)}{2(n-2 q_1 )} & \text { if } (q_1,r_1)\in B, \smallskip\\
\frac{q_1 r_1 n(\gamma+1)}{2(n r_1+2q_1-2 q_1 r_1)} & \text { if } (q_1,r_1)\in C.
\end{array}\right.    
\end{equation*}
        \item If $(q_1,r_1)\in A,(q_2,r_2)\in B\text{ or }C$ and $(C_3)$ holds, then $u\in L^\infty(0,T;L^{2\sigma_2}(\Omega))\cap L^{2\sigma_2}(0,T;L^{2^*\sigma_2}(\Omega))$ where \begin{equation*}
\sigma_2=\left\{\begin{array}{lll}
\frac{q_2(n-2 )}{2(n-2 q_2 )} & \text { if } (q_2,r_2)\in B, \smallskip\\
\frac{q_2 r_2 n}{2(n r_2+2q_2-2 q_2 r_2)} & \text { if } (q_2,r_2)\in C.
\end{array}\right.    
\end{equation*}\item If $(q_1,r_1)\in B\text{ or } C,(q_2,r_2)\in B\text{ or }C$ and $(C_1)$, $(C_3)$ holds, then $u\in L^\infty(0,T;L^{2\sigma}(\Omega))\cap L^{2\sigma}(0,T;L^{2^*\sigma}(\Omega))$ where $\sigma=\min\{\sigma_1,\sigma_2\}$.
    \end{enumerate}
    \end{theorem}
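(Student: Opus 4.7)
The plan is to argue by approximation, working with the sequence $u_k$ solving \cref{aproxproblem} with bounded data $\nu_k,\mu_k \in L^\infty(\Omega_T)$. Since the weak solution $u$ is obtained as the a.e. limit of $u_k$, suitable $k$-uniform a priori estimates on $u_k$ will pass to $u$ by Fatou's lemma and weak lower semicontinuity. Throughout, the nonlocal term produces a non-negative contribution in the standard Gagliardo seminorm on any test by an increasing function of $u_k$, so it can be dropped when deriving upper bounds; we only need to control the parabolic part together with the singular and source terms.

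For case (1), where both $(q_i,r_i) \in A$ are above the Aronson--Serrin threshold, I would use a Stampacchia level-set argument by testing with $G_l(u_k)^{2\beta+1}$ for $l\geq 1$ and $\beta$ large. On $\{u_k>l\}$ one has $(u_k+1/k)^{-\gamma}\leq l^{-\gamma}\leq 1$, so the singular nonlinearity is dominated by $\nu_k \in L^{r_1}(L^{q_1})$ and the equation reduces to a standard parabolic inequality with right-hand side in the Aronson--Serrin class, yielding $\|u_k\|_{L^\infty(\Omega_T)}\leq C$. For cases (2)--(4), I would instead test with $\phi_k=(u_k+1/k)^{2\beta+1}-(1/k)^{2\beta+1}$ to obtain the inequality
\begin{equation*}
\frac{1}{2\beta+2}\,\partial_t\!\int_\Omega (u_k+1/k)^{2\beta+2}\,dx + c_\beta\!\int_\Omega |\nabla (u_k+1/k)^{\beta+1}|^2\,dx \leq \iint \nu_k(u_k+1/k)^{2\beta+1-\gamma} + \iint \mu_k(u_k+1/k)^{2\beta+1}.
\end{equation*}
The key point is that, since $\gamma<1$, the singular term has the \emph{super-linear} power $2\beta+1-\gamma>2\beta$, so it can be treated as an ordinary forcing in a power of $(u_k+1/k)^{\beta+1}$. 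Integrating in time and applying Sobolev embedding $H^1_0\hookrightarrow L^{2^*}$ in space together with $L^\infty(L^2)$ control in time yields mixed parabolic norms of $(u_k+1/k)^{\beta+1}$, which I interpolate via \cref{gagliardo} to land in $L^{2\sigma}(L^{2^*\sigma})\cap L^\infty(L^{2\sigma})$ after choosing $\beta$ so that the exponents match. The optimal $\beta$ gives exactly the values $\sigma_1,\sigma_2$ in the statement.

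The role of the conditions $(C_1)$ and $(C_3)$ is to make the absorption argument feasible. Specifically, for the singular term I apply Hölder in space-time with exponents $(r_1,q_1)$ on $\nu_k$ and the conjugate pair on $(u_k+1/k)^{2\beta+1-\gamma}$; condition $(C_1)$ (namely $q_1>(2^*/(1-\gamma))'$ in the $B$ regime and $r_1>2/(1+\gamma)$ in the $C$ regime) is precisely the threshold at which the resulting power of $(u_k+1/k)^{\beta+1}$ is below the available Sobolev-interpolated norm, allowing Young's inequality to absorb it into the energy on the left. Condition $(C_3)$ plays the analogous role for the forcing $\mu_k$. In cases (3) and (4), where $\mu_k$ is also below the Aronson--Serrin threshold, one test function must simultaneously handle both terms, so the dominating exponent $\sigma=\min(\sigma_1,\sigma_2)$ emerges naturally.

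The main obstacle is the simultaneous balancing of the two subcritical sources in case (4): the test function exponent $\beta$ must be chosen so that both Hölder absorptions close without forcing a contradictory choice. This is handled by noticing that the singular term, by virtue of $\gamma<1$, is effectively a perturbation of a pure forcing at power $2\beta+1-\gamma$, and thus imposes a constraint of the same form as the $\mu_k$-term. Taking the smaller of the two admissible $\beta$'s yields $\sigma=\min(\sigma_1,\sigma_2)$ and closes the estimate. Finally, passage to the limit $k\to\infty$ uses the a.e. convergence of $u_k$ from \cref{exis1}, Fatou's lemma for the $L^\infty(L^{2\sigma})$ bound, and weak lower semicontinuity in $L^{2\sigma}(L^{2^*\sigma})$.
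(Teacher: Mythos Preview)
Your approach is viable in spirit but takes a genuinely different route from the paper and glosses over the step that the paper's method handles most cleanly. The paper does \emph{not} test the full equation for $u_k$ with a single power $(u_k+1/k)^{2\beta+1}$. Instead it invokes a comparison principle (\cref{bounded1}): one introduces the auxiliary solutions $v_k$ of the purely singular problem \cref{aproxproblembdd2} (with $\mu_k\equiv0$) and $w_k$ of the linear problem \cref{aproxproblembdd3} (with forcing $T_k(\mu)$ and no singular term), and shows by testing the difference equation with $(u_k-v_k-w_k)_+$ that $u_k\le v_k+w_k$ pointwise. Then \cref{bounded2} and \cref{bounded4} give completely independent a~priori estimates on $v_k$ and $w_k$ in the spaces indexed by $\sigma_1$ and $\sigma_2$ respectively, and the conclusions for $u_k$ follow by taking the worse (i.e.\ $\min$) of the two indices; Fatou passes the bound to $u$.

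What this buys is exactly the decoupling you are trying to achieve by hand. In your cases (2) and (3) you must absorb, with the same test exponent $\beta+1=\sigma_1$ (resp.\ $\sigma_2$), a term coming from data in the Aronson--Serrin regime $A$; this is not the setting in which the power test in \cref{bounded4} is calibrated, and you do not explain how the absorption closes (e.g.\ when $q_2\ge n/2$ the ``optimal'' $\sigma_2$ from the $B$-analysis is meaningless). In case (4) your claim that ``taking the smaller $\beta$'' closes both terms is correct for the subordinate singular term once one checks that $(2\sigma-1-\gamma)q_1'\le 2^*\sigma$ whenever $\sigma\le\sigma_1$ (which follows since $q_1<n/2$ in region $B$), but you omit this verification and the analogous one in region $C$. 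The paper's comparison bypasses all of this: each of $v_k,w_k$ sees only one forcing term, the test exponent is chosen optimally for that single term, and the mixed cases (2)--(4) require no additional interpolation argument.
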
\begin{theorem}{\label{regu3}}(Variable singular exponent) Let $\gamma: \overline{\Omega}_T \rightarrow(0,\infty)$ be a continuous function satisfying the condition $(P_1)$ for some $\gamma^*\geq 1$ and $\delta>0$, $u_0\in L^\infty(\Omega)$ and $\nu \in L^{r_1}(0,T;L^{q_1}(\Omega))$, $ \mu\in L^{r_2}(0,T;L^{q_2}(\Omega))$ for some $r_1,r_2, q_1,q_2 \geq 1$ be two non-negative functions. Suppose $u$ is the weak solution obtained in \cref{exis1}. Then the following conclusions hold:
    \begin{enumerate}
        \item If $(q_1,r_1),(q_2,r_2)\in A$, then $u\in L^\infty(\Omega_T)$. \item If $(q_1,r_1)\in B\text{ or } C,(q_2,r_2)\in A$ and $(C_2)$ holds, then $u\in L^\infty(0,T;L^{2\sigma_1}(\Omega))\cap L^{2\sigma_1}(0,T;L^{2^*\sigma_1}(\Omega))$ where \begin{equation*}
\sigma_1=\left\{\begin{array}{lll}
\frac{q_1(n-2 )}{2(n-2 q_1 )} & \text { if } (q_1,r_1)\in B, \smallskip\\
\frac{q_1 r_1 n}{2(n r_1+2q_1-2 q_1 r_1)} & \text { if } (q_1,r_1)\in C.
\end{array}\right.    
\end{equation*}
        \item If $(q_1,r_1)\in A,(q_2,r_2)\in B\text{ or }C$ and condition $(C_3)$ holds, then $u\in L^\infty(0,T;L^{2\sigma_2}(\Omega))\cap L^{2\sigma_2}(0,T;L^{2^*\sigma_2}(\Omega))$ where \begin{equation*}
\sigma_2=\left\{\begin{array}{lll}
\frac{q_2(n-2 )}{2(n-2 q_2 )} & \text { if } (q_2,r_2)\in B, \smallskip\\
\frac{q_2 r_2 n}{2(n r_2+2q_2-2 q_2 r_2)} & \text { if } (q_2,r_2)\in C.
\end{array}\right.    
\end{equation*}\item If $(q_1,r_1)\in B\text{ or } C,(q_2,r_2)\in B\text{ or }C$ and conditions $(C_2)$, $(C_3)$ holds, then $u\in L^\infty(0,T;L^{2\sigma}(\Omega))\cap L^{2\sigma}(0,T;L^{2^*\sigma}(\Omega))$ where $\sigma=\min\{\sigma_1,\sigma_2\}$.
    \end{enumerate}
    \end{theorem}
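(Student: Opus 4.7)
The plan is to derive uniform a priori estimates for the approximate solutions $u_k$ of \cref{aproxproblem} and transfer them to $u$ via the almost everywhere convergence $u_k \to u$ provided by \cref{exis1}. Since each $u_k$ is bounded and lies in $L^2(0,T;W_0^{1,2}(\Omega))$, every chosen test function is admissible. Moreover, when tested against any nondecreasing function $\phi(u_k)$, the nonlocal bilinear form
\begin{equation*}
\int_0^T\iint_{\mathbb{R}^{2n}} \frac{(u_k(x,t)-u_k(y,t))(\phi(u_k(x,t))-\phi(u_k(y,t)))}{|x-y|^{n+2s}}\,dx\,dy\,dt
\end{equation*}
is non-negative by pointwise monotonicity and can be discarded from any upper estimate. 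Thus the nonlocal operator plays no active role in the regularity proof beyond being harmless.

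For part (1), in which $(q_1,r_1),(q_2,r_2)\in A$, the strategy is the Aronson-Serrin--style Moser iteration of \cite{AS}: test with $G_l(u_k)^{2\sigma-1}$ and iterate in $\sigma$. On the set $\{u_k>l\}$ one has the crude bound $(u_k+1/k)^{-\gamma(x,t)}\leq \max\{l^{-\min_{\overline{\Omega}_T}\gamma},\,l^{-\gamma^*}\}$, so the singular source inherits the integrability of $\nu_k$, and together with $\mu_k\in L^{r_2}(0,T;L^{q_2}(\Omega))$ yields the same right hand side as a non-singular problem. The standard iteration then produces a uniform $L^\infty(\Omega_T)$ bound for $u_k$, which passes to $u$ by Fatou.

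For parts (2)--(4), the plan is to test \cref{aproxproblem} with $u_k^{2\sigma-1}$ for $\sigma=\min\{\sigma_1,\sigma_2\}$. The parabolic term produces $\frac{1}{2\sigma}\frac{d}{dt}\|u_k(t)\|_{L^{2\sigma}(\Omega)}^{2\sigma}$, the local Laplacian produces $\frac{4(2\sigma-1)}{(2\sigma)^2}\|\nabla u_k^\sigma(t)\|_{L^2(\Omega)}^2$, and combining these with the Sobolev embedding $W_0^{1,2}(\Omega)\hookrightarrow L^{2^*}(\Omega)$ and the Gagliardo--Nirenberg interpolation of \cref{gagliardo} reproduces the claimed $L^\infty(0,T;L^{2\sigma})\cap L^{2\sigma}(0,T;L^{2^*\sigma})$ control on the left hand side, provided the right hand side can be absorbed. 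The term $\iint \mu_k u_k^{2\sigma-1}$ is bounded by H\"older with exponents $(q_2,r_2)$ followed by Young's inequality, with absorption being admissible exactly when condition $(C_3)$ holds; this is precisely what produces the exponent $\sigma_2$.

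The main obstacle, and what is genuinely new with respect to \cref{regu1}--\cref{regu2}, is the estimate of the singular integral $\iint \frac{\nu_k\,u_k^{2\sigma-1}}{(u_k+1/k)^{\gamma(x,t)}}$ when $\gamma$ varies. I plan to split $\Omega_T$ into the boundary strip $(\Omega_T)_\delta$ where condition $(P_1)$ gives $\gamma\leq\gamma^*$, and its compact complement where $\gamma$ is bounded above by continuity and the approximating $u_k$ enjoys a uniform-in-$k$ positive lower bound (inherited from a maximum-principle comparison with a strictly positive strict subsolution generated by the nontriviality of $\nu$). On the complement, the singular factor is then harmless and the integral is controlled by the $L^1$-norm of $\nu_k$. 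On the strip, the crude bound $(u_k+1/k)^{-\gamma(x,t)}\leq 1+(u_k+1/k)^{-\gamma^*}$ reduces matters to a constant-exponent estimate at level $\gamma^*$, and condition $(C_2)$ closes the H\"older--Young inequality producing $\sigma_1$. The delicate point is verifying that the interior positivity constants for $u_k$ can be taken uniform in $k$, and that the cruder strip estimate (which does not exploit any cancellation between $u_k^{2\sigma-1}$ and the singular denominator) is indeed what forces $\sigma_1$ in \cref{regu3} to lose the factor $\gamma^*+1$ present in \cref{regu1}--\cref{regu2}. Once the uniform bounds are in place, passing to the limit by Fatou's lemma and weak lower semicontinuity yields the stated regularity for $u$.
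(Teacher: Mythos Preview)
Your treatment of the singular term via the strip decomposition and interior positivity is exactly what the paper does (Lemma~\ref{bounded3}), and your identification of why the variable-exponent case loses the factor $\gamma+1$ in $\sigma_1$ compared to \cref{regu1} is correct. However, the paper does \emph{not} test the full approximated equation for $u_k$ with $u_k^{2\sigma-1}$. Instead it introduces auxiliary sequences $v_k$ and $w_k$ solving, respectively, the purely singular problem \cref{aproxproblembdd2} and the linear problem \cref{aproxproblembdd3} with source $T_k(\mu)$, and proves the pointwise comparison $u_k\le v_k+w_k$ via a maximum-principle argument on $z_k=u_k-v_k-w_k$ (Lemma~\ref{bounded1}). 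The estimates for $v_k$ and $w_k$ are then obtained completely independently (Lemmas~\ref{bounded3} and~\ref{bounded4}), and all four parts of \cref{regu3} follow at once from $\|u_k\|_{L^c(L^d)}\le\|v_k\|_{L^c(L^d)}+\|w_k\|_{L^c(L^d)}$ and Fatou.

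Your direct approach has a genuine gap in the mixed cases (2) and (3). You write that absorption of $\iint\mu_k u_k^{2\sigma-1}$ requires $(C_3)$ and absorption of the singular term requires $(C_2)$, but in part~(2) only $(C_2)$ is assumed ($(q_2,r_2)\in A$, so $(C_3)$ is vacuous and $\sigma_2$ is undefined), while in part~(3) only $(C_3)$ is assumed. In part~(2), testing with $u_k^{2\sigma_1-1}$ forces you to absorb $\iint\mu_k u_k^{2\sigma_1-1}$ at the specific level $\sigma_1$ dictated by $(q_1,r_1)$; you would need a separate argument that $(q_2,r_2)\in A$ makes this term subcritical for \emph{every} such $\sigma_1$, which is plausible but not what you wrote. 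The paper's comparison trick bypasses this coupling entirely: in part~(2) one simply has $w_k\in L^\infty(\Omega_T)$ by Lemma~\ref{bounded4}(1), so $u_k\le v_k+\|w_k\|_\infty$ and the regularity of $v_k$ alone suffices.
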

    \subsubsection{Uniqueness}
    A weak solution $u$ of \cref{mainproblem} is a finite energy solution if $u\in L^2(0,T;W^{1,2}_0(\Omega))$ and $u_t\in L^2(0,T;W^{-1,2}(\Omega))+L^1(\Omega_T)$. We state the uniqueness result we obtained for the finite energy solutions as:
    \begin{theorem}{\label{uniqueness}}
        If $0\leq \nu\in L^1(\Omega_T)$ and $\mu\in\mathcal{M}(\Omega_T)$ is non-negative and is of form $L^1(\Omega_T)+L^2(0,T;W^{-1,2}(\Omega))$, then every finite energy solution to \cref{mainproblem} is unique.
    \end{theorem}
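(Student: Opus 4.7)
The plan is to take two finite energy solutions $u_1,u_2$ of \cref{mainproblem} (sharing the same $u_0$, $\nu$, $\mu$, $\gamma$), subtract the weak formulations they satisfy, and exploit the fact that $\sig\mapsto -\sig^{-\gamma(x,t)}$ is strictly decreasing on $(0,\infty)$ while $\nu\ge 0$, forcing $w:=u_1-u_2$ to vanish identically. Since $\mu$ cancels in the subtraction, $w$ satisfies, in the weak sense on $\Om_T$,
\begin{equation*}
w_t-\De w+(-\De)^s w \;=\; \nu\lbr u_1^{-\gamma(x,t)}-u_2^{-\gamma(x,t)}\rbr, \qquad w(\cdot,0)=0, \qquad w=0 \text{ in }(\RR^n\setminus\Om)\times(0,T),
\end{equation*}
and enjoys $w\in L^2(0,T;W_0^{1,2}(\Om))$ together with $w_t\in L^2(0,T;W^{-1,2}(\Om))+L^1(\Om_T)$ by the finite energy hypothesis on $u_1,u_2$.

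For each $k>0$, I would test the difference equation against $T_k(w)$. Using the integration-by-parts rule valid for the current regularity of $(w,w_t)$, the condition $w(\cdot,0)=0$, and the elementary monotonicity $(a-b)(T_k(a)-T_k(b))\ge 0$ (which is inherited pointwise by the fractional kernel), one obtains for a.e.\ $t\in(0,T)$
\begin{align*}
\int_\Om \Theta_k(w(t))\,dx &+ \int_0^t\!\int_\Om |\na T_k(w)|^2\,dxd\tau \\
&+ \int_0^t\!\iint_{\RR^{2n}}\frac{(w(x,\tau)-w(y,\tau))(T_k(w)(x,\tau)-T_k(w)(y,\tau))}{|x-y|^{n+2s}}\,dxdyd\tau \\
&= \int_0^t\!\int_\Om \nu\lbr u_1^{-\gamma(x,\tau)}-u_2^{-\gamma(x,\tau)}\rbr T_k(w)\,dxd\tau,
\end{align*}
where $\Theta_k(s):=\int_0^s T_k(\sig)\,d\sig\ge 0$, with equality iff $s=0$. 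Each of the three left-hand terms is nonnegative, while the right-hand integrand is pointwise nonpositive because $u_1^{-\gamma}-u_2^{-\gamma}$ and $T_k(w)$ have opposite signs and $\nu\ge 0$. Consequently every nonnegative term on the left must vanish; in particular $\int_\Om \Theta_k(w(t))\,dx=0$ for a.e.\ $t\in(0,T)$ and every $k>0$, which forces $w\equiv 0$ a.e.\ on $\Om_T$, i.e.\ $u_1=u_2$.

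The principal technical obstacle is to rigorously admit $T_k(w)$ as a test function, since \cref{weaksolu} only prescribes $\phi\in C_c^\infty(\Om_T)$ and the singular sources $\nu u_i^{-\gamma(x,t)}$ are a priori only locally integrable on $\Om_T$ (the lower bound in property (1) of \cref{weaksolu} degenerates towards $\Ga_T$). I would circumvent this by first testing against $T_k(w)\vp_\de$, where $\vp_\de\in C_c^\infty(\Om_T)$ satisfies $0\le\vp_\de\le 1$ and $\vp_\de\uparrow 1$ on compacta of $\Om_T$ (combined, if needed, with a Steklov time regularization of $w$ so that the chain rule for the time derivative is manifestly licit), and then letting $\de\to 0$. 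The sign structure is decisive: monotone convergence applies to the pointwise nonpositive right-hand integrand, Fatou's lemma handles the three nonnegative left-hand contributions, and finiteness of the resulting left-hand side forces the integrability needed to pass the identity to the limit.
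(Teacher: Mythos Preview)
Your core strategy---subtract the two formulations, test with $T_k(u_1-u_2)$, and exploit the pointwise sign $\nu(u_1^{-\gamma}-u_2^{-\gamma})T_k(u_1-u_2)\le 0$---is exactly what the paper does. The difference lies in the technical justification, and there your proposal has a gap.

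When you insert the spatial--temporal cutoff $\varphi_\delta$, the gradient term becomes
\[
\iint_{\Omega_T}\nabla w\cdot\nabla\bigl(T_k(w)\varphi_\delta\bigr)
=\iint_{\Omega_T}|\nabla T_k(w)|^2\varphi_\delta
+\iint_{\Omega_T}T_k(w)\,\nabla w\cdot\nabla\varphi_\delta,
\]
and the nonlocal term splits analogously. The cross term carries no sign, and for a standard cutoff $|\nabla\varphi_\delta|\sim\delta^{-1}$ on a shell of measure $\sim\delta$, so it does not obviously vanish as $\delta\to 0$. Your ``Fatou on the nonnegative pieces, monotone convergence on the right'' scheme ignores these cross terms; without handling them you cannot pass the identity to the limit. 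The paper sidesteps this entirely: since $T_k(w)\in L^2(0,T;W_0^{1,2}(\Omega))\cap L^\infty(\Omega_T)$ already, it first extends the admissible test class from $C_c^\infty(\Omega_T)$ to $L^2(0,T;W_0^{1,2}(\Omega))\cap L^\infty(\Omega_T)$ by \emph{density in the norm} (smooth $\phi_k\to\phi$ in $L^2(0,T;W_0^{1,2})$ with $\|\phi_k\|_\infty\le\|\phi\|_\infty$), using Fatou followed by dominated convergence on the singular source. No cutoff, no cross terms.

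For the time derivative, the paper takes a slightly different route: it multiplies by the weight $\phi(t)=1-t/T$ and invokes an integration-by-parts lemma of Droniou (\cite{jerome}, Lemma~7.1) tailored to $w_t\in L^2(0,T;W^{-1,2})+L^1(\Omega_T)$, which yields $\frac{1}{T}\iint_{\Omega_T}\tilde T_{k,1}(v-w)=0$ directly. Your Steklov-averaging alternative is a legitimate substitute here and would give the pointwise-in-$t$ conclusion you wrote, provided the test-class extension is done correctly first.
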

    \subsubsection{Asymptotic behavior}
    Following is the long time behavior for weak solutions to \cref{mainproblem}.\begin{theorem}{\label{asymptotic}}
      Let $\gamma$ be continuous in $\overline{\Omega}_T$ and satisfies the condition $(P_1)$. Let $0\leq f,g \in L^1(\Omega_T)$ be two functions. Further assume $u_0,v_0\in L^1(\Omega)$ and $u,v$ be the solutions to the problem \begin{equation}{\label{asy}}
           w_t -\Delta w+(-\Delta)_sw=\frac{f}{w^{\gamma(x,t)}}+g \text{ in } \Omega_T, \qquad w>0 \text{ in }\Omega_T, \qquad w\equiv 0\text{ in }\mathbb{R}^n\backslash\Omega\times(0,T);
        \end{equation}with initial data $u_0$ and $v_0$ respectively, obtained as in \cref{exis1}. Further assume that for each $T>0$, they have been obtained as the almost everywhere limit in $\Omega_T$ of solutions to the approximated problems\begin{equation}{\label{asym1}}
    \begin{array}{c}
         (u_k)_t-\Delta u_k+(-\Delta)^s u_k=\frac{T_k(f)}{\left(u_k+\frac{1}{k}\right)^{\gamma(x,t)}} +T_k(g)\text { in } \Omega_T, \smallskip\\u_k>0 \text{ in }\Omega_T,\quad u_k=0  \text { in }(\mathbb{R}^n \backslash \Omega) \times(0, T), \smallskip\\ u_k(\cdot, 0)=T_k(u_0) \text { in } \Omega ;
    \end{array}
\end{equation} and 
 \begin{equation}{\label{asym2}}
    \begin{array}{c}
         (v_k)_t-\Delta v_k+(-\Delta)^s v_k=\frac{T_k(f)}{\left(v_k+\frac{1}{k}\right)^{\gamma(x,t)}} +T_k(g) \text { in } \Omega_T, \smallskip\\v_k>0 \text{ in }\Omega_T,\quad v_k=0  \text { in }(\mathbb{R}^n \backslash \Omega) \times(0, T), \smallskip\\ v_k(\cdot, 0)=T_k(v_0) \text { in } \Omega ;
    \end{array}
\end{equation} respectively.  Then $u,v$ are indeed global weak solutions of \cref{asy}. 
        Further, there exist positive constants $\sigma, M$ such that for a.e. $t>0$
        \begin{equation*}
            \|u(t)-v(t)\|_{L^1(\omega)}\leq M\frac{\|u_0-v_0\|_{L^1(\Omega)}}{t^{n/2}e^{\sigma t}}\quad \text{ for each }\omega\subset\subset\Omega.
        \end{equation*}In particular for each $\omega\subset\subset\Omega$\begin{equation*}
            \|u(t)-v(t)\|_{L^1(\omega)}\to 0, \text{ as } t\to \infty.
        \end{equation*} 
    \end{theorem}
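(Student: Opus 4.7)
My plan has two parts: establishing global existence of $u$ and $v$, and then deriving the local $L^1$ decay estimate. For global existence, I will invoke Theorem~\ref{exis1} on each cylinder $\Omega\times(0,T)$, obtaining weak solutions $u^{(T)}, v^{(T)}$ as a.e. limits of the approximating sequences in \cref{asym1} and \cref{asym2}. For each fixed $k$, the approximate problem is a linear parabolic equation with bounded coefficients (the factor $(u_k+1/k)^{-\gamma(x,t)}$ is bounded), so Theorem~\ref{evolu} yields unique $u_k,v_k$ on any time interval; hence the restrictions of $u_k^{(T_2)}$ to $\Omega_{T_1}$ coincide with $u_k^{(T_1)}$ for $T_1<T_2$, and a diagonal extraction produces global weak solutions $u, v$ on $\Omega\times(0,\infty)$.

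For the decay estimate I will work at the approximating level and pass to the limit. Set $w_k:=u_k-v_k$ (after a common subsequence). Subtracting the equation for $v_k$ from that for $u_k$, and using that $\tau\mapsto\tau^{-\gamma(x,t)}$ is decreasing, I can rewrite the difference of the singular terms as $-c_k(x,t)\,w_k$ with $c_k\geq 0$ a.e., so that $w_k$ solves
\begin{equation*}
(w_k)_t - \Delta w_k + (-\Delta)^s w_k + c_k\, w_k = 0,\qquad w_k(\cdot,0)=T_k(u_0)-T_k(v_0).
\end{equation*}
Applying the Kato inequality for both $-\Delta$ and $(-\Delta)^s$ (namely $(-\Delta)^s|w|\leq\mathrm{sign}(w)(-\Delta)^s w$), and using $c_k|w_k|\geq 0$, I obtain $\partial_t|w_k|-\Delta|w_k|+(-\Delta)^s|w_k|\leq 0$ weakly. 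Comparison for the Dirichlet problem of $L:=-\Delta+(-\Delta)^s$ on $\Omega$ then produces the pointwise bound $|w_k(\cdot,t)|\leq\mathcal{S}(t)\bigl(|T_k(u_0)-T_k(v_0)|\bigr)$, where $\mathcal{S}(t):=e^{-tL}$ is the Dirichlet semigroup of $L$ on $\Omega$.

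The rest of the argument consists in combining ultracontractivity with the spectral gap of $\mathcal{S}(t)$. The operator $L$ is non-negative, self-adjoint with compact resolvent on $L^2(\Omega)$; its smallest Dirichlet eigenvalue $\lambda_1>0$ provides $\|\mathcal{S}(t)\phi\|_{L^2(\Omega)}\leq e^{-\lambda_1 t}\|\phi\|_{L^2(\Omega)}$. Moreover, since $L\geq -\Delta_D$ in the quadratic-form sense, the heat kernel of $L$ is dominated by that of the Dirichlet Laplacian, yielding a local ultracontractive bound $\|\mathcal{S}(t)\phi\|_{L^\infty(\omega)}\leq M_\omega t^{-n/2}\|\phi\|_{L^1(\Omega)}$ for $0<t\leq 1$ and every $\omega\subset\subset\Omega$. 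Splitting $\mathcal{S}(t)=\mathcal{S}(t-1)\,\mathcal{S}(1)$ for $t\geq 1$ and iterating through $L^2$ yields, for some $\sigma\in(0,\lambda_1)$,
\begin{equation*}
\|\mathcal{S}(t)\phi\|_{L^\infty(\omega)}\leq M'_\omega\, t^{-n/2}\, e^{-\sigma t}\,\|\phi\|_{L^1(\Omega)},\qquad t>0.
\end{equation*}
Integrating this bound over $\omega$ with $\phi=|T_k(u_0)-T_k(v_0)|$, and passing to the limit $k\to\infty$ via Fatou's lemma on the left and dominated convergence on the right (using $T_k(u_0)\to u_0,\ T_k(v_0)\to v_0$ in $L^1(\Omega)$), gives the desired decay estimate for $u-v$; the asymptotic conclusion follows immediately.

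The main obstacle will be twofold. First, rigorously justifying the Kato-type inequality for $w_k$ in its natural energy class is delicate: one has to approximate $\mathrm{sign}(w_k)$ by the smooth truncations $T_\epsilon(w_k)/\epsilon$, test the difference equation with such approximations, and exploit the sign structure of the bilinear form associated with $(-\Delta)^s$ to control the nonlocal contribution. Second, the combined ultracontractivity-plus-exponential-decay bound for the Dirichlet semigroup of $L$, with constant $M'_\omega$ depending on $\omega$ only through $\mathrm{dist}(\omega,\partial\Omega)$, is the essential analytic input powering the rate $t^{-n/2}e^{-\sigma t}$; obtaining it requires the kernel domination argument together with a careful interpolation between the short-time ultracontractive regime and the long-time spectral-gap regime.
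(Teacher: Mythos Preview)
Your approach is genuinely different from the paper's, and it is essentially sound, but one step is not correctly justified.

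The paper does not go through semigroup domination. Instead it tests the difference of the approximate equations with two specific functions: first $G_l(u_k-v_k)$, which yields an energy inequality of the form
\[
\tfrac12\int_\Omega |G_l(u_k-v_k)(t_2)|^2-\tfrac12\int_\Omega |G_l(u_k-v_k)(t_1)|^2+\int_{t_1}^{t_2}\int_\Omega|\nabla G_l(u_k-v_k)|^2\le 0,
\]
and second $\bigl\{1-[1+|G_l(u_k-v_k)|]^{-\delta}\bigr\}\operatorname{sign}(u_k-v_k)$, which produces the $L^1$-contraction $\int_\Omega|G_l(u_k-v_k)|(t)\le\int_\Omega|G_l(u_k-v_k)|(t_0)$. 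With these two inequalities in hand the paper invokes an abstract decay result of Porzio to obtain $\|u_k(t)-v_k(t)\|_{L^\infty(\Omega)}\le M_1 t^{-n/2}e^{-\sigma t}\|T_k(u_0)-T_k(v_0)\|_{L^1(\Omega)}$, then passes to the limit using the $L^1_{\mathrm{loc}}$ convergence. Your semigroup route (Kato inequality, then $|w_k|\le \mathcal S(t)|w_k(0)|$, then ultracontractivity plus spectral gap) reaches the same conclusion by a more abstract path; the paper's test-function argument is essentially a hands-on derivation of exactly those semigroup properties at the level of the specific difference $u_k-v_k$.

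The gap in your write-up is the sentence ``since $L\ge -\Delta_D$ in the quadratic-form sense, the heat kernel of $L$ is dominated by that of the Dirichlet Laplacian.'' Quadratic-form ordering $A\ge B$ for non-negative self-adjoint operators does \emph{not} imply pointwise kernel domination $e^{-tA}(x,y)\le e^{-tB}(x,y)$; indeed the process generated by $L=-\Delta+(-\Delta)^s$ has jumps and is not pathwise dominated by Brownian motion. What you actually need is the ultracontractive bound $\|\mathcal S(t)\|_{L^1\to L^\infty}\le Ct^{-n/2}$, and this follows not from kernel domination but from the Sobolev inequality $\|\phi\|_{L^{2^*}}^2\le C\langle L\phi,\phi\rangle$ (which does hold, since $L\ge -\Delta_D$) via the standard Nash--Varopoulos argument. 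Once you replace the kernel-domination step by this, your argument goes through; the spectral-gap part and the Kato-inequality part are fine.
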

    \begin{remark}
        Note that \cref{asymptotic} implies \begin{equation*}
            \|u(t)-v\|_{L^1(\omega)}\to 0, \text{ as } t\to \infty, \text{ for each }\omega\subset\subset\Omega,
        \end{equation*} where $v$ is the solution to the elliptic problem \begin{equation*}
            -\Delta v+(-\Delta)_sv=\frac{f}{v^{\gamma(x)}}+g \text{ in } \Omega, \qquad v>0 \text{ in }\Omega, \qquad v\equiv 0\text{ in }\mathbb{R}^n\backslash\Omega;
        \end{equation*} obtained as in \cite{sanjitgarain}, provided $0\leq u_0\in L^1(\Omega)$, $0\leq f,g \in L^1(\Omega)$ 
        and $\gamma (x,t)\equiv \gamma(x)$.
    \end{remark}
 \section{Preliminaries for the existence and non-existence results}{\label{sec2}} Throughout this section, we consider $\gamma:\overline{\Omega}_T\to (0,\infty)$ is a continuous function unless otherwise mentioned. First of all, we need to show the existence of a weak solution to \cref{aproxproblem}. We prove the following preliminary lemma. 
 \begin{lemma}{\label{lemma1}}
 Let $f\in L^2(\Omega_T)$ and $ g\in L^2(\Omega)$. Then the initial boundary value problem
  \begin{equation}{\label{aproxproblem1}}
    \begin{array}{c}
         v_t-\Delta v+(-\Delta)^s v=f \text { in } \Omega_T, \smallskip\\\quad v=0  \text { in }(\mathbb{R}^n \backslash \Omega) \times(0, T), \smallskip\\ v(x, 0)=g \text { in } \Omega ;
    \end{array}
\end{equation} has a unique solution $v\in L^2(0,T;W^{1,2}_0(\Omega))$ such that $v_t\in L^2(0,T;W^{-1,2}(\Omega))$ and for all $\phi \in 
L^2(0, T;W_0^{1,2}(\Omega))$, we have the equality

\begin{equation*}
    \int_0^T\left\langle v_t, \phi\right\rangle d t+\int_0^T\int_{\Omega}\nabla v\cdot \nabla \phi\, d x d t+\int_0^T \int_{\mathbb{R}^n}\int_{\mathbb{R}^n} \frac{(v(x, t)-v(y, t))(\phi(x, t)-\phi(y, t))}{|x-y|^{n+2 s}} d x d y d t=\int_0^T\int_{\Omega}f\phi \,d x  d t
\end{equation*}
holds and $v(\cdot, t) \rightarrow g$ strongly in $L^2(\Omega)$, as $t \rightarrow 0$. Further, \begin{enumerate}
    \item if $f\in L^\infty(\Omega_T)\backslash\{0\}, g\in L^\infty(\Omega)$, then $v\in L^\infty(\Omega_T)$,
    \item if $f, g\geq 0$, then $v\geq 0$,
    \item if the above two conditions on $f,g$ hold then then for each $0<t_1<T$ 
and for each $\omega \subset \subset \Omega$ fixed, there exists $C:=C\left(t_1, n,s ,\omega\right)>0$ such that
\begin{equation*}
    v(x, t) \geq C\left(t_1, n, s, \omega\right) \text { in } \omega\times [t_1,T).
\end{equation*}
    \end{enumerate}
 \end{lemma}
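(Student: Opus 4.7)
The approach is to verify the hypotheses of Theorem~\ref{evolu} with the choice $V = W_0^{1,2}(\Omega)$, $H = L^2(\Omega)$, $p = 2$, and the time-independent linear operator $\mathcal{A}(v) := -\Delta v + (-\Delta)^s v$. Monotonicity, hemicontinuity, and measurability in $t$ are immediate from linearity. Boundedness $\|\mathcal{A}(v)\|_{V^*} \leq C\|v\|_V$ follows by Cauchy--Schwarz combined with \cref{embedding2}, and coercivity is clear since
\[
\langle\mathcal{A}(v),v\rangle = \int_\Omega|\nabla v|^2\,dx + \iint_{\mathbb{R}^n\times\mathbb{R}^n}\frac{|v(x)-v(y)|^2}{|x-y|^{n+2s}}\,dx\,dy \geq \|v\|_V^2.
\]
Theorem~\ref{evolu} then yields a unique $v\in L^2(0,T;V)\cap C([0,T];H)$ with $v_t\in L^2(0,T;V^*)$ satisfying the stated weak formulation; membership in $C([0,T];H)$ gives $v(\cdot,t)\to g$ strongly in $L^2(\Omega)$ as $t\to 0$.

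For the non-negativity claim (2), I would test the equation with $v_- = \max\{-v,0\}\in L^2(0,T;V)$. Since $g\geq 0$ forces $v_-(0)=0$, the time term contributes $-\tfrac{1}{2}\|v_-(T)\|_{L^2}^2$, the local part contributes $-\int_0^T\!\!\int_\Omega|\nabla v_-|^2$, and the nonlocal term is controlled by the pointwise inequality $(a-b)(a_--b_-)\leq -(a_--b_-)^2$, giving a contribution dominated by the negative of the Gagliardo seminorm of $v_-$. Since $\int_0^T\!\!\int_\Omega f v_- \geq 0$, each of the three non-positive quantities must vanish and so $v_-\equiv 0$.

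For the $L^\infty$ bound (1), I would apply Stampacchia's truncation method with test function $G_k(v):=(v-k)_+$ for $k\geq\|g\|_{L^\infty(\Omega)}$. The companion inequality $(a-b)(G_k(a)-G_k(b))\geq(G_k(a)-G_k(b))^2$ keeps the nonlocal term non-negative on the left, yielding
\[
\tfrac{1}{2}\tfrac{d}{dt}\|G_k(v)\|_{L^2}^2 + \|G_k(v)\|_V^2 \;\leq\; \|f\|_{L^\infty(\Omega_T)}\int_{A_k(t)} G_k(v)\,dx,
\]
where $A_k(t):=\{x\in\Omega:v(x,t)>k\}$. Integration in $t$ together with Sobolev embedding and a Stampacchia/Moser iteration on $k$ then produces the uniform estimate $\|v\|_{L^\infty(\Omega_T)}\leq C(\|f\|_\infty,\|g\|_\infty,n,|\Omega|,T)$.

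The truly delicate point is (3). My plan is to exploit that $f\geq 0$ is bounded and not identically zero: by the Lebesgue differentiation theorem there exist $\eta>0$, $\rho>0$ and a cylinder $Q=B_\rho(x_*)\times(a,b)\subset\subset\Omega_T$ together with a measurable $E\subset Q$ of positive measure on which $f\geq\eta$. By linearity and the comparison principle encoded in (2), $v\geq \tilde v$, where $\tilde v$ solves the IBVP with zero initial datum and source $\eta\chi_E$, so it suffices to bound $\tilde v$ from below on $\omega\times[t_1,T)$. This can be done by comparing $\tilde v$ (on time slices $t\geq b$) with a subsolution of the form $\underline v(x,t)=\varepsilon\,e^{-\lambda_1(t-b)}\varphi_1(x)$ built from the positive first Dirichlet eigenpair $(\lambda_1,\varphi_1)$ of $-\Delta+(-\Delta)^s$ on $\Omega$, where $\varepsilon>0$ is chosen so that $\underline v(\cdot,b)\leq \tilde v(\cdot,b)$, and then using that $\varphi_1$ is strictly positive on $\omega\subset\subset\Omega$; on $[t_1,b]$ a cylinder chosen with $a<t_1$ and a Duhamel-type argument based on the strictly positive heat kernel of the mixed operator handles the remaining range of times. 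The main obstacle is the parabolic comparison/strong maximum principle for $\partial_t-\Delta+(-\Delta)^s$, which I would invoke in the spirit of \cite{pap,abdellaoui2,arora}, or adapt directly from the energy inequality established in (2).
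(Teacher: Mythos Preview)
Your treatment of existence via Theorem~\ref{evolu} and of parts (1) and (2) is essentially identical to the paper's: the same spaces $V=W^{1,2}_0(\Omega)$, $H=L^2(\Omega)$, the same verification of monotonicity, hemicontinuity, boundedness and coercivity for the bilinear operator, and the same test functions $(v-k)_+$ and $v_-$ together with the sign observation $(a-b)(a_--b_-)\le 0$ for the nonlocal term.

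The real divergence is in part (3). The paper disposes of the interior lower bound in one line by invoking the parabolic weak Harnack inequality for the mixed operator from \cite{weakharnack}, referring to \cite{pap} for the details. Your route via comparison with an eigenfunction subsolution $\varepsilon e^{-\lambda_1(t-b)}\varphi_1$ is plausible in spirit, but several of the ingredients you rely on are themselves nontrivial for the mixed local--nonlocal operator and are left unjustified: the existence and strict positivity of a fundamental solution for $\partial_t-\Delta+(-\Delta)^s$, the pointwise lower bound $\tilde v(\cdot,b)\geq c\varphi_1$ on all of $\Omega$ needed to launch the eigenfunction comparison, and the assumption that the source cylinder can be chosen with $a<t_1$ (nothing prevents $f$ from being supported only near $t=T$). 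Each of these would require work comparable to, or exceeding, a direct appeal to weak Harnack. So while your outline could likely be completed, the paper's route is substantially shorter and sidesteps these auxiliary difficulties by citing the appropriate Harnack result.
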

\begin{proof}
    For almost every $t \in (0,T)$, we define the operator
\begin{equation*}
    \mathcal{A}_t: W^{1,2}_0(\Omega) \rightarrow W^{-1,2}(\Omega)
\end{equation*}by
\begin{equation*}
    \left\langle\mathcal{A}_t(v), \phi\right\rangle= \int_\Omega \nabla v\cdot\nabla \phi\, dx+ \int_{\mathbb{R}^{n}}\int_{\mathbb{R}^{n}} \frac{(v(x)-v(y)(\phi(x)-\phi(y))}{|x-y|^{n+2s}} d x dy.
\end{equation*}
It is easy to check that $\mathcal{A}_t(v) \in W^{-1,2}(\Omega)$ whenever $v \in W^{1,2}_0(\Omega)$. Additionally, $\mathcal{A}_t$ is a monotone operator. We now define $\mathcal{A}: W^{1,2}_0(\Omega) \times I\rightarrow W^{-1,2}(\Omega)$ to be the operator defined by
\begin{equation*}
\mathcal{A}(v, t)=\mathcal{A}_t(v).   
\end{equation*}
Observe that this is well-defined. We next show that the operator $\mathcal{A}$, together with the spaces
\begin{equation*}
V=W^{1,2}_0(\Omega), \quad \mathcal{V}=L^2(0,T; W^{1,2}_0(\Omega)) \quad \text { and } \quad H=L^2(\Omega),    
\end{equation*}
fits into the framework of \cref{evolu}. Since $\Omega$ is bounded, $W^{1,2}_0(\Omega)$ is dense and continuously embedded in $L^2(\Omega)$. This follows from the fact that smooth compactly supported functions are dense in both spaces. Note that $\mathcal{A}$ inherits the property of monotonicity from $\mathcal{A}_t$ since
\begin{equation*}
    \langle\mathcal{A}(u, t)-\mathcal{A}(v, t), u-v\rangle= \left\langle\mathcal{A}_t(u)-\mathcal{A}_t(v), u -v\right\rangle \geq 0.
\end{equation*} By \cref{embedding2} and H\"older inequality, it is easy to observe that
\begin{equation*}
|\langle\mathcal{A}(v, t), \phi\rangle| \leq C\|v\|_{W^{1,2}_0(\Omega)}\|\phi\|_{W^{1,2}_0(\Omega)}
,\end{equation*}
so that for sll $v\in{W^{1,2}_0(\Omega)}$, \begin{equation*}
    \|\mathcal{A}(v, t)\|_{W^{-1,2}(\Omega)}\leq C\|v\|_{W^{1,2}_0(\Omega)}.
\end{equation*}
Thus, in order to verify $(2)$ of \cref{evolu}, we are left with proving hemicontinuity. For this, fixed $t \in (0,T)$ and $\lambda, \lambda_0 \in \mathbb{R}$, we consider
\begin{equation*}
\langle\mathcal{A}(u+\lambda v, t), v\rangle-\langle\mathcal{A}(u+\lambda_0 v, t), v\rangle, \quad \text { for } u, v \in W^{1,2}_0(\Omega).    
\end{equation*}
In order to show that this differences goes to $0$ as $\lambda$ goes to $\lambda_0$, it is sufficient to write
\begin{equation*}
\begin{array}{rcl}
    \langle\mathcal{A}(u+\lambda v, t), v\rangle-\langle\mathcal{A}(u+\lambda_0 v, t), v\rangle&=& \langle\mathcal{A}_t(u+\lambda v)-\mathcal{A}_t(u+\lambda_0 v), v\rangle\\&=&(\lambda-\lambda_0)\left(\int_\Omega |\nabla v|^2 dx+ \int_{\mathbb{R}^{n}}\int_{\mathbb{R}^{n}} \frac{|v(x)-v(y)|^2}{|x-y|^{n+2s}} d x dy\right).\end{array}
\end{equation*}Therefore, for fix $u,v \in W^{1,2}_0(\Omega)$\begin{equation*}
    \left|  \langle\mathcal{A}(u+\lambda v, t), v\rangle-\langle\mathcal{A}(u+\lambda_0 v, t), v\rangle\right|\leq C|\lambda-\lambda_0|,
\end{equation*}
which proves that $\mathcal{A}$ is hemicontinuous for almost every $t \in (0,T)$.
\\Finally, as for hypothesis $(3)$ of \cref{evolu}, we observe by Poincar\'e inequality
\begin{equation*}
    \langle\mathcal{A}(v, t), v\rangle \geq \int_\Omega|\nabla v|^2dx\geq C\|v\|^2_{W^{1,2}_0(\Omega)}.
\end{equation*}
Thus one can apply \cref{evolu}, and for every $u_0 \in L^2(\Omega)$ obtain a unique solution
\begin{equation*}
v \in W_2(0,T)=\{\varphi \in L^2(0,T ; W^{1,2}_0(\Omega)): \varphi^{\prime} \in L^{2}(0,T ;W^{-1,2}(\Omega))\},    
\end{equation*}
to the problem
$v^{\prime}(t)+\mathcal{A}(v(t), t)=f(t)$ in $L^{2}(0,T ;W^{-1,2}(\Omega))$, with $v(0)=g$ in $L^2(\Omega)$. That is for all $\phi \in 
L^2(0, T;W_0^{1,2}(\Omega))$, we have the equality
\begin{equation*}
    \int_0^T\left\langle v_t, \phi\right\rangle d t+\int_0^T\int_{\Omega}\nabla v\cdot \nabla \phi\, d x d t+ \int_0^T \int_{\mathbb{R}^{n}}\int_{\mathbb{R}^{n}} \frac{(v(x, t)-v(y, t))(\phi(x, t)-\phi(y, t))}{|x-y|^{n+2 s}} d x d y d t=\int_0^T\int_{\Omega}f\phi \,d x  d t.
\end{equation*}
Observe that, we also have $v \in C([0,T] ; L^2(\Omega))$ and $v(\cdot,0)=g$ in $L^2(\Omega)$. Therefore $v(\cdot, t) \rightarrow g$ strongly in $L^2(\Omega)$, as $t \rightarrow 0$. Once the existence is proved for \cref{aproxproblem1}, we now show the properties mentioned. \smallskip\begin{enumerate}
    \item  Observing that $(v(x,t)-v(y,t))((v(x,t)-k)_+-(v(y,t)-k)_+)\geq 0$, for each $k$, we can show that for $(f,g)\in L^\infty(\Omega_T)\times L^\infty(\Omega)$, the weak solution $v$ satisfies \begin{equation*}
        \sup_{\Omega_T} v\leq C\|f\|_{L^\infty(\Omega_T)}+\|g\|_{L^\infty(\Omega)}.
    \end{equation*} The proof will follow exactly similar to that of [\citealp{bdd}, Theorem 4.2.1]. Finally repeating the same steps for $-v$, we get \begin{equation*}
         \inf_{\Omega_T} v\geq -C\|f\|_{L^\infty(\Omega_T)}-\|g\|_{L^\infty(\Omega)}.
    \end{equation*} Thus $v\in L^\infty(\Omega_T)$.
    \item Now we show that $v\geq 0$ 
provided $f$ and $g$ are non-negative. 
We write $v=v_+-v_-$, where $v_+=\operatorname{max}{\{v,0}\}
$ and $v_-=\operatorname{max}{\{-v,0}\}
$ and take $\phi=v_-\chi_{(0,\tilde{t})}, 0<\tilde{t}<T$, as a test function. 
Since $f\geq 0$, and $\phi \geq 0$, we have
\begin{equation}{\label{contradiction1}}
0\leq \int_0^{T}\int_{\Omega}f\phi \,d x d t=\int_0^{\tilde{t}}\int_{\Omega}fu_{-} d x d t.
\end{equation}
On the other hand, since $u$ is $0$ in $(\mathbb{R}^n\backslash \Omega)\times (0,T)$, we have that
\begin{equation*}
    \begin{array}{c}
\int_0^{T}\iint_{\mathbb{R}^{2 n} }\frac{(v(x,t)-v(y,t))(\phi(x,t)-\phi(y,t))}{|x-y|^{n+2s}}  d x d y d t\smallskip\\ =\int_0^{\tilde{t}}\iint_{\mathbb{R}^{2 n} \backslash(\Omega^c \times \Omega^c)}\frac{(v(x,t)-v(y,t))(v_{-}(x,t)-v_{-}(y,t))}{|x-y|^{n+2s}} d x d y d t \smallskip\\
=\int_0^{\tilde{t}} \int_{\Omega} \int_{\Omega}\frac{(v(x,t)-v(y,t))(v_{-}(x,t)-v_{-}(y,t))}{|x-y|^{n+2s}} d x d y d t+2 \int_0^{\tilde{t}}\int_{\Omega} \int_{\Omega^c}\frac{v(x,t) v_{-}(x,t)}{|x-y|^{n+2s}} d y d x d t .
    \end{array}
\end{equation*}
\smallskip Moreover, $(v_{+}(x,t)-v_{+}(y,t))(v_{-}(x,t)-v_{-}(y,t)) \leq 0$, and thus 
\begin{equation*}
\begin{array}{l}
\int_0^{\tilde{t}}\int_{\Omega} \int_{\Omega}\frac{(v(x,t)-v(y,t))(v_{-}(x,t)-v_{-}(y,t))}{|x-y|^{n+2s}} d x d y d t\smallskip\\
\leq-\int_0^{\tilde{t}}\int_{\Omega} \int_{\Omega}\frac{(v_{-}(x,t)-v_{-}(y,t))^2}{|x-y|^{n+2s}}  d x d y d t\leq 0 .
\end{array}
\end{equation*}
Further, we have
\begin{equation*}
\int_0^{\tilde{t}}\int_{\Omega} \int_{\Omega^c}\frac{v(x,t) v_{-}(x,t)}{|x-y|^{n+2s}} d y d x d t =-\int_0^{\tilde{t}}\int_{\Omega} \int_{\Omega^c}\frac{v^2_{-}(x,t)}{|x-y|^{n+2s}} d y d x d t \leq 0 .    
\end{equation*}
Therefore, we have shown that
\begin{equation*}
   \int_0^{T}\iint_{\mathbb{R}^{2 n} }\frac{(v(x,t)-v(y,t))(\phi(x,t)-\phi(y,t))}{|x-y|^{n+2s}}  d x d y d t\leq 0.
\end{equation*}
Moreover, 
\begin{equation*}
    \int_0^T\int_{\Omega}\nabla v\cdot \nabla \phi\, d x d t=
    -\int_0^{\tilde{t}}\int_{\Omega}| \nabla v_-|^2 d x d t\leq 0.
\end{equation*}
Now 
since $v\in C([0,T];L^2(\Omega))$, $v(\cdot,0)=g\geq 0$, so $v_-(\cdot,0)\equiv 0$, and we get
\begin{equation*}
    \begin{array}{rcl}
         \int_0^T\left\langle v_t, \phi\right\rangle d t
         &=&\int_0^{\tilde{t}}\int_\Omega  \frac{\partial v}{\partial t} v_- d x d t
        = \int_\Omega\int_{v(x,0)}^{v(x,\tilde{t})}\sigma_-d\sigma d x 
         =-\frac{1}{2}\int_\Omega (v_-(x,{\tilde{t}}))^2d x.
    \end{array}
\end{equation*}
Combining the above three inequalities, we get from \cref{contradiction1} that 
\begin{equation*}
\begin{array}{rcl}
0\leq    \int_0^T\int_{\Omega}f\phi\, d x d t&=& \int_0^T\left\langle v_t, \phi\right\rangle d t+\int_0^T\int_{\Omega}\nabla v\cdot \nabla \phi \,d x d t\smallskip\\&&+ \int_0^T \int_{\mathbb{R}^{n}}\int_{\mathbb{R}^{n}}\frac{(v(x, t)-v(y, t))(\phi(x, t)-\phi(y, t))}{|x-y|^{n+2 s}} d x d y d t\smallskip\\&&\leq-\frac{1}{2}\int_\Omega (v_-(x,{\tilde{t}}))^2d x \leq 0,
    \end{array}
\end{equation*}
and this gives that $||v_{-}(\cdot,\tilde{t})||_{L^2(\Omega)}=0$ 
for each $\tilde{t}>0$. 
Therefore $v_{-}\equiv 0$. So we conclude that $v\geq 0$.
\item Finally, we can use the weak Harnack inequality in \cite{weakharnack} to deduce the uniform positivity. One can see [\citealp{pap}, Proposition 2.18] for a detailed proof.
\end{enumerate}
\end{proof}
Now we show the existence of solutions to \cref{aproxproblem}. The proof, which is based on the Schauder fixed-point theorem, is quite standard, but we sketch it for completeness.
\begin{lemma}{\label{exisforaprox}}
    For any fixed $k \in \mathbb{N}$, there exists a unique non-negative solution $u_k \in L^2(0, T ; W_0^{1, 2}(\Omega)) \cap L^{\infty}(\Omega_T)$ such that $(u_k)_t \in L^{2}(0, T ; W^{-1, 2}(\Omega))$ to problem \cref{aproxproblem}. In particular $u_k\in C([0,T],L^2(\Omega))$.\\ Further, if $\nu$ is non-singular with respect to the Lebesgue measure, then 
    for each $t_0>0$ and $\omega\subset\subset\Omega$, $\exists C(\omega,t_0,n,s)$, a constant which does not depend on $k$,  such that $\forall k$, $u_k\geq C(\omega,t_0,n,s)$ in $\omega\times[t_0,T)$.
\end{lemma}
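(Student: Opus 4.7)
The plan is to set up a Schauder fixed-point argument in $L^2(\Omega_T)$. For fixed $k$, set $\Gamma:=\sup_{\overline{\Omega}_T}\gamma<\infty$, which is finite since $\gamma$ is continuous on a compact set. Given $v\in L^2(\Omega_T)$ with $v\geq 0$, let $T(v):=u$ be the unique weak solution (provided by \cref{lemma1}) of the linear initial-boundary value problem with source $\frac{\nu_k}{(v+1/k)^{\gamma(x,t)}}+\mu_k$ and initial datum $u_{0k}$. This source lies in $L^\infty(\Omega_T)$ because $(v+1/k)^{\gamma}\ge (1/k)^{\Gamma}$ and $\nu_k,\mu_k\in L^\infty(\Omega_T)$, and $u_{0k}\in L^\infty(\Omega)$, so parts (1) and (2) of \cref{lemma1} yield $u\in L^\infty(\Omega_T)$ with $u\ge 0$ and an $L^\infty$-bound $R_k$ depending only on $k$, $\|\nu_k\|_{L^\infty}$, $\|\mu_k\|_{L^\infty}$ and $\|u_{0k}\|_{L^\infty}$. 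Thus $T$ maps the closed convex set $K:=\{v\in L^2(\Omega_T):0\le v\le R_k\}$ into itself.

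Next I would verify that $T:K\to K$ is continuous and compact in the $L^2(\Omega_T)$ topology. If $v_n\to v$ in $L^2(\Omega_T)$ with $v_n\in K$, then testing the equation for $u_n:=T(v_n)$ against $u_n$ yields a uniform bound in $L^2(0,T;W^{1,2}_0(\Omega))$, while the equation gives a uniform bound on $(u_n)_t$ in $L^2(0,T;W^{-1,2}(\Omega))$. By the Aubin--Lions--Simon compactness lemma, along a subsequence $u_n\to u$ strongly in $L^2(\Omega_T)$. On the other hand, $\frac{\nu_k}{(v_n+1/k)^{\gamma}}\to \frac{\nu_k}{(v+1/k)^{\gamma}}$ almost everywhere and is dominated by $k^{\Gamma}\|\nu_k\|_{L^\infty}$, so dominated convergence allows me to pass to the limit in the weak formulation and conclude $u=T(v)$. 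Schauder's theorem then produces a fixed point $u_k\in K$, which is the desired weak solution; the $L^2(\Omega_T)$ continuity together with $(u_k)_t\in L^2(0,T;W^{-1,2}(\Omega))$ gives $u_k\in C([0,T];L^2(\Omega))$.

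For uniqueness, suppose $u_k^{(1)},u_k^{(2)}$ are two such solutions. Subtracting the equations and testing the difference with $(u_k^{(1)}-u_k^{(2)})_+$ (using Steklov regularization in time to justify the duality pairing), the non-negativity of the local and nonlocal quadratic forms applied to truncations, together with the fact that $\tau\mapsto(\tau+1/k)^{-\gamma(x,t)}$ is strictly decreasing in $\tau$, give $\tfrac12\|(u_k^{(1)}-u_k^{(2)})_+(t)\|_{L^2(\Omega)}^2\le 0$ for a.e.\ $t$; swapping the roles closes the argument.

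The main obstacle is the uniform (in $k$) lower bound on $\omega\times[t_0,T)$. The plan is to view $u_k$ as a non-negative supersolution of $w_t-\Delta w+(-\Delta)^s w\ge 0$ in $\Omega_T$ (the right-hand side of \cref{aproxproblem} being non-negative) and apply the weak parabolic Harnack inequality of \cite{weakharnack}, which bounds $\inf_{\omega\times[t_0,T)}u_k$ from below by an $L^p$ norm of $u_k$ on a comparison sub-cylinder. To prevent this norm from collapsing as $k\to\infty$, I would exploit the non-singularity hypothesis: writing $\nu=\nu_a+\nu_s$ with $0\not\equiv f:=d\nu_a/d\mathcal{L}\in L^1(\Omega_T)$, the standard convolution approximation gives $\nu_k\ge f_k$ for a sequence $f_k\to f$ in $L^1(\Omega_T)$ with $\|f_k\|_{L^1}\ge c>0$ for large $k$. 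Testing the approximated equation against a fixed non-negative cut-off supported inside $\{f>0\}$ yields a lower bound, uniform in $k$, on the integral of $u_k$ over an interior sub-cylinder; inserting this into the weak Harnack estimate furnishes the claimed constant $C(\omega,t_0,n,s)$ independent of $k$, along the lines of [\citealp{pap}, Proposition 2.18].
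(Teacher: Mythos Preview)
Your existence and uniqueness arguments are essentially the paper's: a Schauder fixed point in $L^2(\Omega_T)$ built on the linear solver of \cref{lemma1}, followed by testing the difference of two solutions with $(u_k^{(1)}-u_k^{(2)})_+$. The only cosmetic difference is that you manufacture the invariant set $K$ from the $L^\infty$ bound of \cref{lemma1}(1), whereas the paper uses the energy estimate and Poincar\'e to obtain an invariant $L^2$-ball; both routes are fine.

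The uniform lower bound, however, is where your sketch and the paper diverge, and your argument has a genuine gap. You propose to apply the weak Harnack inequality directly to $u_k$ and then keep the resulting $L^p$-norm on a comparison sub-cylinder away from zero by ``testing the approximated equation against a fixed non-negative cut-off supported inside $\{f>0\}$''. But testing with such a $\phi$ produces
\[
\iint_{\Omega_T}\frac{\nu_k\,\phi}{(u_k+1/k)^{\gamma}}
= -\iint_{\Omega_T} u_k\,\phi_t+\iint_{\Omega_T}\nabla u_k\cdot\nabla\phi+(\text{nonlocal})-\iint_{\Omega_T}\mu_k\phi,
\]
and it is not clear how this yields a \emph{lower} bound on $\iint u_k$ that is uniform in $k$: the right-hand side involves $u_k$ and $\nabla u_k$, for which no $k$-independent interior control is yet available (the a priori estimates of \cref{preli1}--\cref{preli2} themselves use the lower bound you are trying to prove). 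The singular term on the left is large precisely when $u_k$ is small, so the identity does not localize information in the way you suggest.

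The paper sidesteps this entirely by a barrier/comparison argument. Using the non-singularity of $\nu$ one fixes the particular approximation $\nu_k=T_k(f)+(\nu_s)_k$ (with $f$ the density of $\nu_a$) and introduces an auxiliary solution $w$ of
\[
w_t-\Delta w+(-\Delta)^s w=\frac{T_1(f)}{(w+1)^{\gamma(x,t)}}\ (+\mu_k),\qquad w(\cdot,0)=0,
\]
which is \emph{independent of $k$} once the $\mu_k$ is dropped (it is non-negative, so dropping it only decreases $w$). Testing the difference of equations with $(w-u_k)_+$ and using that $T_1(f)\le\nu_k$ and $(w+1)^{\gamma}\ge(u_k+1/k)^{\gamma}$ on $\{w>u_k\}$ gives $w\le u_k$ for every $k$. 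Then part~(3) of \cref{lemma1}, applied to the fixed function $w$, delivers the uniform constant $C(\omega,t_0,n,s)$. This comparison step is the idea your proposal is missing.
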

\begin{proof}Let $k \in \mathbb{N}$ be fixed, $v \in L^2(\Omega_T)$ and consider the following problem
   \begin{equation}{\label{aproxproblem11}}
    \begin{array}{c}
         w_t-\Delta w+(-\Delta)^s w=\frac{\nu_k}{\left(v_++\frac{1}{k}\right)^{\gamma(x,t)}} +\mu_k \text { in } \Omega_T, \smallskip\\w>0 \text{ in }\Omega_T,\quad w=0  \text { in }(\mathbb{R}^n \backslash \Omega) \times(0, T), \smallskip\\ w(x, 0)=u_{0k}(x) \text { in } \Omega .
    \end{array}
\end{equation}
As $\nu_k,\mu_k$ are non-negative and belongs to $L^\infty(\Omega_T)$, $u_{0k}\in L^\infty(\Omega)$ and $\gamma>0$, it follows from \cref{lemma1} that problem \cref{aproxproblem11} admits a unique solution $0\leq w \in L^2(0, T ; W_0^{1, 2}(\Omega)) \cap C([0, T] ; L^2(\Omega))$ such that $w_t \in L^{2}(0, T ; W^{-1, 2}(\Omega))$ for every fixed $v \in L^2(\Omega_T)$. Furthermore $w$ belongs to $L^{\infty}(\Omega_T)$.
Our aim is to prove the existence of a fixed point for the map
\begin{equation*}
G: L^2(\Omega_T) \rightarrow L^2(\Omega_T)    
\end{equation*}
which for any $v \in L^2(\Omega_T)$ gives the weak solution $w$ to \cref{aproxproblem11}. We take $w$ as test function in the weak formulation of \cref{aproxproblem11} obtaining
\begin{equation}{\label{mid1}}
   \int_0^T\langle w_t, w\rangle d t+\iint_{\Omega_T}|\nabla w|^2 + \int_0^T \iint_{\mathbb{R}^{2n}} \frac{|w(x, t)-w(y, t)|^2}{|x-y|^{n+2 s}} d x d y d t=\iint_{\Omega_T}\frac{\nu_kw}{\left(v_++\frac{1}{k}\right)^{\gamma(x,t)}}+\iint_{\Omega_T} \mu_k w    .
\end{equation}
By the classical integration by parts formula, one has
\begin{equation*}
\frac{1}{2} \int_{\Omega} w^2(x, T)-\frac{1}{2} \int_{\Omega} u_{0k}^2(x)\leq \int_0^T\langle w_t, w\rangle dt+\int_0^T \iint_{\mathbb{R}^{2n}} \frac{|w(x, t)-w(y, t)|^2}{|x-y|^{n+2 s}} d x d y d t.
\end{equation*}
For the right hand side of \cref{mid1} by the Hölder inequality
\begin{equation*}
    \iint_{\Omega_T}\frac{\nu_kw}{\left(v_++\frac{1}{k}\right)^{\gamma(x,t)}}+\iint_{\Omega_T} \mu_k w    \leq C \|\nu_k\|_{L^{\infty}(\Omega_T)}k^{\gamma_1}\left(\iint_{\Omega_T}|w|^2\right)^{\frac{1}{2}}+C\|\mu_k\|_{L^{\infty}(\Omega_T)}\left(\iint_{\Omega_T}|w|^2\right)^{\frac{1}{2}},
\end{equation*} where $\gamma_1=\max_{\overline{\Omega}_T}\gamma$.
Therefore, dropping a positive term
\begin{equation*}
 \iint_{\Omega_T}|\nabla w|^2dxdt \leq C\left(\iint_{\Omega_T}|w|^2dxdt\right)^{\frac{1}{2}}+\frac{1}{2} \int_{\Omega} u_{0_k}^2 dx,
    \end{equation*}
    for some $C\equiv C(k,\nu_k,\mu_k,\Omega_T)$.
Poincaré inequality (\cref{p}) implies for some constant $C$,
\begin{equation}{\label{mid2}}
 \iint_{\Omega_T}| w|^2dxdt \leq C\left(\iint_{\Omega_T}|w|^2dxdt\right)^{\frac{1}{2}}+\frac{1}{2} \int_{\Omega} u_{0_k}^2 dx,
    \end{equation}
which implies by Young's inequality
\begin{equation}{\label{mid21}}
\left(\iint_{\Omega_T}|w|^2dxdt\right)^{\frac{1}{2}} \leq C.
\end{equation}
Note that, the constant $C$ in \cref{mid21} is independent of $v$, and so the ball $B:=B_C(0)$ of $L^2(\Omega_T)$ of radius $C$ is invariant for $G$.

Now we check the continuity of the map $G$. Let $v_m$ be a sequence of functions converging to $v$ in $L^2(\Omega_T)$.
By the dominated convergence theorem one has that \begin{equation}{\label{mid3}}\frac{\nu_k}{\left((v_m)_++\frac{1}{k}\right)^{\gamma(x,t)}} \rightarrow \frac{\nu_k}{\left(v_++\frac{1}{k}\right)^{\gamma(x,t)}}  \text{ in }L^2(\Omega_T).\end{equation} Let $w_m:=G(v_m)$ and $w:=G(v)$. Taking $w_m-w$ as a test function in the weak formulation of both $w_m$ and $w$, and then subtracting, we get

\begin{equation*}
    \begin{array}{l}
        \quad  \int_0^T\langle (w_m-w)_t, (w_m-w)\rangle d t+\iint_{\Omega_T}|\nabla (w_m-w)|^2 d x d t\\\quad+ \int_0^T \iint_{\mathbb{R}^{2n}} \frac{|(w_m-w)(x, t)-(w_m-w)(y, t)|^2}{|x-y|^{n+2 s}} d x d y d t\\=\iint_{\Omega_T}\left(\frac{\nu_k}{\left((v_m)_++\frac{1}{k}\right)^{\gamma(x,t)}}-\frac{\nu_k}{\left(v_++\frac{1}{k}\right)^{\gamma(x,t)}}\right)    (w_m-w)dxdt\\\leq \left\|\frac{\nu_k}{\left((v_m)_++\frac{1}{k}\right)^{\gamma(x,t)}}-\frac{\nu_k}{\left(v_++\frac{1}{k}\right)^{\gamma(x,t)}}\right\|_{L^2(\Omega_T)}\|w_m-w\|_{L^2(\Omega)}.
    \end{array}
\end{equation*} Now as $w_m(\cdot,0)=w(\cdot,0)=u_{0k}$, so dropping positive terms and using Poincar\'e inequality we get
\begin{equation*}
    \|w_m-w\|^2_{L^2(\Omega)}\leq C\iint_{\Omega_T}|\nabla (w_m-w)|^2 d x d t\leq C\left\|\frac{\nu_k}{\left((v_m)_++\frac{1}{k}\right)^{\gamma(x,t)}}-\frac{\nu_k}{\left(v_++\frac{1}{k}\right)^{\gamma(x,t)}}\right\|_{L^2(\Omega_T)}\|w_m-w\|_{L^2(\Omega)}.
\end{equation*}
This, along with \cref{mid3} gives that $G(v_m)$ converges to $G(v)$ in $L^2(\Omega_T)$.

Lastly, we need $G(B)$ to be relatively compact. Let $v_m$ be a bounded sequence in $L^2(\Omega_T)$, and let $w_m=G(v_m)$. Reasoning as to obtain \cref{mid2}, we have
\begin{equation*}
 \iint_{\Omega_T}| w_m|^2dxdt \leq C\left(\iint_{\Omega_T}|w_m|^2dxdt\right)^{\frac{1}{2}}+\frac{1}{2} \int_{\Omega} u_{0_k}^2 dx,
    \end{equation*}
where $w_m:=G(v_m)$ and $C$ is clearly independent of $v_m$. Recalling \cref{mid21} this means that $w_m$ is bounded with respect to $m$ in $L^2(0, T ; W_0^{1, 2}(\Omega))$. We also deduce from the equation that $(w_m)_t$ is bounded with respect to $m$ in $L^{2}(0, T ; W^{-1, 2}(\Omega))$. Hence $w_m$ admits a strongly convergent subsequence in $L^2(\Omega_T)$ (see [\citealp{conv}, Corollary 4]). This concludes the proof of existence by Schauder fixed-point theorem.

Now we show, the solution is unique. For this let $u_k$ and $v_k$ be two solutions of \cref{aproxproblem}. Choosing $(u_k-v_k)_+$ as a test function in the weak formulation of \cref{aproxproblem} and take their difference to get
 \begin{equation}{\label{eeqn11}}
    \begin{array}{l}
        \quad  \int_0^T\langle (u_k-v_k)_t, (u_k-v_k)_+\rangle d t+\iint_{\Omega_T}|\nabla (u_k-v_k)_+|^2 d x d t\\\quad+ \int_0^T \iint_{\mathbb{R}^{2n}} \frac{((u_k-v_k)(x, t)-(u_k-v_k)(y, t))((u_k-v_k)_+(x,t)-(u_k-v_k)_+(y,t))}{|x-y|^{n+2 s}} d x d y d t\smallskip\\=\iint_{\Omega_T}\nu_k\left(\frac{1}{\left(u_k+\frac{1}{k}\right)^{\gamma(x,t)}}-\frac{1}{\left(v_k+\frac{1}{k}\right)^{\gamma(x,t)}}  \right)  (u_k-v_k)_+dxdt\leq 0.
    \end{array}
\end{equation}Now for the nonlocal term in the left, observe that $(w(x,t)-w(y,t))(w(x,t)_+-w(y,t)_+)\geq 0$, for any $w$ and thus the term is non-negative. As $u_k(\cdot,0)=v_k(\cdot, 0)$, so 
\begin{equation*}
    \begin{array}{rcl}
         \int_0^T\left\langle (u_k-v_k)_t, (u_k-v_k)_+\right\rangle d t
         &=&\int_0^{T}\int_\Omega  \frac{\partial (u_k-v_k)}{\partial t} (u_k-v_k)_+d x d t
       \smallskip\\& =& \int_\Omega\int_{0}^{(u_k-v_k)(x,T)}\sigma_+d\sigma d x 
         =\frac{1}{2}\int_\Omega ((u_k-v_k)_+(x,T))^2d x\geq 0.
    \end{array}
\end{equation*} Therefore we obtain from \cref{eeqn11} that 
\begin{equation*}
    0\leq \iint_{\Omega_T}|\nabla (u_k-v_k)_+|^2 d x d t\leq 0,
\end{equation*} which by Poincar\'e inequality implies $\iint_{\Omega_T}|(u_k-v_k)_+|^2 d x d t=0$, which gives $ u_k\leq v_k$ in $\Omega_T$. Similarly one can obtain $v_k\leq u_k$ and thus the solution is unique.

Finally, if $\nu$ is non-singular (with respect to the Lebesgue measure) non-negative bounded
Radon measure on $\Omega_T$, then
\begin{equation*}
    \nu=\nu_a+\nu_s,
\end{equation*}
where $0\neq \nu_a \ll \mathcal{L}$ and $\nu_s\perp \mathcal{L}$. By Radon-Nikodym theorem [\citealp{royden}, Page 382], there exists a non-negative Lebesgue measurable function $f$ such that for every measurable set $E \subset \Omega_T$,
\begin{equation*}
\nu_a(E)=\iint_E f d x dt.    
\end{equation*}
Furthermore, as $\nu$ is bounded then $f \in L^1(\Omega_T)$. Therefore we take $\nu_k=T_k(f)+(\nu_s)_k$ in \cref{aproxproblem}, where $\{(\nu_s)_k\}_k$ is a sequence of non-negative $L^\infty(\Omega_T)$ functions (which are bounded unifomly in $L^1(\Omega_T)$) and converges in narrow topology to $\nu_s$.\\Consider now $w$ to be unique non-negative solution (obtained as in the above step) to the problem   \begin{equation}{\label{eqn}}
    \begin{array}{c}
         w_t-\Delta w+(-\Delta)^s w=\frac{T_1(f)}{(w+1)^{\gamma(x,t)}} +\mu_k \text { in } \Omega_T, \smallskip\\w>0 \text{ in }\Omega_T,\quad w=0  \text { in }(\mathbb{R}^n \backslash \Omega) \times(0, T), \smallskip\\ w(x, 0)=0
         \text { in } \Omega .
    \end{array}
\end{equation} Take $(w-u_k)_+$ as a test function in \cref{eqn} and \cref{aproxproblem} respectively and then subtract to get \begin{equation}{\label{eqn11}}
    \begin{array}{l}
        \quad  \int_0^T\langle (w-u_k)_t, (w-u_k)_+\rangle d t+\iint_{\Omega_T}|\nabla (w-u_k)_+|^2 d x d t\\\quad+ \int_0^T \iint_{\mathbb{R}^{2n}} \frac{((w-u_k)(x, t)-(w-u_k)(y, t))((w-u_k)_+(x,t)-(w-u_k)_+(y,t))}{|x-y|^{n+2 s}} d x d y d t\\=\iint_{\Omega_T}\left(\frac{T_1(f)}{\left(w+1\right)^{\gamma(x,t)}}-\frac{T_k(f)+(\nu_s)_k}{\left(u_k+\frac{1}{k}\right)^{\gamma(x,t)}}-\mu_k\right)    (w-u_k)_+dxdt\leq 0.
    \end{array}
\end{equation}Observe that the nonlocal term in the left is non-negative. As $w(\cdot,0)=0
\leq u_{0k}=u_k(\cdot, 0)$, so taking $v=(w-u_k)$, we get $v_+(\cdot,0)\equiv 0$, and hence
\begin{equation*}
    \begin{array}{rcl}
         \int_0^T\left\langle v_t, v_+\right\rangle d t
         &=&\int_0^{T}\int_\Omega  \frac{\partial v}{\partial t} v_+d x d t
        = \int_\Omega\int_{v(x,0)}^{v(x,T)}\sigma_+d\sigma d x 
         =\frac{1}{2}\int_\Omega (v_+(x,T))^2d x\geq 0.
    \end{array}
\end{equation*} Therefore we obtain from \cref{eqn11} that 
\begin{equation*}
    0\leq \int_{\Omega_T}|\nabla (w-u_k)_+|^2 d x d t\leq 0,
\end{equation*} which by Poincar\'e inequality implies $\int_{\Omega_T}|(w-u_k)_+|^2 d x d t=0$, which gives $w\leq u_k$ in $\Omega_T$. As choice of $k$ was arbitrary, one can now use property $(3)$ of \cref{lemma1} to deduce the conclusion.
\end{proof}
For fixed $l>0$ we will make use of the truncation functions $T_l$ and $G_l$ defined, respectively as $T_l(r)= \max (-l, \min (r, l))$, and $G_l(r)=r-T_l(r)$. For $\eta, \delta>0$, define
\begin{equation}{\label{T_k}}
\tilde{T}_{l, \eta}(r):=\int_0^r T_l^\eta(\theta) d \theta    
\end{equation}
and\begin{equation}{\label{beta}}
\beta_\delta(r):= \begin{cases}1 & r \geq 2\delta \\ \frac{r}{\delta}-1 & \delta<r<2 \delta \\ 0 & r \leq  \delta.\end{cases}
\end{equation}\textbf{Preliminary for the proof of \cref{exis1}}
\begin{lemma}{\label{preli1}}(Uniform boundedness)
Let $u_0\in L^1(\Omega)$. Assume that the function $\gamma: \overline{\Omega}_T \rightarrow(0, \infty)$ is continuous which satisfies the condition $(P_1)$ for some $\gamma^* \geq 1$ and for some $\delta>0$. Let $k \in \mathbb{N}$ and assume that the solution of \cref{aproxproblem} obtained in \cref{exisforaprox} is denoted by $u_k$. Let $\nu,\mu$ be non-negative bounded Radon measures and $\nu$ be non-singular with respect to the Lebesgue measure. If the sequences of non-negative functions $\{\nu_k\}_{k \in \mathbb{N}},\{\mu_k\}_{k \in \mathbb{N}} \subset L^\infty(\Omega_T)$ are uniformly bounded in $L^1(\Omega_T)$, then the following conclusions hold:
\begin{enumerate}
    \item If $\gamma^*=1$, then $\{u_k\}_{k \in \mathbb{N}}$ is bounded in $L^q(0,T;W_0^{1, q}(\Omega))\cap L^\infty(0,T;L^1(\Omega))$ for every $1<q<2-\frac{n}{n+1}$,
\item If $\gamma^*>1$, then the sequence $\{u_k\}_{k \in \mathbb{N}}$ is uniformly bounded in $L^q(t_0,T;W_{\mathrm{loc}}^{1, q}(\Omega)) \cap L^\infty(0,T;L^1(\Omega))$ for every $1<q<2-\frac{n}{n+1}$ and $0<t_0<T$. Moreover, the sequences $\{G_l(u_k)\}_{k \in \mathbb{N}},\{T_l(u_k)\}_{k \in \mathbb{N}}$ and $\{T_l^{\frac{\gamma^*+1}{2}}(u_k)\}_{k \in \mathbb{N}}$ are uniformly bounded in $L^q(0,T;W_0^{1, q}(\Omega)), L^2(t_0,T;W_{\mathrm{loc}}^{1,2}(\Omega))$ and in $L^2(0,T;W_0^{1, 2}(\Omega))$ respectively for every fixed $l>0$, $0<t_0<T$ and $1<q<2-\frac{n}{n+1}$.
\end{enumerate}
\end{lemma}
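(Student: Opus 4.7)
The plan is to test the approximate equation \cref{aproxproblem} with carefully tailored truncations of $u_k$, derive truncated-energy estimates, and deduce both the $L^\infty(0,T;L^1(\Omega))$ bound and the $L^q(W^{1,q})$ bound by the standard Marcinkiewicz/Boccardo--Gallou\"et machinery. The decisive geometric idea is the decomposition
\begin{equation*}
\Omega_T=(\Omega_T)_\delta \,\cup\, \bigl(\Omega_T\setminus(\Omega_T)_\delta\bigr),
\end{equation*}
so that on the boundary strip $(\Omega_T)_\delta$ condition $(P_1)$ yields $\gamma(x,t)\leq\gamma^*$ (and thus a clean power inequality for the singular factor), while on the complement \cref{exisforaprox} supplies a $k$-independent lower bound $u_k\geq c(\omega,t_0)>0$, making the right-hand side of \cref{aproxproblem} dominated by $\nu_k/c^{\gamma_0}+\mu_k$, where $\gamma_0:=\min_{\overline{\Omega}_T}\gamma$.

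For the truncated energy estimate in the case $\gamma^*=1$, I would test with $T_l(u_k)\chi_{(0,\tau)}$: the time term produces $\int_\Omega\tilde T_{l,1}(u_k(\tau))$, the local term gives $\iint|\nabla T_l(u_k)|^2$, and the nonlocal term is nonnegative through $(a-b)(T_l(a)-T_l(b))\geq|T_l(a)-T_l(b)|^2$. On the boundary strip the pointwise bound
\begin{equation*}
\frac{T_l(u_k)}{(u_k+1/k)^\gamma}\leq (l+1)^{1-\gamma}\leq l+1,
\end{equation*}
obtained by separating the cases $u_k\leq l$ and $u_k>l$ and using $\gamma\leq 1$, controls the singular contribution by $C(l+1)\|\nu_k\|_{L^1}$; on the interior it is bounded by $l\|\nu_k\|_{L^1}/c^{\gamma_0}$. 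Combined, these yield $\iint|\nabla T_l(u_k)|^2\leq Cl$ uniformly in $k$ for $l\geq 1$. For $\gamma^*>1$ I would test with $T_l^{\gamma^*}(u_k)\chi_{(0,\tau)}$; an analogous case analysis on $(\Omega_T)_\delta$, distinguishing $u_k+1/k\leq 1$ (where $u_k^{\gamma^*}\leq u_k^\gamma$) from $u_k+1/k\geq 1$ (where the denominator dominates trivially), gives $T_l^{\gamma^*}(u_k)/(u_k+1/k)^\gamma\leq Cl^{\gamma^*}$. The gradient term then becomes, up to a constant, $\iint|\nabla T_l^{(\gamma^*+1)/2}(u_k)|^2$, delivering the bound on $T_l^{(\gamma^*+1)/2}(u_k)$ in $L^2(0,T;W_0^{1,2}(\Omega))$.

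The $L^\infty(0,T;L^1(\Omega))$ bound then follows \emph{a posteriori}. Splitting $\int_\Omega u_k(\tau)=\int_{\{u_k\leq l\}}u_k+\int_{\{u_k>l\}}u_k$, the first integral is at most $l|\Omega|$ and the second is controlled through the pointwise lower bound $\tilde T_{l,\eta}(r)\geq l^\eta(r-l)$ valid for $r\geq l$ (with $\eta=1$ or $\eta=\gamma^*$), so that the uniform bound $\int_\Omega\tilde T_{l,\eta}(u_k(\tau))\leq Cl^\eta$ gives $\int_\Omega u_k(\tau)\leq|\Omega|+C$ after fixing $l=1$. The interior bound on $T_l(u_k)$ in $L^2(t_0,T;W_{\mathrm{loc}}^{1,2}(\Omega))$ in the regime $\gamma^*>1$ is recovered by testing with $\eta^2T_l(u_k)$ for a space-time cutoff $\eta$ supported in $\omega\times(t_0,T)\subset\subset\Omega_T\setminus(\Omega_T)_\delta$, exploiting the uniform positivity on $\mathrm{supp}\,\eta$ to make the singular term $k$-uniformly bounded; the bound on $G_l(u_k)$ in $L^q(W_0^{1,q}(\Omega))$ is a direct consequence of the truncation estimate, since $G_l(u_k)$ vanishes where $u_k\leq l$ and the Marcinkiewicz tail bounds take over on $\{u_k>l\}$.

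Finally, to translate the truncated bounds into the claimed range $1<q<2-n/(n+1)$, I would apply \cref{gagliardo} together with the $L^\infty(L^1)$ bound in the classical Boccardo--Gallou\"et scheme to obtain $u_k\in\mathcal{M}^{(n+2)/n}(\Omega_T)$ and $\nabla u_k\in\mathcal{M}^{(n+2)/(n+1)}(\Omega_T,\mathbb{R}^n)$, whence the claimed $L^q(W^{1,q})$-bound via the embedding $\mathcal{M}^\sigma\hookrightarrow L^{\sigma-\varepsilon}$. The main obstacle I anticipate is the pointwise bound for the singular term on the boundary strip under a merely continuous variable exponent: the constants must absorb the full range $\gamma(x,t)\in(0,\gamma^*]$ uniformly in both $k$ and position, and the case split according to whether $u_k+1/k$ is less than or greater than $1$ must be organized so that no $k$-dependent factor survives in either regime.
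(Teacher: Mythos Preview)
Your strategy is essentially the paper's: the same geometric split $\Omega_T=(\Omega_T)_\delta\cup(\omega_T)_\delta$, the same test functions $T_l^{\gamma^*}(u_k)$ to generate the energy inequalities, the same use of uniform positivity on the interior piece, and the same Boccardo--Gallou\"et scheme to reach the range $1<q<2-\frac{n}{n+1}$.

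Two points where the paper proceeds more directly than your sketch. First, for the local $L^2(t_0,T;W^{1,2}_{\mathrm{loc}})$ bound on $T_l(u_k)$ when $\gamma^*>1$, the paper does \emph{not} introduce a space--time cutoff $\eta^2$ (which in the mixed local--nonlocal setting would force you to control nonlocal cross-terms). Instead it simply observes that on $\omega\times[t_0,T)$ the uniform lower bound $u_k\geq c$ gives
\[
c^{\gamma^*-1}\int_{t_0}^T\!\!\int_\omega|\nabla T_l(u_k)|^2
\;\leq\;\int_{t_0}^T\!\!\int_\omega T_l(u_k)^{\gamma^*-1}|\nabla T_l(u_k)|^2
\;\lesssim\;\iint_{\Omega_T}|\nabla T_l^{(\gamma^*+1)/2}(u_k)|^2\leq C(l),
\]
extracting the local estimate directly from the global weighted one already obtained. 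Second, your claim that the $L^q(W_0^{1,q})$ bound on $G_l(u_k)$ is a ``direct consequence'' of the truncation estimate skips a step: when $\gamma^*>1$ the bound $\iint|\nabla T_m^{(\gamma^*+1)/2}(u_k)|^2\leq Cm^{\gamma^*}$ does \emph{not} by itself yield the level-set estimate $\iint_{\{m\leq u_k\leq m+1\}}|\nabla u_k|^2\leq C$ with $C$ independent of $m$. The paper obtains this by testing separately with $T_1(G_l(u_k))$ and then with $T_1(G_m(G_l(u_k)))$, exploiting that these functions vanish on $\{u_k\leq l\}$ so the singular right-hand side is bounded by $\|l^{-\gamma(\cdot)}\|_{L^\infty}\|\nu_k\|_{L^1}$ there; only then does the Boccardo--Gallou\"et summation run for $G_l(u_k)$. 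Your outline is correct in spirit but should make this extra test function explicit.
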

\begin{proof}
     As $u_k\in  L^2(0,T; W^{1,2}_0(\Omega))$, so for fix $\tau\in(0,T]$ we choose $T_1^{\gamma^*}(u_k) \chi_{(0, \tau)}$ as a test function in \cref{aproxproblem}, to get
     
\begin{equation}{\label{imp1}}
    \begin{array}{l}
      \quad  \int_0^\tau \int_{\Omega}(u_k)_tT_1^{\gamma^*}(u_k)d x d t +\gamma^*\int_0^\tau\int_\Omega |\nabla T_1(u_k)|^2T_1(u_k)^{\gamma^*-1}
      d x d t\smallskip\\
 +\underbrace{\int_0^\tau \int_{\mathbb{R}^n}\int_{\mathbb{R}^n} \frac{(u_k(x, t)-u_k(y, t))(T_1^{\gamma^*}(u_k)(x,t)-T_1^{\gamma^*}(u_k)(y,t))}{|x-y|^{n+2 s}} d x d y d t}_{\geq 0}\smallskip\\
= \int_0^\tau\int_{\Omega} \left(\frac{\nu_k}{\left(u_k+\frac{1}{k}\right)^{\gamma(x,t)}} +\mu_k\right)T_1^{\gamma^*}(u_k)
d x d t .
    \end{array}
\end{equation}
Note that $T_1^{\gamma^*}(r)$ is an increasing function in $r$ and hence the nonlocal term is non-negative. Now it holds\begin{equation}{\label{imp2}}
\begin{array}{l}
\quad\int_0^{\tau}\int_\Omega(\tilde{T}_{1, \gamma^*}(u_k))_t +\frac{4\gamma^*}{(\gamma^*+1)^2}\int_0^\tau\int_\Omega |\nabla T_1^{\frac{\gamma^*+1}{2}}(u_k)|^2d x d t\smallskip\\\leq \iint_{\Omega_T} \frac{\nu_kT_1^{\gamma^*}(u_k)}{\left(u_k+\frac{1}{k}\right)^{\gamma(x,t)}} d x d t+\|\mu_k\|_{L^1(\Omega_T)} \smallskip\\
=\iint_{(\Omega_T)_\delta} \frac{\nu_kT_1^{\gamma^*}(u_k)}{\left(u_k+\frac{1}{k}\right)^{\gamma(x,t)}} d x d t+\iint_{(\omega_{{T}})_\delta} \frac{\nu_kT_1^{\gamma^*}(u_k)}{\left(u_k+\frac{1}{k}\right)^{\gamma(x,t)}} d x d t +\|\mu_k\|_{L^1(\Omega_T)} \smallskip \\
 \leq \iint_{(\Omega_T)_\delta} \frac{\nu_kT_1^{\gamma^*}(u_k)}{\left(u_k+\frac{1}{k}\right)^{\gamma(x,t)}} d x d t+\iint_{(\omega_{{T}})_\delta}  \frac{\nu_kT_1^{\gamma^*}(u_k)}{C_{(\omega_{{T}})_\delta}^{\gamma(x,t)}} d x d t+\|\mu_k\|_{L^1(\Omega_T)} \smallskip\\
 \leq \iint_{(\Omega_T)_\delta\cap\left\{u_k \leq 1\right\}}  \nu_k u_k^{\gamma^*-\gamma(x,t)} d x d t +\iint_{(\Omega_T)_\delta\cap\left\{u_k \geq 1\right\}}  \frac{\nu_kT_1^{\gamma^*}(u_k)}{\left(u_k+\frac{1}{k}\right)^{\gamma(x,t)}} d x d t 
\smallskip\\\quad+\iint_{(\omega_{{T}})_\delta}  \frac{\nu_k}{C_{(\omega_{{T}})_\delta}^{\gamma(x,t)}} d x d t+\|\mu_k\|_{L^1(\Omega_T)}  \\\leq\|\mu_k\|_{L^1(\Omega_T)}+2\|\nu_k\|_{L^1(\Omega_T)}+\left\|C_{(\omega_{{T}})_\delta}^{-\gamma(\cdot)}\right\|_{L^{\infty}(\Omega_T)}\|\nu_k\|_{L^1(\Omega_T)}\leq C,
\end{array}
\end{equation}
where $(\omega_{{T}})_\delta=\Omega_T\backslash(\Omega_T)_\delta$, $\tilde{T}_{1, \gamma^*}(r)$ is defined in \cref{T_k}. Here, we have used the uniform positivity condition inside compact sets, as obtained in \cref{exisforaprox} in the third line. Further, observe that $\tilde{T}_{1, \eta}(r) \geq r-1$, to get
\begin{equation}{\label{imp3}}
    \int_{\Omega} u_k(x, \tau) +\frac{4\gamma^*}{(\gamma^*+1)^2}\int_0^\tau\int_\Omega |\nabla T_1^{\frac{\gamma^*+1}{2}}(u_k)|^2d x d t\leq C+|\Omega|+\int_{\Omega} \tilde{T}_{1, \gamma^*}(u_0)\leq C+\int_\Omega u_0dx.
\end{equation}
The right hand side of \cref{imp3} is bounded with respect to $\tau$; hence taking the supremum on $\tau$ we obtain that
\begin{equation}{\label{imp4}}
\left\|u_k\right\|_{L^{\infty}(0, T ; L^1(\Omega))} +\int_0^T\int_\Omega |\nabla T_1^{\frac{\gamma^*+1}{2}}(u_k)|^2d x d t\leq C    .
\end{equation}
Let $l$ be a fixed integer and note that $T_1(G_l(r))$ is defined by
\begin{equation}{\label{TKKK}}
    T_1(G_l(r))=\begin{cases}
        1 & \text { if } \quad r>l+1 \\
r-l & \text { if } \quad l\leq r \leq l+1 \\
r+l & \text { if } \quad-l-1<r<-l \\
-1 & \text { if } \quad r \leq-l-1 \\
0 & \text { if } \quad-l<r<l.
    \end{cases}
\end{equation}
\textbf{Case 1.} We first assume $\gamma^*=1$. Now take $T_1(G_l(u_k))$ as a test function in \cref{aproxproblem}, to get 
\begin{equation*}{\label{imp5}}
    \begin{array}{l}
      \quad  \int_0^T\int_{\Omega}(u_k)_tT_1(G_l(u_k))d x d t +\iint_{\Omega_T\cap \{l\leq u_k\leq l+1\}} |\nabla u_k|^2d x d t\smallskip\\
 +\underbrace{\int_0^T\int_{\mathbb{R}^n}\int_{\mathbb{R}^n} \frac{(u_k(x, t)-u_k(y, t))(T_1(G_l(u_k))(x,t)-T_1(G_l(u_k))(y,t))}{|x-y|^{n+2 s}} d x d y d t}_{\geq 0}\\
= \int_0^T\int_{\Omega} \left(\frac{\nu_k}{\left(u_k+\frac{1}{k}\right)^{\gamma(x,t)}} +\mu_k\right)T_1(G_l(u_k))
d x d t .
    \end{array}
\end{equation*}
Note that the function $T_1(G_l(r))$ is increasing in $r$ and thus the nonlocal integral is non-negative. For the right side, as $u_k\geq0$ implies $G_l(u_k)\leq u_k$ and $T_1$ is an increasing function, we can estimate similarly like \cref{imp2} to get
\begin{equation*}{\label{imp6}}
\begin{array}{rcl}
    \int_\Omega L(u_k(x,T))dx
    +\iint_{\Omega_T\cap \{l\leq u_k\leq l+1\}} |\nabla u_k|^2d x d t&\leq& C+\int_\Omega L(u_k(x,0))\\&\leq& C+\int_\Omega\int_0^{u_0}T_1(G_l(\theta))d\theta\leq C+\|u_0\|_{L^1(\Omega)},
\end{array}
\end{equation*}
where \begin{equation}{\label{GLL}} L(r)=\int_0^rT_1(G_l(\theta))d\theta.\end{equation} So using \cref{imp4}, we finally get 
\begin{equation*}{\label{imp7}}
    \iint_{B_l} |\nabla u_k|^2d x d t\leq C,
\end{equation*} where $B_l=\Omega_T\cap \{l\leq u_k\leq l+1\}$ and $C$ is a constant independent of $k$ and $l$.
We now follow [\citealp{2}, Theorem 4]. Let $1<q<2-\frac{n}{n+1}, r=\frac{n+1}{n}q$. We have
\begin{equation*}
\iint_{B_l}|\nabla u_k|^q \leq c(\operatorname{meas} B_l)^{1-q / 2} \leq c\left(\iint_{B_l}|u_k|^r\right)^{(2-q) / 2} \frac{1}{l^{r(2-q) / 2}},
\end{equation*} so that by H\"older inequality
\begin{equation}{\label{imp8}}{\begin{array}{rcl}
     \iint_{\Omega_T}|\nabla u_k|^q & \leq& \iint_{\Omega_T}|\nabla T_1(u_k)|^q dxdt+c\sum_{l=1}^\infty\left(\iint_{B_l}|u_k|^r\right)^{(2-q) / 2} \frac{1}{l^{r(2-q) / 2}}\\
& \stackrel{\cref{imp4}}{\leq}& C+c\left(\iint_{\Omega_T}|u_k|^r\right)^{(2-q) / 2}\left(\sum_{l=1}^{\infty} \frac{1}{l^{r(2-q) / q}}\right)^{q / 2} .
\end{array}}
\end{equation}
Applying Hölder's inequality yields
\begin{equation*}
    \|u_k\|_{L^{r}(\Omega)} \leq\|u_k\|_{L^1(\Omega)}^\theta\|u_k\|_{L^{q^*}(\Omega)}^{1-\theta} \stackrel{\cref{imp4}}{\leq} c\|u_k\|_{L^{q^*}(\Omega)}^{1-\theta},
\end{equation*}
where $1-\theta=\left((r-1) /(q^*-1)\right) \cdot\left(q^* / r\right)$.
The above inequality leads to
\begin{equation*}
    \|u_k\|_{L^{r}(0, T: L^{r}(\Omega))}^{r} \leq c \int_0^T\|u_k\|_{L^{q^*}(\Omega)}^{q^*(r-1) /(q^*-1)} =c\|u_k\|_{L^{q}(0, T: L^{q^*}(\Omega))}^q
\end{equation*}
if $r$ is such that $q^*\frac{r-1}{(q^*-1)}=q$, that is, $r=\frac{n+1}{n} q$.
Using \cref{imp8}, Sobolev embedding theorem now implies that

\begin{equation*}
\begin{array}{rcl}
    \|u_k\|_{L^{q}(0, T: L^{q^*}(\Omega))}^q= \int_0^T\left(\int_{\Omega}|u_k|^{q^*}\right)^{q / q^*} d t 
&\leq&  c\int_0^T\int_{\Omega} |\nabla u_k|^q d t 
\\&\leq&  C+c\|u_k\|_{ L^q(0,T;L^{q^*}(\Omega))}^{q(2-q) /2}  \times\left(\sum_{l=1}^{\infty} \frac{1}{l^{(n+1)(2-q) / n}}\right)^{q / 2}.\end{array}
\end{equation*}
From the previous bound on $q$ we have, using Young's inequality, the a priori estimate
\begin{equation*}{\label{imp9}}
\|u_k\|_{L^q(0, T; L^{q^*}(\Omega))} \leq c,    
\end{equation*}
and then the estimate
\begin{equation*}{\label{imp10}}
\|u_k\|_{L^q(0, T ; W_0^{1, q}(\Omega))} \leq C.  
\end{equation*}This along with \cref{imp4} concludes the proof for the case $\gamma^*=1$.
\smallskip\\\textbf{Case 2.} Now assume $\gamma^*>1$. For fix $l>0$ and $\tau\in(0,T]$, take $T_1(G_l(u_k))\chi_{(0,\tau)}$ as a test function in \cref{aproxproblem} to get
\begin{equation}{\label{imp11}}
    \begin{array}{l}
      \quad  \int_0^\tau\int_{\Omega}(u_k)_tT_1(G_l(u_k))d x d t +\iint_{\Omega_\tau\cap \{l\leq u_k\leq l+1\}} |\nabla u_k|^2d x d t\smallskip\\
+\underbrace{\int_0^\tau\int_{\mathbb{R}^n}\int_{\mathbb{R}^n} \frac{(u_k(x, t)-u_k(y, t))(T_1(G_l(u_k))(x,t)-T_1(G_l(u_k))(y,t))}{|x-y|^{n+2 s}} d x d y d t}_{\geq 0}\\
\leq\int_0^T\int_{\Omega} \left(\frac{\nu_k}{\left(u_k+\frac{1}{k}\right)^{\gamma(x,t)}} +\mu_k\right)T_1(G_l(u_k))
d x d t 
\leq  \|l^{-\gamma(\cdot)}\|_{L^\infty(\Omega_T)}\|\nu_k\|_{L^1(\Omega_T)}+\|\mu_k\|_{L^1(\Omega_T)},
    \end{array}
\end{equation}where $\Omega_\tau=\Omega\times(0,\tau)$. Note that in the last line, we have used the fact that each $u_k$ is non-negative, and hence from \cref{TKKK}, $T_1(G_l(u_k))\equiv 0$ if $u_k\leq l$. Now use \cref{GLL} and see that non-negativity of $u_k$ gives for each $\tau\in[0,T]$, \begin{equation*}
L(u_k(x,\tau))=\int_0^{u_k(x,\tau)}T_1(G_l(\theta))d\theta=\begin{cases}0&\text{ if }u_k(x,\tau)<l,\smallskip\\\frac{1}{2}(u_k-l)^2&\text{ if }l\leq u_k(x,\tau)\leq l+1\smallskip\\\frac{1}{2}+(u_k(x,\tau)-l-1)&\text{ if } u_k\geq l+1.\end{cases} 
\end{equation*}This clearly gives 
 $ L(u_k(x,\tau))\geq G_l(u_k)-1$, and we can estimate from \cref{imp11} as
 \begin{equation}{\label{imp12}}
 \begin{array}{c}
     \int_\Omega G_l(u_k(x,\tau))-|\Omega|\leq \int_\Omega L(u_k(x,\tau))+\iint_{\Omega_\tau\cap \{l\leq u_k\leq l+1\}} |\nabla u_k|^2d x d t\leq C(l)\\\implies \int_\Omega G_l(u_k(x,\tau))+\iint_{\Omega_\tau\cap \{l\leq u_k\leq l+1\}} |\nabla u_k|^2d x d t\leq C(l)+|\Omega|.\end{array}
 \end{equation}Note that the right side of \cref{imp12} is also bounded with respect to $\tau$; hence taking supremum on $\tau$ we obtain that
\begin{equation}{\label{imp13}}
\left\|G_l(u_k)\right\|_{L^{\infty}(0, T ; L^1(\Omega))} +\iint_{\Omega_T\cap \{l\leq u_k\leq l+1\}} |\nabla G_l(u_k)|^2d x d t\leq C (l),   
\end{equation}where $C(l)$ is a positive constant which depends on $l$ but is independent of $k$. Now for fix $l>0$, denote $v_k=G_l(u_k)$. Take $T_1(G_m(v_k))$ as a test function in \cref{aproxproblem} to get \begin{equation}{\label{imp131}}
    \begin{array}{l}
      \quad  \int_0^T\int_{\Omega}(u_k)_tT_1(G_m(v_k))d x d t +\iint_{\Omega_T\cap \{m\leq v_k\leq m+1\}} |\nabla v_k|^2d x d t\smallskip\\
 +{\int_0^T\int_{\mathbb{R}^n}\int_{\mathbb{R}^n} \frac{(u_k(x, t)-u_k(y, t))(T_1(G_m(v_k))(x,t)-T_1(G_m(v_k))(y,t))}{|x-y|^{n+2 s}} d x d y d t}\smallskip\\
= \int_0^T\int_{\Omega} \left(\frac{\nu_k}{\left(u_k+\frac{1}{k}\right)^{\gamma(x,t)}} +\mu_k\right)T_1(G_m(v_k))
d x d t .
    \end{array}
\end{equation} We estimate each terms of \cref{imp131}. First note that $v_k\equiv 0$ if $u_k\leq l$ and hence by \cref{TKKK}, $T_1(G_m(v_k))\equiv 0$ if $u_k\leq l$. Therefore $u_k\geq l$ for all $k$ in $\operatorname{supp}T_1(G_m(v_k))$ for each $m>0$. So one can estimate as
\begin{equation*}{\label{imp14}}
    \int_0^T\int_{\Omega} \left(\frac{\nu_k}{\left(u_k+\frac{1}{k}\right)^{\gamma(x,t)}} +\mu_k\right)T_1(G_m(v_k))
d x d t \leq \|l^{-\gamma(\cdot)}\|_{L^\infty(\Omega_T)}\|\nu_k\|_{L^1(\Omega_T)}+\|\mu_k\|_{L^1(\Omega_T)}.
\end{equation*} As the function $T_1(G_m(r))$ is increasing in $r$ and $u_k(x,t)\geq u_k(y,t)$ implies $v_k(x,t)=G_l(u_k(x,t))\geq G_l(u_k(y,t))=v_k(y,t)$, so the nonlocal term is non-negative. Finally, denoting $\tilde{L}(r)=\int_0^rT_1(G_m(G_l(\theta)))d\theta,$ we have
\begin{equation*}
    \int_0^T\int_{\Omega}(u_k)_tT_1(G_m(v_k))d x d t =\int_\Omega\tilde{L}(u_k(x,T))-\int_\Omega\tilde{L}(u_k(x,0))\geq -\int_\Omega\tilde{L}(u_k(x,0)).
\end{equation*} Thus from \cref{imp131} using \cref{imp4}, we obtain \begin{equation*}
    \iint_{\Omega_T\cap \{m\leq v_k\leq m+1\}} |\nabla v_k|^2d x d t\leq C(l)+\int_\Omega\tilde{L}(u_k(x,0))\leq C(l)+\int_\Omega u_0dx\leq C(l)+\|u_0\|_{L^1(\Omega)},
\end{equation*} where $C(l)$ is a constant which depends on $l$ (fixed after choice) but does not depend on $k$ and $m$. Denoting now $B_m=\Omega_T\cap \{m\leq v_k\leq m+1\}=\{l+m\leq u_k\leq l+m+1\}$, we have for $1<q<2-\frac{n}{n+1}, r=\frac{n+1}{n}q$,
\begin{equation*}
\iint_{B_m}|\nabla v_k|^q \leq C(l)(\operatorname{meas} B_m)^{1-q / 2} \leq C(l)\left(\iint_{B_m}|v_k|^r\right)^{(2-q) / 2} \frac{1}{m^{r(2-q) / 2}},
\end{equation*} so that 
\begin{equation}{\label{imp15}}{\begin{array}{rcl}
     \iint_{\Omega_T}|\nabla v_k|^q =\iint_{\Omega_T\cap \{u_k\geq l\}}|\nabla v_k|^q & \leq& \iint_{\Omega_T\cap \{l\leq u_k\leq l+1\}}|\nabla v_k|^q dxdt+c\sum_{m=1}^\infty\left(\iint_{B_m}|v_k|^r\right)^{(2-q) / 2} \frac{1}{m^{r(2-q) / 2}}\\
& \stackrel{\cref{imp13}}{\leq}& C(l)+C(l)\left(\iint_{\Omega_T}|v_k|^r\right)^{(2-q) / 2}\left(\sum_{m=1}^{\infty} \frac{1}{m^{r(2-q) / q}}\right)^{q / 2} .
\end{array}}
\end{equation} Equation \cref{imp15} is indeed same as \cref{imp8} with $u_k$ replaced by $v_k$. One can follow similar steps like the previous case from now on and use \cref{imp13}, to deduce 
\begin{equation}{\label{imp16}}
\|G_l(u_k)\|_{L^q(0, T; W^{1,q}_0(\Omega))} \leq C(l),    
\end{equation} for all $1<q<2-\frac{n}{n+1}$ and for each $l>0$ fixed.\smallskip\\ To get the estimates for $T_l(u_k)$, fix $l>0$ and take $T_l^{\gamma^*}(u_k) \chi_{(0, \tau)}$ as a test function in \cref{aproxproblem}, and then estimate exactly same like \cref{imp1,imp2}, to get\begin{equation*}{\label{imp161}}
\begin{array}{l}
\quad\int_0^{\tau}\int_\Omega(\tilde{T}_{l, \gamma^*}(u_k))_t +\frac{4\gamma^*}{(\gamma^*+1)^2}\int_0^\tau\int_\Omega |\nabla T_l^{\frac{\gamma^*+1}{2}}(u_k)|^2d x d t  \\\leq l^{\gamma^*}\|\mu_k\|_{L^1(\Omega_T)}+(1+l^{\gamma^*})\|\nu_k\|_{L^1(\Omega_T)}+l^{\gamma^*}\left\|C_{(\omega_{{T}})_\delta}^{-\gamma(\cdot)}\right\|_{L^{\infty}(\Omega_T)}\|\nu_k\|_{L^1(\Omega_T)}\leq C,
\end{array}
\end{equation*}
where $\tilde{T}_{l, \gamma^*}(r)$ is defined in \cref{T_k}. Furthermore, observing that $\tilde{T}_{l, \eta}(r) \geq C T_l(r)^{\eta+1}$, one gets
\begin{equation}{\label{imp17}}
    C\int_{\Omega} T^{\gamma^*+1}_l(u_k(x, \tau)) +\frac{4\gamma^*}{(\gamma^*+1)^2}\int_0^\tau\int_\Omega |\nabla T_l^{\frac{\gamma^*+1}{2}}(u_k)|^2d x d t\leq C(l)+\int_{\Omega} \tilde{T}_{l, \gamma^*}(u_0)\leq C(l)+l^{\gamma^*}\int_\Omega u_0dx.
\end{equation}
Every term on the right hand side of \cref{imp17} is also bounded with respect to $\tau$; hence taking the supremum on $\tau$ we obtain that
\begin{equation}{\label{imp18}}
\left\|T_l^{\gamma^*+1}(u_k)\right\|_{L^{\infty}(0, T ; L^2(\Omega))} +\int_0^T\int_\Omega |\nabla T_l^{\frac{\gamma^*+1}{2}}(u_k)|^2d x d t\leq C   (l) ,
\end{equation} for each $l>0$. Further, as $\nu$ is non-singular with respect to the Lebesgue measure, then 
    for each $t_0>0$ and $\omega\subset\subset\Omega$, $\exists C(\omega,t_0,n,s)$, a constant which does not depend on $k$,  such that $\forall k$, $u_k\geq C(\omega,t_0,n,s)$ in $\omega\times[t_0,T)$. Therefore one gets by \cref{imp18}
    \begin{equation}{\label{imp19}}
      C_{(\omega,t_0)}^{\gamma^*-1} \int_{t_0}^T\int_\omega |\nabla T_l(u_k)|^2d x d t \leq \int_{t_0}^T\int_\omega u_k^{\gamma^*-1}|\nabla T_l(u_k)|^2d x d t= \int_{t_0}^T\int_\omega  T_l^{\gamma^*-1}(u_k)|\nabla T_l(u_k)|^2d x d t \leq C(l).
    \end{equation} \cref{imp19} implies that the sequence $\{T_l(u_k)\}_{k \in \mathbb{N}}$ is uniformly bounded in $L^2(t_0,T;W_{\mathrm{loc}}^{1,2}(\Omega))$ for each $l>0$ and $0<t_0<T$. Finally note that $u_k=T_l(u_k)+G_l(u_k)$ for each fix $l>0$ and use \cref{imp4}, \cref{imp16}, \cref{imp19} to deduce that the sequence $\{u_k\}_{k \in \mathbb{N}}$ is uniformly bounded in $L^q(t_0,T;W_{\mathrm{loc}}^{1, q}(\Omega)) \cap L^\infty(0,T;L^1(\Omega))$ for every $1<q<2-\frac{n}{n+1}$ and $0<t_0<T$. Moreover, the sequences $\{G_l(u_k)\}_{k \in \mathbb{N}}$ and $\{T_l^{\frac{\gamma^*+1}{2}}(u_k)\}_{k \in \mathbb{N}}$ are uniformly bounded in $L^q(0,T;W_0^{1, q}(\Omega))$ and in $L^2(0,T;W_0^{1, 2}(\Omega))$ respectively for every fixed $l>0$ and $1<q<2-\frac{n}{n+1}$.
\end{proof}
\textbf{Preliminary for the proof of \cref{exis2}}
\begin{lemma}{\label{preli2}}
    Let  $\gamma(x,t)\equiv \gamma$ be a constant, $u_0\in L^{\gamma+1}(\Omega)$ and define $$q=\begin{cases}
    2, \text{ if } \gamma\geq 1\\ \frac{(\gamma+1)(n+2)}{(n+\gamma+1)}\text{ if } \gamma< 1.\end{cases}$$  Let $\nu$ be a non-negative bounded Radon measure, which is non-singular with respect to the Lebesgue measure. Suppose either $\mu\in L^{\gamma+1}\left(0,T;L^{\frac{n(\gamma+1)}{n+2\gamma}}\Omega)\right)$ or $\mu\in L^r(\Omega_T)$ where $r=\frac{(n+2)(\gamma+1)}{n+2(\gamma+1)}$ is a non-negative function in $\Omega_T$. Let $k \in \mathbb{N}$, $\mu_k=T_k(\mu)$ and assume that the solution of \cref{aproxproblem} obtained in \cref{exisforaprox} is denoted by $u_k$. If the sequence of non-negative functions $\{\nu_k\}_{k \in \mathbb{N}} \subset L^\infty(\Omega_T)$ are uniformly bounded in $L^1(\Omega_T)$, then the following conclusions hold:
\begin{enumerate}
    \item If $\gamma< 1$, then the sequence $\{u_k\}_{k \in \mathbb{N}}$ is bounded in $L^q(0,T;W_0^{1, q}(\Omega))\cap L^\infty(0,T;L^1(\Omega))$.
    \item If $\gamma\geq 1$, then the sequence $\{u^{\frac{\gamma+1}{2}}_k\}_{k \in \mathbb{N}}$ is uniformly bounded in $L^q(0,T;W_0^{1,q}(\Omega))\cap L^\infty(0,T;L^{2}(\Omega))$. Further, the sequence $\{u_k\}_{k \in \mathbb{N}}$ is bounded in $ L^q(t_0,T;W_{\mathrm{loc}}^{1,q}(\Omega))\cap L^\infty(0,T; L^{\gamma+1}(\Omega))$ for all $0<t_0<T$. 
\end{enumerate}  
\end{lemma}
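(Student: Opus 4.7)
The plan is to test the weak formulation of \cref{aproxproblem} with a suitable power of $u_k+1/k$ and combine the resulting energy estimate with parabolic Gagliardo--Nirenberg interpolation. This is the natural analogue of \cref{preli1}, simplified by the fact that $\gamma$ is now constant. I will use $(u_k+1/k)^\gamma\chi_{(0,\tau)}$ as test function, which is admissible since $u_k\in L^2(0,T;W^{1,2}_0(\Omega))\cap L^\infty(\Omega_T)$ by \cref{exisforaprox}. The LHS then produces (i) a time-derivative term that integrates to $\frac{1}{\gamma+1}\int_\Omega(u_k+1/k)^{\gamma+1}(\tau)\,dx$ minus an initial term controlled by $\|u_0\|_{L^{\gamma+1}(\Omega)}^{\gamma+1}+|\Omega|$; (ii) the gradient term $\frac{4\gamma}{(\gamma+1)^2}\iint|\nabla(u_k+1/k)^{(\gamma+1)/2}|^2$; and (iii) a nonlocal term which is non-negative since $r\mapsto(r+1/k)^\gamma$ is monotone increasing. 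On the RHS the singular contribution reduces to $\iint\nu_k\leq\|\nu_k\|_{L^1(\Omega_T)}$ uniformly in $k$, and the remaining integrand is $\mu_ku_k^\gamma$, which is the key term.

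For $\gamma\geq 1$ I will handle $\iint\mu_ku_k^\gamma$ under either hypothesis on $\mu$ by H\"older followed by Young. If $\mu\in L^{\gamma+1}(0,T;L^{n(\gamma+1)/(n+2\gamma)}(\Omega))$, H\"older in space with the Sobolev conjugate exponent gives that $u_k^\gamma$ appears in $L^{2^*(\gamma+1)/(2\gamma)}(\Omega)$; this norm is bounded by $\|\nabla u_k^{(\gamma+1)/2}\|_{L^2(\Omega)}^{2\gamma/(\gamma+1)}$ via Sobolev embedding applied to $u_k^{(\gamma+1)/2}\in W^{1,2}_0(\Omega)$. A subsequent H\"older in time with exponents $\gamma+1$ and $(\gamma+1)/\gamma$, followed by Young, absorbs a small multiple of $\iint|\nabla u_k^{(\gamma+1)/2}|^2$ into the LHS. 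Under the alternative $\mu\in L^r(\Omega_T)$ with $r=(n+2)(\gamma+1)/(n+2(\gamma+1))$, the dual exponent $r'=(n+2)(\gamma+1)/(n\gamma)$ gives $\iint\mu_k u_k^\gamma\leq\|\mu_k\|_{L^r}\|u_k\|_{L^{(n+2)(\gamma+1)/n}(\Omega_T)}^\gamma$, and the parabolic Gagliardo--Nirenberg embedding $L^\infty(0,T;L^2(\Omega))\cap L^2(0,T;W^{1,2}_0(\Omega))\hookrightarrow L^{2(n+2)/n}(\Omega_T)$ applied to $u_k^{(\gamma+1)/2}$ again reduces the estimate to a fractional power of $\iint|\nabla u_k^{(\gamma+1)/2}|^2$, absorbed via Young. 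This yields the uniform bounds on $u_k^{(\gamma+1)/2}$ in $L^\infty(0,T;L^2(\Omega))\cap L^2(0,T;W^{1,2}_0(\Omega))$ and on $u_k$ in $L^\infty(0,T;L^{\gamma+1}(\Omega))$. The local bound $u_k\in L^2(t_0,T;W^{1,2}_{\mathrm{loc}}(\Omega))$ then follows from the identity $|\nabla u_k|^2=\frac{4}{(\gamma+1)^2}u_k^{1-\gamma}|\nabla u_k^{(\gamma+1)/2}|^2$ together with the uniform positivity $u_k\geq C(\omega,t_0)>0$ on $\omega\times[t_0,T)$ granted by \cref{exisforaprox}, exactly as in the last step of \cref{preli1}.

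For $0<\gamma<1$, where $q=(\gamma+1)(n+2)/(n+\gamma+1)<2$, the same energy estimate still delivers $u_k\in L^\infty(0,T;L^{\gamma+1}(\Omega))\hookrightarrow L^\infty(0,T;L^1(\Omega))$ and $u_k^{(\gamma+1)/2}\in L^2(0,T;W^{1,2}_0(\Omega))$. The $L^q$ gradient bound is then obtained algebraically: writing $|\nabla u_k|^q=C\,u_k^{q(1-\gamma)/2}|\nabla u_k^{(\gamma+1)/2}|^q$ and applying H\"older with conjugate exponents $2/q$ and $2/(2-q)$,
\begin{equation*}
\iint_{\Omega_T}|\nabla u_k|^q\leq C\Bigl(\iint|\nabla u_k^{(\gamma+1)/2}|^2\Bigr)^{q/2}\Bigl(\iint u_k^{q(1-\gamma)/(2-q)}\Bigr)^{(2-q)/2}.
\end{equation*}
A direct computation shows $q(1-\gamma)/(2-q)=(\gamma+1)(n+2)/n$, which is precisely the exponent controlled by the parabolic Gagliardo--Nirenberg embedding of $L^\infty(0,T;L^2(\Omega))\cap L^2(0,T;W^{1,2}_0(\Omega))$ into $L^{2(n+2)/n}(\Omega_T)$ applied to $u_k^{(\gamma+1)/2}$. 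Both factors on the right being uniformly bounded in $k$, the conclusion follows.

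The main delicate point is the treatment of the $\mu_ku_k^\gamma$ term: the two hypotheses on $\mu$ are tailored so that the Sobolev and parabolic Gagliardo--Nirenberg exponents match exactly with the dual of the space to which $\mu$ belongs. Verifying this matching is mechanical but is the heart of the argument; once it is achieved, Young's inequality provides the required absorption into the LHS, and the passage from $(u_k+1/k)^{\gamma+1}$ back to $u_k^{\gamma+1}$ is routine since $|\Omega|<\infty$.
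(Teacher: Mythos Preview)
Your approach is essentially the same as the paper's, and the exponent bookkeeping is correct. There are, however, two technical slips you should fix before the argument is complete.

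First, the test function $(u_k+1/k)^\gamma$ is \emph{not} in $L^2(0,T;W_0^{1,2}(\Omega))$: on $\partial\Omega$ where $u_k=0$ it equals $(1/k)^\gamma\neq 0$. The paper sidesteps this by using $u_k^\gamma$ directly when $\gamma\geq 1$ (then $u_k^{\gamma-1}\in L^\infty$ so $u_k^\gamma\in W_0^{1,2}$), and for $\gamma<1$ it uses $((u_k+\epsilon)^\gamma-\epsilon^\gamma)\chi_{(0,\tau)}$. Subtracting the constant $(1/k)^\gamma$ in your version does the job; the singular term is then still $\leq\iint\nu_k$ and the time-primitive $\psi_{1/k}(r)=\int_0^r[(v+1/k)^\gamma-(1/k)^\gamma]\,dv$ satisfies $\psi_{1/k}(r)\geq c\,(r+1/k)^{\gamma+1}-C$ by Young, so no separate $L^\infty(0,T;L^1)$ step is needed.

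Second, for $\gamma<1$ you must keep the shift consistently. The energy estimate controls $\iint|\nabla(u_k+1/k)^{(\gamma+1)/2}|^2$, not $\iint|\nabla u_k^{(\gamma+1)/2}|^2$; the latter need not even be finite, since $u_k^{(\gamma-1)/2}$ blows up where $u_k=0$. Consequently the identity you wrote for $|\nabla u_k|^q$ should read
\[
\iint_{\Omega_T}|\nabla u_k|^q = C\iint_{\Omega_T}(u_k+1/k)^{q(1-\gamma)/2}\,\bigl|\nabla(u_k+1/k)^{(\gamma+1)/2}\bigr|^q,
\]
and H\"older then pairs $\iint|\nabla(u_k+1/k)^{(\gamma+1)/2}|^2$ with $\iint(u_k+1/k)^{(\gamma+1)(n+2)/n}$; this is exactly what the paper does in \cref{3.15}. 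For the same reason, in the treatment of the $\mu$-term when $\gamma<1$ the Sobolev embedding must be applied to $(u_k+1/k)^{(\gamma+1)/2}\in W^{1,2}(\Omega)$ (not $W_0^{1,2}$), so both $\int_\Omega(u_k+1/k)^{\gamma+1}$ and $\int_\Omega|\nabla(u_k+1/k)^{(\gamma+1)/2}|^2$ appear on the right; both are absorbed by the left-hand side of the energy inequality. With these corrections your argument matches the paper's.
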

\begin{proof}
    \textbf{Case 1.} We first assume $\gamma<1$. As $u_k\in  L^2(0,T; W^{1,2}_0(\Omega))$, so for fix $\tau\in(0,T]$ we choose $T_1
    (u_k) \chi_{(0, \tau)}$ as a test function in \cref{aproxproblem}, to get
\begin{equation*}{\label{impo1}}
    \begin{array}{l}
      \quad  \int_0^\tau \int_{\Omega}(u_k)_tT_1
      (u_k)d x d t +
      \int_0^\tau\int_\Omega |\nabla T_1(u_k)|^2
      d x d t\smallskip\\
 +\underbrace{\int_0^\tau \int_{\mathbb{R}^n}\int_{\mathbb{R}^n} \frac{(u_k(x, t)-u_k(y, t))(T_1
 (u_k)(x,t)-T_1
 (u_k)(y,t))}{|x-y|^{n+2 s}} d x d y d t}_{\geq 0}\smallskip\\
= \int_0^\tau\int_{\Omega} \left(\frac{\nu_k}{\left(u_k+\frac{1}{k}\right)^{\gamma}} +\mu_k\right)T_1(u_k)
\leq \iint_{\Omega_T} \frac{\nu_kT_1^\gamma(u_k)}{\left(u_k+\frac{1}{k}\right)^{\gamma}} +\|\mu_k\|_{L^1(\Omega_T)} \leq \|\nu_k\|_{L^1(\Omega_T)}+\|\mu_k\|_{L^1(\Omega_T)}.
    \end{array}
\end{equation*}
Note that $T_1
(r)$ is an increasing function in $r$ and hence the nonlocal term is non-negative. It holds
\begin{equation*}{\label{impo2}}
\begin{array}{l}
\quad\int_0^{\tau}\int_\Omega(\tilde{T}_{1,1
}(u_k))_t +
\int_0^\tau\int_\Omega |\nabla T_1
(u_k)|^2
d x d t\leq \|\nu_k\|_{L^1(\Omega_T)}+\|\mu_k\|_{L^1(\Omega_T)},
\end{array}
\end{equation*}
where $\tilde{T}_{1, \eta}(r)$ is defined in \cref{T_k}. Finally, observing that $\tilde{T}_{1, \eta}(r) \geq r-1$, one gets
\begin{equation}{\label{impo3}}
    \int_{\Omega} u_k(x, \tau) +
    \int_0^\tau\int_\Omega |\nabla T_1
    (u_k)|^2d x d t\leq C+|\Omega|+\int_{\Omega} \tilde{T}_{1, 1
    }(u_0)\leq C+\int_\Omega u_0dx.
\end{equation}
The right side of \cref{impo3} is also bounded with respect to $\tau$; hence taking the supremum on $\tau$ we obtain that
\begin{equation}{\label{impo4}}
\left\|u_k\right\|_{L^{\infty}(0, T ; L^1(\Omega))} +\int_0^T\int_\Omega |\nabla T_1
(u_k)|^2d x d t\leq C    .
\end{equation}
Now for $0<\ep<1/k$ and $\tau\in(0,T]$ fixed, take $((u_k+\ep)^\gamma-\ep^\gamma)\chi_{(0,\tau)}$ as a test function in \cref{aproxproblem}, to get 
\begin{equation}{\label{impor1}}
    \begin{array}{l}
      \quad  \int_0^\tau \int_{\Omega}(u_k)_t((u_k+\ep)^\gamma-\ep^\gamma)d x d t +\frac{4\gamma}{(\gamma+1)^2}\int_0^\tau\int_\Omega |\nabla (u_k+\ep)^{\frac{\gamma+1}{2}}|^2
      d x d t\smallskip\\
 +\underbrace{\int_0^\tau \int_{\mathbb{R}^n}\int_{\mathbb{R}^n} \frac{(u_k(x, t)-u_k(y, t))((u_k+\ep)^\gamma(x,t)-(u_k+\ep)^\gamma(y,t))}{|x-y|^{n+2 s}} d x d y d t}_{\geq 0}\smallskip\\
= \int_0^\tau\int_{\Omega} \left(\frac{\nu_k}{\left(u_k+\frac{1}{k}\right)^{\gamma}} +\mu_k\right)((u_k+\ep)^\gamma-\ep^\gamma)
d x d t \leq \|\nu_k\|_{L^1(\Omega_T)}+\int_0^T\int_\Omega \mu (u_k+\ep)^\gamma dx dt.
    \end{array}
\end{equation}Define for $0<r \in \mathbb{R}$,
$\psi_\ep(r)=\int_0^r\left[(
v+\ep)^{\gamma}-\ep^\gamma\right]  dv  $.
As $\gamma<1$, it results
 $   \frac{(r+\ep)^{\gamma+1}-\ep^{\gamma+1}}{\gamma+1}-r\leq \psi_\ep(r)\leq \frac{r^{\gamma+1}}{\gamma+1}.$
As $u_k\in C([0,T];L^2(\Omega))$ for each $k$, from \cref{impor1}, we thus obtain
\begin{equation}{\label{impor2}}
    \begin{array}{l}
    \int_\Omega \psi(u_k(x,\tau))dx+\frac{4\gamma}{(\gamma+1)^2}\int_0^\tau\int_\Omega |\nabla (u_k+\ep)^{\frac{\gamma+1}{2}}|^2
      d x d t\leq C+\int_0^T\int_\Omega \mu (u_k+\ep)^\gamma dx dt + \int_\Omega \psi(u_k(x,0))dx\smallskip\\
      \implies \frac{1}{\gamma+1} \int_{\Omega}(u_k+\ep)^{\gamma+1}(x,\tau) d x +\frac{4\gamma}{(\gamma+1)^2}\int_0^\tau\int_\Omega |\nabla (u_k+\ep)^{\frac{\gamma+1}{2}}|^2
      d x d t\smallskip\\\qquad\leq C+\int_0^T\int_\Omega \mu (u_k+\ep)^\gamma+\frac{1}{\gamma+1}\int_\Omega u_0^{\gamma+1}dx +\frac{\ep^{\gamma+1}}{\gamma+1}|\Omega|+ \int_{\Omega}u_k(x,\tau) d x \smallskip\\\qquad{\leq} C+\int_0^T\int_\Omega \mu (u_k+\ep)^{\gamma}+C\|u_0\|_{L^{\gamma+1}(\Omega)}+\sup _{0 \leq t \leq T} \int_{\Omega}\left|u_k(x, t)\right| d x .
    \end{array}
\end{equation}Using \cref{impo4}, 
taking supremum on $\tau$ in \cref{impor2}, we obtain that
\begin{equation}{\label{impor6}}
\sup _{0 \leq \tau \leq T} \int_{\Omega}\left|(u_k+\ep)(x, \tau)\right|^{{\gamma}+1} d x +\int_0^T\int_\Omega |\nabla( u_k+\ep)^{\frac{\gamma+1}{2}}|^2d x d t\leq C  +  \int_0^T\int_\Omega \mu (u_k+\ep)^{\gamma}.
\end{equation}
\textbf{Case 1A: $\mu \in L^{\gamma+1}\left(0, T ; L^{\left(\frac{n(\gamma+1)}{n+2\gamma}\right)}(\Omega)\right)$}\\By Hölder inequality in space variable, then Sobolev inequality and again applying of H\"older inequality we get
\begin{equation*}
    \begin{array}{rcl}
         \int_0^T\int_{\Omega} \mu (u_k+\ep)^{\gamma} d x d t&=& \int_0^T\int_{\Omega} \mu \left((u_k+\ep)^{\frac{\gamma+1}{2}}\right)^{\frac{2\gamma}{\gamma+1}} d x d t\\&\leq &C\int_0^T\left(\int_{\Omega}|\mu(x, t)|^{\frac{n(\gamma+1)}{n+2\gamma}} d x\right)^{\frac{n+2\gamma}{n(\gamma+1)}}\left(\int_{\Omega}|(u_k+\ep)^{\frac{\gamma+1}{2}}(x, t)|^{2^*} d x\right)^{\frac{(n-2)\gamma}{n(\gamma+1)}} d t
        \\&\leq&C\int_0^T\|\mu(\cdot,t)\|_{L^{\frac{n(\gamma+1)}{n+2\gamma}}{(\Omega)}}\left(\int_{\Omega}\left|(u_k+\ep)\right|^{{\gamma}+1} d x+\int_{\Omega}|\nabla (u_k+\ep)^{\frac{\gamma+1}{2}}|^{2} d x\right)^{\frac{\gamma}{\gamma+1}} d t\smallskip\\&\leq& C\left(\int_0^T\|\mu(\cdot,t)\|_{L^{\frac{n(\gamma+1)}{n+2\gamma}}{(\Omega)}}^{\gamma+1} d t\right)^{\frac{1}{\gamma+1}}\times\left(\iint_{\Omega_T}\left|(u_k+\ep)\right|^{{\gamma}+1} +|\nabla (u_k+\ep)^{\frac{\gamma+1}{2}}|^{2} \right)^{\frac{\gamma}{\gamma+1}}\smallskip\\&\leq& C\left(\int_0^T\|\mu(\cdot,t)\|_{L^{\frac{n(\gamma+1)}{n+2\gamma}}{(\Omega)}}^{\gamma+1} d t\right)^{\frac{1}{\gamma+1}}\smallskip\\&&\times\left(T\sup _{0 \leq t \leq T}\int_\Omega\left|(u_k+\ep)(x,t)\right|^{{\gamma}+1}dx +\iint_{\Omega_T}|\nabla (u_k+\ep)^{\frac{\gamma+1}{2}}|^{2} dxdt\right)^{\frac{\gamma}{\gamma+1}}.
    \end{array}
\end{equation*}
Now since $\gamma<\gamma+1$, so using Young's inequality in the above estimate, we get from \cref{impor6} that
\begin{equation*}
    \begin{array}{l}
  \quad\sup _{0 \leq t \leq T} \int_{\Omega}\left|(u_k+\ep)(x, t)\right|^{{\gamma}+1} d x +\int_0^T\int_\Omega |\nabla( u_k+\ep)^{\frac{\gamma+1}{2}}|^2d x d t
 \smallskip\\\leq C\left\|\mu\right\|_{L^{\gamma+1}\left(0, T ; L^{\frac{n(\gamma+1)}{n+2\gamma}}(\Omega)\right)}\times\left(\varepsilon\left(\sup _{0 \leq t \leq T}\int_\Omega\left|(u_k+\ep)(x,t)\right|^{{\gamma}+1}dx+\int_0^T\int_\Omega |\nabla u_k^{\frac{{\gamma}+1}{2}}|^2d x d t\right)+C(\varepsilon)\right)+C_2.
 \end{array}
\end{equation*}
Choosing $\varepsilon$ small enough such that $
    \varepsilon C\left\|\mu\right\|_{L^{\gamma+1}\left(0, T ; L^{\frac{n(\gamma+1)}{n+2\gamma}}(\Omega)\right)}=1,
$ we get
\begin{equation}{\label{impor7}}
 \sup _{0 \leq t \leq T} \int_{\Omega}\left|(u_k+\ep)(x, t)\right|^{{\gamma}+1} d x +\int_0^T\int_\Omega |\nabla( u_k+\ep)^{\frac{\gamma+1}{2}}|^2d x d t\leq C.
   \end{equation}
\smallskip\\\textbf{Case 1B: $\mu\in L^r(\Omega_T)$, $r=\frac{(n+2)(\gamma+1)}{n+2(\gamma+1)}$ }\\ For this case, we apply Hölder inequality with exponents ${r}$ and ${r}^\prime$ to get
\begin{equation}{\label{fusobolevsecond11}}
    \begin{array}{rcl}
         \int_0^T\int_{\Omega} \mu (u_k+\ep)^{\gamma} d x d t\leq C\|\mu\|_{L^{{r}}\left(\Omega_T\right)}\left[\iint_{\Omega_T} |(u_k+\ep)|^{\gamma{r}^{\prime}} d x d t\right]^{\frac{1}{
         {r}^{\prime}}}.
         \end{array}
         \end{equation}
We note that $\gamma{r}^\prime=\frac{(n+2)(\gamma+1)}{n}$. Using the Hölder inequality with exponent $\frac{n}{n-2 }$ and $\frac{n}{2 }$ we get
\begin{equation*}
    \begin{array}{rcl}
\iint_{\Omega_T}\left|(u_k+\ep)\right|^{\frac{({\gamma}+1)(n+2 )}{n}} d x d t&= &\iint_{\Omega _T}\left|(u_k+\ep)\right|^{2\frac{{\gamma}+1}{2}}\left|(u_k+\ep)\right|^{\frac{4 }{n} \frac{{\gamma}+1}{2}} d x d t \smallskip\\
 &\leq& \int_0^T\left(\int_{\Omega}\left|(u_k+\ep)(x, t)\right|^{2^* \frac{{\gamma}+1}{2}} d x\right)^{\frac{n-2}{n}}\left(\int_{\Omega}\left|(u_k+\ep)(x, t)\right|^{{\gamma}+1} d x\right)^{\frac{2 }{n}} d t \smallskip\\
 &=&\int_0^T\|(u_k+\ep)^{\frac{{\gamma}+1}{2}}\|_{L^{2 ^*}(\Omega)}^2\left(\int_{\Omega}\left|(u_k+\ep)(x, t)\right|^{{\gamma}+1} d x\right)^{\frac{2 }{n}} d t .
\end{array}
\end{equation*}
Now taking supremum over $t\in [0, T]$ and applying the Sobolev embedding as that of \cref{Sobolev embedding}, we can write
\begin{equation}{\label{veryimp2}}
\begin{array}{rcl}
\iint_{\Omega_T}\left|(u_k+\ep)\right|^{\frac{({\gamma}+1)(n+2 )}{n}} d x d t  &\leq& C(n)\left(\sup _{0 \leq t \leq T} \int_{\Omega}\left|(u_k+\ep)(x, t)\right|^{{\gamma}+1} d x\right)^{\frac{2 }{n}} \smallskip\\&&\times\left(\iint_{\Omega_T}\left( (u_k+\ep)^{\gamma+1}+ |\nabla (u_k+\ep)^{\frac{{\gamma}+1}{2}}|^2 \right)d x d t \right)\smallskip\\&\leq& C(n)\left(\sup _{0 \leq t \leq T} \int_{\Omega}\left|(u_k+\ep)(x, t)\right|^{{\gamma}+1} d x\right)^{\frac{2 }{n}} \smallskip\\&&\times\left(T\sup _{0 \leq t \leq T}\int_\Omega |(u_k+\ep)(x,t)|^{\gamma+1}+\iint_{\Omega_T} |\nabla (u_k+\ep)^{\frac{{\gamma}+1}{2}}|^2 d x d t \right).\end{array}
\end{equation}
Using \cref{impor6} and \cref{fusobolevsecond11}, from the above inequality we get by convexity argument
\begin{equation*}
\begin{array}{rcl}
\iint_{\Omega_T}\left|(u_k+\ep)\right| ^{\frac{({\gamma}+1)(n+2 )}{n}} d x d t &
 \leq &C(n)\left(C_1\iint_{\Omega_T} \mu (u_k+\ep)^{{\gamma}} d x d t+C_2\right)^{\frac{n+2 }{n}} \\
&\leq& C(n) 2^{\frac{2 }{n}}\left(\left(C_1\iint_{\Omega_T} \mu (u_k+\ep)^{{\gamma}} d x d t\right)^{\frac{n+2 }{n}}+C_2^{\frac{n+2}{n}}\right)\\&\leq&C(n) 2^{\frac{2 }{n}}\left(\left(C_1C\|\mu\|_{L^{{r}}(\Omega_T)}\right)^{\frac{n+2 }{n}}\left(\iint_{\Omega_T} |(u_k+\ep)|^{\gamma{r}^{\prime}} d x d t\right)^{\frac{n+2 }{n{r}^\prime}}+C_2^{\frac{n+2}{n}}\right).
\end{array}
\end{equation*}
Now as $\frac{n+2}{n{r}^\prime}=\frac{\gamma}{\gamma+1}<1$, therefore using Young's inequality we get
\begin{equation}{\label{veryimp}}
\iint_{\Omega_T}\left|(u_k+\ep)\right|^{\gamma{r}^\prime}d x d t=\iint_{\Omega_T}\left|(u_k+\ep)\right| ^{\frac{({\gamma}+1)(n+2 )}{n}} d x d t \leq C,
\end{equation}
where $C$ is a positive constant independent of $k$. The above boundedness along with \cref{impor6} and \cref{fusobolevsecond11} gives 
\begin{equation}{\label{impor9}}
\sup _{0 \leq t \leq T} \int_{\Omega}\left|(u_k+\ep)(x, t)\right|^{{\gamma}+1} d x +\int_0^T\int_\Omega |\nabla( u_k+\ep)^{\frac{\gamma+1}{2}}|^2d x d t\leq C  .
\end{equation}Let $1<p<2$ will be specified later. By H\"older inequality, we have
\begin{equation}{\label{3.15}}
    \begin{array}{l}
        \quad \iint_{\Omega_T} |\nabla u_k|^{p} d x d t= \iint_{\Omega_T} \frac{|\nabla u_k|^{p}(u_k+\ep)^{\frac{(1-\gamma)p}{2}}}{(u_k+\ep)^{\frac{(1-\gamma)p}{2}}} d x d t\smallskip\\\leq \left(\iint_{\Omega_T} \frac{|\nabla u_k|^{2}}{(u_k+\ep)^{{(1-\gamma)}}} d x d t\right)^{\frac{p}{2}}\times\left(\iint_{\Omega_T} (u_k+\ep)^{\frac{(1-\gamma)p}{2-p}}d x d t\right)^{\frac{2-p}{2}}\\\stackrel{\cref{impor7},\cref{impor9}}{\leq}C\left(\iint_{\Omega_T} (u_k+\ep)^{\frac{(1-\gamma)p}{2-p}} d x d t\right)^{\frac{2-p}{2}}.
    \end{array}
\end{equation}
We now choose $p$ to be such that
\begin{equation*}
\frac{(1-\gamma)p}{2-p}=\frac{(\gamma+1)(n+2)}{n}
\text{ i.e. } p=\frac{(\gamma+1)(n+2 )}{n+1+\gamma}=q.
\end{equation*}
Note that $1<q<2$. Note also that the way \cref{veryimp2} is established does not use the fact that $\mu\in L^r(\Omega_T)$ of case $1B.$ Therefore \cref{impor7} can be used to deduce for case $1A$, the estimate 
\begin{equation}{\label{veryimp3}}
    \iint_{\Omega_T}\left|(u_k+\ep)\right| ^{\frac{({\gamma}+1)(n+2 )}{n}} d x d t \leq C.
\end{equation}Then we get by \cref{veryimp,3.15,veryimp3} along with the choice of $p$,
\begin{equation*}
\begin{array}{rcl}
   \iint_{\Omega_T} |\nabla u_k|^{q} d x d t\leq C,
    \end{array}
\end{equation*}
where $C$ is a positive constant independent of $k$. Thus merging \cref{impo4}, the sequence $\left\{u_k\right\}_k$ is uniformly bounded in $
L^{q}(0, T ; W_0^{1, q}(\Omega))\cap L^\infty(0,T;L^1(\Omega))$.\smallskip\\
\textbf{Case 2.} Consider now $\gamma\geq 1$. As each $u_k$ is bounded, take $u_k^\gamma\chi(0,\tau)$, $\tau\in(0,T]$ as a test function in \cref{aproxproblem} to get \begin{equation}{\label{impo5}}
    \begin{array}{l}
      \quad  \int_0^\tau \int_{\Omega}(u_k)_tu_k^\gamma d x d t +\gamma\int_0^\tau\int_\Omega |\nabla u_k|^2u_k^{\gamma-1}
      d x d t\smallskip\\
 +\underbrace{\int_0^\tau \int_{\mathbb{R}^n}\int_{\mathbb{R}^n} \frac{(u_k(x, t)-u_k(y, t))(u_k^\gamma(x,t)-u_k^\gamma(y,t))}{|x-y|^{n+2 s}} d x d y d t}_{\geq 0}\smallskip\\
= \int_0^\tau\int_{\Omega} \left(\frac{\nu_k}{\left(u_k+\frac{1}{k}\right)^{\gamma}} +\mu_k\right)u_k^\gamma
d x d t \leq \|\nu_k\|_{L^1(\Omega_T)}+\int_0^T\int_\Omega \mu u^\gamma_kdxdt.
    \end{array}
\end{equation}\cref{impo5} can be further estimated as \begin{equation}{\label{impo67}}
    \begin{array}{rcl}
      \quad  \frac{1}{\gamma+1} \int_{\Omega}u_k^{\gamma+1}(x,\tau) d x +\gamma\int_0^\tau\int_\Omega |\nabla u_k|^2u_k^{\gamma-1}
      d x d t&\leq& C+\int_0^T\int_\Omega \mu u^\gamma_k+\frac{1}{\gamma+1} \int_{\Omega}u_k^{\gamma+1}(x,0) d x \smallskip\\&\leq &C+\int_0^T\int_\Omega \mu u^\gamma_k+C\|u_0\|_{L^{\gamma+1}(\Omega)}.
    \end{array}
\end{equation}Every term on the right hand side of \cref{impo67} is also bounded with respect to $\tau$; hence taking the supremum on $\tau$ we obtain that
\begin{equation}{\label{impo6}}
\sup _{0 \leq \tau \leq T} \int_{\Omega}\left|u_k(x, \tau)\right|^{{\gamma}+1} d x +\int_0^T\int_\Omega |\nabla u_k^{\frac{\gamma+1}{2}}|^2d x d t\leq C  +  \int_0^T\int_\Omega \mu u^\gamma_k.
\end{equation}Equation \cref{impo6} is the same as that of \cref{impor6} except $(u_k+\ep) $ is replaced with $u_k$. One can now follow exactly same steps as that of the previous case (steps to deduce \cref{impor7,impor9}) to get \begin{equation*}{\label{impor64}}
\sup _{0 \leq t \leq T} \int_{\Omega}\left|u_k(x, t)\right|^{{\gamma}+1} d x +\int_0^T\int_\Omega |\nabla u_k^{\frac{\gamma+1}{2}}|^2d x d t\leq C  ,
\end{equation*}
where $C$ is a positive constant independent of $k$. The above boundedness gives that the sequence $\left\{u_k^{\frac{\gamma+1}{2}}\right\}_k$ is uniformly bounded in $
L^2(0, T ; W_0^{1,2}(\Omega)) \cap L^{\infty}(0, T ; L^{2}(\Omega))$ and $\left\{u_k\right\}_k$ is uniformly bounded in $
L^{\infty}(0, T ; L^{\gamma+1}(\Omega))$. 

Further, as $\nu$ is non-singular with respect to the Lebesgue measure, then 
    for each $t_0>0$ and $\omega\subset\subset\Omega$, $\exists C(\omega,t_0,n,s)$, a constant which does not depend on $k$,  such that $\forall k$, $u_k\geq C(\omega,t_0,n,s)$ in $\omega\times[t_0,T)$. Therefore 
    \begin{equation*}{\label{imp191}}
      C_{(\omega,t_0)}^{\gamma^*-1} \int_{t_0}^T\int_\omega |\nabla u_k|^2d x d t \leq \int_{t_0}^T\int_\omega u_k^{\gamma-1}|\nabla u_k|^2d x d t \leq C,
    \end{equation*} which implies that the sequence $\{u_k\}_{k \in \mathbb{N}}$ is bounded in $ L^2(t_0,T;W_{\mathrm{loc}}^{1,2}(\Omega))$ for all $0<t_0<T$.
\end{proof}
The estimates on $u_k$ imply that the singular term is locally bounded in $L^1(\Omega_T)$. Before going to it, let \begin{equation*}
    W_2(0,T):=\left\{v \in L^2(0,T ; W^{1,2}_0(\Omega)): v^{\prime} \in L^{2}(0,T ; W^{-1,2}(\Omega))\right\} \subset C([0,T]; L^2(\Omega)).
\end{equation*}
Then, by [\citealp{showalter2013monotone}, Corollary 1.1, Chapter III], for every $u, v \in W_2(0,T)$ the scalar product $t \mapsto(u(t), v(t))_{L^2(\Omega)}$ is an absolutely continuous function and there holds
\begin{equation}{\label{first}}
\frac{d}{d t}(u(t), v(t))_{L^2(\Omega)}=\left\langle u^{\prime}(t), v(t)\right\rangle+\left\langle v^{\prime}(t), u(t)\right\rangle, \quad \text { for a. e. } t \in (0,T).    
\end{equation}Now, we have
\begin{corollary}{\label{coRo}} Let $\nu$ be a non-negative bounded Radon measure in $\Omega_T$, which is non-singular with respect to the Lebesgue measure. Under the assumptions of \cref{preli1} and \cref{preli2} one has that
\begin{equation*}
    \iint_{\Omega_T} \frac{\nu_k}{\left(u_k+\frac{1}{k}\right)^{\gamma(x,t)}}\phi\,dxdt\leq C,
\end{equation*}for every non-negative $\phi \in C_c^\infty(\Omega_T)$, with $C$ not depending on $k$.\end{corollary}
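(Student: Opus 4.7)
The plan is to test the approximate equation \cref{aproxproblem} against $\phi$ itself, and then bound each of the four resulting terms on the right-hand side uniformly in $k$ by means of the a priori estimates collected in \cref{preli1} and \cref{preli2}. Fix a non-negative $\phi\in C_c^\infty(\Omega_T)$ and set $K:=\operatorname{supp}\phi$. Since $K\subset\subset \Omega_T$, one can choose $\omega\subset\subset\Omega$ and $0<t_0<t_1<T$ with $K\subset\omega\times[t_0,t_1]$, and consequently $\operatorname{dist}(\operatorname{supp}_x\phi,\omega^c)\geq \delta>0$ for some $\delta$.

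Because $(u_k)_t\in L^2(0,T;W^{-1,2}(\Omega))$ and $\phi\in C_c^\infty(\Omega_T)$, using \cref{first} to integrate by parts in time, the weak formulation of \cref{aproxproblem} produces the identity
\[
\iint_{\Omega_T}\frac{\nu_k\,\phi}{\left(u_k+\tfrac{1}{k}\right)^{\gamma(x,t)}}dxdt \;=\; -\iint_{\Omega_T} u_k\,\phi_t\,dxdt \;+\; \iint_{\Omega_T}\nabla u_k\cdot\nabla\phi\,dxdt \;+\; \mathcal I_k \;-\; \iint_{\Omega_T}\mu_k\phi\,dxdt,
\]
where $\mathcal I_k$ denotes the nonlocal bilinear pairing of $u_k$ and $\phi$. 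The left-hand side is non-negative, so it suffices to control the right-hand side independently of $k$.

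For the time term one uses $\|u_k\|_{L^\infty(0,T;L^1(\Omega))}\leq C$ (granted by either lemma according to the setting), getting $\bigl|\iint u_k\phi_t\bigr|\leq T\|\phi_t\|_\infty C$. For the $\mu_k$-term the uniform $L^1$ bound of $\mu_k$ yields $\bigl|\iint \mu_k\phi\bigr|\leq \|\phi\|_\infty \|\mu_k\|_{L^1(\Omega_T)}\leq C\|\phi\|_\infty$. For the gradient term, since $\nabla\phi$ is supported inside $\omega\times[t_0,t_1]$ and \cref{preli1}/\cref{preli2} provide a uniform bound of $u_k$ in $L^q(t_0,T;W^{1,q}_{\mathrm{loc}}(\Omega))$ for some $q>1$, a direct application of Hölder's inequality gives $\bigl|\iint\nabla u_k\cdot\nabla\phi\bigr|\leq \|\nabla\phi\|_\infty\|\nabla u_k\|_{L^1(\omega\times[t_0,t_1])}\leq C\|\nabla\phi\|_\infty$.

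The delicate term is $\mathcal I_k$, and I expect it to be the main obstacle. Following the splitting used in the proof of \cref{fracsolu}, I would decompose
\[
\mathbb{R}^{2n}\setminus(\omega^c\times\omega^c) \;=\; (\omega\times\omega)\cup(\omega\times\omega^c)\cup(\omega^c\times\omega)
\]
and treat the diagonal and off-diagonal pieces separately. On $\omega\times\omega$, combining \cref{embedding} with the uniform local bound on $u_k$ in $W^{1,q}$ (valid for some $q>1$ away from $t=0$) one controls the singular kernel integral by a constant times a $W^{s,q}$-norm of $\phi$, which is finite. On the off-diagonal parts one has $|x-y|\geq\delta>0$ on the entire region where $\phi$ contributes, hence the kernel is uniformly bounded there and the integral is estimated by $C\delta^{-(n+2s)}\|\phi\|_\infty\|u_k\|_{L^1(\mathbb{R}^n\times(0,T))}$, which is again uniformly controlled via the $L^\infty(0,T;L^1(\Omega))$ estimate and the fact that $u_k\equiv 0$ outside $\Omega$. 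Collecting the four estimates finishes the proof, the only subtlety being the requirement $t_0>0$ for the local gradient bound when $\gamma^*>1$ (in \cref{preli1}(ii)) or $\gamma\geq 1$ (in \cref{preli2}(2)); this does not affect the argument because $\operatorname{supp}\phi$ is already contained in a time strip bounded away from $t=0$.
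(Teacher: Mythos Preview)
Your proof is correct and follows essentially the same route as the paper: test \cref{aproxproblem} with $\phi$, integrate the time term by parts via \cref{first}, drop (or bound) the non-negative $\mu_k\phi$ contribution, estimate the local gradient term with the uniform $L^q_{\mathrm{loc}}$ bound on $\nabla u_k$, and handle the nonlocal term by the diagonal/off-diagonal splitting of \cref{fracsolu} together with the uniform $L^\infty(0,T;L^1(\Omega))$ control. The only cosmetic remark is that on the diagonal piece the H\"older pairing naturally produces a $W^{s,q'}$-type quantity for $\phi$ rather than $W^{s,q}$, but since $\phi\in C_c^\infty(\Omega_T)$ this makes no difference.
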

\begin{proof}
Let us take $0 \leq \phi \in C_c^\infty(\Omega_T)$ as a test function in \cref{aproxproblem} obtaining
\begin{equation*}
\begin{array}{rcl}
     \iint_{\Omega_T} \frac{\nu_k}{\left(u_k+\frac{1}{k}\right)^{\gamma(x,t)}}\phi\,dxdt&\leq& \int_0^T\langle (u_k)^\prime(t),\phi(t)\rangle dt+\int_0^T\int_\Omega \nabla u_k\cdot \nabla \phi\smallskip\\&&+\int_0^T \int_{\mathbb{R}^n}\int_{\mathbb{R}^n} \frac{(u_k(x, t)-u_k(y, t))(\phi(x, t)-\phi(y, t))}{|x-y|^{n+2 s}} d x d y d t\smallskip\\&\stackrel{\cref{first}}{\leq}& \int_0^T \frac{d}{d t}(u_k(t), v(t))_{L^2(\Omega)}dt-\int_0^T\int_\Omega u_k\phi_t dxdt +\int_0^T\int_\Omega|\nabla u_k||\nabla \phi| dxdt\smallskip\\&&+\int_0^T \int_{\mathbb{R}^n}\int_{\mathbb{R}^n} \frac{(u_k(x, t)-u_k(y, t))(\phi(x, t)-\phi(y, t))}{|x-y|^{n+2 s}} d x d y d t.
     \end{array}
\end{equation*}For the nonlocal term, note that $\{u_k\}_k$ is uniformly bounded in $L^\infty(0,T;L^1(\Omega))$, and use \cref{preli1} and \cref{preli2}, to follow similar steps as that of \cref{fracsolu}, to get the boundedness. The local term is also $\leq  C\iint_{\operatorname{supp}\phi}|\nabla u_k|dxdt$ and hence is bounded by \cref{preli1} and \cref{preli2}. Thus we have \begin{equation*}
\begin{array}{rcl}
     \iint_{\Omega_T} \frac{\nu_k}{\left(u_k+\frac{1}{k}\right)^{\gamma(x,t)}}\phi\,dxdt&\leq& C+ \underbrace{\int_\Omega u_k(x,T)\phi(x,T)dx}_{=0}-\underbrace{\int_\Omega u_k(x,0)\phi(x,0)dx}_{=0}+c\int_0^T\int_\Omega |u_k| dxdt\smallskip\\&\leq& C+c\|u_k\|_{L^\infty(0,T;L^1(\Omega))}\leq C.
     \end{array}
\end{equation*}
This concludes the proof.
\end{proof}
\begin{lemma}{\label{convergenceinL1}}
 Let $\nu$ be a non-negative bounded Radon measure in $\Omega_T$, which is non-singular with respect to the Lebesgue measure. Under the assumptions of \cref{preli1} and \cref{preli2}, there exists $u \in L^\infty(0, T ;L^1(\Omega))$ ($u$ is weak limit of $\{u_k\}_k$ in the corresponding Sobolev spaces of \cref{preli1} and \cref{preli2}, where the sequence is bounded) such that, up to a subsequence, $\{u_k\}$ converges to $u$ a.e. on $\Omega_T$ and strongly in $L^1_{\mathrm{loc}}(0, T ; L_{\mathrm{loc}}^1(\Omega))$. 
\end{lemma}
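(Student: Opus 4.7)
The plan is to combine the space-time estimates of \cref{preli1} and \cref{preli2} with a bound on $(u_k)_t$ extracted from the equation, then invoke an Aubin--Simon compactness result to obtain strong $L^1_{\mathrm{loc}}$ convergence, from which an a.e.\ convergent subsequence follows. Finally, we identify the limit in $L^{\infty}(0,T;L^1(\Omega))$ by Fatou's lemma and weak-$*$ lower semicontinuity.

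First, from the uniform bound $\|u_k\|_{L^\infty(0,T;L^1(\Omega))}\le C$ (\cref{imp4} and its analogue in \cref{preli2}), and the uniform bound on $\{u_k\}$ (respectively $\{T_l(u_k)\}, \{G_l(u_k)\}, \{u_k^{(\gamma+1)/2}\}$) in the appropriate Sobolev spaces away from $t=0$, I extract, up to a subsequence, a weak limit $u$ in $L^q_{\mathrm{loc}}((0,T);W^{1,q}_{\mathrm{loc}}(\Omega))$ and weak-$*$ limit in $L^{\infty}(0,T;\mathcal{M}(\Omega))$. The fact that $u\in L^{\infty}(0,T;L^1(\Omega))$ will then follow once a.e.\ convergence is established, by Fatou's lemma applied to $\int_\Omega u_k(\cdot,t)\,dx$.

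Next, fix $\omega\subset\subset\Omega$ and $0<t_1<t_2<T$, and view \cref{aproxproblem} as
\begin{equation*}
(u_k)_t = \Delta u_k - (-\Delta)^s u_k + \frac{\nu_k}{(u_k+\tfrac{1}{k})^{\gamma(x,t)}} + \mu_k\qquad \text{in } \mathcal{D}'(\omega\times(t_1,t_2)).
\end{equation*}
The local diffusion contributes a term bounded in $L^q(t_1,t_2;W^{-1,q}(\omega))$ by the uniform $L^q$ bound on $\nabla u_k$ (from \cref{preli1}, \cref{preli2}). The nonlocal term is handled by splitting, as in \cref{fracsolu}: writing, for a test $\varphi\in C^\infty_c(\omega\times(t_1,t_2))$,
\begin{equation*}
\langle (-\Delta)^s u_k,\varphi\rangle = \iint_{\omega\times\omega}\!\!\cdots + 2\int_\omega\int_{\mathbb{R}^n\setminus\omega}\frac{u_k(x,t)\varphi(x,t)}{|x-y|^{n+2s}}\,dy\,dx,
\end{equation*}
the near-diagonal part is bounded through the local Gagliardo bound on $u_k$ (or via the embedding $W^{1,q}\hookrightarrow W^{s,q}$ from \cref{embedding}), while the far-away part is controlled by $\|u_k\|_{L^1(\mathbb{R}^n)}\,\|\varphi\|_{L^\infty}$ uniformly in $k$. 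The singular term is controlled in $L^1(\omega\times(t_1,t_2))$ by \cref{coRo}, and $\mu_k$ is bounded in $L^1$. Combining these, there exists $r>1$ such that
\begin{equation*}
\{(u_k)_t\}_k \text{ is bounded in } L^1(t_1,t_2;W^{-1,r}(\omega)) + L^1(\omega\times(t_1,t_2)).
\end{equation*}

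With $\{u_k\}$ uniformly bounded in $L^q(t_1,t_2;W^{1,q}(\omega))$ and $\{(u_k)_t\}$ uniformly bounded in the above sum space, I invoke the Aubin--Simon compactness theorem (in the version allowing an $L^1$ component for the time derivative; see \cite{conv}) with the compact embedding $W^{1,q}(\omega)\hookrightarrow\hookrightarrow L^1(\omega)$ to conclude that $\{u_k\}$ is relatively compact in $L^1(\omega\times(t_1,t_2))$. A diagonal extraction over an exhausting sequence $\omega_j\subset\subset\Omega$, $t_1^{(j)}\downarrow 0$, $t_2^{(j)}\uparrow T$, produces a subsequence (not relabeled) converging strongly in $L^1_{\mathrm{loc}}(0,T;L^1_{\mathrm{loc}}(\Omega))$ to some $u$, hence a.e.\ in $\Omega_T$ after a further extraction.

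The main technical obstacle will be obtaining the $(u_k)_t$ estimate in a space that (i) accommodates the purely $L^1$ contributions from $\nu_k/(u_k+1/k)^{\gamma}$ and $\mu_k$, yet (ii) is still compatible with a compactness lemma compatible with the weak $W^{1,q}_{\mathrm{loc}}$ bound; the nonlocal term in particular requires the careful splitting above, since $u_k$ is not globally $W^{1,q}_0$ controlled when $\gamma^*>1$. Once the a.e.\ convergence is in hand, $u\in L^{\infty}(0,T;L^1(\Omega))$ follows from Fatou, completing the proof.
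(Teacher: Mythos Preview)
Your proof is correct and follows essentially the same route as the paper: the paper likewise uses the $L^q_{\mathrm{loc}}(W^{1,q}_{\mathrm{loc}})$ bounds from \cref{preli1}/\cref{preli2}, invokes \cref{coRo} for the singular term, handles the nonlocal operator via the splitting of \cref{fracsolu} together with the $L^\infty(0,T;L^1(\Omega))$ bound, deduces a bound on $(u_k)_t$ in $L^q(t_1,t_2;W^{-1,q}(\omega))+L^1(\omega\times[t_1,t_2])$, and then applies [\citealp{conv}, Corollary 4] and Fatou. The only cosmetic discrepancy is that you record the time-derivative bound in the slightly weaker space $L^1(t_1,t_2;W^{-1,r}(\omega))+L^1$, which is harmless for the compactness step.
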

\begin{proof} From \cref{preli1} and \cref{preli2} we know that $\{u_k\}$ is bounded in $L^q(t_0, T ; W_{\mathrm{loc }}^{1, q}(\Omega))$ for some $1<q\leq 2$. Moreover from \cref{coRo} one has that the right hand side of \cref{aproxproblem} is bounded in $L^1_{\mathrm{loc}}(0, T ; L_{\mathrm{loc }}^1(\Omega))$. Hence, for fix $t_1,t_2\in(0,T)$ and $\omega\subset\subset\Omega$, one can again follow the steps of the proof of \cref{fracsolu} and use the boundedness of $\{u_k\}_k$ in $L^\infty(0,T;L^1(\Omega))$ to deduce from the equation \cref{aproxproblem} that $\{(u_k)_t\}$ is bounded in $L^q(t_1, t_2 ; W^{-1, q}(\omega))+L^1(\omega\times[t_1,t_2])$. This is sufficient to apply [\citealp{conv}, Corollary 4] in order to deduce that $\{u_k\}_k$ converges to a function $u$ in $L^1_{\mathrm{loc}}(0, T ; L_{\mathrm{loc }}^1(\Omega))$ and then the almost everywhere convergence in $\Omega_T$. Finally by Fatou's lemma one gets $u\in L^\infty(0,T;L^1(\Omega))$.
\end{proof}
\textbf{Preliminaries for \cref{nonexistence}}\begin{lemma}{\label{preli3}}
    Let $\nu$ be a non-negative bounded Radon measure, $\gamma(x,t)\equiv \gamma\geq 1$ be a constant, $u_0=0$ and consider the approximated problem for each $k\in\mathbb{N}$ as
    \begin{equation}{\label{aproxproblemnon}}
    \begin{array}{c}
         (u_k)_t-\Delta u_k
         =\frac{\nu_k}{\left(u_k+\frac{1}{k}\right)^{\gamma}}  \text { in } \Omega_T, \smallskip\\u_k>0 \text{ in }\Omega_T,\quad u_k=0  \text { in }(\mathbb{R}^n \backslash \Omega) \times(0, T), \smallskip\\ u_k(x, 0)=0 \text { in } \Omega ;
    \end{array}
\end{equation}
where $0\leq \nu_k\in L^\infty(\Omega_T)$ is bounded in $L^1(\Omega_T)$ and converges in narrow topology to $\nu$. 
  Assume that the solutions of \cref{aproxproblemnon} 
  is denoted by $u_k$. Then the sequence $\{u^{\frac{\gamma+1}{2}}_k\}_{k \in \mathbb{N}}$ is uniformly bounded in $L^2(0,T;W_0^{1,2}(\Omega))\cap L^\infty(0,T;L^{2}(\Omega))$. Further, the sequence $\{u_k\}_{k \in \mathbb{N}}$ is bounded in $ L^2_{\mathrm{loc}}(0,T;W_{\mathrm{loc}}^{1,2}(\Omega))\cap L^\infty(0,T; L^{\gamma+1}(\Omega))
  $. Moreover, up to a subsequence, $\{u_k\}_{k\in\mathbb{N}}$ converges to $u$ strongly in $L^1_{\mathrm{loc}}(0, T ; L_{\mathrm{loc}}^1(\Omega))$ and a.e. on $\Omega_T$.
\end{lemma}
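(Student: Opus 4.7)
The plan is to mirror the argument of \cref{preli2}, Case~2, in the simpler setting where the nonlocal term and the measure $\mu$ are absent and the initial datum vanishes. The proof decomposes into three stages: one energy estimate producing the global bounds on $u_k^{(\gamma+1)/2}$, a local refinement yielding the $L^2_{\mathrm{loc}}(W^{1,2}_{\mathrm{loc}})$ bound on $u_k$, and an Aubin--Lions--type compactness step.

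\emph{Step 1 (Core energy estimate).} Since $u_k \in L^2(0,T;W_0^{1,2}(\Omega)) \cap L^\infty(\Omega_T)$ by \cref{exisforaprox} and $\gamma \geq 1$ is constant, the function $u_k^{\gamma}\,\chi_{(0,\tau)}$ is an admissible test in the weak formulation of \cref{aproxproblemnon} for each $\tau\in(0,T]$. Using $u_k(\cdot,0)\equiv 0$ to kill the initial time contribution and the identity $\gamma\,u_k^{\gamma-1}|\nabla u_k|^2 = \tfrac{4\gamma}{(\gamma+1)^2}\,|\nabla u_k^{(\gamma+1)/2}|^2$, I obtain
\[
\tfrac{1}{\gamma+1}\int_\Omega u_k^{\gamma+1}(x,\tau)\,dx + \tfrac{4\gamma}{(\gamma+1)^2}\int_0^\tau\!\!\int_\Omega |\nabla u_k^{(\gamma+1)/2}|^2 \,dx\,dt = \int_0^\tau\!\!\int_\Omega \frac{\nu_k\,u_k^\gamma}{(u_k+\tfrac1k)^\gamma}\,dx\,dt.
\]
Since $u_k \geq 0$, the integrand on the right is pointwise bounded by $\nu_k$, so the right-hand side is controlled by $\|\nu_k\|_{L^1(\Omega_T)} \leq C$. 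Taking the supremum in $\tau$ then gives
\[
\sup_{\tau\in[0,T]}\int_\Omega u_k^{\gamma+1}(x,\tau)\,dx + \int_0^T\!\!\int_\Omega |\nabla u_k^{(\gamma+1)/2}|^2 \,dx\,dt \leq C,
\]
which delivers the stated bounds for $\{u_k^{(\gamma+1)/2}\}$ in $L^2(0,T;W_0^{1,2}(\Omega)) \cap L^\infty(0,T;L^2(\Omega))$ and for $\{u_k\}$ in $L^\infty(0,T;L^{\gamma+1}(\Omega))$.

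\emph{Step 2 (Local $W^{1,2}$ bound).} For $\gamma=1$, the bound on $u_k^{(\gamma+1)/2}=u_k$ in $L^2(0,T;W_0^{1,2}(\Omega))$ from Step~1 already does the job. For $\gamma>1$ I would split through the identity $|\nabla u_k|^2 = \tfrac{4}{(\gamma+1)^2}\,u_k^{1-\gamma}\,|\nabla u_k^{(\gamma+1)/2}|^2$: on the level set $\{u_k \geq 1\}$, where $u_k^{1-\gamma}\leq 1$, Step~1 directly controls $|\nabla u_k|^2$; on $\{u_k<1\}$ I would combine a truncation test ($T_\delta(u_k)$ with an auxiliary $\epsilon$–regularization) with the parabolic Gagliardo--Nirenberg consequence $\{u_k\}\subset L^{(\gamma+1)(n+2)/n}(\Omega_T)$ obtained by interpolating the two bounds of Step~1, so as to control $\int_{[t_1,t_2]\times\omega}|\nabla u_k|^2$ for each $\omega\subset\subset\Omega$ and $0<t_1<t_2<T$.

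\emph{Step 3 (Compactness and a.e.\ convergence).} Repeating the argument of \cref{coRo} (which is purely local and carries over verbatim since no nonlocal operator is present), the right-hand side $\nu_k/(u_k+1/k)^\gamma$ is bounded in $L^1_{\mathrm{loc}}(\Omega_T)$ uniformly in $k$, so from \cref{aproxproblemnon} $\{(u_k)_t\}$ lies in a bounded set of $L^2(t_1,t_2;W^{-1,2}(\omega)) + L^1(\omega\times[t_1,t_2])$ for every $\omega\subset\subset\Omega$ and $[t_1,t_2]\subset (0,T)$. Combining this with the local $W^{1,2}$ bound from Step~2, the compactness result [conv,~Corollary~4] (applied exactly as in the proof of \cref{convergenceinL1}) produces, up to a subsequence, strong convergence in $L^1_{\mathrm{loc}}(0,T;L^1_{\mathrm{loc}}(\Omega))$ and a.e.\ convergence in $\Omega_T$ to a limit $u$. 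The main obstacle is Step~2 when $\gamma>1$: in contrast with \cref{preli2}, Case~2, no uniform positive lower bound on $u_k$ is available on compact subsets of $\Omega_T$ since $\nu$ is allowed to be singular with respect to Lebesgue measure (indeed in the application to \cref{nonexistence} it is supported on a set of zero parabolic $2$–capacity), so the argument must rely on the energy estimate for $u_k^{(\gamma+1)/2}$ together with level–set truncations rather than on pointwise positivity.
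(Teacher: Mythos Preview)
Your Steps~1 and~3 are fine and match the paper. The gap is in Step~2 when $\gamma>1$: the device you describe for the region $\{u_k<1\}$ does not work. Testing with $T_\delta(u_k)$ (with or without a spatial cut-off or an $\epsilon$--shift) produces on the right the quantity $\nu_k\,T_\delta(u_k)\,(u_k+\tfrac1k)^{-\gamma}$, and for $\gamma>1$ this is not uniformly integrable in $k$: near $\{u_k\approx 0\}$ it behaves like $\nu_k\,(u_k+\tfrac1k)^{1-\gamma}\le \nu_k\,k^{\gamma-1}$, and no Gagliardo--Nirenberg information on $u_k$ alone can rescue this, since the blow-up comes from the source term, not from $u_k$ itself. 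So the proposed route stalls exactly at the point you yourself flag as the main obstacle.

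The paper bypasses the small-$u_k$ region entirely by a sign trick: it tests with $(u_k-1)\phi^2$ for $\phi\in C_c^\infty(\Omega_T)$. On $\{u_k\le 1\}$ the right-hand side $\nu_k(u_k-1)\phi^2(u_k+\tfrac1k)^{-\gamma}$ is \emph{non-positive} and can simply be dropped; on $\{u_k>1\}$ one has $(u_k-1)(u_k+\tfrac1k)^{-\gamma}\le u_k^{1-\gamma}\le 1$ because $\gamma\ge 1$, so the right side is bounded by $\|\nu_k\|_{L^1}$. The time-derivative term integrates by parts against $\phi\phi_t$ (boundary terms vanish since $\phi\in C_c^\infty(\Omega_T)$) and is controlled by the $L^\infty(0,T;L^{\gamma+1})\subset L^\infty(0,T;L^2)$ bound from Step~1. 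After absorbing the cross term $2\int(u_k-1)\phi\,\nabla u_k\cdot\nabla\phi$ via Young's inequality, one obtains $\iint|\nabla u_k|^2\phi^2\le C$ uniformly in $k$, i.e.\ the local $L^2(W^{1,2})$ bound. This is the missing idea.
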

\begin{proof}
  By classical theory, there exists solution to \cref{aproxproblemnon} for each $k$. In fact simple changes in proofs of \cref{lemma1} and \cref{exisforaprox}, yields the existence of unique $u_k\in L^2(0,T:W^{1,2}_0(\Omega))\cap L^\infty(\Omega_T)\cap C([0,T];L^2(\Omega))$ such that $(u_k)_t\in L^2(0,T;W^{-1,2}(\Omega))$ and $u_k$ solves \cref{aproxproblemnon}. As each $u_k$ is bounded, take $u_k^\gamma\chi(0,\tau)$, $\tau\in(0,T]$ as a test function in \cref{aproxproblemnon} to get \begin{equation}{\label{impo55}}
    \begin{array}{l}
      \quad  \int_0^\tau \int_{\Omega}(u_k)_tu_k^\gamma d x d t +\gamma\int_0^\tau\int_\Omega |\nabla u_k|^2u_k^{\gamma-1}
      d x d t
= \int_0^\tau\int_{\Omega} \frac{\nu_k}{\left(u_k+\frac{1}{k}\right)^{\gamma}} u_k^\gamma
d x d t \leq \|\nu_k\|_{L^1(\Omega_T)}.
    \end{array}
\end{equation}As $u_k(\cdot,0)\equiv 0$, \cref{impo55} can be further estimated as \begin{equation}{\label{impo567}}
    \begin{array}{rcl}
      \quad  \frac{1}{\gamma+1} \int_{\Omega}u_k^{\gamma+1}(x,\tau) d x +\gamma\int_0^\tau\int_\Omega |\nabla u_k|^2u_k^{\gamma-1}
      d x d t&\leq& C.
    \end{array}
\end{equation} Taking supremum with respect to $\tau$ in \cref{impo567} we obtain that
\begin{equation*}{\label{impo56}}
\sup _{0 \leq \tau \leq T} \int_{\Omega}\left|u_k(x, \tau)\right|^{{\gamma}+1} d x +\int_0^T\int_\Omega |\nabla u_k^{\frac{\gamma+1}{2}}|^2d x d t\leq C  ,
\end{equation*}
where $C$ is a positive constant independent of $k$. The above boundedness gives that the sequence $\left\{u_k^{\frac{\gamma+1}{2}}\right\}_k$ is uniformly bounded in $
L^2(0, T ; W_0^{1,2}(\Omega)) \cap L^{\infty}(0, T ; L^{2}(\Omega))$ and $\left\{u_k\right\}_k$ is uniformly bounded in $
L^{\infty}(0, T ; L^{\gamma+1}(\Omega))$.\smallskip\\Now let $\phi \in C_c^\infty(\Omega_T)$ be non-negative and take $(u_k-1)\phi^2$ as a test function in \cref{aproxproblemnon} to get
 \begin{equation}{\label{non1}}
    \begin{array}{l}
      \quad  \int_0^T \langle(u_k)_t(u_k-1)\phi^2\rangle d t +\int_0^T\int_\Omega |\nabla u_k|^2\phi^2
      d x d t+2\int_0^T\int_\Omega (u_k-1)\phi\nabla u_k\cdot\nabla \phi \,dxdt
      
=\iint_{\Omega_T} \frac{\nu_k(u_k-1)\phi^2}{\left(u_k+\frac{1}{k}\right)^{\gamma}} 
dxdt.
    \end{array}
\end{equation}
The right hand side of \cref{non1} is non-positive for $u_k\leq 1$, and for $u_k>1$, we note $\gamma \geq 1$ and estimate the term as
\begin{equation}{\label{non2}}
\begin{array}{rcl}
 \iint_{\Omega_T\cap \{u_k>1\}} \frac{\nu_k(u_k-1)\phi^2}{\left(u_k+\frac{1}{k}\right)^{\gamma}} 
&\leq& \iint_{\Omega_T\cap \{u_k>1\}} \frac{\nu_ku_k\phi^2}{u_k^{\gamma}} 
 = \iint_{\Omega_T\cap \{u_k>1\}}  \frac{\nu_k\phi^2}{u_k^{\gamma-1}}
\leq \iint_{\Omega_T}  \nu_k \phi^2
 \leq C\|\nu_k\|_{L^1(\Omega_T)}.
\end{array}
\end{equation}
Thus from \cref{non1} using \cref{non2} we get by Young's inequality 
\begin{equation}{\label{non4}}
    \begin{array}{l}
      \int_0^T\int_\Omega |\nabla u_k|^2\phi^2
      d x d t
      \leq C+\ep\int_0^T\int_\Omega |\nabla u_k|^2\phi^2
      d x d t+c\int_0^T\int_\Omega |\nabla \phi|^2(u_k-1)^2
      d x d t.
    \end{array}
\end{equation}
where we also used that
\begin{equation*}
\left|\int_0^T \langle(u_k)_t(u_k-1)\phi^2\rangle d t\right| \leq 2\left|\iint_{\Omega_T}(u_k^2-u_k) \phi \phi_t\right| \leq C \|u_k\|_{L^\infty(0,T; L^2(\Omega))}\leq C.   
\end{equation*}
Hence, by choosing appropriate $\ep
$ one has from \cref{non4}
\begin{equation}{\label{non5}}
    \begin{array}{l}
         \int_0^T\int_\Omega |\nabla u_k|^2\phi^2
      d x d t
      \leq C+c\int_0^T\int_\Omega |\nabla \phi|^2(u_k-1)^2
      d x d t
      \leq C+c\|u_k\|_{L^\infty(0,T; L^2(\Omega))}.
    \end{array}
 \end{equation} Noting the boundedness of $\{u_k\}$ in $L^\infty(0,T; L^2(\Omega))$
from \cref{non5}, we get 
\begin{equation*}
    \int_0^T\int_\Omega |\nabla u_k|^2\phi^2dxdt\leq C,
\end{equation*}
where the constant $C$ does not depend on $k$. Thus the sequence $\{u_k\}_{k \in \mathbb{N}}$ is bounded in $ L^2_{\mathrm{loc}}(0,T;W_{\mathrm{loc}}^{1,2}(\Omega))$. Proceeding similarly like \cref{coRo} and \cref{convergenceinL1}, we get the convergence results. Indeed the absence of nonlocal term will not affect in any way the proofs of \cref{coRo}, \cref{convergenceinL1}.
\end{proof}
\section{Proof of non-existence result \cref{nonexistence}}{\label{sec3}}
In the whole proof, $\omega$ will indicate any quantity that vanishes as the parameters in its argument go to their (obvious, if not explicitly stressed) limit point with the same order in which they appear, that is, for example,
\begin{equation*}
\lim _{\delta \rightarrow 0^{+}} \limsup _{m \rightarrow+\infty} \limsup _{k \rightarrow \infty}|\omega(k, m, \delta)|=0
\end{equation*}Moreover, for the sake of simplicity, in what follows, the convergences, even if not explicitly stressed, may be understood to be taken possibly up to a suitable subsequence extraction.

By \cref{preli3}, we have $\exists\,u\in L^2_{\mathrm{loc}}(0,T;W_{\mathrm{loc}}^{1,2}(\Omega))\cap L^\infty(0,T; L^{\gamma+1}(\Omega))$ such that $u^{\frac{\gamma+1}{2}}\in L^2(0,T;W_0^{1,2}(\Omega))\cap L^\infty(0,T;L^{2}(\Omega))$ and up to a subsequence $u_k\rightharpoonup u $ weakly in $L^2_{\mathrm{loc}}(0,T;W_{\mathrm{loc}}^{1,2}(\Omega))$ and $u_k^{\frac{\gamma+1}{2}}\rightharpoonup u^{\frac{\gamma+1}{2}}$ weakly in $L^2(0,T;W_0^{1,2}(\Omega))$. Further, $\{u_k\}_{k\in\mathbb{N}}$ converges to $u$ strongly in $L^1_{\mathrm{loc}}(0, T ; L_{\mathrm{loc}}^1(\Omega))$ and a.e. on $\Omega_T$.
\\Let $\eta>0$, and let $\psi_\eta$ in $C_c^1(\Omega_T)$ be such that
\begin{equation}{\label{nonn1}}
0 \leq \psi_\eta \leq 1, \quad 0 \leq \iint_{\Omega_T}\left(1-\psi_\eta\right) d \nu \leq \eta, \quad \iint_{\Omega_T}|\nabla \psi_\eta|^2 \leq \eta   , 
\end{equation}
and there exists a decomposition of $(\psi_\eta)_t$ in $L^1(\Omega_T)+L^2(0,T;W^{-1,2}(\Omega))$ such that
\begin{equation}{\label{nonn2}}
    \left\|(\psi_\eta)^1_t\right\|_{L^1(\Omega_T)}\leq \frac{\eta}{3}, \quad \left\|(\psi_\eta)^1_t\right\|_{L^2(0,T;W^{-1,2}(\Omega))}\leq \frac{\eta}{3}.
\end{equation}
Moreover, $\psi_\eta$ converges to $0$ weakly-$*$ in $L^\infty(\Omega_T)$, in $L^1(\Omega_T)$, and, up to subsequences, almost everywhere as $\eta$ vanishes. 
Such a function exists since the set $E$ where the measure is concentrated has zero parabolic $2$ capacity (see for example [\citealp{pett}, Lemma 5]).

For technical reasons, we will from now on use double cut-off functions $\psi_\eta\psi_\delta$. Recall \cref{beta} and denote $0\leq h_{m,\gamma}(r)=1-\beta_m^\gamma(r)\leq 1$, $\Psi_{\eta\delta}=\psi_\eta\psi_\delta$ and take $T_1^\gamma(u_k)(1-\Psi_{\eta\delta})h_{m\gamma}(u_k)$ as a test function in \cref{aproxproblemnonn} to get
\begin{equation}{\label{nnon1}}
    \begin{array}{l}
      \quad  \underbrace{\int_0^T \langle(u_k)_t,T_1^\gamma(u_k)(1-\Psi_{\eta\delta})h_{m\gamma}(u_k)\rangle d t }_{I}+\int_0^T\int_\Omega(\nabla u_k\cdot\nabla T_1^\gamma(u_k))(1-\Psi_{\eta\delta})h_{m\gamma}(u_k)\\-\underbrace{\iint_{\Omega_T}(\nabla u_k\cdot\nabla \Psi_{\eta\delta})T_1^\gamma(u_k)h_{m\gamma}(u_k)}_{II}
     \\
= \underbrace{\int_0^T\int_{\Omega} \frac{\nu_k}{\left(u_k+\frac{1}{k}\right)^{\gamma}} T_1^\gamma(u_k)(1-\Psi_{\eta\delta})h_{m\gamma}(u_k)
}_{III}-\underbrace{\int_0^T\int_\Omega(\nabla u_k\cdot\nabla h_{m\gamma}(u_k))T_1^\gamma(u_k)(1-\Psi_{\eta\delta})}_{IV}.
    \end{array}
\end{equation}
Note that $\frac{T_1^\gamma(u_k)h_{m\gamma}(u_k)}{\left(u_k+\frac{1}{k}\right)^{\gamma}}\leq \frac{u_k^\gamma}{\left(u_k+\frac{1}{k}\right)^{\gamma}}\leq 1$. Since $\nu_k$ converges to $\nu$ in the narrow topology of measures, one has from \cref{nonn1} that 
\begin{equation}{\label{nnon2}}
\begin{array}{rcl}
0\leq III&=&\int_0^T\int_{\Omega} \frac{\nu_k}{\left(u_k+\frac{1}{k}\right)^{\gamma}} T_1^\gamma(u_k)(1-\Psi_{\eta\delta})h_{m\gamma}(u_k)
d x dt\leq \iint_{\Omega_T} \nu_k(1-\Psi_{\eta\delta})dxdt\smallskip\\&=&  \iint_{\Omega_T} (1-\psi_{\eta,\delta})d\nu+\omega(k)=\iint_{\Omega_T} (1-\psi_{\eta})d\nu+\iint_{\Omega_T} \psi_\eta(1-\psi_{\delta})d\nu+\omega(k)\smallskip\\&\leq&\iint_{\Omega_T} (1-\psi_{\eta})d\nu+\iint_{\Omega_T} (1-\psi_{\delta})d\nu+\omega(k)\leq \eta+\delta+\omega(k)=\omega(k,\delta,\eta).   \end{array} 
\end{equation}The last term in right of \cref{nnon1} becomes
\begin{equation*}
\begin{array}{rcl}
   -\int_0^T\int_\Omega(\nabla u_k\cdot\nabla h_{m\gamma}(u_k))T_1^\gamma(u_k)(1-\Psi_{\eta\delta})&=&\int_0^T\int_\Omega(\nabla u_k\cdot\nabla \beta_m^\gamma(u_k))T_1^\gamma(u_k)(1-\Psi_{\eta\delta})\smallskip\\
    &=&\frac{\gamma}{m}\int_{\{m\leq u_k\leq 2m\}}\nabla u_k\cdot\nabla u_k(1-\psi_\eta)T_1^\gamma(u_k)(\beta_m(u_k))^{\gamma-1}\smallskip\\&&+\frac{\gamma}{m}\int_{\{m\leq u_k\leq 2m\}}\nabla u_k\cdot\nabla u_k\psi_\eta(1-\psi_\delta)T_1^\gamma(u_k)(\beta_m(u_k))^{\gamma-1}\end{array}
\end{equation*}Note that $\nabla u_k\cdot\nabla u_k(1-\psi)(\beta_m(u_k))^{\gamma-1}\geq 0$, where $\psi=\psi_\eta$ or $\psi_\delta$, and $0\leq T_1(u_k),\psi_\delta,\psi_\eta\leq 1$. Further as $\{u^{\frac{\gamma+1}{2}}_k\}_{k \in \mathbb{N}}$ is bounded in $L^2(0,T;W_0^{1,2}(\Omega))$, we have using the above estimate, 
\begin{equation}{\label{nonn3}}
    \begin{array}{rcl}
         0\leq IV=-\int_0^T\int_\Omega(\nabla u_k\cdot\nabla h_{m\gamma}(u_k))T_1^\gamma(u_k)(1-\Psi_{\eta\delta})&\leq &
         \frac{2\gamma}{m}\int_{\{m\leq u_k\leq 2m\}}|\nabla u_k|^2\smallskip\\&\leq& 
         \frac{2\gamma}{m^\gamma}\int_{\{m\leq u_k\leq 2m\}}u_k^{\gamma-1}|\nabla u_k|^2\smallskip\\&=&
         \frac{c}{m^\gamma}\int_{\{m\leq u_k\leq 2m\}}|\nabla u^{\frac{\gamma+1}{2}}_k|^2\leq 
         \frac{C}{m^\gamma}=
         \omega(m).
    \end{array}
\end{equation}   
Now note that $\nabla u_k\rightharpoonup\nabla u$ in $\left(L^2_{\mathrm{loc}}(\Omega_T)\right)^n$ and $T_1^\gamma(u_k)h_{m\gamma}(u_k)\nabla \Psi_{\eta\delta}$ converges to $T_1^\gamma(u)h_{m\gamma}(u)\nabla \Psi_{\eta\delta}$ pointwise as $k\to \infty$ in $\Omega_T$. Further noting that $T_1^\gamma(u_k)h_{m\gamma}(u_k)\leq 1$ and $\Psi_{\eta\delta}\in C_c^1(\Omega_T)$, we have by dominated convergence theorem $T_1^\gamma(u_k)h_{m\gamma}(u_k)\nabla \Psi_{\eta\delta}$ converges weakly-$*$ to $T_1^\gamma(u)h_{m\gamma}(u)\nabla \Psi_{\eta\delta}$ in $\left(L^\infty(\Omega_T)\right)^n$. Therefore by \cref{limits}, we have 
\begin{equation}{\label{mie22}}
    -II=\iint_{\Omega_T}(\nabla u_k\cdot\nabla \Psi_{\eta\delta})T_1^\gamma(u_k)h_{m\gamma}(u_k)dxdt=\iint_{\Omega_T}(\nabla u\cdot\nabla \Psi_{\eta\delta})T_1^\gamma(u)h_{m\gamma}(u)dxdt+w(k).
\end{equation}
Now $|\nabla u|h_{m\gamma}(u)T_1^\gamma(u)\leq \frac{2}{\gamma+1}\left|\nabla u^{\frac{\gamma+1}{2}}\right|h_{m\gamma}(u)T_1(u)$ and the later function is in $L^2(\Omega_T)$. So by \cref{nonn1}, as $\psi_\delta\to 0$ in $L^2(0,T;W^{1,2}_0(\Omega))$, we have from \cref{mie22}
\begin{equation}{\label{nonn6}}
   - II=\iint_{\Omega_T}\psi_\delta(\nabla u\cdot\nabla \psi_{\eta})T_1^\gamma(u)h_{m\gamma}(u)dxdt+\iint_{\Omega_T}(\nabla u\cdot\nabla \psi_{\delta})\psi_\eta T_1^\gamma(u)h_{m\gamma}(u)dxdt=\omega(k,\delta).
\end{equation}
For the term $(I)$, denote $\Theta_{m,\gamma}(r)=\int_0^rT_1^\gamma(s)h_{m\gamma}(s)ds$ and note that $\Theta_{m,\gamma}(u_k)$ converges to $\Theta_{m,\gamma}(u)$ as $k\to \infty$ a.e in $\Omega_T$. Further, as $\gamma\geq 1$ and $h_{m\gamma}\leq 1$, so we have $|(\Psi_{\eta\delta})_t \Theta_{m,\gamma}(u_k)|\leq u_k|(\Psi_{\eta\delta})_t|$. So by dominated convergence theorem and the fact that $u_k\to u$ in $L^1_{\mathrm{loc}}(\Omega_T)$, one gets 
\begin{equation}{\label{mie}}
   \lim_{k\rightarrow\infty} \int_0^T\int_\Omega (\Psi_{\eta\delta})_t \Theta_{m,\gamma}(u_k)dxdt= \int_0^T\int_\Omega (\Psi_{\eta\delta})_t \Theta_{m,\gamma}(u)dxdt.
\end{equation} 
From \cref{mie}, we use the fact $u_k(\cdot,0)\equiv 0$ and estimate as
\begin{equation}{\label{nonn7}}
\begin{array}{rcl}
    I=\int_0^T \langle(u_k)_t,T_1^\gamma(u_k)(1-\Psi_{\eta\delta})h_{m\gamma}(u_k)\rangle  d t&=&\underbrace{\int_\Omega \Theta_{m,\gamma}(u_k)(x,T)(1-\Psi_{\eta\delta})(x,T)dx}_{\geq0}+\int_0^T\int_\Omega (\Psi_{\eta\delta})_t \Theta_{m,\gamma}(u_k)\smallskip\\&\geq&\int_0^T\int_\Omega (\Psi_{\eta\delta})_t \Theta_{m,\gamma}(u)dxdt+\omega(k).
    \end{array} 
\end{equation}
As $m\geq 1$, $u$ is not necessarily bounded, so maximum possible value of $\Theta_{k,m}$ is given by
\begin{equation}{\label{mie33}}
    \begin{array}{rcl}
         \Theta_{m,\gamma}(u)&\leq&\int_0^u T_k(s)h_m(s) ds=\int_0^1+\int_1^m+\int_m^{2m}+\int_{2m}^u\smallskip\\&=& \int_0^1 s\times1 ds+\int_1^m 1\times 1 ds+\int_m^{2m} 1\times \left(1-(\frac{s}{m}-1)^\gamma\right)ds+\int_{2m}^u 0ds\smallskip\\&\leq &\frac{1}{2}+(m-1)+(2m-m)
         =\frac{4m-1}{2}.
    \end{array}
\end{equation}
From \cref{nonn2}, let $(\psi_\delta)^1_t+(\psi_\delta)^2_t$ be the decomposition of $(\psi_\delta)_t$ in $L^1(\Omega_T)+L^2(0,T;W^{-1,2}(\Omega))$. Note that $\left|\nabla \Theta_{m,\gamma}(u)\right|\leq c|\nabla u|$, and $u\in L_{\mathrm{loc}}^2(0,T;W_{\mathrm{loc}}^{1,2}(\Omega))$.
As $0\leq\psi_\delta,\psi_\eta\leq 1$, and $\psi_\delta,\psi_\eta\in C_c^\infty(\Omega_T)$, we have 
$$\begin{aligned}
    \left|\int_0^T\int_\Omega (\Psi_{\eta\delta})_t \Theta_{m,\gamma}(u)dxdt\right|&=\left|\iint_{\Omega} \Theta_{m,\gamma}(u)\left(\psi_\eta(\psi_\delta)_t+(\psi_\eta)_t\psi_\delta\right)\right|\smallskip\\&=\left|\iint_{\Omega} \Theta_{m,\gamma}(u)\left(\psi_\eta(\psi_\delta)^1_t+\psi_\eta(\psi_\delta)^2_t+(\psi_\eta)_t\psi_\delta\right)\right|\smallskip\\&\leq \iint_{\Omega_T} \Theta_{m,\gamma}(u)\left|\psi_\eta(\psi_\delta)^1_t+(\psi_\eta)_t\psi_\delta\right|+\left|\int_0^T \langle(\psi_\delta)^2_t,\Theta_{m,\gamma}(u)\psi_\eta\rangle\right|\smallskip\\&\leq \iint_{\Omega_T} \Theta_{m,\gamma}(u)\left|\psi_\eta(\psi_\delta)^1_t\right|+\iint_{\Omega_T} \Theta_{m,\gamma}(u)\left|(\psi_\eta)_t\psi_\delta\right|\smallskip\\&\quad+\|(\psi_\delta)^2_t\|_{L^2(0,T;W^{-1,2}(\Omega))}\|\Theta_{m,\gamma}(u)\psi_\eta\|_{L^2(0,T;W^{1,2}_0(\Omega))}
    \smallskip\\&\stackrel{\cref{nonn2}\cref{mie33}}{\leq} \iint_{\Omega_T}\frac{4m-1}{2}|(\psi_\delta)^1_t|+\frac{4m-1}{2}\|(\psi_\eta)_t\|_{L^\infty(\Omega_T)}\iint_{\Omega_T}|\psi_\delta|\smallskip\\&\quad +C(\gamma)\delta\left( \|u\|_{L^2(t_1,t_2;W^{1,2}(\Omega_1))}+\frac{4m-1}{2}\|\nabla \psi_\eta\|_{L^\infty(\Omega_T)}\right)\smallskip\\&\leq {4m}\delta+4mC_\eta\delta+C(\gamma)\|u\|_{L^2(t_1,t_2;W^{1,2}(\Omega_1))}\delta\smallskip\\
    &\leq 3\eta,
\end{aligned}
$$
provided $\delta<\min\{\frac{\eta}{4m},\frac{\eta}{4mC_\eta},\frac{\eta}{C(\gamma)\|u\|_{L^2(t_1,t_2;W^{1,2}(\Omega_1))}}\}$, where $\operatorname{supp}\psi_\eta\subset[t_1,t_2]\times\Omega_1\subset\subset\Omega_T$. Letting now $k\to \infty$ and then $\delta\to 0+$, we get
\begin{equation*}
  \lim_{\delta\to 0+}\lim_{k\to \infty}   \left|\iint_{\Omega_T} \Theta_{m,\gamma}(u_k)(\Psi_{\eta\delta})_t \right|\leq 3\eta,
\end{equation*}
which gives by taking $m\to \infty$ and then $\eta\to 0+$, 
\begin{equation*}
    \lim_{\eta\to 0+}\lim_{m\to \infty}  \lim_{\delta\to 0+}\lim_{k\to \infty}   \left|\iint_{\Omega_T} \Theta_{m,\gamma}(u_k)(\Psi_{\eta\delta})_t \right|=0.
\end{equation*}
i.e. 
\begin{equation}{\label{nnon77}}
       \left|\iint_{\Omega_T} \Theta_{m,\gamma}(u_k)(\Psi_{\eta\delta})_t \right|=\omega(k,\delta,m,\eta).
\end{equation}
Finally merging \cref{nnon2}, \cref{nonn3}, \cref{nonn6}, \cref{nonn7} and \cref{nnon77} 
in \cref{nnon1}, we have by weak lower semi-continuity
\begin{equation*}{\label{nnonfin}}
    \begin{array}{rcl}
      0\leq \int_0^T\int_\Omega|\nabla T_1^{\frac{\gamma+1}{2}}(u)|^2(1-\Psi_{\eta\delta})&\leq& \liminf_{k\to \infty}\int_0^T\int_\Omega|\nabla T_1^{\frac{\gamma+1}{2}}(u_k)|^2(1-\Psi_{\eta\delta})\\&=&\liminf_{k\to \infty}\int_0^T\int_\Omega|\nabla T_1^{\frac{\gamma+1}{2}}(u_k)|^2(1-\Psi_{\eta\delta})h_{m\gamma}(u_k)\leq \omega(\delta,m,\eta).
    \end{array}
\end{equation*}As $\psi_{\delta}\to 0$ weakly-$*$ in $L^\infty(\Omega_T)$, 
we get $\iint_{\Omega_T}|\nabla T_1^{\frac{\gamma+1}{2}}(u)|^2=0$, which gives $T_1(u)=0$ implying $u=0$ as desired.
\section{Proof of existence results}
\subsection*{Proof of \cref{exis1}}{\label{sec4}}
We assume that $u_0\in L^1(\Omega)$, $\gamma\in C(\overline{\Omega}_T)$ and is locally Lipschitz continuous with respect to $x$ variable in $\Omega_T$ and satisfies the condition $(P_1)$ for some $\gamma^*\geq 1$ and $\delta>0$. We consider the cases $\gamma^*=1$ and $\gamma^*>1$ separately.\smallskip\\
As per hypotheses, $\nu \in \overline{\mathcal{M}}_0^p(\Omega)$ for some $1<p<2-\frac{n}{n+1}$, is a non-negative bounded Radon measure on $\Omega_T$ and $\mu$ is a non-negative bounded Radon measure on $\Omega_T$. Further $\nu$ is non-singular with respect to the Lebesgue measure. Hence, by Lebesgue decomposition theorem, [\citealp{royden}, page 384]
\begin{equation*}
    \nu=\nu_a+\nu_s,
\end{equation*}
where $0\neq \nu_a \ll \mathcal{L}$ and $\nu_s\perp \mathcal{L}$. By Radon-Nikodym theorem [\citealp{royden}, Page 382], there exists a non-negative Lebesgue measurable function $h$ such that for every measurable set $E \subset \Omega_T$,
\begin{equation}{\label{muu}}
\nu_a(E)=\iint_E h d x dt.    
\end{equation}
Furthermore, as $\nu$ is bounded then $h \in L^1(\Omega_T)$. Since $\nu \in \overline{\mathcal{M}}_0^p(\Omega)$, $0 \leq \nu_s \leq \nu$, and \cref{muu} holds, we have $\nu_s \in \overline{\mathcal{M}}_0^p(\Omega)$. Let $f\in L^1(\Omega_T)$ and $G\in \left(L^{p^\prime}(\Omega_T)\right)^n$ are such that for every $\phi\in C_c^\infty(\Omega_T)$, it holds
   \begin{equation*}
         \iint_{\Omega_T} \phi \,d \nu_s=\iint_{\Omega_T} f \phi \,d x d t+\int_0^T G\cdot\nabla \phi \, dxd t,
     \end{equation*}
   Formally we write $\nu_s=f-\operatorname{div}(G)$. Now noting \cref{measapx}, we take the following approximations of $\nu_s$ and $\nu$:
\begin{equation}{\label{appxx}}
\begin{array}{c}
  0\leq   (\nu_s)_k=f_k-\operatorname{div}\left(G_k\right), \text{ i.e. } \nu_k=T_k(h)+f_k-\operatorname{div}\left(G_k\right),\\  \left\|(\nu_s)_k\right\|_{L^1(\Omega_T)} \leq C\|\nu_s\|_{\mathcal{M}(\Omega_T)}\leq C\|\nu\|_{\mathcal{M}(\Omega_T)}.\end{array}
\end{equation}
where $f_k \in C_c^{\infty}(\Omega_T)$ is a sequence of functions which converges to $f$ weakly in $L^1(\Omega)$, $G_k \in C_c^{\infty}(\Omega_T)$ is a sequence of functions which converges to $G$ strongly in $\left(L^{p^{\prime}}(\Omega_T)\right)^n$. Thus note that $\nu_k\in L^\infty(\Omega_T)$ is a sequence of non-negative functions which is bounded in $L^1(\Omega_T)$ and also converges to $\nu$ in narrow topology of measures. 
\smallskip\\Further, since $\mu$ is a non-negative bounded Radon measure on $\Omega_T$, there exists a non-negative sequence $\{\mu_k\}_{k \in \mathbb{N}} \subset L^{\infty}(\Omega_T)$ such that $\|\mu_k\|_{L^1(\Omega_T)} \leq C$ for some constant $C>0$ independent of $k$ and $\mu_k \rightharpoonup \mu$ in the narrow topology. 

Now, for each $k\in \mathbb{N}$, consider the approximated problems
  \begin{equation}{\label{aproxproblem12}}
    \begin{array}{c}
         (u_k)_t-\Delta u_k+(-\Delta)^s u_k=\frac{\nu_k}{\left(u_k+\frac{1}{k}\right)^{\gamma(x,t)}} +\mu_k \text { in } \Omega_T, \\u_k>0 \text{ in }\Omega_T,\quad u_k=0  \text { in }(\mathbb{R}^n \backslash \Omega) \times(0, T), \\ u_k(x, 0)=T_k(u_0)(x) \text { in } \Omega ;
    \end{array}
\end{equation} and let $u_k$ denotes the solution to \cref{aproxproblem12}, which exists by \cref{exisforaprox}. Further, $0\leq u_k  \in L^{\infty}(\Omega_T)$ and $(u_k)_t \in L^{2}(0, T ; W^{-1, 2}(\Omega))$. In particular $u_k\in C([0,T],L^2(\Omega))$. Also, as $\nu$ is non-singular with respect to the Lebesgue measure, then 
    for each $t_0>0$, $\omega\subset\subset\Omega$ and $\forall k$, it holds \begin{equation}{\label{greater}}
        u_k\geq C\quad \text{ in }\omega\times[t_0,T),
    \end{equation} where $C\equiv C(\omega,t_0,n,s)$ is a constant independent of $k$. \smallskip\\
\textbf{Case 1: $\gamma^*=1$}\\ By \cref{preli1}, there exists a subsequence of $\{u_k\}_{k \in \mathbb{N}}$, still denoted by $\{u_k\}_{k \in \mathbb{N}}$ and $u \in L^p(0,T;W_0^{1, p}(\Omega))$ such that $u_k \rightharpoonup u$ weakly in $L^p(0,T;W_0^{1, p}(\Omega))$. Further by \cref{convergenceinL1}, we have $u_k\rightarrow u$ in $L^1_{\mathrm{loc}}(0,T;L^1_{\mathrm{loc}}(\Omega))$ and $u_k \rightarrow u$ pointwise a.e. in $\Omega_T$. Also by Fatou's lemma $u\in L^\infty(0,T;L^1(\Omega))$. Suppose $\phi \in C_c^{\infty}(\Omega_T)$, and $\operatorname{supp}(\phi)\subset \omega\times[t_1,t_2]\subset\subset\Omega_T$. From the weak formulation of \cref{aproxproblem12} noting \cref{first} and \cref{appxx}, we get
\begin{equation}{\label{e1}}
\begin{array}{l}
    -  \int_0^T\int_\Omega u_k \phi_tdx d t+\int_0^T\int_{\Omega}\nabla u_k\cdot \nabla \phi\, d x d t+ \int_0^T \int_{\mathbb{R}^{n}}\int_{\mathbb{R}^{n}} \frac{(u_k(x, t)-u_k(y, t))(\phi(x, t)-\phi(y, t))}{|x-y|^{n+2 s}} d x d y d t \smallskip\\ = \int_0^T\int_{\Omega} \frac{T_k(h)}{\left(u_k+\frac{1}{k}\right)^{\gamma(x,t)}} \phi \, dxdt+\int_0^T\int_{\Omega} \frac{f_k}{\left(u_k+\frac{1}{k}\right)^{\gamma(x,t)}} \phi \, dxdt \smallskip\\\quad+\int_0^T\int_{\Omega} G_k\cdot\nabla \left(\frac{\phi}{\left(u_k+\frac{1}{k}\right)^{\gamma(x,t)}} \right) dxdt+\int_0^T\int_{\Omega} \mu_k\phi \,
d x d t .
\end{array}
\end{equation}
Since $u_k \rightarrow u$ weakly in $L^p(0,T;W_0^{1, p}(\Omega))$, for every $\phi \in C_c^{\infty}(\Omega_T)$, it follows that that
\begin{equation}{\label{e2}}
\lim _{k \rightarrow \infty} \int_0^T\int_{\Omega}\nabla u_k\cdot \nabla \phi\, d x d t=\int_0^T\int_{\Omega}\nabla u\cdot \nabla \phi\, d x d t
\end{equation}
and
\begin{equation}{\label{e3}}
\begin{array}{c}
    \lim _{k \rightarrow \infty}\int_0^T \int_{\mathbb{R}^{n}}\int_{\mathbb{R}^{n}} \frac{(u_k(x, t)-u_k(y, t))(\phi(x, t)-\phi(y, t))}{|x-y|^{n+2 s}} d x d y d t \smallskip\\
    =\int_0^T \int_{\mathbb{R}^{n}}\int_{\mathbb{R}^{n}} \frac{(u(x, t)-u(y, t))(\phi(x, t)-\phi(y, t))}{|x-y|^{n+2 s}} d x d y d t .\end{array}
\end{equation}
By \cref{convergenceinL1}, we have \begin{equation}{\label{e4}}
     \lim _{k \rightarrow \infty}\int_0^T\int_\Omega u_k \phi_tdx d t=\int_0^T\int_\Omega u\phi_tdx d t.
\end{equation}
Moreover, since and $\mu_k\rightharpoonup \mu$ in the narrow topology, for every $\phi \in C_c^{\infty}(\Omega_T)$, we have
\begin{equation}{\label{e5}}
\lim _{k \rightarrow \infty} \int_0^T\int_{\Omega}\mu_k \phi  \,d xdt=dt \iint_{\Omega_T} \phi\, d \mu.
\end{equation}
By \cref{greater}, there exists $C>0$ (independent of $k$) such that $u_k\geq C$ in $\omega\times[t_1,t_2]\supset\operatorname{supp}\phi$, for all $k$. Since $h \in L^1(\Omega_T)$, from the Lebesgue's dominated convergence theorem, we get
\begin{equation}{\label{e6}}
      \lim _{k \rightarrow \infty}\int_0^T\int_{\Omega} \frac{T_k(h)}{\left(u_k+\frac{1}{k}\right)^{\gamma(x,t)}} \phi \, dxdt=\int_0^T\int_{\Omega} \frac{h}{u^{\gamma(x,t)}} \phi \, dxdt.
\end{equation}
It follows from the facts $f_k \rightharpoonup f$ weakly in $L^1(\Omega_T)$ and $\frac{\phi}{\left(u_k+\frac{1}{k}\right)^{\gamma(x,t)}} \rightarrow \frac{\phi}{u^{\gamma(x,t)}}$ pointwise a.e. in $\Omega_T$ and weak* in $L^{\infty}(\Omega_T)$ that (also note \cref{limits})
\begin{equation}{\label{e7}}
     \lim _{k \rightarrow \infty} \int_0^T\int_{\Omega} \frac{f_k}{\left(u_k+\frac{1}{k}\right)^{\gamma(x,t)}} \phi \, dxdt =\int_0^T\int_{\Omega} \frac{f}{u^{\gamma(x,t)}} \phi \, dxdt.
\end{equation}
We only have to pass the limit in the second last term of \cref{e1}. We observe that for every $\phi \in C_c^{\infty}(\Omega_T)$,
\begin{equation}{\label{e8}}
    \begin{array}{rcl}
    \int_0^T\int_{\Omega} G_k\cdot\nabla \left(\frac{\phi}{\left(u_k+\frac{1}{k}\right)^{\gamma(x,t)}} \right) dxdt&=&\int_0^T\int_{\Omega}  \frac{G_k\cdot\nabla\phi}{\left(u_k+\frac{1}{k}\right)^{\gamma(x,t)}}  dxdt\smallskip\\&&-\int_0^T\int_{\Omega}  \frac{G_k\cdot\nabla\gamma(x,t) }{\left(u_k+\frac{1}{k}\right)^{\gamma(x,t)}} \log\left(u_k+\frac{1}{k} \right)\phi\, dxdt\smallskip\\&&-\int_0^T\int_{\Omega}\gamma(x,t)  \frac{G_k\cdot\nabla u_k}{\left(u_k+\frac{1}{k}\right)^{\gamma(x,t)+1}}\phi\,  dxdt.
    \end{array}
\end{equation}
The fact $G_k\rightarrow G$ strongly in $\left(L^{p^{\prime}}(\Omega_T)\right)^n$ implies $G_k\rightarrow G$ strongly in $\left(L^{1}(\Omega_T)\right)^n$. So by \cref{greater}, it holds 
\begin{equation*}
    \left| \frac{G_k\cdot\nabla\phi}{\left(u_k+\frac{1}{k}\right)^{\gamma(x,t)}}\right|\leq C |G_k|\text{ in }\operatorname{supp}\phi.
\end{equation*} Therefore by Lebesgue dominated convergence theorem, it holds 
\begin{equation}{\label{e9}}
     \lim _{k \rightarrow \infty}\int_0^T\int_{\Omega}  \frac{G_k\cdot\nabla\phi}{\left(u_k+\frac{1}{k}\right)^{\gamma(x,t)}}  dxdt=\int_0^T\int_{\Omega}  \frac{G\cdot\nabla\phi}{u^{\gamma(x,t)}}  dxdt.
\end{equation}
Since \begin{equation*}\frac{\phi\gamma(x,t)\nabla u_k}{\left(u_k+\frac{1}{k}\right)^{\gamma(x,t)+1}}  \rightharpoonup \frac{\phi\gamma(x,t)\nabla u}{u^{\gamma(x,t)+1}} \end{equation*} weakly in $\left(L^p(\Omega_T)\right)^n$ and $G_k \rightarrow G$ strongly in $\left(L^{p^{\prime}}(\Omega_T)\right)^n$, we obtain
\begin{equation}{\label{e10}}
   \lim _{k \rightarrow \infty}  \int_0^T\int_{\Omega}\gamma(x,t)  \frac{G_k\cdot\nabla u_k}{\left(u_k+\frac{1}{k}\right)^{\gamma(x,t)+1}}\phi\,dxdt=\int_0^T\int_{\Omega}\gamma(x,t)  \frac{G\cdot\nabla u}{u^{\gamma(x,t)+1}}\phi\,dxdt
\end{equation}
In order to pass to the limit in the second last integral in \cref{e8}, we use the condition $\gamma(x)>0$ and $\gamma$ is locally Lipschitz continuous with respect to $x$ variable in $\Omega_T$. This is the only place where we used the Lipschitz property; to be more precise, we use the fact $\|\nabla \gamma\|_{L_{\mathrm{loc }}^{\infty}(\Omega_T)}<\infty$ to apply Lebesgue's dominated convergence theorem. Define $r:=\min _{\Omega_T} \gamma(x,t)>0$ and observe that $\frac{\log z}{z^{r}}$ is bounded on $[C, \infty)$, where $C>0$ is a uniform lower bound of $u_k$ in $\omega\times[t_2,t_2]$.
We have
\begin{equation*}
    \left|\frac{G_k\cdot\nabla\gamma(x,t) }{\left(u_k+\frac{1}{k}\right)^{\gamma(x,t)}} \log\left(u_k+\frac{1}{k} \right)\phi \right|=\left|\left(\frac{\phi}{\left(u_k+\frac{1}{k}\right)^{\gamma(x,t)-r}}\right)\left(\frac{\log \left(u_k+\frac{1}{k}\right)}{\left(u_k+\frac{1}{k}\right)^r}\right) G_k\cdot \nabla \delta\right|\leq C|G_k|.
\end{equation*}
in $\omega\times [t_1,t_2]$. Using this together with the fact $\lim _{k \rightarrow \infty} \iint_{\Omega_T}\left|G_k\right|=\iint_{\Omega_T}|G|$, we conclude
\begin{equation}{\label{e11}}
   \lim _{k \rightarrow \infty}  \int_0^T\int_{\Omega}  \frac{G_k\cdot\nabla\gamma(x,t) }{\left(u_k+\frac{1}{k}\right)^{\gamma(x,t)}} \log\left(u_k+\frac{1}{k} \right)\phi\, dxdt=\int_0^T\int_{\Omega}  \frac{G\cdot\nabla\gamma(x,t) }{u^{\gamma(x,t)}} (\log u)\phi\, dxdt.
\end{equation}
Combining \cref{e9,e10,e11}, the identity \cref{e8} leads to conclude that
\begin{equation}{\label{e12}}
     \lim _{k \rightarrow \infty}\int_0^T\int_{\Omega} G_k\cdot\nabla \left(\frac{\phi}{\left(u_k+\frac{1}{k}\right)^{\gamma(x,t)}} \right) dxdt=\int_0^T\int_{\Omega} G\cdot\nabla \left(\frac{\phi}{u^{\gamma(x,t)}} \right) dxdt.
\end{equation}
Thus noting that $\frac{\phi}{u^{\gamma(x,t)}}\in L^p(0,T,W^{1,p}_0(\Omega)\cap L^\infty(\Omega_T)$, we let $k \rightarrow \infty$ in both sides of the equality \cref{e1}; and use \cref{e2,e3,e4,e5,e6,e7} and \cref{e12}, to obtain
\begin{equation*}{\label{e13}}
\begin{array}{l}
    -  \int_0^T\int_\Omega u \phi_tdx d t+\int_0^T\int_{\Omega}\nabla u\cdot \nabla \phi\, d x d t+ \int_0^T \int_{\mathbb{R}^{n}}\int_{\mathbb{R}^{n}} \frac{(u(x, t)-u(y, t))(\phi(x, t)-\phi(y, t))}{|x-y|^{n+2 s}} d x d y d t \smallskip\\ = \int_0^T\int_{\Omega} \frac{h}{u^{\gamma(x,t)}} \phi \, dxdt+\int_0^T\int_{\Omega} \frac{f}{u^{\gamma(x,t)}} \phi \, dxdt +\int_0^T\int_{\Omega} G\cdot\nabla \left(\frac{\phi}{u^{\gamma(x,t)}} \right) dxdt+\int_0^T\int_{\Omega} \phi \,d\mu\smallskip\\=\iint_{\Omega_T}\frac{\phi}{u^{\gamma(x,t)}}d\nu+\iint_{\Omega_T}{\phi}\,d\mu.
\end{array}
\end{equation*}
Hence, $u \in L^p(0,T:W_0^{1, p}(\Omega))\cap L^\infty(0,T;L^1(\Omega))$ is a solution of the equation \cref{mainproblem}.\smallskip\\
\textbf{Case 2: $\gamma^*>1$} \\ We apply \cref{preli1}, \cref{convergenceinL1} and conclude that there exist a subsequence of $\{u_k\}_{k \in \mathbb{N}}$, still denoted by $\{u_k\}_{k \in \mathbb{N}}$ and $u \in L^p(t_0,T;W_{\mathrm{loc}}^{1, p}(\Omega))$, for each $t_0\in (0,T)$, such that
$$
\left\{\begin{array}{l}
u_k \rightarrow u \text { weakly in } L^p(t_0,T;W_{\mathrm{loc}}^{1, p}(\Omega)),\, \forall t_0\in (0,T),\smallskip\\ u_k \rightarrow u \text { strongly in } L^1_{\mathrm{loc}}(0,T;L_{\mathrm{loc}}^1(\Omega)) \smallskip\\
u_k \rightarrow u \text { pointwise a.e. in } \Omega_T .
\end{array}\right.
$$Moreover, by Fatou's lemma $u\in L^\infty(0,T;L^1(\Omega))$.

For any $\phi \in C_c^{\infty}(\Omega_T)$, and $\operatorname{supp}(\phi)\subset \omega\times[t_1,t_2]\subset\subset\Omega_T$, from the weak formulation of \cref{aproxproblem12} noting \cref{first} and \cref{appxx}, we get
\begin{equation}{\label{ee1}}
\begin{array}{l}
    -  \int_0^T\int_\Omega u_k \phi_tdx d t+\int_0^T\int_{\Omega}\nabla u_k\cdot \nabla \phi\, d x d t+ \int_0^T \int_{\mathbb{R}^{n}}\int_{\mathbb{R}^{n}} \frac{(u_k(x, t)-u_k(y, t))(\phi(x, t)-\phi(y, t))}{|x-y|^{n+2 s}} d x d y d t \smallskip\\ = \int_0^T\int_{\Omega} \frac{T_k(h)}{\left(u_k+\frac{1}{k}\right)^{\gamma(x,t)}} \phi \, dxdt+\int_0^T\int_{\Omega} \frac{f_k}{\left(u_k+\frac{1}{k}\right)^{\gamma(x,t)}} \phi \, dxdt \smallskip\\\quad+\int_0^T\int_{\Omega} G_k\cdot\nabla \left(\frac{\phi}{\left(u_k+\frac{1}{k}\right)^{\gamma(x,t)}} \right) dxdt+\int_0^T\int_{\Omega} \mu_k\phi \,
d x d t .
\end{array}
\end{equation}
By repeating the similar proof as in case $1$ above, one can pass to the limit in all the integrals in \cref{ee1} except the third integral, which is nonlocal. In order to pass to the limit there, by \cref{preli1}, since the sequence $\left\{T_1^{\frac{\gamma^*+1}{2}}(u_k)\right\}_{k\in \mathbb{N}}$ is uniformly bounded in $L^2(0,T;W_0^{1,2}(\Omega))$, by a similar argument as in [\citealp{fracvari1}, Theorem 3.6] (see also [\citealp{pap}, Remark 3.4] for a parabolic version), we have
\begin{equation}{\label{ee222}}
    \begin{array}{c}
      \lim _{k \rightarrow \infty}\int_0^T \int_{\mathbb{R}^{n}}\int_{\mathbb{R}^{n}} \frac{(T_1(u_k)(x, t)-T_1(u_k)(y, t))(\phi(x, t)-\phi(y, t))}{|x-y|^{n+2 s}} d x d y d t\\=\int_0^T \int_{\mathbb{R}^{n}}\int_{\mathbb{R}^{n}} \frac{(T_1(u)(x, t)-T_1(u)(y, t))(\phi(x, t)-\phi(y, t))}{|x-y|^{n+2 s}} d x d y d t 
    \end{array}
\end{equation}
Furthermore, by \cref{preli1}, since $\left\{G_1(u_k)\right\}_{k \in \mathbb{N}}$ is uniformly bounded in $L^p(0,T;W_0^{1, p}(\Omega))$, up to a subsequence we have $G_1(u_k) \rightarrow G_1(u)$ weakly in $L^p(0,T;W_0^{1, p}(\Omega)$. Thus
\begin{equation}{\label{ee2}}
    \begin{array}{c}
      \lim _{k \rightarrow \infty}\int_0^T \int_{\mathbb{R}^{n}}\int_{\mathbb{R}^{n}} \frac{(G_1(u_k)(x, t)-G_1(u_k)(y, t))(\phi(x, t)-\phi(y, t))}{|x-y|^{n+2 s}} d x d y d t \\=\int_0^T \int_{\mathbb{R}^{n}}\int_{\mathbb{R}^{n}} \frac{(G_1(u)(x, t)-G_1(u)(y, t))(\phi(x, t)-\phi(y, t))}{|x-y|^{n+2 s}} d x d y d t 
    \end{array}
\end{equation}
Combining \cref{ee222,ee2} and using the fact $u_k=T_1(u_k)+G_1(u_k)$, we obtain
\begin{equation}{\label{ee21}}
    \begin{array}{c}
      \lim _{k \rightarrow \infty}\int_0^T \int_{\mathbb{R}^{n}}\int_{\mathbb{R}^{n}} \frac{(u_k(x, t)-u_k(y, t))(\phi(x, t)-\phi(y, t))}{|x-y|^{n+2 s}} d x d y d t \\=\int_0^T \int_{\mathbb{R}^{n}}\int_{\mathbb{R}^{n}} \frac{(u(x, t)-u(y, t))(\phi(x, t)-\phi(y, t))}{|x-y|^{n+2 s}} d x d y d t 
    \end{array}
\end{equation}
Letting $k \rightarrow \infty$ on both sides of the identity \cref{ee1} and using \cref{ee21}, we obtain
\begin{equation*}{\label{ee13}}
\begin{array}{c}
    -  \int_0^T\int_\Omega u \phi_tdx d t+\int_0^T\int_{\Omega}\nabla u\cdot \nabla \phi\, d x d t+ \int_0^T \int_{\mathbb{R}^{n}}\int_{\mathbb{R}^{n}} \frac{(u(x, t)-u(y, t))(\phi(x, t)-\phi(y, t))}{|x-y|^{n+2 s}} d x d y d t \smallskip\\ =\iint_{\Omega_T}\frac{\phi}{u^{\gamma(x,t)}}d\nu+\iint_{\Omega_T}{\phi}\,d\mu.
\end{array}
\end{equation*}
Thus $u \in L^p(t_0,T;W_{\mathrm{loc}}^{1, p}(\Omega))\cap  L^\infty(0,T;L^1(\Omega))$, for each $t_0\in (0,T)$, solves the equation \cref{mainproblem}. Furthermore, using \cref{preli1}-(2) and \cref{convergenceinL1} one obtain that $T_l(u) \in L^p(t_0,T;W_{\mathrm{loc}}^{1, p}(\Omega))$ such that $T_l^{\frac{\gamma^*+1}{2}}(u) \in L^2(0,T;W_0^{1,2}(\Omega))$ for every $l>0$, $t_0\in (0,T)$.

Finally, if $\nu \in L^1(\Omega_T) \backslash\{0\}$ is a non-negative, therefore $\nu \in \overline{\mathcal{M}}_0^p(\Omega_T)$ for every $1<p<2-\frac{n}{n+1}$. Moreover, $\nu_s=0$, and hence the gradient of $\gamma$ does not appear, so the Lipschitz condition is completely redundant. Thus taking into account \cref{preli1} and \cref{convergenceinL1}, we get the same existence results.
\subsection*{Proof of \cref{exis2}} 
Let  $\gamma(x,t)\equiv \gamma$ be a constant, $u_0\in L^{\gamma+1}(\Omega)$ and $\mu\in L^{\gamma+1}\left(0,T;L^{\frac{n(\gamma+1)}{n+2\gamma}}\Omega)\right)$ or $\mu\in L^r(\Omega_T)$ where $r=\frac{(n+2)(\gamma+1)}{n+2(\gamma+1)}$ is a non-negative function in $\Omega_T$. Define \begin{equation}{\label{qqq}}
    q=\begin{cases}
    2, \text{ if } \gamma\geq 1\\ \frac{(\gamma+1)(n+2)}{(n+\gamma+1)}\text{ if } \gamma< 1.\end{cases}
\end{equation}As, $\nu \in \overline{\mathcal{M}}_0^q(\Omega)$, is a non-negative bounded Radon measure on $\Omega_T$ and is non-singular with respect to the Lebesgue measure, so similar to the previous proof, we take the same approximation \cref{appxx} of $\nu$. That is take

\begin{equation*} 0\leq \nu_k=T_k(h)+f_k-\operatorname{div}\left(G_k\right), \left\|\nu_k\right\|_{L^1(\Omega_T)}\leq C\|\nu\|_{\mathcal{M}(\Omega_T)},\end{equation*} where $0\leq h\in L^1(\Omega_T)$, $f_k \in C_c^{\infty}(\Omega_T)$ is a sequence of functions which converges to $f$ weakly in $L^1(\Omega)$, $G_k \in C_c^{\infty}(\Omega_T)$ is a sequence of functions which converges to $G$ strongly in $\left(L^{q^{\prime}}(\Omega_T)\right)^n$. Thus note that $\nu_k\in L^\infty(\Omega_T)$ is a sequence of non-negative functions which is bounded in $L^1(\Omega_T)$ and also converges to $\nu$ in narrow topology of measures. Further, take the approximation $\mu_k=T_k(\mu)$ and consider the approximated problem for each $k\in\mathbb{N}$
  \begin{equation}{\label{aproxproblem123}}
    \begin{array}{c}
         (u_k)_t-\Delta u_k+(-\Delta)^s u_k=\frac{T_k(h)+f_k-\operatorname{div}(G_k)}{\left(u_k+\frac{1}{k}\right)^{\gamma}} +\mu_k \text { in } \Omega_T, \smallskip\\u_k>0 \text{ in }\Omega_T,\quad u_k=0  \text { in }(\mathbb{R}^n \backslash \Omega) \times(0, T), \smallskip\\ u_k(x, 0)=T_k(u_0)(x) \text { in } \Omega ;
    \end{array}
\end{equation} and let $u_k$ denotes the solution to \cref{aproxproblem123}, which exists by \cref{exisforaprox}. Further, $0\leq u_k  \in L^{\infty}(\Omega_T)$ and $(u_k)_t \in L^{2}(0, T ; W^{-1, 2}(\Omega))$. In particular $u_k\in C([0,T],L^2(\Omega))$. Also, as $\nu$ is non-singular with respect to the Lebesgue measure, then 
    for each $t_0>0$, $\omega\subset\subset\Omega$ and $\forall k$, it holds \begin{equation}{\label{greater11}}
        u_k\geq C\quad \text{ in }\omega\times[t_0,T),
    \end{equation} where $C\equiv C(\omega,t_0,n,s)$ is a constant independent of $k$.
Thus, for every $\phi \in C_c^{\infty}(\Omega_T)$, noting \cref{first}, we have
\begin{equation}{\label{eee1}}
\begin{array}{l}
   -  \int_0^T\int_\Omega u_k \phi_tdx d t+\int_0^T\int_{\Omega}\nabla u_k\cdot \nabla \phi\, d x d t+ \int_0^T \int_{\mathbb{R}^{n}}\int_{\mathbb{R}^{n}} \frac{(u_k(x, t)-u_k(y, t))(\phi(x, t)-\phi(y, t))}{|x-y|^{n+2 s}} d x d y d t \smallskip\\ = \int_0^T\int_{\Omega} \frac{T_k(h)}{\left(u_k+\frac{1}{k}\right)^{\gamma}} \phi \, dxdt+\int_0^T\int_{\Omega} \frac{f_k}{\left(u_k+\frac{1}{k}\right)^{\gamma}} \phi \, dxdt +\int_0^T\int_{\Omega} G_k\cdot\nabla \left(\frac{\phi}{\left(u_k+\frac{1}{k}\right)^{\gamma}} \right) dxdt+\int_0^T\int_{\Omega} \mu_k\phi \,
d x d t .
\end{array}
\end{equation}
\textbf{Case 1: $\gamma<1$}\\
Then recall the definition of $q$ from \cref{qqq}. By \cref{preli2}, we have $\{u_k\}_{k \in \mathbb{N}}$ is bounded in $L^q(0,T;W_0^{1, q}(\Omega))$. Consequently, there exists $u \in L^q(0,T;W_0^{1, q}(\Omega))$ such that up to a sub-sequence, $u_k \rightharpoonup u$ weakly in $L^q(0,T;W_0^{1, q}(\Omega))$. Moreover by \cref{convergenceinL1}, $u_k \rightarrow u$ stongly in $L^1_{\mathrm{loc}}(0,T;L^1_{\mathrm{loc}}(\Omega))$ and pointwise a.e. in $\Omega_T$.
Except the last two integral in \cref{eee1}, by using similar justification as in the proof of \cref{exis1}, one can pass to the limit. Now, for the second last integral, we observe that
for every $\phi \in C_c^{\infty}(\Omega_T)$,
\begin{equation}{\label{eee8}}
    \begin{array}{rcl}
    \int_0^T\int_{\Omega} G_k\cdot\nabla \left(\frac{\phi}{\left(u_k+\frac{1}{k}\right)^{\gamma}} \right) dxdt&=&\int_0^T\int_{\Omega}  \frac{G_k\cdot\nabla\phi}{\left(u_k+\frac{1}{k}\right)^{\gamma}}  dxdt-\gamma\int_0^T\int_{\Omega}\frac{G_k\cdot\nabla u_k}{\left(u_k+\frac{1}{k}\right)^{\gamma+1}}\phi\,  dxdt.
    \end{array}
\end{equation}
The fact $G_k\rightarrow G$ strongly in $\left(L^{q^{\prime}}(\Omega_T)\right)^n$ implies $G_k\rightarrow G$ strongly in $\left(L^{1}(\Omega_T)\right)^n$. Also by \cref{greater11},
\begin{equation*}
    \left| \frac{G_k\cdot\nabla\phi}{\left(u_k+\frac{1}{k}\right)^{\gamma}}\right|\leq C |G_k|\text{ in }\operatorname{supp}\phi.
\end{equation*} Therefore, by Lebesgue dominated convergence theorem, it holds 
\begin{equation}{\label{eee9}}
     \lim _{k \rightarrow \infty}\int_0^T\int_{\Omega}  \frac{G_k\cdot\nabla\phi}{\left(u_k+\frac{1}{k}\right)^{\gamma}}  dxdt=\int_0^T\int_{\Omega}  \frac{G\cdot\nabla\phi}{u^{\gamma}}  dxdt.
\end{equation}
Since $\frac{\phi\nabla u_k}{\left(u_k+\frac{1}{k}\right)^{\gamma+1}}  \rightharpoonup \frac{\phi\nabla u}{u^{\gamma+1}} $ weakly in $\left(L^p(\Omega_T)\right)^n$ and $G_k \rightarrow G$ strongly in $\left(L^{p^{\prime}}(\Omega_T)\right)^n$, we obtain
\begin{equation}{\label{eee10}}
   \lim _{k \rightarrow \infty} \gamma \int_0^T\int_{\Omega}  \frac{G_k\cdot\nabla u_k}{\left(u_k+\frac{1}{k}\right)^{\gamma+1}}\phi\,dxdt=\gamma\int_0^T\int_{\Omega}  \frac{G\cdot\nabla u}{u^{\gamma+1}}\phi\,dxdt
\end{equation}
Combining \cref{eee8,eee9,eee10}, we deduce
\begin{equation}{\label{eee11}}
    \lim_{k\rightarrow\infty}\int_0^T\int_{\Omega} G_k\cdot\nabla \left(\frac{\phi}{\left(u_k+\frac{1}{k}\right)^{\gamma}} \right) dxdt=\int_0^T\int_{\Omega} G\cdot\nabla \left(\frac{\phi}{u^{\gamma}} \right) dxdt.
\end{equation}
Also as $\mu_k\to \mu$ in $L^1(\Omega_T)$, we have 
\begin{equation}{\label{eee12}}
     \lim_{k\rightarrow\infty}\int_0^T\int_{\Omega} \mu_k\phi \,dxdt=\int_0^T\int_{\Omega} \mu\phi\,dxdt .
\end{equation}
Taking $k \rightarrow \infty$ in both sides of the identity \cref{eee1} and using \cref{eee11,eee12}, we obtain

\begin{equation*}{\label{eeee13}}
\begin{array}{l}
    -  \int_0^T\int_\Omega u \phi_tdx d t+\int_0^T\int_{\Omega}\nabla u\cdot \nabla \phi\, d x d t+ \int_0^T \int_{\mathbb{R}^{n}}\int_{\mathbb{R}^{n}} \frac{(u(x, t)-u(y, t))(\phi(x, t)-\phi(y, t))}{|x-y|^{n+2 s}} d x d y d t \smallskip\\ = \int_0^T\int_{\Omega} \frac{h}{u^{\gamma}} \phi \, dxdt+\int_0^T\int_{\Omega} \frac{f}{u^{\gamma}} \phi \, dxdt +\int_0^T\int_{\Omega} G\cdot\nabla \left(\frac{\phi}{u^{\gamma}} \right) dxdt+\int_0^T\int_{\Omega} \phi \mu\,dxdt=\iint_{\Omega_T}\frac{\phi}{u^{\gamma}}d\nu+\iint_{\Omega_T}{\phi}\,d\mu.
\end{array}
\end{equation*}
Hence, $u \in L^q(0,T;W_0^{1, q}(\Omega))$ is a solution of the equation \cref{mainproblem}. Moreover by \cref{preli2}, \cref{convergenceinL1}, by Fatou's lemma, we have $u\in L^\infty(0,T;L^1(\Omega))$ if $\gamma\in(0,1)$ and $u\in L^\infty(0,T;L^2(\Omega))$ if $\gamma=1$. 
\smallskip\\\textbf{Case 2: $\gamma>1$}\\
In this case $q=2$. Then by \cref{preli2}, the sequence $\{u_k\}_{k \in \mathbb{N}}$ is uniformly bounded in $L^2(t_0,T;W_{_{\mathrm{loc}}}^{1,2}(\Omega))\cap L^\infty(0,T;L^{\gamma+1}(\Omega))$ for each $0<t_0<T$, such that $\left\{u_k^{\frac{\gamma+1}{2}}\right\}_{ k\in \mathbb{N}}$ is uniformly bounded in $L^2(0,T;W_0^{1,2}(\Omega))$. Also using \cref{convergenceinL1}, there exists $u \in L^2(t_0,T;W_{_{\mathrm{loc}}}^{1,2}(\Omega))\cap L^\infty(0,T;L^{\gamma+1}(\Omega))$, for all $0<t_0<T$, such that $u^{\frac{\gamma+1}{2}} \in L^2(0,T;W_0^{1,2}(\Omega))$, and up to a subsequence $u_k^{\frac{\gamma+1}{2}}\rightharpoonup u^{\frac{\gamma+1}{2}}$ weakly in $L^2(0,T:W^{1,2}_0(\Omega))$, $u_k \rightharpoonup u$ weakly in $L^2
(t_0,T;W_{{\mathrm{loc}}}^{1,2}(\Omega))$, $u_k \rightarrow u$ strongly in $L^1_{\mathrm{loc}}(0,T;L^1_{\mathrm{loc}}(\Omega))$ and $u_k\to u$ pointwise a.e. in $\Omega_T$.
\smallskip\\Since $u_k,u\equiv 0$ a.e. in $(\mathbb{R}^n\backslash\Omega)\times(0,T)$, by Fatou's lemma and \cref{embedding2}, we have 
\begin{equation*}
\|u^{\frac{\gamma+1}{2}}\|_{L^2(0,T;W^{s,2}_0(\mathbb{R}^n))}\leq {\lim\inf}_k\|u_k^{\frac{\gamma+1}{2}}\|_{L^2(0,T;W^{s,2}_0(\mathbb{R}^n))} \leq {\lim\inf}_k C\|u_k^{\frac{\gamma+1}{2}}\|_{L^2(0,T;W^{1,2}_0(\Omega))}\leq C.
    \end{equation*}
Thus by a similar argument as in [\citealp{fracvari1}, Theorem 3.6] (see also [\citealp{pap}, Remark 3.4] for a parabolic version), we have 
\begin{equation}{\label{eeeee21}}
    \begin{array}{c}
      \lim _{k \rightarrow \infty}\int_0^T \int_{\mathbb{R}^{n}}\int_{\mathbb{R}^{n}} \frac{(u_k(x, t)-u_k(y, t))(\phi(x, t)-\phi(y, t))}{|x-y|^{n+2 s}} d x d y d t \smallskip\\=\int_0^T \int_{\mathbb{R}^{n}}\int_{\mathbb{R}^{n}} \frac{(u(x, t)-u(y, t))(\phi(x, t)-\phi(y, t))}{|x-y|^{n+2 s}} d x d y d t 
    \end{array}
\end{equation}
Rest of the proof follows similarly like the above case and the proof of \cref{exis1} and letting $k \rightarrow \infty$ on both sides of the identity \cref{eee1}, by \cref{eeeee21}, we obtain
\begin{equation*}{\label{eeeee13}}
\begin{array}{c}
    -  \int_0^T\int_\Omega u \phi_tdx d t+\int_0^T\int_{\Omega}\nabla u\cdot \nabla \phi\, d x d t+ \int_0^T \int_{\mathbb{R}^{n}}\int_{\mathbb{R}^{n}} \frac{(u(x, t)-u(y, t))(\phi(x, t)-\phi(y, t))}{|x-y|^{n+2 s}} d x d y d t \smallskip\\ =\iint_{\Omega_T}\frac{\phi}{u^{\gamma}}d\nu+\iint_{\Omega_T}{\phi\mu}\,dxdt.
\end{array}
\end{equation*}
\begin{remark}{\label{boundarybeha2}}
       Notice that the results in \cref{exis2} and \cref{exis1}- part (i) implies, $u^{\eta}\in L^m(0,T;W^{1,m}_0(\Omega))$ for some $\eta\geq 1$ and $m>1$. Hence one has that, for almost every $t \in(0, T)$, $u(\cdot, t)^{\eta} \in W_0^{1, m}(\Omega)$. If $\eta=1$ then $u(\cdot, t) \in W_0^{1, m}(\Omega)$ is known to imply
       \begin{equation}{\label{middd}}
           \lim _{\epsilon \rightarrow 0} \frac{1}{\epsilon} \int_{\Omega_\epsilon} u(x,t)dx=0 \text{ for a.e. }t \in(0, T),
       \end{equation} where $\Omega_\epsilon=\{x\in\Omega:\operatorname{dist}\{x,\partial\Omega\}<\epsilon\}$. Otherwise, if $\eta>1$, one can write
\begin{equation*}
    \frac{1}{\epsilon} \int_{\Omega_\epsilon}u (x,t)dx\leq \frac{1}{\epsilon}\left(\int_{\Omega_\epsilon} u^{\eta}(x,t)dx\right)^{\frac{1}{\eta}}\left|\Omega_\epsilon\right|^{\frac{1}{\eta^\prime}}=\left(\frac{\left|\Omega_\epsilon\right|}{\epsilon}\right)^{\frac{1}{\eta^\prime}}\left(\frac{1}{\epsilon} \int_{\Omega_\epsilon} u^{\eta}(x,t)dx\right)^{\frac{1}{\eta}},
\end{equation*}
and taking $\epsilon \rightarrow 0$ in the previous one has \cref{middd}. This also gives one notion of boundary behavior for the solutions. \\Finally for \cref{exis1}- part (ii), we note from \cref{preli1}, that $G_l(u)\in L^q(0,T;W^{1,q}_0(\Omega))$, for all $l>0$ and some $q>1$. Merging this with \cref{boundarybeha}, we get \cref{middd}. 
    \end{remark}
\section{A priori estimates for regularity results}{\label{sec5}}
    Unless otherwise mentioned, we take $u_0\in L^\infty(\Omega)$ and $\nu \in L^{r_1}(0,T;L^{q_1}(\Omega))$, $ \mu\in L^{r_2}(0,T;L^{q_2}(\Omega))$ for some $r_1,r_2, q_1,q_2 \geq 1$ be two non-negative functions.
\\By \cref{exisforaprox}, for each fixed $k\in \mathbb{N}$, we have existence of a unique non-negative solution $u_k\in  L^2(0,T;W^{1,2}_0(\Omega))\cap L^\infty(\Omega_T)$ such that $(u_k)_t\in L^2(0,T;W^{-1,2}(\Omega))$ to the problem
     \begin{equation}{\label{aproxproblembdd}}
    \begin{array}{c}
         (u_k)_t-\Delta u_k+(-\Delta)^s u_k=\frac{T_k(\nu)}{\left(u_k+\frac{1}{k}\right)^{\gamma(x,t)}} +T_k(\mu) \text { in } \Omega_T, \\u_k>0 \text{ in }\Omega_T,\quad u_k=0  \text { in }(\mathbb{R}^n \backslash \Omega) \times(0, T), \\ u_k(\cdot, 0)=T_k(u_0) \text { in } \Omega .
    \end{array}
\end{equation}Following the same proof of \cref{exisforaprox}, we have for each fix $k\in \mathbb{N}$, the existence of a unique non-negative weak solution 
$v_k\in  L^2(0,T;W^{1,2}_0(\Omega))\cap L^\infty(\Omega_T)$ such that $(v_k)_t\in L^2(0,T;W^{-1,2}(\Omega))$ to the problem
     \begin{equation}{\label{aproxproblembdd2}}
    \begin{array}{c}
         (v_k)_t-\Delta v_k+(-\Delta)^s v_k=\frac{T_k(\nu)}{\left(v_k+\frac{1}{k}\right)^{\gamma(x,t)}}  \text { in } \Omega_T, \smallskip\\v_k>0 \text{ in }\Omega_T,\quad v_k=0  \text { in }(\mathbb{R}^n \backslash \Omega) \times(0, T), \smallskip\\ v_k(\cdot, 0)=T_k(u_0)\text { in } \Omega ;
    \end{array}\end{equation}and by \cref{lemma1}, the existence of a unique non-negative weak solution 
$w_k\in  L^2(0,T;W^{1,2}_0(\Omega))\cap L^\infty(\Omega_T)$ such that $(w_k)_t\in L^2(0,T;W^{-1,2}(\Omega))$ to the problem

     \begin{equation}{\label{aproxproblembdd3}}
    \begin{array}{c}
         (w_k)_t-\Delta w_k+(-\Delta)^s w_k=T_k(\mu)  \text { in } \Omega_T, \smallskip\\w_k>0 \text{ in }\Omega_T,\quad w_k=0  \text { in }(\mathbb{R}^n \backslash \Omega) \times(0, T), \smallskip\\ w_k(\cdot, 0)=0\text { in } \Omega .\end{array}\end{equation}
    \begin{lemma}{\label{bounded1}}
      Let $k\in \mathbb{N}$ and let $u_k,v_k,w_k\in  L^2(0,T;W^{1,2}_0(\Omega))\cap L^\infty(\Omega_T)$ such that $(u_k)_t, (v_k)_t, (w_k)_t\in  L^2(0,T;W^{-1,2}(\Omega)) $ are the solutions to the problems \cref{aproxproblembdd}, \cref{aproxproblembdd2} and \cref{aproxproblembdd3} respectively.  If $\{v_k\}_k$ and $\{w_k\}_k$ are uniformly bounded in $L^{a_1}(0,T; L^{b_1}(\Omega))$ and $L^{a_2}(0,T; L^{b_2}(\Omega))$, respectively for some $1\leq a_1,b_1,a_2,b_2\leq \infty$, then $\{u_k\}_k$ is uniformly bounded in $L^{c}(0,T; L^{b}(\Omega))$, where $c=\min\{a_1,a_2\}$ and $d=\min\{b_1,b_2\}$.
    \end{lemma}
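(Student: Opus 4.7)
The plan is to use a comparison argument: I will show pointwise that $0\le u_k\le v_k+w_k$ a.e.\ in $\Omega_T$, after which the conclusion is a routine embedding argument on bounded intervals/domains.

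First, observe that $v_k+w_k$ solves (in the weak sense) the same equation as $u_k$ \emph{except} that the singular denominator is $(v_k+1/k)^{\gamma(x,t)}$ instead of $(u_k+1/k)^{\gamma(x,t)}$, and with the same initial datum $T_k(u_0)$ and the same (zero) exterior datum. Set $z_k:=u_k-v_k-w_k$. Then $z_k\in L^2(0,T;W^{1,2}_0(\Omega))$, $(z_k)_t\in L^2(0,T;W^{-1,2}(\Omega))$, $z_k(\cdot,0)\equiv 0$ in $\Omega$, $z_k\equiv 0$ in $(\mathbb{R}^n\setminus\Omega)\times(0,T)$, and
\begin{equation*}
(z_k)_t-\Delta z_k+(-\Delta)^s z_k = T_k(\nu)\left[\frac{1}{(u_k+\tfrac{1}{k})^{\gamma(x,t)}}-\frac{1}{(v_k+\tfrac{1}{k})^{\gamma(x,t)}}\right]\quad\text{in }\Omega_T.
\end{equation*}

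I will test this equation with $(z_k)_+\in L^2(0,T;W^{1,2}_0(\Omega))$, which is admissible. The time term equals $\tfrac{1}{2}\int_\Omega (z_k)_+^2(x,T)\,dx\ge 0$ since $z_k(\cdot,0)\equiv 0$ (the same argument as used in \cref{exisforaprox}, \cref{lemma1}). The local term gives $\iint_{\Omega_T}|\nabla(z_k)_+|^2\ge 0$. The nonlocal term is non-negative by the classical inequality $(a-b)(a_+-b_+)\ge (a_+-b_+)^2\ge 0$, which together with the zero exterior condition gives
\begin{equation*}
\int_0^T\!\!\iint_{\mathbb{R}^{2n}}\frac{(z_k(x,t)-z_k(y,t))((z_k)_+(x,t)-(z_k)_+(y,t))}{|x-y|^{n+2s}}\,dxdydt\ge 0,
\end{equation*}
exactly as in the $L^\infty$/positivity proof of \cref{lemma1}. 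On the other hand, the right-hand side, multiplied by $(z_k)_+$, is non-positive: on the set $\{z_k>0\}$ we have $u_k>v_k+w_k\ge v_k$ (since $w_k\ge 0$ by \cref{lemma1}), hence
\begin{equation*}
\frac{1}{(u_k+\tfrac1k)^{\gamma(x,t)}}-\frac{1}{(v_k+\tfrac1k)^{\gamma(x,t)}}\le 0
\end{equation*}
because $\gamma>0$ and $T_k(\nu)\ge 0$. So every term on the left is non-negative while the right-hand side is non-positive, forcing $(z_k)_+\equiv 0$, i.e.\ $u_k\le v_k+w_k$ a.e.\ in $\Omega_T$. Combined with $u_k\ge 0$ (\cref{exisforaprox}), this gives the pointwise bound
\begin{equation*}
0\le u_k\le v_k+w_k\quad\text{a.e.\ in }\Omega_T.
\end{equation*}

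From here the conclusion is immediate. Put $c=\min\{a_1,a_2\}\ge 1$ and $d=\min\{b_1,b_2\}$. Since $\Omega$ is bounded and $d\le b_i$, H\"older's inequality yields $\|v_k(t)\|_{L^d(\Omega)}\le |\Omega|^{1/d-1/b_1}\|v_k(t)\|_{L^{b_1}(\Omega)}$ and similarly for $w_k$; then since $(0,T)$ is finite and $c\le a_i$,
\begin{equation*}
\|u_k\|_{L^c(0,T;L^d(\Omega))}\le \|v_k\|_{L^c(0,T;L^d(\Omega))}+\|w_k\|_{L^c(0,T;L^d(\Omega))}\le C\bigl(\|v_k\|_{L^{a_1}(0,T;L^{b_1}(\Omega))}+\|w_k\|_{L^{a_2}(0,T;L^{b_2}(\Omega))}\bigr),
\end{equation*}
with $C=C(T,|\Omega|,a_1,a_2,b_1,b_2)$ independent of $k$. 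The main (minor) obstacle is justifying the comparison test function $(z_k)_+$ rigorously, in particular the sign of the nonlocal contribution and the chain-rule identity for the time derivative; both are already present in the proofs of \cref{lemma1} and \cref{exisforaprox} and can be invoked verbatim.
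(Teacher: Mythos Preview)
Your proof is correct and follows essentially the same approach as the paper: both define $z_k=u_k-v_k-w_k$, test the difference equation with $(z_k)_+$, use the non-negativity of the time, local, and nonlocal contributions together with the sign of the singular right-hand side on $\{z_k>0\}$ (via $w_k\ge0$) to conclude $(z_k)_+\equiv0$, and then read off the Bochner-norm bound from $0\le u_k\le v_k+w_k$. Your write-up is in fact slightly more explicit than the paper's in justifying the sign of the right-hand side and in carrying out the final H\"older embedding.
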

\begin{proof}  The function $z_k=(u_k-v_k-w_k)\in L^2(0,T;W^{1,2}_0(\Omega))\cap L^\infty(\Omega_T)$ with $(z_k)_t\in  L^2(0,T;W^{-1,2}(\Omega))$ satisfies the equation  
         \begin{equation}{\label{aproxproblembdd4}}
    \begin{array}{c}
         (z_k)_t-\Delta z_k+(-\Delta)^s z_k=T_k(\nu)\left(\frac{1}{\left(u_k+\frac{1}{k}\right)^{\gamma(x,t)}}-\frac{1}{\left(v_k+\frac{1}{k}\right)^{\gamma(x,t)}}\right)  \text { in } \Omega_T, \smallskip\\ z_k=0  \text { in }(\mathbb{R}^n \backslash \Omega) \times(0, T), \smallskip\\ z_k(\cdot, 0)=0\text { in } \Omega ;
    \end{array}\end{equation}
       Take $(z_k)_+$ as a test function in \cref{aproxproblembdd4} to get \begin{equation}{\label{eqn1111}}
    \begin{array}{l}
        \quad  \int_0^T\langle (z_k)_t, (z_k)_+\rangle d t+\int_{\Omega_T}|\nabla (z_k)_+|^2 d x d t\\\quad+ \underbrace{\int_0^T \iint_{\mathbb{R}^{2n}} \frac{(z_k(x, t)-z_k(y, t))((z_k)_+(x,t)-(z_k)_+(y,t))}{|x-y|^{n+2 s}} d x d y d t}_{\geq 0}\\=\int_{\Omega_T}T_k(\nu)\left(\frac{1}{\left(u_k+\frac{1}{k}\right)^{\gamma(x,t)}}-\frac{1}{\left(v_k+\frac{1}{k}\right)^{\gamma(x,t)}}\right)     (z_k)_+dxdt\leq 0.
    \end{array}
\end{equation} As $z_k(\cdot,0)=0$ and $z_k\in C([0,T],L^2(\Omega))$, so 
\begin{equation*}
    \begin{array}{rcl}
         \int_0^T\left\langle (z_k)_t, (z_k)_+\right\rangle d t
         &=&\int_0^{T}\int_\Omega  \frac{\partial z_k}{\partial t} (z_k)_+d x d t
        = \int_\Omega\int_{z_k(x,0)}^{z_k(x,T)}\sigma_+d\sigma d x 
         =\frac{1}{2}\int_\Omega ((z_k)_+(x,T))^2d x\geq 0.
    \end{array}
\end{equation*} Therefore we obtain from \cref{eqn1111} that 
$ \int_{\Omega_T}|\nabla (z_k)_+|^2 d x d t=0$,
 which by Poincar\'e inequality implies $\int_{\Omega_T}|(z_k)_+|^2 d x d t=0$, which gives $u_k\leq v_k+w_k$ in $\Omega_T$ and hence $\{u_k\}_k$ is uniformly bounded in $L^{c}(0,T; L^{b}(\Omega))$.
\end{proof}
The following lemma is for the boundedness of $\{v_k\}_k$ for $\gamma$ being a constant.
\begin{lemma}{\label{bounded2}}
(1) Assume that $\nu \in L^r(0, T ; L^q(\Omega))$ with $r, q$ satisfying
\begin{equation*}
\frac{1}{r}+\frac{n}{2 q }<1  , 
\end{equation*}
and let $u_{0} \in L^{\infty}(\Omega)$ and $\gamma$ be a constant. Then there exists a positive constant $c>0$ such that the unique solution of \cref{aproxproblembdd2} satisfies
\begin{equation*}
    \left\|v_k(x, t)\right\|_{L^{\infty}\left(\Omega_T\right)} \leq c
    .
\end{equation*}
(2) Assume $u_{0}\in L^\infty(\Omega)$, $\gamma$ be a constant and $\nu \in L^r\left(0, T ; L^q(\Omega)\right)$, with $r>1, q>1$ satisfy
\begin{equation*}
    1<\frac{1}{r}+\frac{n}{2 q }.
\end{equation*}
Further, assume that for $\gamma<1$,\\
$i)$ if $\frac{1}{r}<\frac{n}{n-2} \frac{1}{q}-\frac{2}{n-2}$,  then $q> \left(\frac{2^*}{1-\gamma}\right)^\prime$, and\\
$ii)$ if $\frac{1}{r}\geq\frac{n}{n-2} \frac{1}{q}-\frac{2}{n-2}$, then $r>\frac{2}{1+\gamma}$.\\
Then there exists a positive constant $c$ such that the sequence of solutions of \cref{aproxproblembdd2} satisfies
\begin{equation*}
\left\|v_k\right\|_{L^{\infty}\left(0, T ; L^{2 \sigma}(\Omega)\right)}+\left\|v_k\right\|_{L^{2 \sigma}\left(0, T ; L^{2^* \sigma}\right)} \leq c,
\end{equation*}
where
\begin{equation*}
\sigma=\left\{\begin{array}{lll}
\frac{q(n-2 )(\gamma+1)}{2(n-2 q )} & \text { if } \frac{1}{r}<\frac{n}{n-2} \frac{1}{q}-\frac{2}{n-2}, \smallskip\\
\frac{q r n(\gamma+1)}{2(n r+2q-2 q r)} & \text { if } \frac{1}{r} \geq \frac{n}{n-2} \frac{1}{q}-\frac{2}{n-2}.
\end{array}\right.    
\end{equation*}
\end{lemma}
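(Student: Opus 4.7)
For Part (1), the plan is a Stampacchia truncation argument in the spirit of Aronson-Serrin. Fix $M \geq \max\{1, \|u_0\|_{L^\infty(\Omega)}\}$, so that $v_k(\cdot,0) = T_k(u_0) \leq M$ a.e. and $(v_k+1/k)^{-\gamma}\leq M^{-\gamma}\leq 1$ on $\{v_k>M\}$. Testing the weak formulation of \cref{aproxproblembdd2} with $G_M(v_k)\in L^2(0,T;W^{1,2}_0(\Omega))$ gives
\begin{equation*}
\tfrac{1}{2}\int_\Omega G_M(v_k)^2(x,T)\,dx + \iint_{\Omega_T}|\nabla G_M(v_k)|^2\,dx\,dt \leq \iint_{\Omega_T} T_k(\nu)\,(v_k+1/k)^{-\gamma}\,G_M(v_k) \leq \iint_{A_M} \nu\, G_M(v_k),
\end{equation*}
where the nonlocal contribution $\int\!\!\int (v_k(x,t)-v_k(y,t))(G_M(v_k)(x,t)-G_M(v_k)(y,t))|x-y|^{-n-2s}\,dx\,dy$ is non-negative by monotonicity of $G_M$, and $A_M:=\{(x,t)\in\Omega_T:v_k(x,t)>M\}$. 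Applying H\"older with the exponents $(r,q)$ on the right, followed by the parabolic Sobolev embedding $L^\infty(0,T;L^2(\Omega))\cap L^2(0,T;L^{2^*}(\Omega))\hookrightarrow L^{2(n+2)/n}(\Omega_T)$ (which is \cref{gagliardo}) and one more H\"older over $A_M$, the hypothesis $\tfrac{1}{r}+\tfrac{n}{2q}<1$ exactly forces the resulting power $\theta$ of $|A_M|$ to exceed $1/2$. A standard Stampacchia lemma then yields $|A_{M_0}|=0$ for some finite $M_0$ independent of $k$, giving the uniform $L^\infty$ bound.

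For Part (2), the plan is to run one single energy estimate with test function $(v_k+1/k)^{2\sigma-1}$, where $\sigma$ is the exponent from the statement; note $2\sigma-1\geq \gamma>0$ so the test function is admissible. The parabolic term produces $\tfrac{1}{2\sigma}\tfrac{d}{dt}\int(v_k+1/k)^{2\sigma}$, the local diffusion contributes a multiple of $\int|\nabla(v_k+1/k)^\sigma|^2$, the nonlocal piece is non-negative, and the singular source obeys $\tfrac{T_k(\nu)}{(v_k+1/k)^\gamma}(v_k+1/k)^{2\sigma-1}\leq \nu\,(v_k+1/k)^{2\sigma-1-\gamma}$. Writing $w_k:=(v_k+1/k)^\sigma$ and $\beta:=(2\sigma-1-\gamma)/\sigma$, we obtain
\begin{equation*}
\sup_{t\in(0,T)}\|w_k(t)\|_{L^2(\Omega)}^2 + \iint_{\Omega_T}|\nabla w_k|^2\,dx\,dt \lesssim \|u_0\|_{L^\infty(\Omega)}^{2\sigma}|\Omega| + \iint_{\Omega_T}\nu\,w_k^\beta.
\end{equation*}
The right-hand side is bounded by $\|\nu\|_{L^r(0,T;L^q(\Omega))}\|w_k^\beta\|_{L^{r'}(0,T;L^{q'}(\Omega))}$ via H\"older, which is in turn interpolated between $\|w_k\|_{L^\infty(0,T;L^2(\Omega))}$ and $\|w_k\|_{L^2(0,T;L^{2^*}(\Omega))}$ by parabolic Gagliardo-Nirenberg (\cref{gagliardo}). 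The precise value of $\sigma$ is the one which makes the interpolation exponent on the right strictly less than $2$, so that Young's inequality absorbs the highest-order terms into the left. Translating back to $v_k$, this delivers the claimed $L^\infty(0,T;L^{2\sigma}(\Omega))\cap L^{2\sigma}(0,T;L^{2^*\sigma}(\Omega))$ bound.

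The principal difficulty is the case split driven by $(r,q)$: when $\tfrac{1}{r}<\tfrac{n}{n-2}\tfrac{1}{q}-\tfrac{2}{n-2}$ the interpolation reduces to pure Sobolev embedding in space (no time-interpolation loss), whereas in the opposite regime one must invoke the full parabolic Gagliardo-Nirenberg inequality, which changes the formula for $\sigma$. The additional conditions $q>(2^*/(1-\gamma))'$ and $r>2/(1+\gamma)$ for $\gamma<1$ enter precisely to force $\beta<2$ in the respective regimes (i.e.\ to keep the $w_k$ exponent on the right strictly sub-quadratic); when $\gamma\geq 1$ these thresholds are automatic because the singularity gives extra room. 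The nonlocal operator never produces an obstruction: at every stage it contributes a non-negative term coming from monotonicity of the relevant test nonlinearity, so the bookkeeping parallels the classical local-case proof of \cite{Summability}, modulo the singular factor $(v_k+1/k)^{-\gamma}$ on the source.
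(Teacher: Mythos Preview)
Your strategy is the same as the paper's (Aronson--Serrin/Stampacchia truncation for Part~(1); a power test function plus space--time interpolation for Part~(2), with the nonlocal term dropped by monotonicity), and it is essentially correct. Two small points deserve tightening.

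First, in Part~(2) your test function $(v_k+1/k)^{2\sigma-1}$ does not vanish on $\partial\Omega$, so it does not belong to $L^2(0,T;W^{1,2}_0(\Omega))$ and is not admissible in the weak formulation of \cref{aproxproblembdd2}. The paper avoids this by testing with $v_k^{2\delta-1}$ directly; if you prefer to keep the shift, use $(v_k+1/k)^{2\sigma-1}-(1/k)^{2\sigma-1}$ instead (compare the argument leading to \cref{impor1} in \cref{preli2}). The extra constant only contributes harmless lower-order terms.

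Second, your explanation of the role of the hypotheses $q>(2^*/(1-\gamma))'$ and $r>2/(1+\gamma)$ is not accurate: one always has $\beta=(2\sigma-1-\gamma)/\sigma<2$, since $2\sigma-1-\gamma<2\sigma$ trivially, so ``$\beta<2$'' is automatic and is not what these conditions buy. In the paper's argument (which uses $v_k^{2\delta-1}$), the conditions are needed precisely to guarantee that the specific value $\delta=\sigma$ forced by the interpolation matching satisfies $\delta>1$ when $\gamma<1$, so that $\nabla v_k^{2\delta-1}=(2\delta-1)v_k^{2\delta-2}\nabla v_k\in L^2$ and the test function is admissible. With your shifted test function this admissibility constraint relaxes to $\sigma\ge (1+\gamma)/2$, which is automatic for $q,r>1$; so your variant would in fact not need the extra hypotheses at all, but in any case the reason you cite is not the operative one.
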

\begin{proof}
(1) Let $v$ be any measurable function and define \begin{equation*}A_m(v)=\left\{(x, t) \in \Omega_T: v(x, t)>m\right\}, \text{ and for a.e. } t \in(0, T), \,A_m^t(v)=\{x \in \Omega: v(x, t)>m\}.\end{equation*}
We use the idea of the classical proof by D.G. Aronson and J. Serrin, which is to prove a uniform $L^{\infty}$ bound for $v_k$ in $\Omega \times(0, \tau)$, for a positive (small) $\tau$ (to be specified later), and then to iterate such an estimate. We consider \cref{aproxproblembdd2} and take $(v_k-m)_+\chi_{(0,\tilde{t})}\in L^2(0,T;W^{1,2}_0(\Omega))$, for $m>0,\tilde{t}\in(0,\tau),\tau<T$,
as a test function to obtain
\begin{equation}{\label{testfunc}}
\begin{array}{c}
\int_{\Omega} \varphi_m\left(v_k(x, \tilde{t})\right) d x+\int_0^{\tilde{t}}\int_\Omega |\nabla (v_k-m)_+ |^2d x d t\\+\underbrace{\int_0^{\tilde{t} }\int_{\mathbb{R}^n}\int_{\mathbb{R}^n} \frac{\left(v_k(x, t)-v_k(y, t)\right)\left((v_k-m)_+(x, t)-(v_k-m)_+(y, t)\right)}{|x-y|^{n+2 s}} d x d y d t }_{\geq 0}\\
\leq\int_0^\tau \int_{\Omega} \frac{\nu_k (v_k-m)_+}{(v_k+\frac{1}{k})^\gamma} d x d t+\int_{\Omega} \varphi_m\left(v_k(x, 0)\right) d x,
\end{array}
\end{equation}
where \begin{equation*}\varphi_m(\rho)=\int_0^\rho (\sigma-m)_+
d \sigma= \frac{(\rho-m)_+^2}{2}.\end{equation*} Choose $m >\operatorname{max}\{\left\|u_ 0\right\|_{L^{\infty}(\Omega)},1\}$, in order to neglect the last term. Note that the nonlocal integral in non-negative, 
and take supremum over $\tilde{t}\in(0,\tau]$ in \cref{testfunc}, to get
\begin{equation}{\label{iter}}
\begin{array}{c}
\|(v_k-m)_+\|^2_{L^\infty(0,\tau;L^2(\Omega))}+\|(v_k-m)_+\|^2_{L^2(0,\tau;W^{1,2}_0(\Omega))} 

\leq \int_0^\tau \int_{A^t_{m,k}} \nu(v_k-m)_+ d x d t.
\end{array}
\end{equation}
Here, to deal with the singularity, we have used $m\geq 1$ and hence $v_k> 1$ in $A^t_{m,k}$. The subscript $k$ in $A^t_{m,k}$ denotes that we are considering $v_k$.
Now, the term of the right-hand side above can be estimated as,
\begin{equation}{\label{rhs}}
    \int_0^\tau \int_{A_{m,k}^t} \nu(v_k-m)_+ d x d t \leq \int_0^\tau \int_{A_{m,k}^t} \nu(v_k-m)_+ ^2 d x d t+\int_0^\tau \int_{A_{m,k}^t} \nu d x d t .
\end{equation}
We now study each member present in the right-hand side of \cref{rhs}. We first define the followings,
\begin{equation*}
\bar{r}=2 r^{\prime}, \bar{q}=2 q^{\prime}, \eta=\frac{2 \eta_1}{n},\hat{r}=\bar{r}(1+\eta), \hat{q}=\bar{q}(1+\eta),
\end{equation*}
where $\eta_1=1-\frac{1}{r}-\frac{n}{2 q}$. Clearly $\eta_1\in(0,1)$ as $0<\frac{1}{r}+\frac{n}{2q}<1$. Further, simple calculation yields that \begin{equation*}\frac{1}{\hat{r}}+\frac{n}{2 \hat{q} }=\frac{n}{4 }.\end{equation*} Now, applying H\"older inequality repeatedly, we estimate the first term as
\begin{equation*}
\begin{array}{rcl}
\int_0^\tau \int_{A_{m,k}^t} \nu (v_k-m)_+^2 d x d t &\leq& \int_0^\tau \left(\int_{A_{m,k}^t}| \nu (x,t)|^q d x\right )^{\frac{1}{q}}\left(\int_{A_{m,k}^t}(v_k-m)_+^{2q^\prime} d x\right)^{\frac{1}{q^\prime}} d t \\
&\leq& \left(\int_0^\tau \left(\int_{A_{m,k}^t}| \nu (x,t)|^q d x\right )^{\frac{r}{q}}d t\right)^{\frac{1}{r}}\left(\int_0^\tau\left(\int_{A_{m,k}^t}(v_k-m)_+^{2q^\prime} d x\right)^{\frac{r^\prime}{q^\prime}} d t \right)^{\frac{1}{r^\prime}}.
\end{array}
\end{equation*}
As again by H\"older inequality with exponent $(1+\eta)$, we have
\begin{equation*}
    \begin{array}{rcl}
        \left(\int_0^\tau\left(\int_{A_{m,k}^t}(v_k-m)_+^{2q^\prime} d x\right)^{\frac{r^\prime}{q^\prime}} d t \right)^{\frac{1}{r^\prime}}&\leq &\left(\int_0^\tau\left(\int_{A_{m,k}^t}(v_k-m)_+^{2q^\prime(1+\eta)}d x\right)^{\frac{r^\prime}{q^\prime(1+\eta)}}|A^t_{m,k}|^{\frac{r^\prime\eta}{q^\prime(1+\eta)}} d t \right)^{\frac{1}{r^\prime}}\\&=&\left(\int_0^\tau\left(\int_{A_{m,k}^t}(v_k-m)_+^{\hat{q}}d x\right)^{\frac{2r^\prime}{\hat{q}}}|A^t_{m,k}|^{\frac{2r^\prime\eta}{\hat{q}}} d t \right)^{\frac{1}{r^\prime}}
        \\&\leq & \left(\int_0^\tau\left(\int_{A_{m,k}^t}(v_k-m)_+^{\hat{q}}d x\right)^{\frac{\hat{r}}{\hat{q}}}d t \right)^{\frac{1}{r^\prime(1+\eta)}}\left(\int_0^\tau |A^t_{m,k}|^{\frac{\hat{r}}{\hat{q}}}\right)^{\frac{\eta}{r^\prime(1+\eta)}} ,
    \end{array}
\end{equation*}
therefore

\begin{equation*}
    \int_0^\tau \int_{A_{m,k}^t} \nu(v_k-m)_+^2 d x d t \leq \|\nu\|_{L^r(0,T;L^q(\Omega))}\|(v_k-m)_+\|^2_{L^{\hat{r}}(0,\tau;L^{\hat{q}}(\Omega))}\mu_k(m)^{\frac{2 \eta}{\hat{r}}},
\end{equation*}
where $\mu_k(m)=\int_0^\tau\left|A_{m,k}^t\right|^{\frac{\hat{r}}{\hat{q}}} d t$. We now use \cref{gagliardo} for $(v_k-m)_+$ to get
\begin{equation}{\label{firstt}}
    \begin{array}{rcl}
\int_0^\tau \int_{A_{m,k}^t} \nu (v_k-m)_+^2 d x d t &\leq&  c\|\nu\|_{L^r(0,T;L^q(\Omega))} \mu_k(m)^{\frac{2 \eta}{\hat{r}}}\left(\int_0^\tau\left\|(v_k-m)_+\right\|_{L^2(\Omega)}^{(1-\theta) \hat{r}}\left\|\nabla (v_k-m)_+\right\|_{L^2\left(\Omega\right)}^{\hat{r} \theta} d t\right)^{\frac{2}{\hat{r}}} \\&
\leq& c\|\nu\|
\mu_k(m)^{\frac{2 \eta}{\hat{r}}}\bigg[\left\|(v_k-m)_+\right\|_{L^{\infty}\left(0, \tau ; L^2(\Omega)\right)}^2
+\left\| (v_k-m)_+\right\|_{L^2(0, \tau ; W^{1,2}_0(\Omega))}^2\bigg].
\end{array}
\end{equation}
where $\hat{r}\theta=2$, and $c$ is a constant independent of the choice of $k$ and $m$. Note that, we applied Young's inequality with exponents $\frac{\hat{r}}{2}$ and its conjugate.\\
On the other hand, the second term on the right-hand side in \cref{rhs} can be estimated by Hölder inequality as
\begin{equation}{\label{second}}
\begin{array}{rcl}
     \int_0^\tau \int_{A_{m,k}^t} \nu d x d t &\leq&\int_0^\tau \left(\int_{A_{m,k}^t} |\nu(x,t)|^qd x\right)^{\frac{1}{q}} |A^t_{m,k}|^{\frac{1}{q^\prime}}d t\smallskip \\&\leq&\left(\int_0^\tau \left(\int_{A_{m,k}^t} |\nu(x,t)|^qd x\right)^{\frac{r}{q}}dt\right)^{\frac{1}{r}} \left(\int_0^\tau|A^t_{m,k}|^{\frac{r^\prime}{q^\prime}}d t\right)^{\frac{1}{r^\prime}}\smallskip\\ &\leq&\|\nu\|_{L^r(0,T;L^q(\Omega))}\mu_k(m)^{\frac{2(1+\eta)}{\hat{r}}} .
\end{array}  
\end{equation}
Denoting
\begin{equation*}
||| (v_k-m)_+|||^2=\| (v_k-m)_+\|_{L^{\infty}(0, \tau ; L^2(\Omega))}^2+\|(v_k-m)_+ \|_{L^2(0, \tau ; W^{1,2}_0(\Omega))}^2,    
\end{equation*}
and using \cref{firstt}, \cref{second} in \cref{rhs}, we get from \cref{iter},
\begin{equation*}
||| (v_k-m)_+|||^2\leq c\|\nu\|_{L^r(0,T;L^q(\Omega))}\left[\mu_k(m)^{\frac{2 \eta}{\hat{r}}}||| (v_k-m)_+|||^2+\mu_k(m)^{\frac{2(1+\eta)}{\hat{r}}}\right],
\end{equation*}
where $c$ is a constant which does not depend on $k$ and $m$. Note that $\mu_k(m) \leq \tau|\Omega|^{\frac{\hat{r}}{\hat{q}}}$, for all $k\in\mathbb{N}$ and $m$, so that we can fix $\tau$, independent of $u_k$ and $m$, suitable small in such a way that
$c\mu_k(m)^{\frac{2 \eta}{\hat{r}}}\|\nu\|_{L^r(0,T;L^q(\Omega))}=\frac{1}{2}
$ and use again \cref{gagliardo} (see also \cref{firstt}), to deduce that
\begin{equation}{\label{finalboun}}
\left\|(v_k-m)_+\right\|_{L^{\hat{r}}\left(0, \tau ; L^{\hat{q}}(\Omega)\right)}^2 \leq c||| (v_k-m)_+|||^2\leq c\|\nu\|_{L^r(0,T;L^q(\Omega))} \mu_k(m)^{\frac{2(1+\eta)}{\hat{r}}} .  
\end{equation}
Consider $l>m>\operatorname{max}\{\left\|u_ 0\right\|_{L^{\infty}(\Omega)},1\}:=m_0$. Then $A^t_{l,k}\subset A^t_{m,k}$, and using \cref{finalboun}, we get
\begin{equation*}
\begin{array}{rcl}
(l-m)\mu_k(l)^{\frac{1}{\hat{r}}}
=\left(\int_0^\tau\left((l-m)^{\hat{q}}\left|A_{l,k}^t\right|\right)^{\frac{\hat{r}}{\hat{q}}} d t\right)^{\frac{1}{\hat{r}}}&\leq& \left(\int_0^\tau\left(\int_{A_{l,k}^t}(v_k-m)_+^{\hat{q}}d x\right)^{\frac{\hat{r}}{\hat{q}}}d t \right)^{\frac{1}{\hat{r}}}\\&\leq &\left(\int_0^\tau\left(\int_{A_{m,k}^t}(v_k-m)_+^{\hat{q}}d x\right)^{\frac{\hat{r}}{\hat{q}}}d t \right)^{\frac{1}{\hat{r}}}
\leq c\mu_k(m)^{\frac{1+\eta}{\hat{r}}} .    
\end{array}
\end{equation*}
Therefore for all $l>m>m_0$, we have
\begin{equation*}
   \mu_k(l) \leq\frac{c}{(l-m)^{\hat{r}}} \mu_k(m)^{1+\eta} ,
\end{equation*}
where c is a constant independent of $k$.
Now applying [\citealp{Iterative}, Lemma B.$1$], we conclude that
$\mu_k(m_0+d_k)=0,$ where $d_k=c[\mu_k(m_0)]^\eta2^{\hat{r}(1+\eta)/\eta}$. As for each $k$, $d_k\leq c \left(T|\Omega|^{\frac{\hat{r}}{\hat{q}}}\right)^\eta$, we get
\begin{equation*}
\left\|v_k\right\|_{L^{\infty}(\Omega \times[0, \tau])} \leq d .
\end{equation*}
We can iterate this procedure in the sets $\Omega \times[\tau, 2 \tau], \cdots, \Omega \times[j \tau, T]$, where $T-j \tau \leqslant \tau$ to conclude that
\begin{equation*}
    \left\|v_k\right\|_{L^{\infty}\left(\Omega_T\right)} \leq C \quad \text { uniformly in } k \in \mathbb{N} .
\end{equation*}
(2)
For $\gamma\geq 1$, we choose $\delta>\frac{1+\gamma}{2}$ and for $\gamma<1$, we choose $\delta>1$, and take $v_k^{2\delta-1}\chi_{(0,\tau)},0<\tau\leq T$ as test function in \cref{aproxproblembdd2}. We note that, such a test function is admissible as $v_k\in L^\infty(\Omega_T)$.
We get $\forall \tau\leq T,$
\begin{equation*}
 \begin{array}{c}
 \quad\frac{1}{2\delta} \int_{\Omega} v_k^{2\delta}(x, \tau) d x+\int_0^\tau\int_\Omega \nabla v_k\cdot\nabla v_k^{2\delta-1}d x dt\smallskip\\\quad+ \underbrace{\int_0^\tau \int_{\mathbb{R}^n}\int_{\mathbb{R}^n} \frac{\left(v_k(x, t)-v_k(y,t)\right)\left(v_k^{2\delta-1}(x, t)-v_k^{2\delta-1}(y, t)\right)}{|x-y|^{n+2 s}} d x d y d t}_{\geq 0}
\smallskip\\\leq \int_0^\tau\int_{\Omega} \nu v_k^{2\delta-1-\gamma}d x dt+\frac{1}{2\delta} \int_{\Omega} v_k^{2\delta}(x, 0) d x.
\end{array}
\end{equation*}
Taking supremum over $\tau\in(0,T]$, by Hölder and Sobolev inequalities, we have
\begin{equation}{\label{test1}}
\begin{array}{c}
\sup _{\tau \in[0, T]} \int_{\Omega} v_k^{2 \delta} (x,\tau)d x+\frac{2(2\delta-1)}{ \delta} \lambda \int_0^T\left\|v_k^\delta\right\|_{L^{2^*}(\Omega)}^2 d t  \smallskip\\
\leq 2\delta\left\|\nu\right\|_{L^r\left(0, T ; L^q(\Omega)\right)}\left(\int_0^T\left[\int_{\Omega} v_k^{(2 \delta-1-\gamma) q^{\prime}} d x\right]^{\frac{r^{\prime}}{q^{\prime}}} d t\right)^{\frac{1}{r^{\prime}}} +\frac{1}{2\delta} \int_{\Omega} u_0^{2\delta} d x.
\end{array}
\end{equation}
Note that $\frac{2\delta-1}{\delta}>1$, so we can ignore the constants in left. Denoting by
$
A=\left(\int_0^T\left\|v_k\right\|_{L^{(2 \delta-1-\gamma) q^{\prime}}(\Omega)}^{(2 \delta-1-\gamma) r^{\prime}} d t\right)^{\frac{1}{r^{\prime}}},    
$ and $B=|\Omega|\|u_0\|^{2\delta}_{L^\infty(\Omega)}$,
we get from \cref{test1},
\begin{equation}{\label{res1}}
\sup _{t\in[0, T]} \int_{\Omega} v_k^{2 \delta} d x \leq 2 \delta\left\|\nu\right\|_{L^r\left(0, T ; L^q(\Omega)\right)} A+\frac{1}{2\delta} B,    
\end{equation}
and
\begin{equation}{\label{res2}}
    \int_0^T\left[\int_{\Omega} v_k^{2^*\delta} d x\right]^{\frac{2}{2 ^*}} d t  \leq\frac{c}{\lambda} 2\delta\left\|\nu\right\|_{L^r\left(0, T ; L^q(\Omega)\right)} A +\frac{1}{2\delta} B.
\end{equation}
\smallskip\\
\textbf{Case 1: $1<q<\frac{n r}{n+2 (r-1)}$ \text{ i.e. } $\frac{1}{r}<\frac{n}{n-2} \frac{1}{q}-\frac{2}{n-2}$} \smallskip\\Since, $1<q<\frac{n r}{n+2 (r-1)}$ implies $q<r$ and $\frac{r^{\prime}}{q^{\prime}}<\frac{2}{2^*}$, we apply Hölder inequality with exponent $\frac{2q^{\prime}}{2^*r^{\prime}}$ to get
\begin{equation}{\label{res2.5}}
A \leq T^{\frac{1}{r^{\prime}}-\frac{2^*}{2 q^{\prime}}}\left[\int_0^T\left(\int_{\Omega} v_k^{(2 \delta-1-\gamma) q^{\prime}} d x\right)^{\frac{2}{2 ^*}} d t\right]^{\frac{2^*}{2 q^{\prime}}} .    
\end{equation}
Thus from \cref{res2}, it follows
\begin{equation*}{\label{res3}}
\int_0^T\left[\int_{\Omega} v_k^{2^* \delta} d x\right]^{\frac{2}{2^*}} d t \leq 2\delta cA +\frac{1}{2\delta} B\leq 2\delta c\left[\int_0^T\left(\int_{\Omega} v_k^{(2 \delta-1-\gamma) q^{\prime}} d x\right)^{\frac{2}{2 ^*}} d t\right]^{\frac{2^*}{2 q^{\prime}}} +\frac{1}{2\delta} B.  
\end{equation*}
We now choose $2^* \delta=(2 \delta-1-\gamma) q^{\prime}$, that is, $\delta=\frac{1}{2} \cdot \frac{q(n-2 )(\gamma+1)}{(n-2 q )}$. Note that if $\gamma\geq 1$, then $\delta >\frac{\gamma+1}{2}$ if and only if $q>1$, and for $\gamma<1$, $\delta>1$ if and only if $q>\left(\frac{2^*}{1-\gamma}\right)^{\prime}$.  Moreover, since $q<\frac{n}{2 }$ it follows that $\frac{2^*}{2 q^{\prime}}<1$. Thus we get by Young's inequality with $\ep$,
\begin{equation}{\label{res4}}
   \int_0^T\left(\int_{\Omega} v_k^{2^*\delta} d x\right)^{\frac{2}{2^*}} d t=\int_0^T\left(\int_{\Omega} v_k^{\left(2 \delta-1-\gamma\right) q^{\prime}} d x\right)^{\frac{2}{2^*}} d t \leq  c,
\end{equation}
and therefore from \cref{res1}, \cref{res2.5} and \cref{res4}, it follows
\begin{equation*}
\left\|v_k\right\|_{L^\infty\left(0, T ; L^{2\sigma}(\Omega)\right)} +\left\|v_k\right\|_{L^{2\sigma}\left(0, T ; L^{2^*\sigma}(\Omega)\right)} \leq  c, \text{ with } \sigma=\frac{q(n-2 )(\gamma+1)}{2(n-2 q )}>\frac{q}{2}.
\end{equation*}
\smallskip\\\textbf{Case 2: $\frac{nr}{n+2(r-1)} \leq q<\frac{n}{2 } r^{\prime}$ i.e. $\frac{1}{r}\geq\frac{n}{n-2} \frac{1}{q}-\frac{2}{n-2}$ and $\frac{1}{r}+\frac{n}{2 q }>1$}\smallskip
\\In this case we choose $\delta=\frac{1}{2} \frac{q r n(\gamma+1)}{nr-2 q (r-1)}$. Note that for $\gamma\geq 1$, $\delta >\frac{1+\gamma}{2},$ if and only if $\frac{1}{r}+\frac{n}{2 q }<1+\frac{n}{2}$ which is always satisfied and for $\gamma <1$, $\delta>1$ if and only if $q>\frac{2nr}{nr(\gamma+1)+4(r-1)}$ which is satisfied if $r>\frac{2}{\gamma+1}$. Now by interpolation, we get that $\frac{1}{(2 \delta-1-\gamma) q^{\prime}}=\frac{1-\theta}{2 \delta}+\frac{\theta}{2^* \delta}$, where $\theta=\frac{nq(r-1)}{nr(q-1)+2q(r-1)}\in(0,1]
$. Therefore
\begin{equation*}
A^{r^\prime}=\int_0^T\left\|v_k\right\|_{L^{(2 \delta-1-\gamma) q^{\prime}}(\Omega)}^{r^{\prime}(2 \delta-1-\gamma)} d t \leq c\left\|v_k\right\|_{L^{\infty}\left(0, T ; L^{2 \delta}(\Omega)\right)}^{2 \delta \mu_1} \int_0^T\left(\int_{\Omega} v_k^{2^ * \delta} d x\right)^{\frac{2}{2^*} \mu_2} d t,
\end{equation*}
where $\mu_1=\frac{(1-\theta) r^{\prime}(2 \delta-1-\gamma)}{2 \delta}$ and $\mu_2=\frac{\theta r^{\prime}(2 \delta-1-\gamma)}{2\delta}$. Since $\mu_2\leq 1$, we have that
\begin{equation*}
      A^{r^\prime}= \int_0^T\left\|v_k\right\|_{L^{(2 \delta-1-\gamma) q^{\prime}}(\Omega)}^{r^{\prime}(2 \delta-1-\gamma)} d t \leq c\left\|v_k\right\|_{L^{\infty}\left(0, T ; L^{2 \delta}(\Omega)\right)}^{2 \delta \mu_1} \left(\int_0^T\left[\int_{\Omega} v_k^{2^* \delta} d x\right]^{\frac{2}{2^*}} d t\right)^{\mu_2}\stackrel{\cref{res1},\cref{res2}}{\leq }C(A+B)^{\mu_1+\mu_2},
\end{equation*}
and since $\mu_1+\mu_2 < r^{\prime}$, using Young's inequality, we conclude the following, which will give our final result
\begin{equation*}
    \int_0^T\left(\int_{\Omega} v_k^{(2 \delta-1-\gamma) q^{\prime}} d x\right)^{\frac{r^{\prime}}{q^{\prime}}} d t \leq c.
\end{equation*}
\end{proof}
We now consider $\gamma:\overline{\Omega}_T\to (0,\infty)$ to be a continuous function, and prove the following lemma.
\begin{lemma}{\label{bounded3}} Let $\gamma:\overline{\Omega}_T\to (0,\infty)$ is a continuous function and satisfies the condition $(P_1)$ for some $\gamma^*\geq 1$ and $\delta>0$.\smallskip
\\(1) Assume that $\nu \in L^r(0, T ; L^q(\Omega))$ with $r, q$ satisfying
\begin{equation*}
\frac{1}{r}+\frac{n}{2 q }<1  , 
\end{equation*}
and let $u_{0} \in L^{\infty}(\Omega)$. Then there exists a positive constant $c>0$ such that the unique solution of \cref{aproxproblembdd2} satisfies
\begin{equation*}
    \left\|v_k(x, t)\right\|_{L^{\infty}\left(\Omega_T\right)} \leq c
    .
\end{equation*}
(2) Assume $u_{0}\in L^\infty(\Omega)$, and $\nu \in L^r\left(0, T ; L^q(\Omega)\right)$, with $r>1, q>1$ satisfy
\begin{equation*}
    1<\frac{1}{r}+\frac{n}{2 q }.
\end{equation*}
Further, assume that,\\
$i)$ if $\frac{1}{r}<\frac{n}{n-2} \frac{1}{q}-\frac{2}{n-2}$,  then $q> \frac{n(\gamma^*+1)}{n+2\gamma^*}$, and\\
$ii)$ if $\frac{1}{r}\geq\frac{n}{n-2} \frac{1}{q}-\frac{2}{n-2}$, then $r>{1+\gamma^*}$.\\
Then there exists a positive constant $c$ such that the sequence of solutions of \cref{aproxproblembdd2} satisfies
\begin{equation*}
\left\|v_k\right\|_{L^{\infty}\left(0, T ; L^{2 \sigma}(\Omega)\right)}+\left\|v_k\right\|_{L^{2 \sigma}\left(0, T ; L^{2^* \sigma}\right)} \leq c,
\end{equation*}
where
\begin{equation*}
\sigma=\left\{\begin{array}{lll}
\frac{q(n-2 )}{2(n-2 q )} & \text { if } \frac{1}{r}<\frac{n}{n-2} \frac{1}{q}-\frac{2}{n-2}, \smallskip\\
\frac{q r n
}{2(n r+2q-2 q r)} & \text { if } \frac{1}{r} \geq \frac{n}{n-2} \frac{1}{q}-\frac{2}{n-2}.
\end{array}\right.    
\end{equation*}
\end{lemma}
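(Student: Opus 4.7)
The proof will follow the template of Lemma \ref{bounded2}, with condition $(P_1)$ used to localize the effect of the variable exponent $\gamma(x,t)$. Throughout, to avoid clashing with the width $\delta$ in $(P_1)$, I will write the test-function exponent as $\beta$ (playing the role of $\delta$ in the proof of Lemma \ref{bounded2}).

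For part (1), the argument is essentially unchanged. Test \cref{aproxproblembdd2} with $(v_k-m)_+\chi_{(0,\tilde t)}$ for $m\geq\max\{\|u_0\|_{L^\infty(\Omega)},1\}$. On the level set $\{v_k>m\}$ one has $v_k\geq 1$, so $(v_k+\tfrac{1}{k})^{\gamma(x,t)}\geq 1$ regardless of the value of $\gamma(x,t)>0$, giving
\[
\int_0^\tau\!\!\int_{A^t_{m,k}}\frac{\nu\,(v_k-m)_+}{(v_k+\frac{1}{k})^{\gamma(x,t)}}\,dx\,dt\;\le\;\int_0^\tau\!\!\int_{A^t_{m,k}}\nu\,(v_k-m)_+\,dx\,dt.
\]
From here the full Aronson--Serrin level-set iteration used in part (1) of Lemma \ref{bounded2} transfers verbatim, yielding $\|v_k\|_{L^\infty(\Omega_T)}\leq c$.

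For part (2), test \cref{aproxproblembdd2} with $v_k^{2\beta-1}\chi_{(0,\tau)}$, where $\beta=\sigma$ is the value stated in the lemma. Standard manipulation of the time derivative and dropping the nonnegative nonlocal term leads to
\[
\sup_{\tau\in[0,T]}\int_\Omega v_k^{2\beta}(x,\tau)\,dx+\frac{2(2\beta-1)}{\beta}\int_0^T\|v_k^\beta\|_{L^{2^*}(\Omega)}^2\,dt \;\leq\; 2\beta\,\mathcal S + \frac{1}{2\beta}\|u_0\|_{L^\infty(\Omega)}^{2\beta}|\Omega|,
\]
where $\mathcal S:=\iint_{\Omega_\tau}\nu\,v_k^{2\beta-1}(v_k+\tfrac{1}{k})^{-\gamma(x,t)}\,dx\,dt$. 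The main new work is to show $\mathcal S\leq C\iint_{\Omega_T}\nu\,v_k^{2\beta-1}\,dx\,dt+C$, i.e.\ that the singular weight can be absorbed. I split $\Omega_T=(\Omega_T)_\delta\cup(\omega_T)_\delta$ according to $(P_1)$. On the compact interior $(\omega_T)_\delta$, \cref{exisforaprox} gives $v_k\geq c(\omega_T)_\delta>0$ uniformly in $k$; since $\gamma$ is continuous on the compact set $\overline{(\omega_T)_\delta}$, the weight $(v_k+\tfrac{1}{k})^{-\gamma(x,t)}$ is bounded by a constant, and this piece contributes at most $C\iint\nu\,v_k^{2\beta-1}$. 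On the strip $(\Omega_T)_\delta$ we use $\gamma(x,t)\leq\gamma^*$ and split further: on $\{v_k\geq 1\}$ we have $(v_k+\tfrac{1}{k})^{\gamma(x,t)}\geq 1$, controlled again by $\nu\,v_k^{2\beta-1}$; on $\{v_k<1\}\cap(\Omega_T)_\delta$, since $v_k+\tfrac{1}{k}<2$ and $\gamma(x,t)\leq\gamma^*$, we have $(v_k+\tfrac{1}{k})^{-\gamma(x,t)}\leq C(v_k+\tfrac{1}{k})^{-\gamma^*}\leq C v_k^{-\gamma^*}$, so the integrand is dominated by $C\,\nu\,v_k^{2\beta-1-\gamma^*}$. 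The conditions imposed on $(q,r)$ translate precisely into $\beta\geq (\gamma^*+1)/2$ (for case $i$: $q>n(\gamma^*+1)/(n+2\gamma^*)$ is equivalent to $\frac{q(n-2)}{2(n-2q)}\geq\frac{\gamma^*+1}{2}$; analogously for case $ii$ with $r>1+\gamma^*$), so $v_k^{2\beta-1-\gamma^*}\leq 1$ on $\{v_k<1\}$, and this piece contributes at most $C\|\nu\|_{L^1(\Omega_T)}$.

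Once $\mathcal S$ is bounded by $C(\iint\nu\,v_k^{2\beta-1}+1)$, the remaining analysis is identical to the proof of part (2) of Lemma \ref{bounded2} with the formal substitution $\gamma=0$ in the right-hand side: Hölder inequality in space and time with the same exponents $\bar r,\bar q,\hat r,\hat q$ in part (1), and in part (2) either the direct choice $2^*\beta=(2\beta-1)q'$ (yielding $\sigma=\frac{q(n-2)}{2(n-2q)}$) or the interpolation exponent computation (yielding $\sigma=\frac{qrn}{2(nr+2q-2qr)}$), combined with Young's inequality with $\varepsilon$ to absorb the $\mathcal S$-term into the left-hand side. The main obstacle I anticipate is the clean verification that the stated numerical conditions on $(q,r)$ are equivalent to $\beta\geq(\gamma^*+1)/2$ for each of the two cases $i),ii)$; this is a direct algebraic check that requires care, but once established makes the rest of the argument a straightforward transcription from Lemma \ref{bounded2}.
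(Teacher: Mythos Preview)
Your proposal is correct and follows essentially the same approach as the paper: both prove part (1) by observing that on $\{v_k>m\geq 1\}$ the singular weight is harmless, and both prove part (2) by testing with $v_k^{2\beta-1}$, splitting $\Omega_T$ via condition $(P_1)$, using the uniform interior lower bound from \cref{exisforaprox} on $(\omega_T)_\delta$, and handling the strip $(\Omega_T)_\delta$ by the further split $\{v_k\lessgtr 1\}$ with $\gamma\leq\gamma^*$. One small remark: in case $ii)$ the condition $r>1+\gamma^*$ is only \emph{sufficient} (not equivalent) for $\beta>(\gamma^*+1)/2$, which is all that is needed.
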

\begin{proof}
    (1) This part follows exactly similar to \cref{bounded2}, part (1).
    \smallskip\\(2)  We choose $\delta>\frac{1+\gamma^*}{2}$ and take $v_k^{2\delta-1}\chi_{(0,\tau)},0<\tau\leq T$ as test function in \cref{aproxproblembdd2}. We note that, such a test function is admissible as $v_k\in L^\infty(\Omega_T)$.
We get $\forall \tau\leq T,$
\begin{equation}{\label{zzzz}}
 \begin{array}{c}
 \quad\frac{1}{2\delta} \int_{\Omega} v_k^{2\delta}(x, \tau) d x+\int_0^\tau\int_\Omega \nabla v_k\cdot\nabla v_k^{2\delta-1}d x dt\smallskip\\\quad+ \underbrace{\int_0^\tau \int_{\mathbb{R}^n}\int_{\mathbb{R}^n} \frac{\left(v_k(x, t)-v_k(y,t)\right)\left(v_k^{2\delta-1}(x, t)-v_k^{2\delta-1}(y, t)\right)}{|x-y|^{n+2 s}} d x d y d t}_{\geq 0}
\\\leq \int_0^\tau\int_{\Omega} T_k(\nu) \frac{v_k^{2\delta-1}}{\left(v_k+\frac{1}{k}\right)^{\gamma(x,t)}}d x dt+\frac{1}{2\delta} \int_{\Omega} v_k^{2\delta}(x, 0) d x.
\end{array}
\end{equation}Note that as $\nu$ is a function, hence (by \cref{exisforaprox}) 
    for each $t_0>0$ and $\omega\subset\subset\Omega$, $\exists C(\omega,t_0,n,s)$, a constant which does not depend on $k$,  such that $\forall k$, $v_k\geq C(\omega,t_0,n,s)$ in $\omega\times[t_0,T)$. Now using the condition $(P_1)$, we get
\begin{equation*}{\label{imp23332}}
\begin{array}{rcl}
\iint_{\Omega_T} \frac{T_k(\nu)v_k^{2\delta-1}}{\left(v_k+\frac{1}{k}\right)^{\gamma(x,t)}} 
&=&\iint_{(\Omega_T)_\delta} \frac{T_k(\nu)v_k^{2\delta-1}}{\left(v_k+\frac{1}{k}\right)^{\gamma(x,t)}} +\iint_{(\omega_{{T}})_\delta} \frac{T_k(\nu)v_k^{2\delta-1}}{\left(v_k+\frac{1}{k}\right)^{\gamma(x,t)}}  \\&\leq &\iint_{(\Omega_T)_\delta}  \nu v_k^{{2\delta-1}-\gamma(x,t)} +\iint_{(\omega_{\tilde{T}})_\delta}  \frac{\nu v_k^{2\delta-1}}{C_{(\omega_{\tilde{T}})_\delta}^{\gamma(x,t)}} \\
 &\leq &\iint_{(\Omega_T)_\delta\cap\left\{v_k \leq 1\right\}}  \nu v_k^{{2\delta-1}-\gamma(x,t)} +\iint_{(\Omega_T)_\delta\cap\left\{v_k \geq 1\right\}}  \nu v_k^{2\delta-1}  
+\iint_{(\omega_{\tilde{T}})_\delta}  \frac{\nu v_k^{2\delta-1}}{C_{(\omega_{\tilde{T}})_\delta}^{\gamma(x,t)}} \smallskip\\&\leq&\|\nu\|_{L^1(\Omega_T)}+\left(1+\left\|C_{(\omega_{\tilde{T}})_\delta}^{-\gamma(\cdot)}\right\|_{L^{\infty}(\Omega_T)}\right) \iint_{\Omega_T} \nu v_k^{2\delta-1},
\end{array}
\end{equation*}
where $(\omega_{{T}})_\delta=\Omega_T\backslash(\Omega_T)_\delta$. Now, taking supremum over $\tau\in(0,T]$ in \cref{zzzz}, by Hölder and Sobolev inequalities, 
\begin{equation}{\label{test111}}
\begin{array}{rcl}
\sup _{\tau \in[0, T]} \int_{\Omega} v_k^{2 \delta} (x,\tau)d x+\frac{2(2\delta-1)}{ \delta} \lambda \int_0^T\left\|v_k^\delta\right\|_{L^{2^*}(\Omega)}^2 d t  &\leq& C+c \iint_{\Omega_T} \nu v_k^{2\delta-1} d x d t+c\|u_0\|_{L^\infty(\Omega)}^{2\delta}\\
&\leq& C+2\delta\left\|\nu\right\|_{L^r\left(0, T ; L^q(\Omega)\right)}\left(\int_0^T\left[\int_{\Omega} v_k^{(2 \delta-1) q^{\prime}} \right]^{\frac{r^{\prime}}{q^{\prime}}} \right)^{\frac{1}{r^{\prime}}}.
\end{array}
\end{equation}
As $\frac{2\delta-1}{\delta}>1$, one can ignore the constants in left. Denoting 
$
A=\left(\int_0^T\left\|v_k\right\|_{L^{(2 \delta-1) q^{\prime}}(\Omega)}^{(2 \delta-1) r^{\prime}} d t\right)^{\frac{1}{r^{\prime}}},    
$
we get by \cref{test111},
\begin{equation}{\label{res11}}
\sup _{t\in[0, T]} \int_{\Omega} v_k^{2 \delta} d x \leq 2 \delta\left\|\nu\right\|_{L^r\left(0, T ; L^q(\Omega)\right)} A+C,    
\end{equation}
and
\begin{equation}{\label{res22}}
    \int_0^T\left[\int_{\Omega} v_k^{2^*\delta} d x\right]^{\frac{2}{2 ^*}} d t  \leq\frac{c}{\lambda} 2\delta\left\|\nu\right\|_{L^r\left(0, T ; L^q(\Omega)\right)} A +C.
\end{equation}
\smallskip\\
\textbf{Case 1: $1<q<\frac{n r}{n+2 (r-1)}$ \text{ i.e. } $\frac{1}{r}<\frac{n}{n-2} \frac{1}{q}-\frac{2}{n-2}$} \smallskip\\Since, $1<q<\frac{n r}{n+2 (r-1)}$ implies $q<r$ and $\frac{r^{\prime}}{q^{\prime}}<\frac{2}{2^*}$, we apply Hölder inequality with exponent $\frac{2q^{\prime}}{2^*r^{\prime}}$ to get
\begin{equation}{\label{res222.5}}
A \leq T^{\frac{1}{r^{\prime}}-\frac{2^*}{2 q^{\prime}}}\left[\int_0^T\left(\int_{\Omega} v_k^{(2 \delta-1) q^{\prime}} d x\right)^{\frac{2}{2 ^*}} d t\right]^{\frac{2^*}{2 q^{\prime}}} .    
\end{equation}
Thus from \cref{res22}, it follows
\begin{equation*}{\label{res33}}
\int_0^T\left[\int_{\Omega} v_k^{2^* \delta} d x\right]^{\frac{2}{2^*}} d t \leq 2\delta cA +C\leq 2\delta c\left[\int_0^T\left(\int_{\Omega} v_k^{(2 \delta-1) q^{\prime}} d x\right)^{\frac{2}{2 ^*}} d t\right]^{\frac{2^*}{2 q^{\prime}}} +C.  
\end{equation*}
We now choose $2^* \delta=(2 \delta-1) q^{\prime}$, that is, $\delta=\frac{1}{2} \cdot \frac{q(n-2 )}{(n-2 q )}$. Note that $\delta >\frac{\gamma^*+1}{2}$ if and only if $q>\frac{n(\gamma^*+1)}{n+2\gamma^*}$.  Moreover, since $q<\frac{n}{2 }$ it follows that $\frac{2^*}{2 q^{\prime}}<1$. Thus we get by Young's inequality with $\ep$,
\begin{equation}{\label{res44}}
   \int_0^T\left(\int_{\Omega} v_k^{2^*\delta} d x\right)^{\frac{2}{2^*}} d t=\int_0^T\left(\int_{\Omega} v_k^{\left(2 \delta-1\right) q^{\prime}} d x\right)^{\frac{2}{2^*}} d t \leq  c,
\end{equation}
and therefore from \cref{res11}, \cref{res222.5} and \cref{res44}, it follows
\begin{equation*}
\left\|v_k\right\|_{L^\infty\left(0, T ; L^{2\sigma}(\Omega)\right)} +\left\|v_k\right\|_{L^{2\sigma}\left(0, T ; L^{2^*\sigma}(\Omega)\right)} \leq  c, \text{ with } \sigma=\frac{q(n-2 )}{2(n-2 q )}>\frac{q}{2}.
\end{equation*}
\textbf{Case 2: $\frac{nr}{n+2(r-1)} \leq q<\frac{n}{2 } r^{\prime}$ i.e. $\frac{1}{r}\geq\frac{n}{n-2} \frac{1}{q}-\frac{2}{n-2}$ and $\frac{1}{r}+\frac{n}{2 q }>1$}\smallskip
\\In this case we choose $\delta=\frac{1}{2} \frac{q r n}{nr-2 q (r-1)}$. Note that $\delta >\frac{1+\gamma^*}{2},$ if and only if $q>\frac{nr(\gamma^*+1)}{nr+2(1+\gamma^*)(r-1)}$ which is satisfied if $r>{\gamma^*+1}$. Now by interpolation, we get that $\frac{1}{(2 \delta-1) q^{\prime}}=\frac{1-\theta}{2 \delta}+\frac{\theta}{2^* \delta}$, where $\theta=n\delta\left[\frac{1}{2\delta}-\frac{1}{(2\delta-1)q^\prime}\right]\in(0,1]
$. Therefore
\begin{equation*}
    A^{r^\prime}=\int_0^T\left\|v_k\right\|_{L^{(2 \delta-1) q^{\prime}}(\Omega)}^{r^{\prime}(2 \delta-1)} d t \leq c\left\|v_k\right\|_{L^{\infty}\left(0, T ; L^{2 \delta}(\Omega)\right)}^{2 \delta \mu_1} \int_0^T\left(\int_{\Omega} v_k^{2^ * \delta} d x\right)^{\frac{2}{2^*} \mu_2} d t,
\end{equation*}
where $\mu_1=\frac{(1-\theta) r^{\prime}(2 \delta-1)}{2 \delta}$ and $\mu_2=\frac{\theta r^{\prime}(2 \delta-1)}{2\delta}$. Since $\mu_2\leq 1$, we have that
\begin{equation*}
      A^{r^\prime}= \int_0^T\left\|v_k\right\|_{L^{(2 \delta-1) q^{\prime}}(\Omega)}^{r^{\prime}(2 \delta-1)} d t \leq c\left\|v_k\right\|_{L^{\infty}\left(0, T ; L^{2 \delta}(\Omega)\right)}^{2 \delta \mu_1} \left(\int_0^T\left[\int_{\Omega} v_k^{2^* \delta} d x\right]^{\frac{2}{2^*}} d t\right)^{\mu_2}\stackrel{\cref{res11},\cref{res22}}{\leq }C(A+B)^{\mu_1+\mu_2},
\end{equation*}
and since $\mu_1+\mu_2 < r^{\prime}$, using Young's inequality, we conclude the following, which will give our final result
\begin{equation*}
    \int_0^T\left(\int_{\Omega} v_k^{(2 \delta-1) q^{\prime}} d x\right)^{\frac{r^{\prime}}{q^{\prime}}} d t \leq c.
\end{equation*}
\end{proof}
We now give boundedness result for $\{w_k\}_k$ and prove the following lemma.
\begin{lemma}{\label{bounded4}}
(1) Assume that $\mu \in L^r(0, T ; L^q(\Omega))$ with $r, q$ satisfying
\begin{equation*}
\frac{1}{r}+\frac{n}{2 q }<1  .
\end{equation*}
 Then there exists a positive constant $c>0$ such that the unique solution of \cref{aproxproblembdd3} satisfies
\begin{equation*}
    \left\|w_k(x, t)\right\|_{L^{\infty}\left(\Omega_T\right)} \leq c
    .
\end{equation*}
(2) Assume, and $\mu \in L^r\left(0, T ; L^q(\Omega)\right)$, with $r>1, q>1$ satisfy
\begin{equation*}
    1<\frac{1}{r}+\frac{n}{2 q }.
\end{equation*}
Further, assume that,\\
$i)$ if $\frac{1}{r}<\frac{n}{n-2} \frac{1}{q}-\frac{2}{n-2}$,  then $\frac{1}{q}\leq \frac{1}{2}+\frac{1}{n}$, and\\
$ii)$ if $\frac{1}{r}\geq\frac{n}{n-2} \frac{1}{q}-\frac{2}{n-2}$, then $1<\frac{1}{r}+\frac{n}{2 q }\leq 1+\frac{n}{4}$.\\
Then there exists a positive constant $c$ such that the sequence of solutions of \cref{aproxproblembdd3} satisfies
\begin{equation*}
\left\|w_k\right\|_{L^{\infty}\left(0, T ; L^{2 \sigma}(\Omega)\right)}+\left\|w_k\right\|_{L^{2 \sigma}\left(0, T ; L^{2^* \sigma}\right)} \leq c,
\end{equation*}
where
\begin{equation*}
\sigma=\left\{\begin{array}{lll}
\frac{q(n-2 )}{2(n-2 q )} & \text { if } \frac{1}{r}<\frac{n}{n-2} \frac{1}{q}-\frac{2}{n-2}, \smallskip\\
\frac{q r n
}{2(n r+2q-2 q r)} & \text { if } \frac{1}{r} \geq \frac{n}{n-2} \frac{1}{q}-\frac{2}{n-2}.
\end{array}\right.    
\end{equation*}
\end{lemma}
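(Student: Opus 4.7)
The plan is to follow the Aronson--Serrin iteration strategy used in \cref{bounded2} and \cref{bounded3}, with the important simplification that the right-hand side here is just $T_k(\mu)$, so no singular term has to be absorbed or localized. In particular the condition $(P_1)$, the use of \cref{greater11}, and the splitting into $(\Omega_T)_\delta$ and its complement all become unnecessary, and the argument is a clean parabolic Moser/Aronson--Serrin iteration in the presence of the extra nonlocal diffusion.

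For part (1), the plan is to test \cref{aproxproblembdd3} with $(w_k-m)_+\chi_{(0,\tilde t)}$ for $m>0$ and $\tilde t\in(0,\tau)$ with $\tau$ small to be fixed. The time derivative yields $\tfrac12\int_\Omega(w_k(\tilde t)-m)_+^2$ since $w_k(\cdot,0)=0\le m$, the local diffusion gives $\|\nabla(w_k-m)_+\|_{L^2}^2$, and the nonlocal term is non-negative because $r\mapsto(r-m)_+$ is monotone. Thus, after taking the supremum in $\tilde t$, one has
\begin{equation*}
\|(w_k-m)_+\|_{L^\infty(0,\tau;L^2(\Omega))}^2+\|(w_k-m)_+\|_{L^2(0,\tau;W^{1,2}_0(\Omega))}^2\lesssim\int_0^\tau\!\!\int_{A_{m,k}^t}\mu\,(w_k-m)_+\,dx\,dt.
\end{equation*}
Using H\"older in $(q,q')$ and $(r,r')$, the interpolation inequality \cref{gagliardo} on $(w_k-m)_+$ with the parameters $\hat r=2r'(1+\eta)$, $\hat q=2q'(1+\eta)$, $\eta=\tfrac{2}{n}(1-\tfrac1r-\tfrac{n}{2q})$, and Young's inequality, one obtains exactly the same recursive inequality $\mu_k(l)\le c(l-m)^{-\hat r}\mu_k(m)^{1+\eta}$ on the super-level measure $\mu_k(m)=\int_0^\tau|A_{m,k}^t|^{\hat r/\hat q}\,dt$ derived in part~(1) of \cref{bounded2}. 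A standard De Giorgi--Stampacchia lemma then yields $\|w_k\|_{L^\infty(\Omega\times[0,\tau])}\le C$, independent of $k$, and iterating on time-slabs of length $\tau$ covers $[0,T]$.

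For part (2), the plan is to test \cref{aproxproblembdd3} with $w_k^{2\delta-1}\chi_{(0,\tau)}$, which is admissible since $w_k\in L^\infty(\Omega_T)$. The time term produces $\tfrac{1}{2\delta}\int_\Omega w_k^{2\delta}(\tau)$, the local term produces $\tfrac{2(2\delta-1)}{\delta^2}\|\nabla w_k^\delta\|_{L^2}^2$ (which by Sobolev controls $\|w_k^\delta\|_{L^{2^*}}^2$), the nonlocal term is again non-negative, and the initial-value contribution vanishes. Applying H\"older in space ($q,q'$) and time ($r,r'$) to the right-hand side $\iint \mu\,w_k^{2\delta-1}$, taking the supremum in $\tau$, one arrives at the twin bounds
\begin{equation*}
\sup_{t\in[0,T]}\int_\Omega w_k^{2\delta}\,dx+\int_0^T\bigl\|w_k^\delta\bigr\|_{L^{2^*}(\Omega)}^2\,dt\;\lesssim\;\|\mu\|_{L^r(0,T;L^q(\Omega))}\,A,
\end{equation*}
with $A=\left(\int_0^T\|w_k\|_{L^{(2\delta-1)q'}(\Omega)}^{(2\delta-1)r'}\,dt\right)^{1/r'}$, in perfect parallel with \cref{res11}--\cref{res22}. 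Choosing $\delta$ in Case 1 so that $2^*\delta=(2\delta-1)q'$ gives $\delta=\tfrac{q(n-2)}{2(n-2q)}$ (requiring $\tfrac1q\le\tfrac12+\tfrac1n$ for $\delta\ge1$), and in Case 2 interpolating $L^{(2\delta-1)q'}$ between $L^{2\delta}$ and $L^{2^*\delta}$ with $\delta=\tfrac{qrn}{2(nr+2q-2qr)}$ (requiring the stated bound $\tfrac1r+\tfrac{n}{2q}\le1+\tfrac{n}{4}$) makes the exponents $\mu_1+\mu_2<r'$, so Young's inequality absorbs $A$ into the left-hand side.

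The main obstacle is purely bookkeeping: checking that the sharp exponent thresholds $\tfrac1q\le\tfrac12+\tfrac1n$ (Case 1) and $1<\tfrac1r+\tfrac{n}{2q}\le1+\tfrac{n}{4}$ (Case 2) are precisely the conditions under which the chosen $\delta$ satisfies $\delta>\tfrac12$, the interpolation exponent $\theta=n\delta(\tfrac{1}{2\delta}-\tfrac{1}{(2\delta-1)q'})$ lies in $(0,1]$, and $\mu_1+\mu_2<r'$ so that Young's inequality closes the estimate. Once these algebraic identities are verified, the conclusion $\|w_k\|_{L^\infty(0,T;L^{2\sigma}(\Omega))}+\|w_k\|_{L^{2\sigma}(0,T;L^{2^*\sigma}(\Omega))}\le c$ with $\sigma$ as stated follows immediately from \cref{res11}--\cref{res22} applied with the chosen $\delta=\sigma$.
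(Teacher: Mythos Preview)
Your proposal is correct and follows essentially the same route as the paper: part~(1) is the Aronson--Serrin level-set iteration of \cref{bounded2}~(1) with the simplification $w_k(\cdot,0)=0$, and part~(2) tests with $w_k^{2\delta-1}\chi_{(0,\tau)}$ and closes via the same H\"older/Sobolev/interpolation scheme as \cref{bounded3}~(2), yielding identical choices of $\delta=\sigma$ in the two sub-cases. One small correction: the hypotheses $\tfrac1q\le\tfrac12+\tfrac1n$ and $\tfrac1r+\tfrac{n}{2q}\le 1+\tfrac{n}{4}$ are \emph{not} the conditions for $\delta>\tfrac12$ (in Case~1, for instance, $\delta>\tfrac12$ holds automatically once $q>1$); they are exactly the conditions for $\delta\ge 1$, which is what is actually needed so that $w_k^{2\delta-1}$ is an admissible $W_0^{1,2}$ test function (its gradient $(2\delta-1)w_k^{2\delta-2}\nabla w_k$ would be singular at $\{w_k=0\}$ if $\delta<1$).
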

\begin{proof}
    (1) This part follows exactly similar to \cref{bounded2}, part (1).
    \smallskip\\(2)  We choose $\delta\geq 1$ and take $w_k^{2\delta-1}\chi_{(0,\tau)},0<\tau\leq T$ as test function in \cref{aproxproblembdd3}. We note that, such a test function is admissible as $w_k\in L^\infty(\Omega_T)$.
We get $\forall \tau\leq T,$
\begin{equation}{\label{zzyyzz}}
 \begin{array}{c}
 \quad\frac{1}{2\delta} \int_{\Omega} w_k^{2\delta}(x, \tau) d x+\int_0^\tau\int_\Omega \nabla w_k\cdot\nabla w_k^{2\delta-1}d x dt\smallskip\\\quad+ \underbrace{\int_0^\tau \int_{\mathbb{R}^n}\int_{\mathbb{R}^n} \frac{\left(w_k(x, t)-w_k(y,t)\right)\left(w_k^{2\delta-1}(x, t)-w_k^{2\delta-1}(y, t)\right)}{|x-y|^{n+2 s}} d x d y d t}_{\geq 0}
\smallskip\\\leq \int_0^\tau\int_{\Omega} T_k(\mu)w_k^{2\delta-1} \leq  \int_0^\tau\int_{\Omega} \mu w_k^{2\delta-1}dxdt.
\end{array}
\end{equation}Note that by \cref{lemma1}-(3), 
    for each $t_0>0$ and $\omega\subset\subset\Omega$, $\exists C(\omega,t_0,n,s)$, a constant which does not depend on $k$,  such that $\forall k$, $w_k\geq C(\omega,t_0,n,s)$ in $\omega\times[t_0,T)$. Taking supremum over $\tau\in(0,T]$ in \cref{zzyyzz}, by Hölder and Sobolev inequalities, we have
\begin{equation}{\label{testtt111}}
\begin{array}{rcl}
\sup _{\tau \in[0, T]} \int_{\Omega} w_k^{2 \delta} (x,\tau)d x+\frac{2(2\delta-1)}{ \delta} \lambda \int_0^T\left\|w_k^\delta\right\|_{L^{2^*}(\Omega)}^2 d t 
&\leq& 2\delta\left\|\mu\right\|_{L^r\left(0, T ; L^q(\Omega)\right)}\left(\int_0^T\left[\int_{\Omega} w_k^{(2 \delta-1) q^{\prime}} d x\right]^{\frac{r^{\prime}}{q^{\prime}}} d t\right)^{\frac{1}{r^{\prime}}}.
\end{array}
\end{equation}
Note that $\frac{2\delta-1}{\delta}>1$, so we can ignore the constants in left. Denoting by
$
A=\left(\int_0^T\left\|w_k\right\|_{L^{(2 \delta-1) q^{\prime}}(\Omega)}^{(2 \delta-1) r^{\prime}} d t\right)^{\frac{1}{r^{\prime}}},    
$
we get from \cref{testtt111},
\begin{equation}{\label{rres11}}
\sup _{t\in[0, T]} \int_{\Omega} w_k^{2 \delta} d x \leq 2 \delta\left\|\mu\right\|_{L^r\left(0, T ; L^q(\Omega)\right)} A,    
\end{equation}
and
\begin{equation}{\label{rres22}}
    \int_0^T\left[\int_{\Omega} w_k^{2^*\delta} d x\right]^{\frac{2}{2 ^*}} d t  \leq\frac{c}{\lambda} 2\delta\left\|\mu\right\|_{L^r\left(0, T ; L^q(\Omega)\right)} A.
\end{equation}
\smallskip\\
\textbf{Case 1: $1<q<\frac{n r}{n+2 (r-1)}$ \text{ i.e. } $\frac{1}{r}<\frac{n}{n-2} \frac{1}{q}-\frac{2}{n-2}$} \smallskip\\Since, $1<q<\frac{n r}{n+2 (r-1)}$ implies $q<r$ and $\frac{r^{\prime}}{q^{\prime}}<\frac{2}{2^*}$, we apply Hölder inequality with exponent $\frac{2q^{\prime}}{2^*r^{\prime}}$ to get
\begin{equation}{\label{rres2.5}}
A \leq T^{\frac{1}{r^{\prime}}-\frac{2^*}{2 q^{\prime}}}\left[\int_0^T\left(\int_{\Omega} w_k^{(2 \delta-1) q^{\prime}} d x\right)^{\frac{2}{2 ^*}} d t\right]^{\frac{2^*}{2 q^{\prime}}} .    \end{equation}
Thus from \cref{rres22}, it follows
\begin{equation*}{\label{rres3}}
\int_0^T\left[\int_{\Omega} w_k^{2^* \delta} d x\right]^{\frac{2}{2^*}} d t \leq 2\delta cA \leq 2\delta c\left[\int_0^T\left(\int_{\Omega} w_k^{(2 \delta-1) q^{\prime}} d x\right)^{\frac{2}{2 ^*}} d t\right]^{\frac{2^*}{2 q^{\prime}}} .  \end{equation*}
We now choose $2^* \delta=(2 \delta-1) q^{\prime}$, that is, $\delta=\frac{1}{2} \cdot \frac{q(n-2 )}{(n-2 q )}$. Note that $\delta \geq 1$ if and only if $\frac{1}{q}\leq \frac{1}{2}+\frac{1}{n}$.  Moreover, since $q<\frac{n}{2 }$ it follows that $\frac{2^*}{2 q^{\prime}}<1$. Thus we get by Young's inequality with $\ep$,
\begin{equation}{\label{rres4}}
   \int_0^T\left(\int_{\Omega} w_k^{2^*\delta} d x\right)^{\frac{2}{2^*}} d t=\int_0^T\left(\int_{\Omega} w_k^{\left(2 \delta-1\right) q^{\prime}} d x\right)^{\frac{2}{2^*}} d t \leq  c,
\end{equation}
and therefore from \cref{rres11}, \cref{rres2.5} and \cref{rres4}, it follows
\begin{equation*}
    \left\|w_k\right\|_{L^\infty\left(0, T ; L^{2\sigma}(\Omega)\right)} +\left\|w_k\right\|_{L^{2\sigma}\left(0, T ; L^{2^*\sigma}(\Omega)\right)} \leq  c, \text{ with } \sigma=\frac{q(n-2 )}{2(n-2 q )}>\frac{q}{2}.
\end{equation*}
\smallskip\\\textbf{Case 2: $\frac{nr}{n+2(r-1)} \leq q<\frac{n}{2 } r^{\prime}$ i.e. $\frac{1}{r}\geq\frac{n}{n-2} \frac{1}{q}-\frac{2}{n-2}$ and $\frac{1}{r}+\frac{n}{2 q }>1$}\smallskip
\\In this case we choose $\delta=\frac{1}{2} \frac{q r n}{nr-2 q (r-1)}$. Note that $\delta \geq 1,$ if and only if $\frac{1}{r}+\frac{n}{2 q }\leq 1+\frac{n}{4}$. Now by interpolation, we get that $\frac{1}{(2 \delta-1) q^{\prime}}=\frac{1-\theta}{2 \delta}+\frac{\theta}{2^* \delta}$, where $\theta=n\delta\left[\frac{1}{2\delta}-\frac{1}{(2\delta-1)q^\prime}\right]\in(0,1]
$. Therefore

\begin{equation*}
    A^{r^\prime}=\int_0^T\left\|w_k\right\|_{L^{(2 \delta-1) q^{\prime}}(\Omega)}^{r^{\prime}(2 \delta-1)} d t \leq c\left\|w_k\right\|_{L^{\infty}\left(0, T ; L^{2 \delta}(\Omega)\right)}^{2 \delta \mu_1} \int_0^T\left(\int_{\Omega} w_k^{2^ * \delta} d x\right)^{\frac{2}{2^*} \mu_2} d t,
\end{equation*}
where $\mu_1=\frac{(1-\theta) r^{\prime}(2 \delta-1)}{2 \delta}$ and $\mu_2=\frac{\theta r^{\prime}(2 \delta-1)}{2\delta}$. Since $\mu_2\leq 1$, we have that
\begin{equation*}
      A^{r^\prime}= \int_0^T\left\|w_k\right\|_{L^{(2 \delta-1) q^{\prime}}(\Omega)}^{r^{\prime}(2 \delta-1)} d t \leq c\left\|w_k\right\|_{L^{\infty}\left(0, T ; L^{2 \delta}(\Omega)\right)}^{2 \delta \mu_1} \left(\int_0^T\left[\int_{\Omega} w_k^{2^* \delta} d x\right]^{\frac{2}{2^*}} d t\right)^{\mu_2}\stackrel{\cref{rres11},\cref{rres22}}{\leq }C(A+B)^{\mu_1+\mu_2},
\end{equation*}
and since $\mu_1+\mu_2 < r^{\prime}$, using Young's inequality, we conclude the following, which will give our final result
\begin{equation*}
    \int_0^T\left(\int_{\Omega} w_k^{(2 \delta-1) q^{\prime}} d x\right)^{\frac{r^{\prime}}{q^{\prime}}} d t \leq c.
\end{equation*}
\end{proof}
\section{Proof of regularity results}{\label{sec6}}
\subsection*{Proof of \cref{regu1}, \cref{regu2} and \cref{regu3}} As the solution obtained in \cref{exis1} is the pointwise limit of the sequence $\{u_k\}_k$ in $\Omega_T$, therefore noting \cref{bounded1}, \cref{bounded2}, \cref{bounded3} ,\cref{bounded4} and using Fatou's lemma, we obtain each conclusion of \cref{regu1}, \cref{regu2} and \cref{regu3}.
\section{Proof of \cref{uniqueness}}{\label{sec7}}
We first show if $u$ is a finite energy solution to \cref{mainproblem}, with $0\leq \nu\in L^1(\Omega_T)$, $\mu\in\mathcal{M}(\Omega_T)$ of the  form $f-\operatorname{div}G$ for some $f\in L^1(\Omega_T)$ and $G\in \left(L^2(\Omega_T)\right)^n$, then $u$ satisfies
\begin{equation}{\label{finene}}
\begin{array}{c}
  \int_0^T \left\langle\left(u_t\right)_1, \phi\right\rangle +\int_0^T\int_\Omega\left(u_t\right)_2 \phi+\iint_{\Omega_T} \nabla u \cdot\nabla \phi+\int_0^T\int_{\mathbb{R}^{n}}\int_{\mathbb{R}^{n}} 
\frac{(u(x,t)-u(y,t))(\phi(x,t)-\phi(y,t))}{|x-y|^{n+2s}}\smallskip\\=\iint_{\Omega_T} \frac{\phi\nu}{u^{\gamma(x,t)}} dxdt+\iint_{\Omega_T}f\phi\,dxdt+\iint_{\Omega_T}G\cdot\nabla\phi\,dxdt,\end{array}
\end{equation}
 for every $\phi \in L^2(0, T ; W_0^{1, 2}(\Omega)) \cap L^{\infty}(\Omega_T)$, where $\left(u_t\right)=\left(u_t\right)_1+\left(u_t\right)_2$ with $\left(u_t\right)_1 \in L^{2}\left(0, T ; W^{-1, 2}(\Omega)\right),\left(u_t\right)_2 \in L^1(\Omega_T)$. Let $\phi \in L^2(0, T ; W_0^{1, 2}(\Omega)) \cap L^{\infty}(\Omega_T)$ be a non-negative function and consider a sequence $\phi_k$ of smooth non-negative functions converging to $\phi$ in $L^2(0, T ; W_0^{1, 2}(\Omega))$, (such a sequence exists by density, see for details [\citealp{softmea}, Appendix A]) without loss of generality one may assume $\|\phi_k\|_{L^{\infty}(\Omega_T)} \leq\|\phi\|_{L^{\infty}(\Omega_T)}$. Now we take $\phi_k$ in \cref{mainproblem} and integrate by parts (see for instance \cref{first}) in order to obtain
\begin{equation*}{\label{finene2}}
\begin{array}{c}
  \int_0^T \left\langle\left(u_t\right)_1, \phi_k\right\rangle +\int_0^T\int_\Omega\left(u_t\right)_2 \phi_k+\iint_{\Omega_T} \nabla u \cdot\nabla \phi_k+\int_0^T\int_{\mathbb{R}^{n}}\int_{\mathbb{R}^{n}} 
\frac{(u(x,t)-u(y,t))(\phi_k(x,t)-\phi_k(y,t))}{|x-y|^{n+2s}}\smallskip\\=\iint_{\Omega_T} \frac{\phi_k\nu}{u^{\gamma(x,t)}} dxdt+\iint_{\Omega_T}f\phi_kdxdt+\iint_{\Omega_T}G\cdot\nabla\phi_kdxdt,\end{array}
\end{equation*}
Due to the definition of $\phi_k$ and on the regularity of $u$ one can pass to the limit in the left hand side of the previous equality along with the last two terms in right; in particular $\frac{\phi_k\nu}{u^{\gamma(x,t)}} $ is bounded in $L^1(\Omega_T)$ and by Fatou's lemma one obtains that also $\frac{\phi\nu}{u^{\gamma(x,t)}} \in L^1(\Omega_T)$. Using dominated convergence theorem one can pass to the limit in this term too and we conclude \cref{finene} (note that for a function in $L^2(0,T;W^{1,2}_0(\Omega))$ with arbitrary sign, one can consider its positive and negative parts separately).

Let $v, w$ be two finite energy solutions of \cref{mainproblem}, and take $T_k(v-w) \phi(t)$ in the difference of formulations \cref{finene} solved by $v, w$, where we define $\phi(t)=\frac{-t}{T}+1$ for $t \in(0, T]$. Dropping positive terms, we obtain that
\begin{equation*}
\int_0^T\left\langle\left(v_t\right)_1-\left(w_t\right)_1, T_k(v-w) \phi(t)\right\rangle+\iint_{\Omega_T}\left(\left(v_t\right)_2-\left(w_t\right)_2\right) T_k(v-w) \phi(t) \leq 0 .
\end{equation*}
It follows from of [\citealp{jerome}, Lemma 7.1] that ($\tilde{T}_{k, 1}(s)$ is defined in \cref{T_k})
\begin{equation*}
\frac{1}{T} \iint_{\Omega_T} \tilde{T}_{k, 1}(v-w)=0
\end{equation*}that, due to the arbitrariness of $k$ and recalling that $v-w \in C\left([0, T] ; L^1(\Omega)\right)$, implies that $v(\tau)=w(\tau)$ for any $\tau \in(0, T]$ and for almost every $x$ in $\Omega$ and this concludes the proof.
\section{Proof of \cref{asymptotic}}{\label{sec8}}
In order to prove the asymptotic behavior, we first consider the difference of the problems \cref{asym1,asym2} solved respectively by $u_k$ and $v_k$ and take $G_l(u_k-v_k)$, $l>0$ as test function. We have, for every $0<t_1<t_2<T$,
\begin{equation}{\label{asym3}}
    \begin{array}{c}
    \frac{1}{2}  \int_{\Omega}\left|G_l\left(u_k-v_k\right)(t_2)\right|^2-\frac{1}{2} \int_{\Omega}\left|G_l\left(u_k-v_k\right)(t_1)\right|^2 
 +\int_{t_1}^{t_2} \int_{\Omega}\left|\nabla G_l\left(u_k-v_k\right)\right|^2dxdt\\+\int_{t_1}^{t_2}\int_{\mathbb{R}^{n}}\int_{\mathbb{R}^{n}} 
\frac{((u_k-v_k)(x,t)-(u_k-v_k)(y,t))(G_l(u_k-v_k)(x,t)-G_l(u_k-v_k)(y,t))}{|x-y|^{n+2s}}\\
= \int_{t_1}^{t_2} \int_{\Omega} T_k(f)\left(\frac{1}{\left(u_k+\frac{1}{k}\right)^{\gamma(x,t)}}-\frac{1}{\left(v_k+\frac{1}{k}\right)^{\gamma(x,t)}}\right) G_l(u_k-v_k).
\end{array}
\end{equation}
Note that 
\begin{equation*}
    \begin{array}{l}
\quad\int_{t_1}^{t_2}\int_{\mathbb{R}^{n}}\int_{\mathbb{R}^{n}} 
\frac{((u_k-v_k)(x,t)-(u_k-v_k)(y,t))(G_l(u_k-v_k)(x,t)-G_l(u_k-v_k)(y,t))}{|x-y|^{n+2s}}\\=\int_{t_1}^{t_2}\int_{\mathbb{R}^{n}}\int_{\mathbb{R}^{n}} 
\frac{(G_l(u_k-v_k)(x,t)-G_l(u_k-v_k)(y,t))^2}{|x-y|^{n+2s}}\\\quad+\int_{t_1}^{t_2}\int_{\mathbb{R}^{n}}\int_{\mathbb{R}^{n}} 
\frac{(T_l(u_k-v_k)(x,t)-T_l(u_k-v_k)(y,t))(G_l(u_k-v_k)(x,t)-G_l(u_k-v_k)(y,t))}{|x-y|^{n+2s}},
    \end{array}
\end{equation*} which is non-negative by [\citealp{Peral}, Lemma 4]. Observing now that the function $G_l(u_k-v_k)$ has the same sign of the function $(u_k-v_k)$, the right hand side of \cref{asym3} is negative and consequently we have
\begin{equation*}{\label{asy4}}
     \frac{1}{2}  \int_{\Omega}\left|G_l\left(u_k-v_k\right)(t_2)\right|^2-\frac{1}{2} \int_{\Omega}\left|G_l\left(u_k-v_k\right)(t_1)\right|^2 
 +\int_{t_1}^{t_2} \int_{\Omega}\left|\nabla G_l\left(u_k-v_k\right)\right|^2dxdt\leq 0.
\end{equation*}
Thus, by Sobolev inequality we obtain
\begin{equation}{\label{asy5}}
     \int_{\Omega}\left|G_l\left(u_k-v_k\right)(t_2)\right|^2-\frac{1}{2} \int_{\Omega}\left|G_l\left(u_k-v_k\right)(t_1)\right|^2 
 +2c\int_{t_1}^{t_2} \left(\int_{\Omega}\left|G_l\left(u_k-v_k\right)\right|^{2^*}dx\right)^{2/2^*}dt\leq 0.
\end{equation}Here we have denoted by $c=c(n)$ the Sobolev constant. 
Now take as test function in \cref{asym1,asym2} the function $\phi=\left\{1-\frac{1}{\left[1+\left|G_l\left(u_k-v_k\right)\right|\right]^\delta}\right\} \operatorname{sign}(u_k-v_k), \delta>1$. Subtracting the results we get for every $0 \leq t_0 \leq t<T$
\begin{equation}{\label{asy6}}
\begin{array}{l}
\int_{\Omega}|G_l(u_k-v_k)|(t)+\frac{1}{\delta-1} \int_{\Omega}\left\{1-\frac{1}{\left[1+\left|G_l(u_k-v_k)\right|(t)\right]^{\delta-1}}\right\} 
\smallskip\\+\delta \int_{t_0}^t \int_{\Omega}(\nabla u_k-\nabla v_k) \frac{\nabla G_l(u_k-v_k)}{\left[1+\left|G_l(u_k-v_k)\right|\right]^{\delta+1}}\smallskip \\+\int_{t_1}^{t_2}\int_{\mathbb{R}^{n}}\int_{\mathbb{R}^{n}} 
\frac{((u_k-v_k)(x,t)-(u_k-v_k)(y,t))(\phi(x,t)-\phi(y,t))}{|x-y|^{n+2s}}\smallskip\\
 \leq \int_{\Omega}\left|G_l\left(u_k-v_k\right)\right|(t_0)+\frac{1}{\delta-1} \int_{\Omega}\left\{1-\frac{1}{\left[1+\left|G_l\left(u_k-v_k\right)\right|(t_0)\right]^{\delta-1}}\right\} \smallskip\\
 +\int_{t_1}^{t_2} \int_{\Omega} T_k(f)\left(\frac{1}{\left(u_k+\frac{1}{k}\right)^\gamma}-\frac{1}{\left(v_k+\frac{1}{k}\right)^\gamma}\right)\left\{1-\frac{1}{\left[1+\mid G_l\left(u_k-v_k\right)\right]^\delta}\right\} \operatorname{sign}\left(u_k-v_k\right).\end{array}
\end{equation}
Note that the function $\phi(r)=\left\{1-\frac{1}{\left[1+\left|G_l(r)\right|\right]^\delta}\right\} \operatorname{sign}(r)$ is increasing in $r$, and hence the nonlocal integral is non-negative. Observe also that the last integral is negative (or null). Thus, by \cref{asy6} we deduce that
\begin{equation*}
\int_{\Omega}\left|G_l(u_k-v_k)\right|(t) \leq \int_{\Omega}\left|G_l(u_k-v_k)\right|\left(t_0\right)+\frac{|\Omega|}{\delta-1},    
\end{equation*}
from which, thanks to the arbitrary choice of $\delta>1$ we derive that for every $0 \leq t_0 \leq t<T$,
\begin{equation*}
\int_{\Omega}\left|G_l(u_k-v_k)\right|(t) \leq \int_{\Omega}\left|G_l(u_k-v_k)\right|\left(t_0\right). 
\end{equation*}
Noting \cref{asy5}, we can now apply [\citealp{Porziomm}, Theorem 2.2] obtaining the following estimate
\begin{equation}{\label{asy7}}
\left\|u_k(t)-v_k(t)\right\|_{L^{1}(\omega)}\leq c\left\|u_k(t)-v_k(t)\right\|_{L^{\infty}(\Omega)} \leq M_1 \frac{\left\|T_k(u_0)-T_k(v_0)\right\|_{L^1(\Omega)}}{t^{\frac{n}{2}} e^{\sigma t}},\text{ for a.e. } t\in(0,T), 
\end{equation} for each $\omega\subset\subset\Omega$, for some positive constants $\sigma, h, M_1$.

Note that \cref{convergenceinL1} can be proved by taking some different approximation of $u_0$ than $T_k(u_0)$. Proceeding as in the proof of \cref{asy7}, taking $\bar {u}_k$ (where $\bar {u}_k$ is a solution of \cref{asym1}, with $u_k(\cdot,0)= \bar {u}_{0k}$ and $\{\bar {u}_{0k}\}$ is a sequence of non-negative $L^\infty(\Omega)$ functions converging to $u_0$ in $L^1(\Omega)$) in place of $v_k$, we deduce the following estimate
\begin{equation*}{\label{asy8}}
\left\|u_k(t)-\bar{u}_k(t)\right\|_{L^{1}(\omega)}\leq c\left\|u_k(t)-\bar{u}_k(t)\right\|_{L^{\infty}(\Omega)} \leq M_1 \frac{\left\|T_k(u_0)-\bar{u}_{0k}\right\|_{L^1(\Omega)}}{t^{\frac{n}{2}} e^{\sigma t}},\text{ for each }\omega\subset\subset\Omega,
\end{equation*} for some positive constants $\sigma, h, M_1$.
Hence, passing to the limit on $k$ and recalling that by \cref{convergenceinL1} $u$ and $\bar{u}$ are, the a.e. limit in $\Omega_T$ of (respectively) $u_k$ and $\bar{u}_k$, we deduce that $u=\bar u$ a.e. in $\Omega_T$. Note that, we have also used the convergence of $\{u_k\}$ and $\{\bar{u}_k\}$ in $L^1_{\mathrm{loc}}(\Omega_T)$, as per \cref{convergenceinL1}. Therefore the approximation choice of initial data does not influence the weak solution. In fact, approximation (non-negative) choices for $f$ or $g$ (not simultaneously together) also do not influence the weak solution.

Finally, passing to the limit on $k$ in \cref{asy7} and recalling by \cref{convergenceinL1} that $u$ and $v$ are, the a.e. limit in $\Omega_T$ of (respectively) $u_k$ and $v_k$, we deduce that 
\begin{equation*}{\label{asy9}}
\left\|u(t)-v(t)\right\|_{L^{1}(\omega)} \leq M_1 \frac{\left\|u_0-v_0\right\|_{L^1(\Omega)}}{t^{\frac{n}{2}} e^{\sigma t}},\text{ for almost every } t\in(0,T).
\end{equation*}Again we have used the convergence of $\{u_k\}$ and $\{v_k\}$ in $L^1_{\mathrm{loc}}(\Omega_T)$, as per \cref{convergenceinL1}.

Let $u_{0, k} =T_k(u_0)\in L^{\infty}(\Omega)$ be such that $\{u_{0, k} \}\to u_0$ in $L^1(\Omega)$ and let $T_0>0$ arbitrarily fixed. By \cref{exis1} there exists a weak solution $u$ of \cref{asy} in $\Omega_{T_0}$ obtained as the a.e. limit in $\Omega_{T_0}$ of $u_k \in L^2(0, T_0 ; W_0^{1, 2}(\Omega)) \cap L^{\infty}(\Omega_{T_0}) \cap C([0, T_0] ; L^2(\Omega))$ (solutions to \cref{asym1}). Also the approximation of initial data does not affect the solution.
Therefore, since the structure conditions are satisfied for every $T>0$, the approximating solutions $u_k$ are global solutions of \cref{asy}. In particular, if $u_k\in L^2(0,2 T_0 ; W_0^{1, 2}(\Omega)) \cap L^{\infty}(\Omega_{2 T_0}) \cap C([0,2 T_0] ; L^2(\Omega))$ are solutions of our problem in $\Omega_{2 T_0}$, it is possible to extract a subsequence of $u_k$, that we denote $u_k^{(1)}$, converging a.e. in $\Omega_{2 T_0}$ to a weak solution $u^{(1)}$ of our problem in $\Omega_{2 T_0}$. We notice that $u=u^{(1)}$ in $\Omega_{T_0}$. Iterating this procedure we get a global solution $u$ which, by construction, satisfies
\begin{equation}{\label{asy10}}
\left\|u(t)-v(t)\right\|_{L^{1}(\omega)} \leq M_1 \frac{\left\|u_0-v_0\right\|_{L^1(\Omega)}}{t^{\frac{n}{2}} e^{\sigma t}},\text{ for almost every } t\in(0,\infty).
\end{equation}
Taking limit $t\to \infty$ in \cref{asy10}, we deduce the conclusion.
\section{Further directions, open problems}
The results of this paper pose several questions and problems. Without pretending to
be exhaustive concerning the directions one might take, we give a short list of possible
issues here.\smallskip\\
\textbf{$1$. The quasilinear case:} The first possible generalization one can think is to consider a quasilinear analogue of \cref{mainproblem}. We believe that at least for $p\geq 2$, the quasilinear version will show similar results.\smallskip\\\textbf{$2$. The case $g\neq 0$ in \cref{representation}:} Our existence results \cref{exis1} and \cref{exis2} consider diffuse measures which belongs to $L^1(\Omega_T)+L^{q^\prime}(0,T;W^{-1,q^\prime}(\Omega))$ i.e. $g=0$ in \cref{representation}. Is it possible to consider $g\neq 0$ i.e. to take the full class of diffuse measures and still have the existence results? In that case, the solutions seem to be defined in some other sense. Which sense will be perfect to define the solutions?\smallskip\\\textbf{$3$. Singular measures:} In \cref{exis1} and \cref{exis2}, it have been assumed that $\nu$ is non-singular with respect to the Lebesgue measure $\mathcal{L}$. Looking at the ellictic analogue \cite{biswas2025existence}, it is reasonable conjecture that the same existece results will still hold if $\nu$ is singular with respect to $\mathcal{L}$.\smallskip\\\textbf{$4$. The measure is of the type $\nu=\nu_d+\nu_c$:} If $\mu$ is in $\mathcal{M}(\Omega_T)$, thanks to a well known decomposition result (see for instance \cite{fukus}), we can split it into a sum (uniquely determined) of its absolutely continuous part $\mu_d$ with respect to $p$-capacity and its singular part $\mu_c$, that is $\mu_c$ is concentrated on a set $E$ of zero $p$-capacity; we will say that $\mu_c \perp$ cap$_p$. Noting the elliptic analogue \cite{lindda}, the possible questions that occur are: What happens when one simultaneously takes $\nu_d,\nu_c\neq 0$? Recalling \cref{nonexistence}, will the solution have only effect from the term $\nu_d$? What will be the suitable notion of defining solutions for the case $\nu_d,\nu_c\neq 0$? \section*{Acknowledgements} The author wishes to express sincere gratitude to Prof. Francesco Petitta for his valuable discussions. The author gratefully acknowledges the financial support provided by Indian Institute of Technology Kanpur.
\bibliography{main.bib}

\bibliographystyle{plain}


\end{document}